\title{Buchberger-Zacharias Theory of Multivariate Ore Extensions}
\author{
{\small\bf Michela Ceria}\\
{\small Dipartimento di Matematica}\\
{\small Universit\`{a} di Trento}\\
{\small{\tt michela.ceria@unitn.it}}
\and
\and
{\small\bf Teo Mora}\\
{\small DIMA}\\
{\small Universit\`a di Genova}\\
{\small{\tt theomora@disi.unige.it}}
}
\def\Forall{\mbox{\ for each }}
\def\Where{\mbox{\ where }}
\def\With{\mbox{\ with }}
\def\And{\mbox{\ and }}
\def\Bcc#1{\overline{\bf #1}}
\def\Bbb#1{{\mathbb #1}}
\def\Cal#1{{\cal #1}}
\def\Frak #1{{\mathfrak #1}}
\def\Can{\mathop{\rm Can}\nolimits}
\def\GM{{\mathfrak G}{\mathfrak M}}
\def\then{\;\Longrightarrow\;}
\def\Rep{\mathop{\bf Rep}\nolimits}
\def\Syz{\mathop{\rm Syz}\nolimits}
\def\Span{\mathop{\rm Span}\nolimits}
\def\supp{\mathop{\rm supp}\nolimits}
\def\lcm{\mathop{\rm lcm}\nolimits}
\def\lc{\mathop{\rm lc}\nolimits}
\def\lift{\mathop{\rm lift}\nolimits}
\def\NF{\mathop{\rm NF}\nolimits}
\def\op{\mathop{\rm op}\nolimits}
\def\Id{\mathop{\rm Id}\nolimits}
\begin{document}
\maketitle
\renewcommand\labelitemi{\bf --}
%
\def\qed{\ifmmode\squareforqed\else{\unskip\nobreak\hfil
\penalty50\hskip1em\null\nobreak\hfil\squareforqed
\parfillskip=0pt\finalhyphendemerits=0\endgraf}\fi}
\def\squareforqed{\hbox{\rlap{$\sqcap$}$\sqcup$}}
\def\pf{\par\noindent {\it Proof }}
\theoremstyle{plain}
\newtheorem{Theorem}{Theorem}
\newtheorem{Corollary}[Theorem]{Corollary}
\newtheorem{Lemma}[Theorem]{Lemma}
\newtheorem{Proposition}[Theorem]{Proposition}
\newtheorem{Fact}[Theorem]{Fact}
\theoremstyle{definition}
\newtheorem{Definition}[Theorem]{Definition}
\newtheorem{Problem}[Theorem]{Problem}
\newtheorem{Void}{}[section]
\theoremstyle{remark}
\newtheorem{Example}[Theorem]{Example}
\newtheorem{Remark}[Theorem]{Remark}
\newtheorem{Algorithm}[Theorem]{Algorithm}
\newtheorem{Procedure}[Theorem]{Procedure}
\newtheorem{History}[Theorem]{Historical Remark}
\newtheorem{Notation}[Theorem]{Notation}
\newtheorem{teo}{Remark$\times$teo}
        
\newcounter{saveq}
\newcommand{\alpheqn}{\setcounter{saveq}{\value{equation}}
\stepcounter{saveq}\setcounter{equation}{0}
\renewcommand{\theequation}
{\mbox{\arabic{saveq}-\alph{equation}}} 
}
\newcommand{\reseteqn}{\setcounter{equation}{\value{saveq}}%
\renewcommand{\theequation}{\arabic{equation}}}                 
\begin{abstract}  We present Buchberger Theory and Algorithm of Gr\"obner bases for multivariate Ore extensions of rings presented as modules over a principal ideal domain. The algorithms are based on M\"oller Lifting Theorem.
\end{abstract}

In her 1978 Bachelor's thesis \cite{Z} Zacharias discussed how to extend Buchberger Theory \cite{Bu1,Bu2,Bu3} from the case of polynomial rings over a field to that of polynomials over a Noetherian ring with suitable effectiveness conditions. 

In the meantime a similar task was performed in a series of papers --- Kandri-Ro\-dy--Ka\-pur \cite{KrK} merged the rewriting rules behind Euclidean Algorithm with Buchberger's rewriting, proposing a Buchberger Theory for polynomial rings over Euclidean domains; Pan \cite{P} studied Buchberger Theory for polynomial rings over domains introducing the notions of strong/weak Gr\"obner bases --- which culminated with \cite{M}. 

Such unsorpassed paper, reformulating and improving Zacharias' intuition, gave efficient solutions to compute 
both weak and strong Gr\"obner bases for polynomial rings over each Zacharias ring, with particolar attention to the PIR case. Its main contribution is the reformulation of Buchberger test/completion (``a basis $F$ is Gr\"obner if and only if each
S-polynomial  between two elements of $F$, reduces to 0'') in the more flexible {\em lifting theorem} (``a basis $F$ is Gr\"obner if and only if
each element in a minimal basis of the syzygies among the
leading monomials lifts, via Buchberger reduction, 
to a syzygy among the elements of $F$'').
The only further contribution to this ultimate paper is the survey \cite{m5} of M\"oller's results which reformulated them in terms of Szekeres Theory \cite{S}.

The suggestion of extending  Buchberger Theory to non-commuative
rings which satisfy Poincar\'e-Birkhoff-Witt Theorem was put forward by Bergman \cite{Be}, 
effectively applied by Apel--Lassner \cite{AL1,AL2} to Lie algebras and further extended to solvable polynomial rings \cite{KrW,Kr}, skew polynomial rings \cite{Ga1,Ga2,Ga3} and to other 
algebras \cite{A0,BGV,Lev,Lev2}
which satisfy Poincar\'e-Birkhoff-Witt Theorem and thus, under the standard intepretation of Buchberger Theory in terms of filration/graduations \cite{A,m1,MoSw,Sw,Bath},
have the classical polynomial ring  as associated graded rings.

In particular Weispfenning \cite{W} adapted his results to a generalization of the Ore extension \cite{O2} proposed by Tamari \cite{T} and its construction was generalized by his student Pesch \cite{P1,P2} thus introducing the notion of {\em iterative Ore extension with commuting variables}
$${\sf R} :=R[Y_1;\alpha_1,\delta_1][Y_2;\alpha_2,\delta_2]\cdots[Y_n;\alpha_n,\delta_n], \, R  \mbox{ a domain};$$ 
the concept has been called   {\em Ore algebra} in \cite{CS} and is renamed here as
{\em multivariate Ore extension} (for a different and promising approach to Ore algebras see \cite{LSL}).

Bergman's approach and most of all extensions are formulated for rings which are vector spaces over a field; in our knowledge the only instances in which the coefficient ring $R$  is presented as a ${\Bbb D}$-module over a domain ${\Bbb D}$ (or at least as a ${\Bbb Z}$-module) are Pritchard's \cite{Pr1,Pr2} extension of M\"oller Lifting Theory to non-commutative free algebras and Reinert's \cite{R2,R3} deep study of Buchberger Theory on Function Rings.

Following the recent survey on Buchberger-Zacharias Theory 
for monoid rings $R[{\sf S}]$ over a unitary effective ring $R$ and an effective monoid ${\sf S}$ \cite{Zac}, we propose here a M\"oller--Pritchard lifting theorem presentation of Buchberger-Zacharias Theory
and related Gr\"obner basis computation algorithms for multivariate Ore extensions. The twist w.r.t. \cite{Zac} is that there $R[{\sf S}]$ coincides with its associated graded ring; here ${\sf R}$ and its associated graded ring 
$$G({\sf R}) := R[Y_1;\alpha_1][Y_2;\alpha_2]\cdots[Y_n;\alpha_n]$$
 coincide as  sets and as left $R$-modules between themselves and with the commutative polynomial ring $R[Y_1,\cdots,Y_n]$,  but as rings  have different multiplications.

\bigskip

We begin by recalling Ore's original theory \cite{O2} of non-commutative polynomials $R[Y]$, relaxing the original assumption that $R$ is a field to the case in which $R$ is a domain (Section~1.1) and Pesch's constructions of multivariate Ore extensions (Section~1.2) and graded multivariate Ore extensions (Section~1.3), focusing on the arithmetics of the main Example~\ref{GOe}
$${\sf R} := R[Y_1;\alpha_1][Y_2;\alpha_2]\cdots[Y_n;\alpha_n], R:={\Bbb Z}[x]\quad
\alpha_i(x) := c_i x^{e_i}, c_i\in{\Bbb Z}\setminus\{0\}, e_i\in{\Bbb N}\setminus\{0\}.$$

Next we introduce Buchberger Theory in  multivariate Ore extensions recalling the notion of term-orderings (Section~2.1), definition and main properties of left, right, bilateral and restricted Gr\"obner bases (Section~2.2) and Buchberger Algorithm for computing canonical forms in the case in which $R$ is a field (Section~2.3). 

We adapt to our setting Szekeres Theory \cite{S} (Section~3), Zacharias canonical representation with related algorithm (Section~4) and M\"oller Lifting Theorem  (Section~5).

The next Sections are the algorithmic core of the paper:
we reformulate for multivariate Ore extensions over a Zacharias ring
\begin{itemize}
\item M\"oller's algorithm for computing the required Gebauer--M\"o\-ller set ({\em id est} the minimal basis of the module of the syzygies among the
leading monomials) for Buchberger test/com\-ple\-tion of left weak bases (Section~6.1); 
\item his reformulation of it  which requires only l.c.m. computation in $R$ for the  case in which $R$ is a PIR (Section~6.2); 
\item still in the  case in which $R$ is a PIR,  M\"oller's completion of a left weak basis to a left strong one (Section~6.3);
\item Gebauer--M\"oller criteria \cite{GM} for producing a Gebauer--M\"oller set  (Section~6.4);
\item Kan\-dri-Ro\-dy--Weis\-pfen\-ning completion \cite{KrW} of a left weak bases for producing a bilateral one  (Section~7.1);
\item Weis\-pfen\-ning's \cite{W} restricted completion  (Section~7.2);
\item as a technical tool required by Weis\-pfen\-ning's  restricted completion, how to produce right Gebauer--M\"oller sets   (Section~7.3).
\end{itemize}

Finally we reverse to a theoretical survey summarizing the structural theorem for the case in which $R$ is a Zacharias ring over a PID (Section~8), specializing to our setting Spear's Theorem \cite{Sp, Bath} (Section~9) and extending to it Lazard's Structural Theorem \cite{Laz} (Section~10).

In an appendix we discuss, as far as it is possible, how to extend this theory and algorithms to the case in which $R$ is a PIR  (Section~A).

\section{Recalls on Ore Theory} 

\subsection{Ore Extensions} 
Let $R$ be a not necessarily commutative domain; Ore  \cite{O2} investigated 
under which conditions 
the $R$-module
${\sf R} := R[Y]$ 
 of all the {\em formal polynomials}
{\em is made a ring}  under the assumption
{\em that the multiplication of polynomials shall be associative and both-sided distributive} and the limitation imposed by the postulate that
{\em the degree of a product shall be equal to the sum of the degree of the factors.}

It is clear that, due to the distributive property, it suffices to define the product of two monomials  $bY^r\cdot aY^s$ or even more specifically, to define the product
$Y \cdot r, r\in R$;
this necessarily requires the existence of maps  $\alpha,\delta : R \to R$ such that
$$Y\cdot r = \alpha(r) Y + \delta(r) \Forall r\in R;$$
Ore calls $\alpha(r)$ the {\em conjugate}\index{conjugate (Ore)} and
$\delta(r)$ the {\em derivative}\index{derivative (Ore)} of $r$.

Under the required postulate clearly we have
\begin{enumerate} 
\item for each $r\in R$, $\alpha(r) = 0 \then r = 0$, 
\end{enumerate} 
so that $\alpha$ is injective.

It is moreover sufficient to consider,  for each $r,r'\in R$, the relations
$$
\alpha(r+r') Y + \delta(r+r') = Y\cdot(r+r') = Y\cdot r+Y\cdot r' =
\bigl(\alpha(r)+\alpha(r')\bigr) Y + \delta(r)+\delta(r'),$$
$$\alpha(rr') Y + \delta(rr') = Y\cdot(rr') = \left(Y\cdot r\right)\cdot r' =
\alpha(r)\alpha(r') Y + \alpha(r)\delta(r')+\delta(r)r',$$
and, if $R$ is a skew field, and $r\neq 0$,
$$Y =   \left(Y\cdot r\right)\cdot r^{-1} =
\alpha(r)\alpha(r^{-1}) Y + \alpha(r)\delta(r^{-1})+\delta(r)r^{-1},$$
to deduce that
\begin{enumerate} 
\setcounter{enumi}{1}
\item $\alpha$ is a ring endomorphism; 
\item the following conditions are equivalent:
\begin{enumerate}
\item for each $d\in R\setminus\{0\}$ exists $c\in R\setminus\{0\} :
Y\cdot c=dY+\delta(c), \alpha(c)=d$;
\item $\alpha$ is a ring automorphism;
\end{enumerate}
\item $\delta$ is an $\alpha$-derivation of $R$\index{derivation} {\em id est} an additive map satisfying
$$\delta(rr') = \alpha(r)\delta(r')+\delta(r)r' \Forall r,r'\in R;$$
\item if $R$ is a skew field, then  each $r\in R\setminus\{0\}$ satisfies
$$\alpha(r^{-1}) = \left(\alpha(r)\right)^{-1}, \quad \delta(r^{-1}) = -\left(\alpha(r)\right)^{-1}\delta(r)r^{-1};$$
\item $Im(\alpha)\subset R$ is a subring, which is an isomorphism copy of $R$;
\item $R_1 := \{r\in R : r = \alpha(r)\}\subset R$ is a ring, the {\em invariant ring}\index{invariant ring}\index{ring!invariant} of $R$;
\item $R_0 := \{r\in R : \delta(r) = 0\}\subset R$ is a ring, the {\em constant ring}\index{constant!ring}\index{ring!constant} of $R$;
\item $\{r\in R : Y\cdot r = rY\} = R_0\cap R_1$.
\item  If $R$ is a skew field, such are also $Im(\alpha)$,  $R_1$ and $R_0$.
\item Denoting $Z := \{z\in R : zr = rz \Forall r\in R\}$ the {\em center} of $R$\index{center}, we have
$$\{r\in R : f\cdot r = rf \Forall f\in {\sf R}\} = R_0\cap R_1\cap Z.$$
\end{enumerate}

Moreover, if we consider two polynomials
$f(Y),g(Y)\in{\sf R}\setminus\{0\},$
$$f= a Y^m+f_0,  g=b Y^n+g_0, a,b\in R\setminus\{0\}, m,n\in{\Bbb N}, 
f_0,g_0\in{\sf R}, \deg(f_0)<m,  \deg(g_0)<n,$$
we have 
$$f\cdot g = a\alpha^{m}(b)Y^{m+n}+h(Y), \deg(h)<m+n;$$
since $\alpha$ is an endomorphism, $b\neq 0\then \alpha(b)\neq 0\then \alpha^m(b)\neq 0$ and
since $R$ is a domain it holds
$\alpha^m(b)\neq 0\neq a\then a\alpha^{m}(b)\neq 0\then f\cdot g\neq 0$.
As a consequence
\begin{enumerate} 
\setcounter{enumi}{11}
\item ${\sf R}$ is a domain.
\end{enumerate}

\begin{Definition} ${\sf R}$ with the ring structure described above is called an {\em Ore extension}\index{Ore, E.!extension} and is denoted $R[Y;\alpha,\delta]$.
\end{Definition}

\begin{Remark}[Ore] In an Ore extension $R[Y;\alpha,\delta]$, denoting
${\Cal S} = \langle \alpha,\delta \rangle$ the free semigroup over the alphabet $\left\{\alpha,\delta\right\}$ and, for each $d\in{\Bbb N}$ and $i\in{\Bbb N}, 0\leq i\leq d$, ${\Cal S}_{d,i}$ the set of the $\binom{d}{i}$ words in ${\Cal S}$ of length $d$ in which occur $i$ instances of $\alpha$ and $d-i$ instances of $\delta$ in an arbitrary order,  we have  
$$Y^d\cdot r = \sum_{i= 0}^d \sum_{\tau\in{\Cal S}_{d,i}} \tau(r) Y^i$$
for each $d\in{\Bbb N}$;
 for instance
\begin{eqnarray*}
Y^3\cdot r &=& 
\alpha^3(r) Y^3 + \delta^3(r)
\\&+&
\left(\alpha^2\delta(r)+\alpha\delta\alpha(r)+ \delta\alpha^2(r)\right) Y^2
\\&+&
\left(\alpha\delta^2(r)+\delta\alpha\delta(r)+ \delta^2\alpha(r)\right) Y.
\end{eqnarray*}

In particular, for $f(Y)=\sum_{i=0}^n a_i Y^{n-i}$ and $g(Y)=\sum_{i=0}^m b_i Y^{m-i}$ in ${\sf R}$ we have
$$g(Y)f(Y)=\sum_{i=0}^{n+m} c_i Y^{n+m-i} \With c_0=b_0\alpha^m(a_0) \And
c_i=\sum_{{\sf a}=0}^i b_{\sf a}\sum_{{\sf b}=0}^{i-{\sf a}}  \sum_{\tau\in{\Cal S}_{m-{\sf a},i-{\sf a}-{\sf b}}}
\tau(a_{\sf b}).$$
\end{Remark}

\begin{Remark}[Ore] \label{OreRem1} Under the assumption that ({\em cf.} 3.)  $\alpha$ is an automorphism, each polynomial 
 $\sum_{i=1}^n a_i Y^i\in{\sf R}$ can be uniquely represented as
 $\sum_{i=1}^n  Y^i \bar{a}_i$ for proper values $\bar{a}_i\in R$.
 
 In fact we have
 $ax=x\alpha^{-1}(a)-\delta(\alpha^{-1}(a))$
 from which we can deduce inductively   proper expressions
 $$ax^n=x^n\alpha^{-n}(a)+\sum_{i=1}^n (-1)^ix^{n-i} \sigma_{in}(a).$$
\qed \end{Remark}

\subsection{Multivariate Ore Extensions}\label{c46S12}

 \begin{Definition}
 An {\em   iterative Ore extension}\index{iterative Ore extension}\index{Ore, E.!extension!iterative} is a ring (whose multiplication we denote $\star$) defined as
$${\sf R} := R[Y_1;\alpha_1,\delta_1][Y_2;\alpha_2,\delta_2]\cdots[Y_n;\alpha_n,\delta_n]$$
where, for each $i>1$, $\alpha_i$ is an endomorphism and $\delta_i$ 
 an $\alpha_i$-derivation of the iterative Ore extension
$$R_{i-1} := R[Y_1;\alpha_1,\delta_1]\cdots[Y_{i-1};\alpha_{i-1},\delta_{i-1}].$$

A   {\em multivariate Ore extension}\index{Ore, E.!extension!multivariate}
(or: {\em Ore algebra}\index{Ore, E.!algebra}  \cite{CS}; or: {\em iterative Ore extension with commuting variables}  \cite{P1,P2}) is an iterative Ore extension which satisfies
\begin{itemize}
\renewcommand\labelitemi{\bf --}
\item  $\alpha_j\delta_i = \delta_i\alpha_j$,  for each $i,j$, $i\neq j$,
\item $\alpha_i\alpha_j = \alpha_j\alpha_i$,
$\delta_i\delta_j = \delta_j\delta_i$  for $j > i$,
\item$\alpha_j(Y_i) = Y_i, \delta_j(Y_i) = 0$ for $j > i$. 
\end{itemize}
\qed\end{Definition}

\begin{Lemma}[Pesch]\label{OreLePex} In an iterative Ore extension, for each  $i<j$
it holds
$$Y_j\star Y_i = Y_i Y_j \iff \alpha_j(Y_i) = Y_i, \delta_j(Y_i) = 0.$$
\end{Lemma}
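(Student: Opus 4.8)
The plan is to work inside the iterative Ore extension $R_j := R_{j-1}[Y_j;\alpha_j,\delta_j]$ and simply compute $Y_j\star Y_i$ using the defining rule $Y_j\star r = \alpha_j(r)Y_j + \delta_j(r)$, valid for every $r\in R_{j-1}$. Since $i<j$, the element $Y_i$ lies in $R_{j-1}$, so it is a legitimate coefficient and the rule applies: $Y_j\star Y_i = \alpha_j(Y_i)Y_j + \delta_j(Y_i)$. The statement then reads $\alpha_j(Y_i)Y_j + \delta_j(Y_i) = Y_i Y_j$, and the equivalence with $\alpha_j(Y_i)=Y_i,\ \delta_j(Y_i)=0$ should drop out of the uniqueness of the normal form $\sum_k c_k Y_j^k$ with $c_k\in R_{j-1}$.

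Concretely, first I would establish the ($\Leftarrow$) direction, which is immediate: if $\alpha_j(Y_i)=Y_i$ and $\delta_j(Y_i)=0$, then $Y_j\star Y_i = \alpha_j(Y_i)Y_j+\delta_j(Y_i) = Y_iY_j$, where the right-hand side is the ordinary product in $R_j$ (coefficient $Y_i\in R_{j-1}$ times $Y_j$), and there is nothing further to check. For ($\Rightarrow$), I would take the hypothesis $Y_j\star Y_i = Y_iY_j$ and subtract: $\bigl(\alpha_j(Y_i) - Y_i\bigr)Y_j + \delta_j(Y_i) = 0$ in $R_j$. Now $\alpha_j(Y_i)-Y_i$ and $\delta_j(Y_i)$ both lie in $R_{j-1}$, and $R_j$ as a left $R_{j-1}$-module is free on the powers $1, Y_j, Y_j^2,\dots$; comparing the coefficient of $Y_j^1$ forces $\alpha_j(Y_i) - Y_i = 0$, and comparing the coefficient of $Y_j^0$ forces $\delta_j(Y_i) = 0$. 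That gives both conditions.

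The only point needing a little care — and the step I would flag as the main obstacle — is justifying that $R_j$ really is free as a left $R_{j-1}$-module on $\{Y_j^k\}$, i.e.\ that ``comparing coefficients'' is legitimate. This is exactly the content of Ore's construction recalled in Section~1.1 (the formal polynomials $R_{j-1}[Y_j]$ form the underlying module, and the degree-additivity postulate guarantees the representation $\sum_k c_k Y_j^k$ is unique); for an \emph{iterative} Ore extension one needs that $R_{j-1}$ is itself a domain, which again follows inductively from item~12 of Section~1.1. Once this module-freeness is in hand, the proof is the two-line coefficient comparison above. I would also remark that $\delta_j(Y_i)\in R_{j-1}$ and $\alpha_j(Y_i)\in R_{j-1}$ by definition of $\alpha_j,\delta_j$ as maps $R_{j-1}\to R_{j-1}$, so no degree bookkeeping in $Y_j$ is even required beyond separating the $Y_j^0$ and $Y_j^1$ terms.
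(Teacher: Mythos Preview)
Your proof is correct and follows exactly the paper's approach: the paper's proof consists of the single line ``For each $i<j$, we have $Y_j\star Y_i = \alpha_j(Y_i)Y_j+\delta_j(Y_i)$,'' leaving the coefficient comparison implicit. You have simply made explicit the freeness argument justifying that comparison, which is a reasonable elaboration but not a different route.
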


\begin{proof} For each  $i<j$, we have
$Y_j\star  Y_i = \alpha_j(Y_i) Y_j+\delta_j(Y_i).$ 
\end{proof}

\begin{Lemma}[Pesch]\label{OreLePe} An iterative Ore extension is a multivariate Ore extension iff  $Y_j\star Y_i = Y_i Y_j$
for each $i<j$.\end{Lemma}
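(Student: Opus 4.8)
The plan is to reduce everything to Lemma~\ref{OreLePex}. That lemma already records, for a single pair $i<j$, the equivalence $Y_j\star Y_i = Y_iY_j \iff \bigl(\alpha_j(Y_i)=Y_i \text{ and } \delta_j(Y_i)=0\bigr)$, which is exactly the third of the three families of conditions defining a multivariate Ore extension. So the forward implication is immediate: if ${\sf R}$ is a multivariate Ore extension then $\alpha_j(Y_i)=Y_i$ and $\delta_j(Y_i)=0$ for all $j>i$, and Lemma~\ref{OreLePex} gives $Y_j\star Y_i = Y_iY_j$ for every $i<j$. The real content is the converse, namely that the third family of conditions already forces the first two.

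For the converse, assume $Y_j\star Y_i = Y_iY_j$ for all $i<j$; by Lemma~\ref{OreLePex} this already yields $\alpha_j(Y_i)=Y_i$ and $\delta_j(Y_i)=0$ for $j>i$. Fix $i<j$ and start from the straightening identity
\[ Y_i\star r = \alpha_i(r)\,Y_i + \delta_i(r), \]
valid for $r$ in the base ring $R_{i-1}$ over which $Y_i$ is adjoined (hence valid inside $R_{j-1}$). I would apply the ring endomorphism $\alpha_j$ to it: using $\alpha_j(Y_i)=Y_i$, the left-hand side becomes $Y_i\star\alpha_j(r)$, which the same identity re-expands as $\alpha_i\alpha_j(r)\,Y_i+\delta_i\alpha_j(r)$, while the right-hand side becomes $\alpha_j\alpha_i(r)\,Y_i+\alpha_j\delta_i(r)$; comparing the coefficients of $Y_i$ and of $1$ yields $\alpha_i\alpha_j=\alpha_j\alpha_i$ and $\delta_i\alpha_j=\alpha_j\delta_i$. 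Applying instead the $\alpha_j$-derivation $\delta_j$, for which $\delta_j(Y_i)=0$ and $\delta_j(uv)=\alpha_j(u)\delta_j(v)+\delta_j(u)v$, the left-hand side becomes $Y_i\star\delta_j(r)=\alpha_i\delta_j(r)\,Y_i+\delta_i\delta_j(r)$ and the right-hand side becomes $\delta_j\alpha_i(r)\,Y_i+\delta_j\delta_i(r)$, giving $\alpha_i\delta_j=\delta_j\alpha_i$ and $\delta_i\delta_j=\delta_j\delta_i$. Ranging over all $i<j$, these four identities are exactly the first two families in the definition, so ${\sf R}$ is a multivariate Ore extension.

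The step that needs genuine care --- and it is bookkeeping, not computation --- is the domain on which these identities live. The straightening identity holds only for arguments in $R_{i-1}$, and the argument above reinserts $\alpha_j(r)$ and $\delta_j(r)$ into it, so one must first know that $\alpha_j$ and $\delta_j$ carry $R_{i-1}$ into itself; since $\alpha_j,\delta_j$ are an endomorphism and an $\alpha_j$-derivation of $R_{j-1}$ that stabilize $R$ and fix, resp.\ annihilate, $Y_1,\dots,Y_{j-1}$, they do stabilize every $R_k$ with $k<j$ (check on the generators $R\cup\{Y_1,\dots,Y_k\}$). Granting this, the four identities need only be verified on that generating set of $R_{i-1}$: two ring endomorphisms agreeing on a generating set are equal, and likewise the second-order operators $\alpha_i\delta_j$ versus $\delta_j\alpha_i$ and $\delta_i\delta_j$ versus $\delta_j\delta_i$ coincide once they agree on generators, since each expands by the same iterated Leibniz rule. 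On the $Y_k$'s ($k<i$) the identities hold because those variables are fixed by $\alpha_i,\alpha_j$ and annihilated by $\delta_i,\delta_j$, and on $R$ they are the coefficient comparison displayed above. I expect this extension-from-$R$ bookkeeping to be the only friction; the coefficient comparisons themselves are routine.
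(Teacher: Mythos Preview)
Your proof is correct and is essentially the paper's argument in different packaging: the paper expands $(Y_j\star Y_i)\star r$ and $(Y_iY_j)\star r$ for $r\in R$ and equates them, and since $Y_j\star(Y_i\star r)=\alpha_j(Y_i\star r)\,Y_j+\delta_j(Y_i\star r)$, reading off the $Y_j$-coefficient and the $Y_j$-free part is precisely your ``apply $\alpha_j$'' and ``apply $\delta_j$'' steps. Your added bookkeeping about $\alpha_j,\delta_j$ stabilizing $R_{i-1}$ and extending the identities from generators is more careful than the paper, which tacitly works over $R$ only.
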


\begin{proof} In fact, using Lemma~\ref{OreLePex}
for each $r\in R$, we have
\begin{eqnarray*}
Y_j\star  Y_i \star  r &=& Y_j\star (\alpha_i(r)Y_i+\delta_i(r)) 
\\&=&
\alpha_j\left(\alpha_i(r)Y_i+\delta_i(r)\right)Y_j+\delta_j\left(\alpha_i(r)Y_i+\delta_i(r)\right)
\\&=&
\alpha_j\alpha_i(r)Y_iY_j+
\alpha_j\delta_i(r)Y_j+\delta_j\left(\alpha_i(r)Y_i\right)+
\delta_j\delta_i(r)
\\&=&
\alpha_j\alpha_i(r)Y_iY_j 
+\alpha_j\delta_i(r)Y_j+\delta_j\alpha_i(r)Y_i+\delta_j\delta_i(r)
\end{eqnarray*}
and (by symmetry)
\begin{eqnarray*}
Y_i Y_j\star   r &=& Y_i\star (\alpha_j(r)Y_j+\delta_i(r)) 
\\&=&
\alpha_i\alpha_j(r)Y_iY_j+\delta_i\alpha_j(r)Y_j+\alpha_i\delta_j(r)Y_i+\delta_i\delta_j(r).
\end{eqnarray*}\end{proof}

Thus the   $R$-module structure of a multivariate Ore extension
can  be identified with that of the polynomial ring $R[Y_1,\ldots,Y_n]$ 
over its natural  $R$-basis 
$${\Cal T} := \{Y_1^{a_1}\cdots Y_n^{a_n} : (a_1,\ldots,a_n)\in{\Bbb N}^n\},\quad
{\sf R}\cong R[{\Cal T}] = \Span_R\{\Cal T\}.$$

We can therefore denote $\alpha_{Y_i} := \alpha_i, \delta_{Y_i} := \delta_i $ for each $i$ and, iteratively,
$$\alpha_{\tau Y_i} := \alpha_\tau\alpha_i, \delta_{\tau Y_i} := \delta_\tau\delta_i, \Forall\tau\in{\Cal T}.$$ 

Remark that a multivariate Ore extension is {\em not} an algebra; in fact,
if we define, for $\tau = Y_1^{d_1}\cdots Y_n^{d_n}$
and $t = Y_1^{e_1}\cdots Y_n^{e_n}$ such that $\tau\mid t$
$$\binom{t}{\tau} :=\binom{e_1}{d_1}\cdots\binom{e_n}{d_n},$$
we have
$$t\star r =\sum\limits_{\substack{\tau\in{\Cal T}\\ \tau\mid t}} \binom{t}{\tau} \delta_{\frac{t}{\tau}}\alpha_\tau(r) \tau, \Forall t\in{\Cal T} \And r\in R.$$

We can define, for each $t\in{\Cal T}$, a map 
$$\theta_t : R\to {\sf R},\quad \theta_t(r) = 
\sum\limits_{\substack{\tau\in{\Cal T}\\ \tau\mid t,  \tau\neq t}} 
\binom{t}{\tau} \delta_{\frac{t}{\tau}}\alpha_\tau(r) \tau,$$
so that
$t\star r = \alpha_t(r) t+\theta_t(r)$  for each $t\in{\Cal T}$ and each $r\in R$. 

Such maps $ \alpha_t$ and $\theta_t$ satisfy properties analogous of those of Ore's 
conjugate  and derivative:
\begin{Lemma} \label{OreLe+} With the present notation, for each  $t\in{\Cal T}$, we have
\begin{enumerate} 
\item for each $r\in R$, $\alpha_t(r) = 0 \then r = 0$, 
\item $\alpha_t$ is a ring endomorphism; 
\item the following conditions are equivalent:
\begin{enumerate}
\item for each $d\in R\setminus\{0\}$ exists $c\in R\setminus\{0\} :
Y\star c=dY+\theta_t(c), \alpha_t(c)=d$;
\item $\alpha_t$ is a ring automorphism;
\end{enumerate}
\item $\theta_t$ is an $\alpha_t$-derivation of ${\sf R}$;
\item if $R$ is a skew field, then  each $r\in R\setminus\{0\}$ satisfies
$$\alpha_t(r^{-1}) = \left(\alpha_t(r)\right)^{-1}, \quad \theta_t(r^{-1}) = -\left(\alpha_t(r)\right)^{-1}\theta_t(r)r^{-1};$$
\item $Im(\alpha_t)\subset R$ is a subring, which is an isomorphism copy of $R$.
\end{enumerate}
We further have
\begin{enumerate} 
\setcounter{enumi}{6}
\item if  each $\alpha_i$ is an automorphism, also each  $\alpha_t$, $t\in{\Cal T}$, is such.
\qed\end{enumerate}\end{Lemma}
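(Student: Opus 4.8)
The plan is to deduce every assertion from three ingredients: the defining identity $t\star r=\alpha_t(r)\,t+\theta_t(r)$ for $t\in{\Cal T}$, $r\in R$; the fact that $\theta_t(r)$ is supported on terms $\tau\in{\Cal T}$ with $\tau\mid t$ and $\tau\neq t$ (so that $\alpha_t(r)$ is genuinely the ``leading coefficient'' of $t\star r$); and associativity and two-sided distributivity of $\star$ in ${\sf R}$, together with the identification of ${\sf R}$ with the left $R$-module $\Span_R({\Cal T})$, on which left multiplication by $R$ is scalar multiplication. First I would dispose of the purely multiplicative statements. Writing $t=Y_1^{d_1}\cdots Y_n^{d_n}$, the recursion $\alpha_{\tau Y_i}:=\alpha_\tau\alpha_i$ gives $\alpha_t=\alpha_1^{d_1}\circ\cdots\circ\alpha_n^{d_n}$, a composition that is unambiguous because $\alpha_i\alpha_j=\alpha_j\alpha_i$; since each $\alpha_i$ is an injective ring endomorphism (the one-variable properties~1--2 of Section~1.1, applied to $R_{i-1}[Y_i;\alpha_i,\delta_i]$), so is $\alpha_t$. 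This is items~1 and~2; item~6 follows at once ($\Jm(\alpha_t)$ is a subring and $\alpha_t$ an isomorphism onto it), and item~7 is the remark that a composition of automorphisms is an automorphism. For item~3 I would simply observe that condition~(a) asserts exactly that $\alpha_t$ maps $R\setminus\{0\}$ onto $R\setminus\{0\}$ --- once $\alpha_t(c)=d$, the equation $t\star c=d\,t+\theta_t(c)$ is automatic --- which, combined with injectivity (item~1) and with $\alpha_t$ being a ring endomorphism (item~2), is equivalent to $\alpha_t$ being a ring automorphism, i.e.\ to~(b).

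The heart of the proof is item~4. Evaluating the identity at $r=1$, from $t\star1=t$ one reads off $\alpha_t(1)=1$ and $\theta_t(1)=0$; evaluating at $r+r'$ and comparing the coefficient of $t$ with the rest gives additivity of both $\alpha_t$ and $\theta_t$. For the Leibniz rule I would expand $t\star(rr')=(t\star r)\star r'$ by the identity and distributivity, obtaining
$$\alpha_t(rr')\,t+\theta_t(rr')=\alpha_t(r)\alpha_t(r')\,t+\alpha_t(r)\,\theta_t(r')+\theta_t(r)\star r'.$$
Here the point is that neither $\alpha_t(r)\,\theta_t(r')$ nor $\theta_t(r)\star r'$ contains the term $t$: both are $R$-linear combinations of terms $\sigma$ with $\sigma\mid t$, $\sigma\neq t$ (for $\theta_t(r)\star r'$ because $\sigma\mid\tau$ and $\tau\mid t$, $\tau\neq t$ force $\sigma$ to be a proper divisor of $t$). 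Comparing the coefficient of $t$ therefore yields $\alpha_t(rr')=\alpha_t(r)\alpha_t(r')$, which also finishes item~2, and the complementary part yields exactly $\theta_t(rr')=\alpha_t(r)\theta_t(r')+\theta_t(r)\star r'$, that is, item~4.

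Finally, item~5 is a formal consequence over a skew field: $\alpha_t(r)\alpha_t(r^{-1})=\alpha_t(1)=1$ gives the first formula, and applying item~4 to $1=r\,r^{-1}$ together with $\theta_t(1)=0$ gives $\alpha_t(r)\,\theta_t(r^{-1})=-\theta_t(r)\star r^{-1}$, whence the second formula upon left multiplication by $\alpha_t(r)^{-1}\in R$ (which exists because $\alpha_t(r)\neq0$ by item~1). The only step where care is genuinely required is the separation of ``leading'' and ``tail'' contributions in item~4 --- one must be sure that the term $t$ does not sneak back in through $\theta_t(r)\star r'$ --- and this is precisely guaranteed by the support condition $\tau\mid t$, $\tau\neq t$ on $\theta_t$ together with the fact that divisors of proper divisors of $t$ are again proper divisors of $t$; everything else is a routine transfer of the one-variable statements along the composition $\alpha_t=\alpha_1^{d_1}\circ\cdots\circ\alpha_n^{d_n}$.
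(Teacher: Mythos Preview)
Your proof is correct. The paper gives no proof at all for this lemma --- it is stated with a terminal \qed, the authors treating it as an evident transfer of the single-variable properties~1--6 of Section~1.1 to the multivariate $\alpha_t,\theta_t$. Your argument is precisely the natural elaboration of that transfer: the purely multiplicative items follow from $\alpha_t=\alpha_1^{d_1}\circ\cdots\circ\alpha_n^{d_n}$ being a composition of commuting injective ring endomorphisms, and the derivation rule (item~4) is obtained by expanding $(t\star r)\star r'=t\star(rr')$ and separating the coefficient of $t$ from the tail, exactly as one does in the one-variable case. Your explicit check that $\theta_t(r)\star r'$ is supported on proper divisors of $t$ --- because proper divisors of proper divisors are again proper --- is the one non-automatic point, and you handle it correctly.
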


\subsection{Associated  graded Ore Extension}\label{c46S12b}

\begin{Definition}\label{itOrex}
A multivariate Ore extension
$$R[Y_1;\alpha_1,\delta_1][Y_2;\alpha_2,\delta_2]\cdots[Y_n;\alpha_n,\delta_n]$$
where each $\delta_i$ is zero, will be called a
{\em graded Ore extension}\index{Ore, E.!extension!graded} (or:
{\em Ore extension with zero derivations} \cite{P1,P2})
and will be denoted
$${\sf R} := R[Y_1;\alpha_1][Y_2;\alpha_2]\cdots[Y_n;\alpha_n].$$
\qed\end{Definition}

\begin{Lemma}  In a multivariate graded Ore extension,
\begin{itemize}
\item since it is an Ore algebra, the $\alpha$s commute,
\item and $t\star r = \alpha_t(r) t \Bigl[=:{\bf M}(t\star r)\Bigr] \Forall t\in{\Cal T} \And r\in R.$
\end{itemize}\end{Lemma}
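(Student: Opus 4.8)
The plan is to read off both assertions from the material already assembled in Sections~\ref{c46S12}--\ref{c46S12b}, with no real computation. The first bullet is a pure matter of definitions: by Definition~\ref{itOrex} a graded Ore extension \emph{is} a multivariate Ore extension (one in which every $\delta_i$ is the zero map), and a multivariate Ore extension \emph{is}, by its defining clauses, an Ore algebra; in particular it satisfies $\alpha_i\alpha_j=\alpha_j\alpha_i$ for $j>i$ (and trivially for $i=j$). So here there is nothing to prove beyond unwinding the hypothesis; I would also note in passing that this commutativity is precisely what makes $\alpha_\tau$, and likewise $\delta_\tau$, independent of the order in which the constituent maps are composed.

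For the second bullet I would invoke the closed expansion recorded just before Lemma~\ref{OreLe+}, namely
$$t\star r=\sum_{\substack{\tau\in{\Cal T}\\ \tau\mid t}}\binom{t}{\tau}\,\delta_{\frac t\tau}\alpha_\tau(r)\,\tau=\alpha_t(r)\,t+\theta_t(r),\qquad\theta_t(r)=\sum_{\substack{\tau\in{\Cal T}\\ \tau\mid t,\ \tau\neq t}}\binom{t}{\tau}\,\delta_{\frac t\tau}\alpha_\tau(r)\,\tau,$$
and show that $\theta_t\equiv 0$ in the graded case. Fix a summand, i.e.\ a term $\tau$ with $\tau\mid t$ and $\tau\neq t$; then $s:=\frac t\tau\neq 1$, say $s=Y_1^{b_1}\cdots Y_n^{b_n}$ with $b_i\geq 1$ for some $i$. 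By the iterative rule $\delta_{\tau'Y_k}:=\delta_{\tau'}\delta_k$ the operator $\delta_s$ is a multiple composition of $\delta_1,\dots,\delta_n$ in which the factor $\delta_i$ actually occurs; since $\delta_i=0$ in a graded Ore extension, and a composition of additive maps one of whose factors is the zero map is itself the zero map, we get $\delta_s=0$, hence that summand vanishes. Therefore $\theta_t(r)=0$ for every $t\in{\Cal T}$ and $r\in R$, so $t\star r=\alpha_t(r)\,t$, and the symbol ${\bf M}(t\star r)$ is just a name for this product. The degenerate case $t=1$, where $\alpha_1=\Id$ and the sum defining $\theta_1$ is empty, is consistent.

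The argument is short enough that there is no genuine obstacle; the one spot deserving a careful sentence is the passage ``$\delta_s$ involves $\delta_i$ as a factor, hence is zero'', which rests on the bookkeeping of the iterative definition of $\delta_\tau$ together with the elementary fact that zero is absorbing for composition regardless of its position in the string. If a fully self-contained argument avoiding the closed formula were preferred, an induction on $\deg(t)$ also works: peel off one variable using the graded relation $Y_n\star r=\alpha_n(r)Y_n$ and the commutativity $Y_j\star Y_i=Y_iY_j$ of Lemma~\ref{OreLePe}; but since the closed formula is already at hand, I would simply use it.
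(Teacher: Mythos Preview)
Your proposal is correct and matches the paper's treatment: the paper states this Lemma without proof, treating both bullets as immediate from Definition~\ref{itOrex} and the product formula $t\star r=\alpha_t(r)t+\theta_t(r)$ established just before Lemma~\ref{OreLe+}. Your explicit check that $\theta_t\equiv 0$ in the graded case (because every proper summand carries a factor $\delta_i=0$) is exactly the unwinding the reader is expected to perform.
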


\begin{Remark}\label{RemVal}
Note that, since multivariate Ore extensions coincide, as left $R$-modules, with  the classical polynomial rings $R[Y_1,\ldots,Y_n]$ and so
have the same $R$-basis, namely ${\Cal T}$, they can share with  the polynomial rings their 
standard  ${\Cal T}$-valuation  \cite{S,m1,A,MoSw} \cite[\S24.4,24.6]{SPES}. 
This justifies the definition below.
\end{Remark}

\begin{Definition}
Given an Ore extension
${\sf R} := R[Y_1;\alpha_1,\delta_1][Y_2;\alpha_2,\delta_2]\cdots[Y_n;\alpha_n,\delta_n]$
the corresponding graded Ore extension
$G({\sf R}) := R[Y_1;\alpha_1][Y_2;\alpha_2]\cdots[Y_n;\alpha_n]$
is called its {\em associated  graded Ore extension}\index{associated graded!Ore extension}.
\qed\end{Definition}

\begin{Example} \label{TamEx}\

\begin{enumerate}
\item The first non obvious example of Ore extension was proposed in 1948 by D.Ta\-ma\-ri \cite{T} in connection with the notion of  ``order of irregularity'' introduced by Ore in  \cite{O1}; it consists of the 
graded Ore extension.
$${\sf R}:=R[Y;\alpha],\, R={\Bbb Q}[x] \Where\alpha : R \to R  : x \mapsto x^2.$$
\item Such construction was generalized by Weispfenning  \cite{W} which introduced the rings
$${\sf R} :=R[Y;\alpha], \, R={\Bbb Q}[x] \Where\alpha : R \to R  : x \mapsto x^e, \, e_i\in{\Bbb N}$$
\item and extended by Pesch  \cite{P1} to his {\em iterated Ore extensions with power substitution}
$${\sf R}:=R[Y_1;\alpha_1][Y_2;\alpha_2]\cdots[Y_n;\alpha_n], \, R={\Bbb Q}[x]$$
where $\alpha_i : R \to R  : x \mapsto x^{e_i}, \, e_i\in{\Bbb N}$.
\item An Ore extension where $\alpha$ is invertible is discussed in  \cite{MS}:
$${\sf R}:=R[S;\alpha], R= {\Bbb Q}[D_1,D_2,D_3]$$
where
$$\alpha : R \to R  : f(D_1,D_2,D_3) \mapsto 
f(D_2+2D_1,D_3,-D_1)$$
whose inverse is
$$\alpha^{-1} : R \to R  : f(D_1,D_2,D_3) \mapsto 
f(-D_3,D_1+2D_3,D_2).$$
\qed\end{enumerate}\end{Example}

Note that, while as $R$-modules ${\sf R}$ and $G({\sf R})$ coincide both with the polynomial ring
${\Cal P} = R[Y_1,\ldots,Y_n]$, the three rings have, in general, different arithmetics; we will denote $\star$ the multiplication of ${\sf R}$ and $\ast$ those of $G({\sf R})$.

\begin{Example}\label{ExWeis2} The ring of Example~\ref{TamEx}, 1.
$${\sf R}:=R[Y;\alpha], \, R={\Bbb Q}[x] \Where\alpha : R \to R  : x \mapsto x^2$$
 is an Ore extension
 which is graded.

The map 
$$\delta : {\Bbb Q}[x] \to {\Bbb Q}[x]  : x^i \mapsto \sum_{h=i}^{2i-1} x^h$$
is an $\alpha$-derivation since
\begin{eqnarray*}
\delta(x^{i}\cdot x^j)&=& \sum_{h=i+j}^{2i+2j-1}  x^h 
\\ &=&
 \sum_{h=2i+j}^{2i+2j-1}  x^h +
\sum_{h=i+j}^{2i+j-1}  x^h 
\\ &=& 
x^{2i} \sum_{h=j}^{2j-1}  x^h + 
x^j \sum_{h=i}^{2i-1}  x^h
\\ &=&
 \alpha(x^{i})\delta(x^j)+\delta(x^{i})x^j;
 \end{eqnarray*}
thus ${\sf S}:=R[Y;\alpha,\delta]$ is an Ore extension of which ${\sf R}$ is the 
associated  graded Ore extension.\qed\end{Example}

\begin{Example}\label{GOe}
\alpheqn

Since in Buchberger-Zacharias Theory,  from an algorithmic point of view, the interest points   are associated graded rings and thus the r\^ole of derivates  is  irrelevant, we illustrate the results for the Ore extensions with the  zero-derivatives
$${\sf R} := R[Y_1;\alpha_1][Y_2;\alpha_2]\cdots[Y_n;\alpha_n], R={\Bbb Z}[x],$$
with $\alpha_i(x) := c_i x^{e_i}, c_i\in{\Bbb Z}\setminus\{0\}, e_i\in{\Bbb N}\setminus\{0\}$.

If we denote $\gamma$ the map
$$\gamma : {\Bbb N} \times {\Bbb N}\setminus\{0\} \to  {\Bbb N},
(a,e)\mapsto \sum_{i=0}^{a-1} e^i = \frac{1-e^a}{1-e}$$
 where the last equality holds for $e \neq 1,$ 
we have $Y_i^a\ast x^b=c_i^{b\gamma(a,e_i)}x^{e_i^ab}Y_i^a$.

Note that 
\begin{equation}\label{Eq1}
\gamma(b,e)+e^b\gamma(a,e)=
 \sum_{i=0}^{b-1} e^i+ \sum_{i=0}^{a-1} e^{b+i} =  \sum_{i=0}^{a+b-1} e^i=\gamma(a+b,e).
\end{equation}

Since 
$\alpha_j(\alpha_i(x))=c_i\alpha_j(x^{e_i})=c_ic_j^{e_i}x^{e_ie_j}$
and 
$\alpha_i(\alpha_j(x))=c_j\alpha_i(x^{e_j})=c_jc_i^{e_j}x^{e_ie_j}$,
then ${\sf R}$ is a graded Ore extension if and only if
$$c_ic_j^{e_i}x^{e_ie_j} = \alpha_j(\alpha_i(x)) = \alpha_i(\alpha_j(x))=c_j\alpha_i(x^{e_j})=c_jc_i^{e_j}x^{e_ie_j}$$
{\em id est}
\begin{equation}\label{eqb}
c_j^{e_i-1}=c_i^{e_j-1}. 
\end{equation}
We thus have $\binom{n}{2}$ relations among the $n$ coefficients $c_i$. In particular we need to partition the indices as
 $$\{1,\ldots,n\}= E\sqcup O\sqcup S,  E=\{i: 2\mid e_i\}, O=\{i: 2\nmid e_i>1\},S=\{i: e_i=1\}.$$ 

If $I:=O\sqcup E=\emptyset$ then each such equations are the trivial equality $1=1$ and thus all $c_i$ are free. The situation is completely different when $I:=O\sqcup E\neq\emptyset$;
in fact, 
\begin{itemize}
\item for $i\in S$ necessarily $c_i=\pm 1$;
\item if a prime $p$ divides at least a $c_j, j\in I$, then  it divides each $c_i, i\in I$.
\end{itemize}

As regards the sign of $c_i$ we can say that
\begin{itemize}
\item if $E\neq\emptyset$ then 
\begin{itemize}
\item $c_i$ is positive for each $i\in S \cup O$, 
\item the sign of $c_i, i \in E$, is undetermined but all the $c_i, i \in E$, have the same sign.
\end{itemize}

\item if $E=\emptyset$ then the sign of $c_i, i \in  S \cup O$ is undetermined.
\end{itemize}

For instance
\begin{itemize}
\item for $e_1=e_4=1, e_2=5, e_3=3$ we have $S=\{1,4\},O=\{2,3\},E=\emptyset$ and
$$c_1^4=c_2^0,c_1^2=c_3^0,c_1^0=c_4^0,c_2^2=c_3^4,c_2^0=c_4^4,c_3^0=c_4^2;$$
whence $c_1=\pm1, c_4=\pm1, c_2=\pm c_3^2$;
\item for $e_1=e_4=1, e_2=2, e_3=3$ we have $S=\{1,4\},O=\{3\},E=\{2\},$ and
$$c_1=c_2^0,c_1^2=c_3^0,c_1^0=c_4^0,c_2^2=c_3,c_2^0=c_4,c_3^0=c_4^2;$$
whence $c_1=c_4=1, c_3=c_2^2>0$;
\item for $e_1=1,\, e_2=2, e_3=3$, $S=\{1\}$ $E=\{2\}$, $O=\{3\}$. Suppose $c_2=6$, so both the primes $2$ and $3$ divide $c_2$. From $c_1=c_2^0$, $c_1^2=c_3^0$, $c_2^2=c_3$ we get $c_1=1$ and $c_3=36$. We  notice that $2 \mid c_3$  and $3 \mid c_3$, but neither $2$ nor $3$ divide $c_1$;
\item for $e_1=e_4=1, e_2=4, e_3=8$ we have $S=\{1,4\},E=\{2,3\},O=\emptyset$ and
$$c_1^3=c_2^0,c_1^7=c_3^0,c_1^0=c_4^0,c_2^7=c_3^3,c_2^0=c_4^3,c_3^0=c_4^7.$$
whence $c_1=c_4=1, c_2=\chi^3,c_3=\chi^7,c_2c_3>0$.
\end{itemize}

As regards the values $\vert c_i \vert, 1\leq i \leq n$, 
setting 
$$\rho:=\sum_{j=1}^n (e_j-1)=\sum_{j\in I} (e_j-1), \chi:= \sqrt[\rho]{\prod_{j=1}^n \vert c_j \vert},$$
 we have 
\begin{equation}\label{eqc}
\vert c_j\vert = \chi^{e_j-1}\Forall j\in \{1,....,n\}. 
\end{equation}

In fact, since if a prime $p$ divides at least a $c_j, j\in I$, then  it divides each $c_i, i\in I$,
we can express each $\vert c_i \vert, i\in I$, as
$\vert c_i \vert=p_{1}^{a_{i,1}}\cdots p_{h}^{a_{i,h}}$
where $p_{1},\cdots, p_{h}$ are the prime factors
of the
squarefree associate
$\sqrt{\chi} = p_{1}\cdots p_{h}$ of $\chi$.

We have
$$\vert c_i \vert^{e_j-1}= \vert c_j \vert^{e_i-1} \then p^{a_i(e_j-1)}=p^{a_j(e_i-1)} \then a_i(e_j-1)=a_j(e_i-1)$$
whence $a_i=a_j\iff e_i=e_j$ and $a_i>a_j\iff e_j<e_i$.

Thus the $c_i$s with minimal $e_i$ minimalize also all ${a_{i,l}}, 1\leq l\leq h$.
 
 We moreover have $a_{j,l}=\frac{a_{i,i}(e_j-1)}{(e_i-1)}, 1\leq l\leq h$. 

Therefore 
$\prod_{j=1}^n \vert c_j\vert = \prod_{j\in I} \vert c_j\vert =\prod_{l=1}^h p_l^{a_{j,l}} =  \prod_{l=1}^h
p_{l}^{\frac{a_{i,l}\sum_{j=1}^n(e_j-1)}{e_i-1}}  
=  \prod_{l=1}^h
p_{l}^{\frac{a_{i,l}\rho}{e_i-1}}
$ 
 whence
 $$\chi:= \sqrt[\rho]{\prod_{j=1}^n \vert c_j \vert} = \prod_{l=1}^h
p_{l}^{\frac{a_{i,l}}{e_i-1}} =
\prod_{l=1}^h
p_{l}^{\frac{a_{j,l}}{e_j-1}}$$
and  (\ref{eqc}).

The formula (\ref{eqc}) allows to reformulate (\ref{eqb}) as
\begin{equation}\label{eqd}
\vert c_j\vert^{e_i-1}  =\vert c_i\vert^{e_j-1}  =  \chi^{(e_i-1)(e_j-1)}.
\end{equation}

Note that we have
\begin{eqnarray*} 
\chi^{(e_i^{a_i}-1)(e_j^{a_j}-1)} =
 \chi^{(e_i-1)\gamma(a_i,e_i)(e_j-1)\gamma(a_j,e_j)} 
&=&
\vert c_i\vert^{(e_j-1)\gamma(a_i,e_i)\gamma(a_j,e_j)}=\vert c_i\vert^{(e_j^{a_j}-1)\gamma(a_i,e_i)} 
\\ &=&
\vert c_j\vert^{(e_i-1)\gamma(a_i,e_i)\gamma(a_j,e_j)} =\vert c_j\vert^{(e_i^{a_i}-1)\gamma(a_j,e_j)}
\end{eqnarray*}
and
\begin{eqnarray}\label{eqe}
\vert c_i\vert^{\gamma(a_i,e_i)}\vert c_j\vert^{e_i^{a_i}\gamma(a_j,e_j)} =
\vert c_i\vert^{e_j^{a_j}\gamma(a_i,e_i)}\vert c_j\vert^{\gamma(a_j,e_j)}  =   \chi^{e_i^{a_i} e_j^{a_j}  -1}.
\end{eqnarray}

\reseteqn\alpheqn

To avoid cumbersome and useless case-to-case studies,
let us simply assume $c_i>0$ for each $i$;
under this restricted assumption, a series of inductive arguments allow to deduce 
\begin{equation}\label{eq2a}
 Y_i \ast x^\alpha = c_i^\alpha x^{\alpha e_i} Y_i
\end{equation}

$$Y_i \ast x^\alpha = (Y_i \ast x^{\alpha -1})x= c_i^{\alpha -1} x^{(\alpha-1) e_i} (Y_i \ast x)=
c_i^{\alpha -1} x^{(\alpha-1) e_i} c_i x^{e_i}Y_i = c_i^\alpha x^{\alpha e_i} Y_i.$$
\begin{equation}\label{eq2b}
 Y_j^{a_j} \ast x^{b_0} = c_j^{b_0 \gamma(a_j, e_j)} x^{b_0 e_j^{a_j}} Y_j^{a_j}
\end{equation}

\begin{eqnarray*}
Y_j^{a_j} \ast x^{b_0} = Y_j\left(Y_j^{a_j-1} \ast x^{b_0}\right)
&=& Y_j\ast \left(c_j^{b_0 \gamma(a_j-1, e_j)} x^{b_0 e_j^{a_j-1}} Y_j^{a_j-1}\right)
 \\ &=& c_j^{b_0 \gamma(a_j-1, e_j)}\left(Y_j\ast x^{b_0 e_j^{{a_j}-1}}\right) Y_j^{a_j-1} 
\\ &=& c_j^{b_0 \gamma(a_j-1, e_j)}\left(c_j^{b_0 e_j^{a_j-1}}  x^{\left(b_0 e_j^{a_j-1}\right) e_j}  \right)Y_j^{a_j}
 \\ &=&
  c_j^{b_0 \gamma(a_j, e_j)} x^{b_0 e_j^{a_j}} Y_j^{a_j}.
  \end{eqnarray*}
\begin{equation}\label{eq2c}
c_j^{b_0 \gamma(a_j,e_j)} c_i^{b_0 e_j^{a_j}\gamma(a_i,e_i)} = \chi^{b_0(e_j^{a_j}e_i^{a_i}-1)}
 \end{equation}  
 Substituing $c_j = \chi^{e_j-1}$ and $c_i= \chi^{e_i-1}$ we get
\begin{eqnarray*}
c_j^{b_0 \gamma(a_j,e_j)} c_i^{b_0 e_j^{a_j}\gamma(a_i,e_i)} 
= \chi^{\left(e_j-1\right)b_0 \gamma(a_j,e_j)} \chi^{\left(e_i-1\right)b_0 e_j^{a_j}\gamma(a_i,e_i)} &=& \chi^{\left(e_j-1\right)b_0  \frac{1-e_j^{a_j}}{1-e_j}+\left(e_i-1\right)b_0 e_j^{a_j}\frac{1-e_i^{a_i}}{1-e_i}   }
\\ &=& \chi^{-b_0\left(1-e_j^{a_j}\right)-b_0 e_j^{a_j}\left(1-e_i^{a_i}\right)}
\\ &=& \chi^{b_0 e_j^{a_j}-b_0 + b_0 e_j^{a_j} e_i^{a_i}-b_0 e_j^{a_j}}
\\ &=&  \chi^{b_0(e_j^{a_j}e_i^{a_i}-1)}.
 \end{eqnarray*}
\begin{equation}\label{eq2d}
 Y_i^{a_i} Y_j^{a_j} \ast x^{b_0} = c_j^{b_0 \gamma(a_j, e_j)}c_i^{b_0 e_j^{a_j} \gamma(a_i, e_i)} x^{b_0 e_j^{a_j}e_i^{a_i}}Y_i^{a_i}Y_j^{a_j}= \chi^{b_0(e_j^{a_j}e_i^{a_i}-1)}x^{b_0 e_j^{a_j}e_i^{a_i}}Y_i^{a_i}Y_j^{a_j}
\end{equation}
\begin{eqnarray*}
 Y_i^{a_i} \left(Y_j^{a_j} \ast x^{b_0}\right) = 
 Y_i^{a_i}\ast \left(c_j^{b_0 \gamma(a_j,e_j)}x^{b_0 e_j^{a_j}}Y_j^{a_j} \right)
&=& c_j^{b_0 \gamma(a_j,e_j)} \left(Y_i^{a_i} \ast x^{b_0 e_j^{a_j}}\right) Y_j^{a_j}
\\ &=& c_j^{b_0 \gamma(a_j,e_j)} \left( c_i^{b_0 e_j^{a_j}\gamma(a_i,e_i)} x^{b_0 e_j^{a_j}e_i^{a_i}} Y_i^{a_i}\right) Y_j^{a_j}
\\ &=& c_j^{b_0 \gamma(a_j,e_j)} c_i^{b_0 e_j^{a_j}\gamma(a_i,e_i)} x^{b_0 e_j^{a_j}e_i^{a_i}} Y_i^{a_i} Y_j^{a_j}.
 \end{eqnarray*}
\begin{equation}\label{eq2e}
 (a x^{a_0} Y_1^{a_1} \cdots Y_n^{a_n})\ast (b x^{b_0} Y_1^{b_1} \cdots Y_n^{b_n})  = 
 ab \chi^{b_0\left(\left(\prod_{i=1}^n e_i^{a_i}\right)-1\right)} x^{a_0+ b_0 \prod_{i=1}^n e_i^{a_i} }     Y_1^{a_1+b_1}\cdots Y_n^{a_n+b_n}
\end{equation}
\begin{eqnarray*}
&& (a x^{a_0} Y_1^{a_1} \cdots Y_n^{a_n})\ast (b x^{b_0} Y_1^{b_1} \cdots Y_n^{b_n})  
\\ &=&  a x^{a_0} Y_1^{a_1}(Y_2^{a_2} \cdots Y_n^{a_n}\ast b x^{b_0}) Y_1^{b_1} \cdots Y_n^{b_n} 
\\ &=& a x^{a_0} Y_1^{a_1}\left(b \chi^{b_0\left(\left(\prod_{i=2}^n e_i^{a_i}\right)-1\right)}x^{b_0 \prod_{i=2}^n e_i^{a_i}}\right) Y_1^{b_1}Y_2^{a_2+b_2}\cdots Y_n^{a_n+b_n} 
\\ &=& ab \chi^{b_0\left(\left(\prod_{i=2}^n e_i^{a_i}\right)-1\right)} x^{a_0}\left(Y_1^{a_1}\ast x^{b_0 \prod_{i=2}^n e_i^{a_i}}\right) Y_1^{b_1}Y_2^{a_2+b_2}\cdots Y_n^{a_n+b_n}
\\ &=& ab \chi^{b_0\left(\left(\prod_{i=2}^n e_i^{a_i}\right)-1\right)} x^{a_0}\left(c_1^{b_0 \prod_{i=2}^n e_i^{a_i} \gamma(a_1,e_1)} x^{b_0 \left(\prod_{i=2}^n e_i^{a_i} \right)e_1^{a_1}} \right) Y_1^{a_1+b_1}Y_2^{a_2+b_2}\cdots Y_n^{a_n+b_n} 
\\ &=& 
ab \chi^{b_0\left(\left(\prod_{i=2}^n e_i^{a_i}\right)-1\right)} x^{a_0}\left(\chi^{(e_1-1)b_0 \prod_{i=2}^n e_i^{a_i} \frac{1-e_1^{a_1}}{1-e_1}} x^{b_0 \prod_{i=1}^n e_i^{a_i} }\right ) Y_1^{a_1+b_1}\cdots Y_n^{a_n+b_n}
\\ &=& 
ab \chi^{b_0\left(\left(\prod_{i=1}^n e_i^{a_i}\right)-1\right)} x^{a_0+ b_0 \prod_{i=1}^n e_i^{a_i} }     Y_1^{a_1+b_1}\cdots Y_n^{a_n+b_n}.
 \end{eqnarray*}
Note that associativity is verified by
\begin{eqnarray*}
&& \left[\left(a x^{a_0} Y_1^{a_1} \cdots Y_n^{a_n}\right)\ast \left(b x^{b_0} Y_1^{b_1} \cdots Y_n^{b_n}\right) \right]\ast 
 \left(d x^{d_0} Y_1^{d_1} \cdots Y_n^{d_n}\right)
\\ &=& 
\left[ab \chi^{b_0 \left(\left(\prod_{i=1}^n e_i^{a_i}\right) -1 \right)} x^{a_0+b_0 \prod_{i=1}^n e_i^{a_i}  } Y_1^{a_1+b_1}\cdots Y_n^{a_n+b_n}\right] \ast 
 \left(d x^{d_0} Y_1^{d_1} \cdots Y_n^{d_n}\right) 
\\ &=& 
abd \chi^{b_0\left(\left(\prod_{i=1}^n e_i^{a_i}\right) -1 \right) +d_0\left(\left(\prod_{i=1}^n e_i^{a_i+b_i}\right) -1 \right) }  x^{a_0+b_0 \prod_{i=1}^n e_i^{a_i} +d_0 \prod_{i=1}^n e_i^{a_i+b_i}} Y_1^{a_1+b_1+d_1}\cdots Y_n^{a_n+b_n+d_n}
\end{eqnarray*}
and
\begin{eqnarray*}
&& \left(a x^{a_0} Y_1^{a_1} \cdots Y_n^{a_n}\right)\ast \left[\left(b x^{b_0} Y_1^{b_1} \cdots Y_n^{b_n}\right) \ast 
 \left(d x^{d_0} Y_1^{d_1} \cdots Y_n^{d_n}\right) \right]
\\ &=& 
\left(a x^{a_0} Y_1^{a_1} \cdots Y_n^{a_n}\right)\ast \left[bd \chi^{d_0
\left(\left(\prod_{i=1}^n e_i^{b_i}\right) -1 \right)}x^{b_0+d_0 \prod_{i=1}^n e_i^{b_i}} Y_1^{b_1+d_1}\cdots Y_n^{b_n+d_n}\right ]
\\ &=& 
abd\chi^{d_0
\left(\left(\prod_{i=1}^n e_i^{b_i}\right) -1 \right)
+\left(b_0+d_0 \left(\prod_{i=1}^n e_i^{b_i}\right)\right)\left(\left(\prod_{i=1}^ne_i^{a_i}\right) -1\right)   } 
x^{a_0+\left(b_0+d_0 \prod_{i=1}^n e_i^{b_i}\right)\prod_{i=1}^n e_i^{a_i}} 
\prod_{i=1}^n Y_i^{a_i+b_i+d_i} 
\\ &=& 
abd \chi^{b_0\left(\left(\prod_{i=1}^n e_i^{a_i}\right) -1 \right) +d_0\left(\left(\prod_{i=1}^n e_i^{a_i+b_i}\right) -1 \right) }  x^{a_0+b_0 \prod_{i=1}^n e_i^{a_i} +d_0 \prod_{i=1}^n e_i^{a_i+b_i}} Y_1^{a_1+b_1+d_1}\cdots Y_n^{a_n+b_n+d_n}
\end{eqnarray*}
\reseteqn\qed\end{Example}
 
\section{Buchberger Theory}\label{c46S12a}

\subsection{Term ordering}

For each $m\in{\Bbb N}$,  the free  ${\sf R}$-module ${\sf R}^m$ -- 
 the canonical basis of which will be denoted  $\{{\bf e}_1,\ldots,{\bf e}_m\}$ -- 
is an $(R,R)$-bimodule  with basis the set of the  {\em terms}
$${\Cal T}^{(m)} := \{t{\bf e}_i : t\in{\Cal T}, 1\leq i\leq m\}.$$

If we impose on ${\Cal T}^{(m)}$ a total ordering $<$, then 
each $f\in {\sf R}^m$  has a unique representation as 
an ordered linear combination of terms $t\in{\Cal T}^{(m)}$ with coefficients in $R$:
$$f = \sum_{i=1}^s c(f,t_i) t_i : c(f,t_i) \in R\setminus\{0\}, t_i \in {\Cal T}^{(m)}, 
t_1 > \cdots > t_s.$$
The {\em support}\index{support} of $f$ is the set $\supp(f)
:= \{t \, \vert \, c(f,t)\neq 0\}.$ 

W.r.t. $<$ we denote ${\bf T}(f) := t_1$
the {\em maximal term}\index{maximal!term} of $f$, $\lc(f)
:= c(f,t_1)$ its {\em leading cofficient}\index{leading!cofficient} and ${\bf M}(f) := c(f,t_1)t_1$ 
its {\em maximal monomial}\index{maximal!monomial}.

If we denote, following  \cite{R2,R3}, 
${\sf M}({\sf R}^m) := \{ct{\bf e}_i \, \vert \, c\in R\setminus\{0\}, t\in{\Cal T}, 1\leq i\leq m\},$
for each $f\in{\sf R}^m\setminus\{0\}$ the unique finite representation above
can be reformulated 
$$f = \sum_{\tau\in\supp(f)} m_\tau, \, m_\tau = c(f,\tau)\tau$$
as a sum of elements of the {\em monomial set} ${\sf M}({\sf R}^m).$

While a multivariate Ore extension does not satisfiy commutativity between terms and coefficients,
$$t\star r = r t \Forall r\in R\setminus\{0\}, t\in{\Cal T},$$
it however satisfies
\begin{equation}\label{c46Eq2}
{\bf M}(t\star r) = \alpha_t(r)t, \Forall r\in R\setminus\{0\}, t\in{\Cal T}^{(m)};
\end{equation}
moreover, while ${\sf R}$ is not a monoid ring under  the multiplication $\star$, so that in particular we cannot claim $\tau\star \omega\in{\Cal T}$ for $\tau,\omega\in{\Cal T}$, however 
$\tau\star \omega$ satisfies 
\begin{equation}\label{c46Eq3}
{\bf T}(\tau\star \omega)= \tau\circ\omega
\end{equation}
where we have denoted
$\circ$ the (commutative) multiplication of ${\Cal T}$;
similarly, for $n\in{\sf M}({\sf R}^m)$ 
and $m_l,m_r\in{\sf M}({\sf R})=\{ct : c\in R\setminus\{0\}, t\in{\Cal T}\}$ we have
${\sf M}(m_l\star m\star m_r)=m_l\ast m\ast m_r$.

In conclusion
w.r.t.  each term  ordering $\prec$ on ${\Cal T}$ and each 
$\prec$-compatible
term  ordering $<$ on  ${\Cal T}^{(m)}$, {\em id est} any well-ordering on ${\Cal T}^{(m)}$ which satisfies
$$\omega_1 \prec\omega_2 \then
\omega_1t< \omega_2 t,
t\omega_1< t \omega_2
\Forall t\in{\Cal T}^{(m)},\omega_1,\omega_2\in{\Cal T},
$$
it holds, for each $ l,r\in{\sf R} \And f\in{\sf R}^{(m)},$
\begin{equation} \label{Eq3}
{\bf T}(l\star f\star r)= {\bf T}(l)\circ{\bf T}(f)\circ{\bf T}(r)
\end{equation}
and
\begin{equation} \label{Eq4}
{\bf M}(l\star   f\star r)= {\bf M}({\bf M}(l)\star {\bf M}(f)\star {\bf M}(r)) =
{\bf M}(l)\ast {\bf M}(f)\ast {\bf M}(r).
\end{equation}

As a consequence we trivially have
\begin{Corollary}\label{c46CoX2X2X2} 
If $\prec$ is a term  ordering on ${\Cal T}$ 
and $<$ is a $\prec$-compatible
term  ordering on ${\Cal T}^{(m)}$,
then, for each $l,r\in{\sf R}$ and $f\in{\sf R}^{(m)}$,
\begin{enumerate}
\item ${\bf M}(l\star  f)= {\bf M}({\bf M}(l)\star  {\bf M}(f))={\bf M}(l)\ast {\bf M}(f)$;
\item ${\bf M}(f\star  r)=  {\bf M}({\bf M}(f)\star  {\bf M}(r))={\bf M}(f)\ast {\bf M}(r)$;
\item ${\bf M}(l\star  f\star  r)=  {\bf M}({\bf M}(l)\star  {\bf M}(f)\star  {\bf M}(r))={\bf M}(l)\ast {\bf M}(f)\ast {\bf M}(r)$.
\item ${\bf T}(l\star    f)= {\bf T}(l)\circ{\bf T}(f)$;
\item ${\bf T}(f\star  r)= {\bf T}(f)\circ{\bf T}(r)$;
\item ${\bf T}(l\star  f\star  r)= {\bf T}(l)\circ{\bf T}(f)\circ{\bf T}(r)$.
\end{enumerate}
\end{Corollary}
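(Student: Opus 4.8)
The plan is to derive Corollary~\ref{c46CoX2X2X2} directly from equations (\ref{Eq3}) and (\ref{Eq4}), which have already been established for the most general case $l,r\in{\sf R}$, $f\in{\sf R}^{(m)}$. The six assertions of the Corollary are precisely the statements (\ref{Eq3}) and (\ref{Eq4}) either applied verbatim or specialized by taking one of $l$, $r$ to be the unit $1\in{\sf R}$; hence there is essentially nothing to prove beyond unwinding notation. Concretely, items~3 and~6 are literally (\ref{Eq4}) and (\ref{Eq3}); items~1,2,4,5 follow by setting $r=1$ (for the ``left'' versions) or $l=1$ (for the ``right'' versions) and observing that ${\bf T}(1)=1\in{\Cal T}$ is the neutral element for $\circ$, while ${\bf M}(1)=1$ acts as the identity for the partial operation $\ast$ on ${\sf M}({\sf R}^m)$, so that $1\ast{\bf M}(f)={\bf M}(f)$ and similarly on the right.

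First I would record the reduction: since $1\star f\star r=f\star r$ and $l\star f\star 1=l\star f$ in ${\sf R}^{(m)}$, substituting these choices into (\ref{Eq3}) and (\ref{Eq4}) immediately yields items 1,2,4,5 once we know $\circ$ and $\ast$ treat $1$ as an identity. For $\circ$ this is the definition of the commutative monoid ${\Cal T}$; for $\ast$ one checks from (\ref{c46Eq2}) and the surrounding discussion that $1\star m=m$ for $m\in{\sf M}({\sf R}^m)$, since $\alpha_1=\Id$ and $\theta_1=0$ (the empty-divisor summand is vacuous). Then I would note that the two-step identities inside each item, e.g. ${\bf M}(l\star f)={\bf M}({\bf M}(l)\star{\bf M}(f))$, are exactly the two displayed equalities in (\ref{Eq4}) read in this specialization, so no separate argument is needed.

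The only point deserving care — and the one I would flag as the ``main obstacle,'' though it is minor — is the compatibility bookkeeping: one must be sure that the hypotheses of (\ref{Eq3})–(\ref{Eq4}), namely that $\prec$ is a term ordering on ${\Cal T}$ and $<$ a $\prec$-compatible term ordering on ${\Cal T}^{(m)}$, are the same hypotheses assumed in the Corollary, so that the specialization is legitimate. They are, by inspection. A secondary subtlety is that $\ast$ is only a \emph{partial} operation on ${\sf M}({\sf R}^m)$ (a product of two terms need not be a single term, cf.\ (\ref{c46Eq3})), so the expressions ${\bf M}(l)\ast{\bf M}(f)\ast{\bf M}(r)$ must be parsed as products of monomials of ${\sf M}({\sf R})$ with a monomial of ${\sf M}({\sf R}^m)$, for which the relevant products \emph{are} defined and single-termed by the remark ${\sf M}(m_l\star m\star m_r)=m_l\ast m\ast m_r$ preceding (\ref{Eq3}); once this parsing is fixed the equalities are immediate. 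Hence the proof is a one-line citation of (\ref{Eq3}) and (\ref{Eq4}) together with the identity role of $1$, and I would present it as such.
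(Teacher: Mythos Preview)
Your proposal is correct and matches the paper's approach exactly: the paper introduces the Corollary with the phrase ``As a consequence we trivially have'' and gives no further proof, treating it as an immediate restatement and specialization of (\ref{Eq3}) and (\ref{Eq4}). Your unpacking of the specializations $l=1$, $r=1$ and the identity role of $1$ for $\circ$ and $\ast$ is precisely the triviality the paper leaves implicit.
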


\subsection{Gr\"obner Bases}

In function of a term  ordering $<$ on ${\Cal T}^{(m)}$ which is 
compatible with a term  ordering on ${\Cal T}$ which, with a slight abuse of notation, we still denote $<$,  
we denote, for any set $F \subset {\sf R}^m$,
${\Bbb I}_L(F), {\Bbb I}_R(F),{\Bbb I}_2(F)$ the left (resp. right, bilateral) module generated by $F$,
and
\begin{itemize}
\item ${\bf T}\{F\} := \{{\bf T}(f) : f\in F\}\subset {\Cal T}^{(m)};$
\item ${\bf M}\{F\} := \{{\bf M}(f) : f\in F\}\subset {\sf M}({\sf R}^m).$
\item ${\bf T}_L(F) := {\Bbb I}_L({\bf T}\{F\}) =
 \{{\bf T}(\lambda\star f) : \lambda\in {\Cal T}, f\in F\} =
 \{\lambda\circ{\bf T}(f) : \lambda\in {\Cal T}, f\in F\}
 \subset {\Cal T}^{(m)};$  
\item ${\bf M}_L(F) := \{{\bf M}(a\lambda\star f) : a\in R\setminus\{0\},\lambda\in {\Cal T}, f\in F\}
= \{m\ast {\bf M}(f) : m\in {\sf M}({\sf R}), f\in F\} \subset  {\sf M}({\sf R}^m)$;  
\item ${\bf T}_R(F) := {\Bbb I}_R({\bf T}\{F\}) =
\{{\bf T}(f\star\rho)  : \rho\in {\Cal T}, f\in F\}
= 
\{{\bf T}(f)\circ\rho  : \rho\in {\Cal T}, f\in F\}\subset {\Cal T}^{(m)};$ 
\item ${\bf M}_R(F)  := \{{\bf M}(f\star b\rho )  : b\in R\setminus\{0\},\rho\in {\Cal T}, f\in F\}
= \{{\bf M}(f)\ast n   : n\in {\sf M}({\sf R}), f\in F\}
\subset  {\sf M}({\sf R}^m);$
\item ${\bf T}_2(F) := {\Bbb I}_2({\bf T}\{F\}) = 
\{{\bf T}(\lambda\star f\star\rho)  : \lambda,\rho\in {\Cal T}, f\in F\}= 
\{\lambda\circ {\bf T}(f)\circ\rho  : \lambda,\rho\in {\Cal T}, f\in F\}\subset {\Cal T}^{(m)};$ 
\item ${\bf M}_2(F) := \{{\bf M}(a\lambda\star f\star b\rho)   : a,b\in
R\setminus\{0\},\lambda,\rho\in {\Cal T}, f\in F\}
= \{m\ast {\bf M}(f)\ast n : m,n\in {\sf M}({\sf R}), f\in F\} 
\subset  {\sf M}({\sf R}^m).$
\end{itemize}

Following an intutition by Weispfenning \cite{W} we further denote
\begin{itemize}
\item  ${\Bbb I}_W(F)$ the {\em restricted}
module generated by $F$,
$${\Bbb I}_W(F) : = \Span_{R}(af\star \rho   : a\in
R\setminus\{0\}, \rho\in {\Cal T}, f\in F),$$
\item ${\bf T}_W(F) := {\bf T}_R(F),$ 
\item ${\bf M}_W(F) := \{{\bf M}(a f\star \rho)  : a\in
R\setminus\{0\},\rho\in {\Cal T}, f\in F\}
= \{a {\bf M}(f)\ast \rho : a\in
R\setminus\{0\},\rho\in {\Cal T}, f\in F\}
\subset  {\sf M}({\sf R}^m).$
\end{itemize}

If $R$ is a skew field, for each set $F \subset {\sf R}^m$ we have
\begin{equation}\label{c46Eq1}
\begin{array}{rclcl}
{\bf M}_L(F) &=& {\bf M}\{{\Bbb I}_L({\bf M}\{F\})\} &=& {\Bbb I}_L({\bf M}\{F\})\cap{\sf M}({\sf R}^m),\\
{\bf M}_R(F) &=& {\bf M}\{{\Bbb I}_R({\bf M}\{F\})\} &=& {\Bbb I}_R({\bf M}\{F\})\cap{\sf M}({\sf R}^m),\\
{\bf M}_2(F) &=& {\bf M}\{{\Bbb I}_2({\bf M}\{F\})\} &=& {\Bbb I}_2({\bf M}\{F\})\cap{\sf M}({\sf R}^m),\\
{\bf M}_W(F) &=& {\bf M}\{{\Bbb I}_W({\bf M}\{F\})\} &=& {\Bbb I}_W({\bf M}\{F\})\cap{\sf M}({\sf R}^m).
\end{array}
\end{equation}

\begin{Notation} From now on, in order  to avoid cumbersome  notation and boring repetitions,
we will drop the subscripts when it will be clear of which kind of module (left, right, bilateral, restricted) we
are discussing. 
As a consequence, the four statements of (\ref{c46Eq1}) will be summarized as
$${\bf M}(F) = {\bf M}\{{\Bbb I}({\bf M}\{F\})\} = {\Bbb I}({\bf M}\{F\})\cap{\sf M}({\sf R}^m).$$

Similarly, we formulate a (left,right,bilateral,restricted) definition simply for the bilateral case leaving to the reader the task to remove the irrelevant factors.

For instance condition (ii) below is stated for the bilateral case; it would be reformulated:
\begin{itemize}
\item[left case] for each $f\in{\Bbb I}(F)$ there are $g\in F$, $a\in
R\setminus\{0\}, \lambda\in {\Cal T}$ such that
${\bf M}(f) =  a\lambda\ast {\bf M}(g)
=  {\bf M}(a\lambda\star g),$
\item[right case] for each $f\in{\Bbb I}(F)$ there are $g\in F$, $b\in
R\setminus\{0\}, \rho\in {\Cal T}$ such that
${\bf M}(f) =  {\bf M}(g)\ast   b\rho 
=  {\bf M}(g\star  b\rho),$
\item[restricted case] for each $f\in{\Bbb I}(F)$ there are $g\in F$, $a\in
R\setminus\{0\}, \rho\in {\Cal T}$ such that
${\bf M}(f) =  a{\bf M}(g)\ast   \rho 
=  {\bf M}(a g\star  \rho),$
\end{itemize}
\qed\end{Notation}

Thus, if $R$ is a skew field, the following conditions are equivalent and can be naturally chosen as definition of Gr\"obner bases: 
\begin{enumerate}
\item ${\bf M}({\Bbb I}(F)) = {\bf M}\{{\Bbb I}(F)\} =  {\bf M}\{{\Bbb I}({\bf M}\{F\})\} = {\Bbb I}({\bf M}\{F\})\cap{\sf M}({\sf R}^m)$,
\item for each $f\in{\Bbb I}(F)$ there is $g\in F$ such that ${\bf M}(g) \mid {\bf M}(f).$
\end{enumerate}

But in general between these statements there is just the implication $(2) \then (1)$.

Thus  \cite{P}, there are two alternative natural definitions for the concept of Gr\"obner bases:
 \begin{itemize}
 \item a stronger one which satisfies 
the following equivalent conditions:
\begin{enumerate}
\renewcommand\theenumi{{\rm (\roman{enumi})}}
\item for each $f\in{\Bbb I}(F)$ there is $g\in F$ such that
${\bf M}(g) \mid {\bf M}(f),$ 
\item for each $f\in{\Bbb I}(F)$ there are $g\in F$, $a,b\in
R\setminus\{0\}, \lambda,\rho\in {\Cal T}$ such that
${\bf M}(f) =  a\lambda\ast {\bf M}(g)\ast   b\rho 
=  {\bf M}(a\lambda\star g\star  b\rho),$
\item ${\bf M}({\Bbb I}(F)) = {\bf M}\{{\Bbb I}(F)\} =  {\bf  M}(F)$;
\end{enumerate}
\item and a weaker one  which satisfies 
the following equivalent conditions: 
\begin{enumerate}
\renewcommand\theenumi{{\rm (\roman{enumi})}}
\setcounter{enumi}{3}
\item for each $f\in{\Bbb I}(F)$ there are $g_i\in F$, $a_i,b_i\in
R\setminus\{0\},\lambda_i,\rho_i\in {\Cal T}$ for which, denoting
$\tau_i:=\mathbf{T}(g_i)$,
 one has
\begin{itemize}
\item ${\bf T}(f) =  \lambda_i\circ{\bf T}(g_i) \circ  \rho_i$ for each $i$, and 
$\lc(f) = 
\sum_i a_i\alpha_{\lambda_i}(\lc(g_{i}))\alpha_{\lambda_i\tau_i}(b_i)$
\end{itemize}
and, equivalently,
\begin{itemize}
\item ${\bf M}(f) = \sum_i a_i\lambda_i\ast  {\bf M}(g_i)\ast  b_i\rho_i
 = \sum_i{\bf M}(a_i\lambda_i\star  g_i\star   b_i\rho_i);$
\end{itemize}
\item ${\bf M}({\Bbb I}(F)) = {\bf M}\{{\Bbb I}(F)\} =   {\bf M}\{{\Bbb I}({\bf M}\{F\})\} = {\Bbb I}({\bf M}\{F\})\cap{\sf M}({\sf R}^m)$;
\end{enumerate}
 if moreover $R$ is a skew field 
${\bf  M}(F) = {\bf M}\{{\Bbb I}({\bf M}\{F\})\}$
so that conditions (i-v) above are all equivalent and are also equivalent to
\begin{enumerate}
\renewcommand\theenumi{{\rm (\roman{enumi})}}\setcounter{enumi}{5}
\item ${\bf T}(f) =  \lambda\circ  {\bf T}(g)\circ \rho$ for some 
 $g\in F$, $\lambda,\rho\in {\Cal T}$.
 \end{enumerate}
\end{itemize}

\begin{Example}\label{GOeY}
Let us now specialize the ring of Example~\ref{GOe}
to the case
$$n=3, e_1=2,e_2=3,e_3=4,\chi=5,c_1=5,c_2=5^2,c_3=5^3$$
and remark that

$$ax^{a_0}Y_1^{a_1}Y_2^{a_2}Y_3^{a_3} \ast
 bx^{b_0} Y_1^{b_1}Y_2^{b_2}Y_3^{b_3} =
ab5^{b_0\left(2^{a_1}3^{a_2}4^{a_3}-1\right)}
x^{a_0+b_02^{a_1}3^{a_2}4^{a_3}}Y_1^{a_1+b_1}Y_2^{a_2+b_2}Y_3^{a_3+b_3}.$$

As a consequence, for each $(b_0,b_1,b_2,b_3),(j_0,j_1,j_2,j_3)\in{\Bbb N}^4$
$$jx^{j_0}Y_1^{j_1}Y_2^{j_2}Y_3^{j_3}\in{\Bbb I}_L(bx^{b_0}Y_1^{b_1}Y_2^{b_2}Y_3^{b_3})$$
if and only if
\begin{equation}\label{SyzEq1}
a_1:=j_1-b_1\geq 0, 
a_2:=j_2-b_2\geq 0,
a_3:=j_3-b_3\geq 0,
a_0:=j_0-b_02^{a_1}3^{a_2}4^{a_3}\geq 0
\end{equation}
and $b5^{b_0\left(2^{a_1}3^{a_2}4^{a_3}-1\right)} \mid j.$ 

Note that if we set $y:=5x$ then for each  $(b_1,b_2,b_3),(j_1,j_2,j_3)\in{\Bbb N}^3$ and
$b(y),j(y)\in {\Bbb Z}[y]\subset R$
$$j(y)Y_1^{j_1}Y_2^{j_2}Y_3^{j_3}\in{\Bbb I}_L(b(y)Y_1^{b_1}Y_2^{b_2}Y_3^{b_3})$$
if and only if, not only (\ref{SyzEq1})
but also \fbox{$b(y^{2^{a_1}3^{a_2}4^{a_3}}) \mid j(y)$.}
\end{Example}

\begin{Definition}\label{49D1} Let ${\sf I} \subset {\sf R}^m$ be a (left, right, bilateral, restricted) module and
$G \subset {\sf I}$.
\begin{itemize}
\item $G$ will be called 
\begin{itemize}
\item a (left, right, bilateral, restricted) {\em weak Gr\"obner basis} ({\em Gr\"obner basis} for short) of  ${\sf I}$ if  
$${\bf M}\{{\sf I}\}= {\bf M}({\sf I}) ={\bf M}\{{\Bbb I}({\bf M}\{G\})\} ={\Bbb I}({\bf M}\{G\})\cap{\sf M}({\sf R}^m),$$
{\em id est} if $G$ satisfies conditions (iv-v) w.r.t. the module ${\sf I} = {\Bbb I}(G);$
in particular ${\bf M}\{G\}$ 
generates the (left, right, bilateral, restricted) module
${\bf M}({\sf I}) \subset{\sf R}^m;$
\item a 
(left, right, bilateral, restricted) {\em strong Gr\"obner
basis}\index{strong!Gr\"obner basis}\index{Gr\"obner!basis!strong} of ${\sf I}$ if 
for each 
$f\in{\sf I}$ there is $g\in G$ such that ${\bf M}(g) \mid {\bf M}(f),$
{\em id est} if $G$ satisfies conditions (i-iii) w.r.t. the module ${\sf I} = {\Bbb I}(G).$
\end{itemize}
\item
We say that $f\in {\sf R}^m\setminus\{0\}$ has 
\begin{itemize}
\item a left {\em Gr\"obner  representation}\index{representation!
Gr\"obner!(of a polynomial)}\index{Gr\"obner!representation (of a polynomial)}
in terms of $G$ if it can be written as
$f = \sum_{i=1}^u l_i  \star g_i,$ 
with $l_i\in {\sf R}, g_i \in G$ and 
${\bf T}(l_i)\circ {\bf T}(g_i) \leq {\bf T}(f) \Forall i;$    
\item a left {\em (weak) Gr\"obner representation}\index{Gr\"obner!representation (of a polynomial)!weak}
in terms of $G$ if it can be written as
$f = \sum_{i=1}^\mu a_i \lambda_i  \star   g_i,$ 
with $a_i\in R\setminus\{0\},  \lambda_i\in {\Cal T}, g_i \in G$ and 
${\bf T}(\lambda_i  \star   g_i) \leq {\bf T}(f) \Forall i;$
\item a left {\em (strong) Gr\"obner representation}\index{representation!
Gr\"obner!(of a polynomial)}\index{Gr\"obner!representation (of a polynomial)!strong} in terms
of $G$ if it can be written as 
$f = \sum_{i=1}^\mu a_i \lambda_i  \star   g_i,$ 
with $a_i\in R\setminus\{0\},  \lambda_i\in {\Cal T}, g_i \in G$ and 
$${\bf T}(f) =  \lambda_1\circ {\bf T}(g_1) >  \lambda_i\circ {\bf T}(g_i)  \Forall i;$$ 
\item a right {\em Gr\"obner representation} 
in terms of $G$ if it can be written as
$f = \sum_{i=1}^u g_i  \star  r_i,$ 
with $r_i\in {\sf R}, g_i \in G$ and 
${\bf T}(g_i)\circ {\bf T}(r_i) \leq {\bf T}(f) \Forall i;$    
\item a right {\em (weak) Gr\"obner representation} 
in terms of $G$ if it can be written as
$f = \sum_{i=1}^\mu  g_i  \star  b_i \rho_i ,$
with $b_i\in R\setminus\{0\}, \rho_i\in {\Cal T}, g_i \in G$ and
${\bf T}(g_i   \star  \rho_i)  \leq {\bf T}(f)$ for each $i;$
\item a right {\em (strong) Gr\"obner representation} in terms
of $G$ if it can be written as 
$f = \sum_{i=1}^\mu g_i  \star  b_i \rho_i,$
with $ b_i\in R\setminus\{0\}, \rho_i\in {\Cal T}, g_i \in G$ and 
$${\bf T}(f) = {\bf T}(g_1)\circ\rho_1  >{\bf T}(g_i)\circ \rho_i   \Forall i;$$ 
\item a bilateral {\em(weak) Gr\"obner representation} 
in terms of $G$ if it can be written as
$f = \sum_{i=1}^\mu a_i \lambda_i \star   g_i  \star b_i \rho_i ,$
with $ a_i, b_i\in R\setminus\{0\}, \lambda_i, \rho_i\in {\Cal T}, g_i \in G$ and 
${\bf T}(\lambda_i \star g_i \star\rho_i)  \leq {\bf T}(f)$ for each $i;$    
\item a bilateral {\em (strong) Gr\"obner representation} in terms
of $G$ if it can be written as 
$f = \sum_{i=1}^\mu a_i \lambda_i  \star  g_i  \star  b_i \rho_i ,$
with $ a_i, b_i\in R\setminus\{0\}, \lambda_i, \rho_i\in{\Cal T}, g_i \in G$ and 
${\bf T}(f) =  \lambda_1\circ{\bf T}(g_1)\circ\rho_1   > \lambda_i\circ{\bf T}(g_i)\circ \rho_i   \Forall i.$
\item a restricted {\em(weak) Gr\"obner representation} 
in terms of $G$ if it can be written as
$f = \sum_{i=1}^\mu a_i   g_i\star  \rho_i ,$
with $ a_i\in R\setminus\{0\}, \rho_i\in {\Cal T}, g_i \in G$ and 
${\bf T}(g_i\star\rho_i)  \leq {\bf T}(f)$ for each $i;$    
\item a restricted {\em (strong) Gr\"obner representation} in terms
of $G$ if it can be written as 
$f = \sum_{i=1}^\mu a_i    g_i  \star  \rho_i ,$
with $ a_i\in R\setminus\{0\}, \rho_i\in{\Cal T}, g_i \in G$ and 
${\bf T}(f) =  {\bf T}(g_1)\circ\rho_1   >{\bf T}(g_i)\circ \rho_i   \Forall i.$
\end{itemize} 
\item For $f\in{\sf R}^m\setminus\{0\}, F \subset{\sf R}^m$, an element 
$h:=\NF(f, F)\in {\sf R}^m$ is called a \index{form!normal}
\begin{itemize}
\item (left, right, bilateral, restricted) {\em (weak) normal form}\index{normal form!(weak)}\index{form!normal}
of $f$ w.r.t. $F$, if
\begin{description}
\item[] $f - h\in{\Bbb I}(F)$ has a weak Gr\"obner representation in
terms of $F$, and 
\item[] $h \neq 0 \then {\bf M}(h) \notin{\bf M}\{{\Bbb I}({\bf M}\{F\})\};$
\end{description} 
\item (left, right, bilateral, restricted) {\em strong normal form}\index{normal form!strong}
of $f$ w.r.t. $F$, if
\begin{description}
\item[] $f - h\in{\Bbb I}(F)$ has a strong Gr\"obner representation in
terms of $F$, and 
\item[] $h \neq 0 \then {\bf M}(f) \notin{\bf M}(F).$
\qed\end{description}
\end{itemize}
\end{itemize}
\end{Definition}

\begin{Proposition}({\em cf.}  \cite{R2,R3})\label{57P1} 
For any set $F\subset {\sf R}^m\setminus\{0\}$, among the
following
conditions:
\begin{enumerate}
\item $f\in{\Bbb I}(F) \iff $ it has a (left, right, bilateral, restricted) strong Gr\"obner representation 
$f = \sum_{i=1}^\mu a_i \lambda_i  \star    g_i  \star  b_i\rho_i$  
in terms of $F$ which further satisfies
$${\bf T}(f) =  {\bf T}(\lambda_1  \star    g_1  \star   \rho_1)  > \cdots > {\bf T}(\lambda_i  \star   g_i  \star   \rho_i)   > \cdots;$$
\item $f\in{\Bbb I}(F) \iff $ it has a (left, right, bilateral, restricted) strong Gr\"obner representation in terms of $F$;
\item $F$ is a (left, right, bilateral, restricted) strong Gr\"obner basis of ${\Bbb I}(F)$; 
\item $f\in{\Bbb I}(F) \iff $ it has a (left, right, bilateral, restricted) weak Gr\"obner representation in terms of $F$;
\item $F$ is a (left, right, bilateral, restricted)  Gr\"obner basis of ${\Bbb I}(F)$;
\item $f\in{\Bbb I}(F) \iff $ it has a (left, right) Gr\"obner representation in terms of $F$;
\item for each $f\in{\sf R}^m\setminus\{0\}$ and any (left, right, bilateral, restricted)  strong normal form $h$ of $f$ w.r.t. $F$ we have
$f\in{\Bbb I}(F) \iff h = 0;$ 
\item for each $f\in{\sf R}^m\setminus\{0\}$ and any (left, right, bilateral, restricted)  weak normal form $h$ of $f$ w.r.t. $F$ we have
$f\in{\Bbb I}(F) \iff h = 0;$ 
\end{enumerate}
there are the implications
$$
\begin{array}{rcccccl}
(1) &\Leftrightarrow&(2) &\Rightarrow& (4) &\Leftrightarrow& (6)\\
&\Nearrow&\Updownarrow&&\Updownarrow&\Nwarrow&\cr
(7)&\Leftarrow&(3) &\Rightarrow& (5)&\Rightarrow&(8)\cr\end{array}$$
If 
$R$ is a skew field 
 we have also the implication $(4) \then (2)$ and as a consequence also $(5) \then (3)$.
\end{Proposition}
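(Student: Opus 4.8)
The plan is to establish the diagram of implications by proving a handful of elementary equivalences and one-directional implications, most of which are either definitional unwindings or straightforward inductions on the leading term with respect to the term ordering $<$.

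First I would treat the trivial equivalences. The equivalences $(1)\Leftrightarrow(2)$ and $(4)\Leftrightarrow(6)$: for $(1)\Leftrightarrow(2)$, a strong Gr\"obner representation $f = \sum_i a_i\lambda_i\star g_i\star b_i\rho_i$ with ${\bf T}(f) = \lambda_1\circ{\bf T}(g_1)\circ\rho_1 > \lambda_i\circ{\bf T}(g_i)\circ\rho_i$ for $i>1$ can always be re-sorted and its lower-order summands recursively re-expanded so that the terms ${\bf T}(\lambda_i\star g_i\star\rho_i)$ appear in strictly decreasing order; this uses Noetherian induction on ${\bf T}(f)$ together with Corollary~\ref{c46CoX2X2X2}, which guarantees ${\bf T}(\lambda_i\star g_i\star\rho_i) = \lambda_i\circ{\bf T}(g_i)\circ\rho_i$. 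The direction $(1)\Rightarrow(2)$ is vacuous. Similarly $(4)\Leftrightarrow(6)$ in the left/right cases is just the observation that a weak representation $\sum a_i\lambda_i\star g_i$ is the same data as a representation $\sum l_i\star g_i$ after collecting terms, the constraint ${\bf T}(l_i)\circ{\bf T}(g_i)\le{\bf T}(f)$ matching ${\bf T}(\lambda_i\star g_i)\le{\bf T}(f)$. The equivalences $(2)\Leftrightarrow(3)$ and $(4)\Leftrightarrow(5)$ are essentially the content of Definition~\ref{49D1}: a strong (resp. weak) Gr\"obner basis is defined exactly by the property that every ideal element has a strong (resp. weak) Gr\"obner representation — one direction is immediate from the definition of ``${\bf M}(f)\in{\bf M}(F)$'' or ``${\bf M}(f)\in{\bf M}\{{\Bbb I}({\bf M}\{F\})\}$'' and the other follows by peeling off the leading monomial and inducting on ${\bf T}(f)$, using again Corollary~\ref{c46CoX2X2X2} to control leading terms of products.

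Next, the forward implications. $(2)\Rightarrow(4)$ (and hence $(3)\Rightarrow(5)$): a strong Gr\"obner representation is a fortiori a weak one, since ${\bf T}(f) = \lambda_1\circ{\bf T}(g_1)\circ\rho_1$ and all other terms are $\le {\bf T}(f)$, so we can keep only the sorted form and observe the weak-representation constraints hold; the fact that the sum has the right leading coefficient is automatic. The implications $(3)\Rightarrow(7)$ and $(5)\Rightarrow(8)$: if $F$ is a strong (resp. weak) Gr\"obner basis and $h$ is a strong (resp. weak) normal form of $f$, then $f-h\in{\Bbb I}(F)$, so $f\in{\Bbb I}(F)$ iff $h\in{\Bbb I}(F)$; but if $h\ne 0$ then ${\bf M}(h)\notin{\bf M}(F)$ (resp. $\notin{\bf M}\{{\Bbb I}({\bf M}\{F\})\}$) contradicts $h\in{\Bbb I}(F)$ via the Gr\"obner basis property, so $h=0$; conversely $h=0$ gives $f = f-h\in{\Bbb I}(F)$. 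Finally, for the last sentence: if $R$ is a skew field, then by the displayed equalities \eqref{c46Eq1} we have ${\bf M}(F) = {\bf M}\{{\Bbb I}({\bf M}\{F\})\}$, so conditions (i)–(v) of the Gr\"obner-basis discussion collapse and in particular a weak Gr\"obner representation can be upgraded to a strong one by the usual leading-monomial cancellation argument (now possible because leading coefficients are invertible, so one can always match ${\bf M}(f)$ by a single monomial multiple of some ${\bf M}(g_i)$, $g_i\in F$); this yields $(4)\Rightarrow(2)$, whence $(5)\Rightarrow(3)$ as well.

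The main obstacle I expect is not any single implication but the bookkeeping forced by the ``four cases in one'' convention: every inductive step that peels off a leading monomial must be checked to respect the side constraints (left multiplier only, right multiplier only, both, or restricted = left-coefficient-with-right-term), and the restricted case is the delicate one because there the left factor is a bare coefficient $a\in R\setminus\{0\}$ rather than a monomial, so ${\bf T}$ is unaffected on the left but ${\bf M}$ picks up $\alpha$-twists via \eqref{c46Eq2} and \eqref{Eq4}. Concretely, the subtle point is ensuring that when we subtract a leading monomial and recurse, the new remainder still lies in the \emph{same} flavor of module ${\Bbb I}(F)$ and still admits a representation of the same flavor; this is where one must lean on Corollary~\ref{c46CoX2X2X2} and the identity ${\bf M}(l\star f\star r) = {\bf M}(l)\ast{\bf M}(f)\ast{\bf M}(r)$ rather than on naive monoid-ring intuition, since ${\sf R}$ is not a monoid ring under $\star$.
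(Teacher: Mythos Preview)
Your approach is essentially the same as the paper's and the arguments you give are correct, but you have omitted two implications from the diagram: the diagonal arrows $(7)\Rightarrow(2)$ and $(8)\Rightarrow(4)$. You handle $(3)\Rightarrow(7)$ and $(5)\Rightarrow(8)$, but the diagram also asserts the reverse diagonals, and without them your list of implications does not close up to the claimed picture.

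The missing arguments are short but not entirely content-free, since they use the definition of normal form rather than the definition of Gr\"obner basis. For $(7)\Rightarrow(2)$: given $f\in{\Bbb I}(F)$, pick any strong normal form $h$ of $f$ w.r.t.\ $F$; condition $(7)$ forces $h=0$, and then by Definition~\ref{49D1} the element $f-h=f$ already has a strong Gr\"obner representation in terms of $F$. The converse direction of $(2)$ (representation $\Rightarrow$ membership) is trivial. The argument for $(8)\Rightarrow(4)$ is identical with ``strong'' replaced by ``weak''. Note that these implications do \emph{not} require $F$ to be any kind of Gr\"obner basis; the representation is supplied by the normal-form hypothesis itself, which is why $(7)$ and $(8)$ feed back into the top row independently of the vertical equivalences.

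A minor remark on your treatment of $(2)\Leftrightarrow(3)$: the strong Gr\"obner basis property is not literally \emph{defined} as ``every element has a strong Gr\"obner representation'' but rather as condition (i)--(iii) (divisibility of ${\bf M}(f)$ by some ${\bf M}(g)$). You do supply the inductive peeling-off argument that bridges the two, so the content is there; just be careful not to describe the equivalence as purely definitional. The paper organizes this slightly differently, proving $(3)\Rightarrow(1)$ directly by the same induction rather than going through $(3)\Rightarrow(2)\Rightarrow(1)$, but this is only a cosmetic difference.
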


\begin{proof} The implications  $(1) \then (2) \then (4) \iff (6)$, $(3) \then (5)$, $(2) \then (3)$ and $(4) \then (5)$
 are trivial. 
 
 Ad $(3) \then (1)$:
for each $f\in{\Bbb I}_2(F)$ by assumption there are 
elements
$g\in F,
m=a\lambda, n=b\rho\in{\sf M}({\sf R})$ such that
${\bf M}(f) = {\bf M}(m  \star g  \star n)$.
Thus ${\bf T}(f) = {\bf T}(m  \star g  \star n)=\lambda\circ{\bf T}(g)\circ\rho$ and,
denoting 
$f_1 := f - m  \star g  \star n$,
we have
${\bf T}(f_1) < {\bf T}(f)$ so the claim follows 
by induction, since $<$ is a well ordering.

Ad $(5) \then (4)$: similarly, for each $f\in{\Bbb I}_2(F)$ by assumption there
are  elements
$g_i\in F, {\bf T}(g_i):= \tau_i{\bf e}_{l_i}$, 
$m_i=a_i\lambda_i, n_i=b_i\rho_i\in{\sf M}({\sf R})$ such that
\begin{itemize}
\item ${\bf T}(f) =  {\bf T}(\lambda_i  \star g_i  \star  \rho_i)=\lambda_i\circ\tau_i\circ\rho_i{\bf e}_{l_i}$ for each $i$,
\item $\lc(f) 
 = \sum_i a_i\alpha_{\lambda_i}(\lc(g_{i}))\alpha_{\lambda_i\tau_i}(b_i).$
\end{itemize}
It is then sufficient to denote 
$f_1 := f - \sum_i m_i  \star g_i  \star n_i$ 
in order to deduce the claim 
by induction, since
${\bf T}(f_1) < {\bf T}(f)$ and $<$ is a well ordering.

Ad $(4) \then (2)$: let $f\in{\Bbb I}_2(F)\setminus\{0\};$ (4) implies the existence of $g\in F, \lambda,\rho\in{\Cal T}$, such that
${\bf T}(f) = \lambda\circ{\bf T}(g)\circ\rho$.
Then  setting 
$f_1 := f -  \lc(f)\Bigl(\alpha_{\lambda}\left(\lc(g)\right)\Bigr)^{-1}\lambda  \star  g  \star \rho$ 
we deduce the claim 
by induction, since
${\bf T}(f_1) < {\bf T}(f)$ and $<$ is a well ordering.

Ad $(3) \then (7)$ and $(5) \then (8)$: either
\begin{itemize}
\item $h=0$ and $f=f-h\in{\Bbb I}(F)$ or
\item $h\neq 0$, ${\bf M}(h) \notin {\bf M}({\Bbb I}(F))$, $h\notin{\Bbb I}(F)$ and $f\notin{\Bbb I}(F)$.
\end{itemize}

Ad $(7) \then (2)$ and $(8) \then (4)$: for each $f\in{\Bbb I}(F)$, its normal form is  $h=0$ and
$f = f-h$ has a strong (resp.: weak) Gr\"obner representation in
terms of $F$.

\end{proof}

\begin{Proposition} (Compare  \cite[Proposition~22.2.10]{SPES})
 If $F$ is a (weak, strong) Gr\"obner basis of ${\sf I} := {\Bbb I}(F)$, then the following holds:
\begin{enumerate}
\item Let $g\in{\sf R}^m$ be a (weak, strong) normal form of $f$ w.r.t. $F$. If $g \neq 0$, then
$${\bf T}(g) = \min\{{\bf T}(h) : h - f \in{\Bbb I}(F)\}.$$
\item Let $f, f' \in  {\sf R}^m\setminus{\sf I}$ be such that $f-f'\in{\sf I}$. Let $g$
be a (weak, strong) normal form of $f$ w.r.t. $F$ and $g'$ be a (weak, strong) normal form of $f'$ w.r.t. $F$.
Then 
\begin{itemize}
\item ${\bf T}(g) = {\bf T}(g') =:\tau$ and
\item  $\lc(g)-\lc(g')\in{\sf I}_\tau:= \{\lc(f) : f\in{\sf I}, {\bf T}(f) = \tau\}\cup\{0\}\subset R$. 
\end{itemize}\end{enumerate}
\end{Proposition}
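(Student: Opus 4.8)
The plan is to reduce both statements to two facts valid whenever $F$ is a (weak, strong) Gröbner basis of ${\sf I}$: (a) ${\bf M}\{{\sf I}\}={\bf M}({\sf I})$, which is part of the very definition of a Gröbner basis (Definition~\ref{49D1}); and (b) a \emph{nonzero} (weak, resp.\ strong) normal form $g$ of an element $f$ w.r.t.\ $F$ satisfies ${\bf M}(g)\notin{\bf M}\{{\sf I}\}$. Fact (b) is nothing but the irreducibility clause of the definition of normal form --- $g\neq 0\then{\bf M}(g)\notin{\bf M}\{{\Bbb I}({\bf M}\{F\})\}$ in the weak case, $\notin{\bf M}(F)$ in the strong case --- combined with the equalities ${\bf M}\{{\sf I}\}={\bf M}\{{\Bbb I}({\bf M}\{F\})\}$ for a weak Gröbner basis, resp.\ ${\bf M}\{{\sf I}\}={\bf M}(F)$ for a strong one. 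Once (a) and (b) are available, everything is bookkeeping about maximal terms; the noncommutative product $\star$ plays no role, since I only use that ${\sf I}$ is a subgroup of $({\sf R}^m,+)$ and that a nonzero element of ${\sf R}^m$ has a well-defined maximal monomial relative to the well-ordering on ${\Cal T}^{(m)}$.

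For part~1, I would first observe that $g\neq 0$ forces $f\notin{\sf I}$ (otherwise $f\in{\sf I}$ would force $g=0$, cf.\ Proposition~\ref{57P1}), so that the set $S:=\{{\bf T}(h):h-f\in{\sf I}\}$ consists of maximal terms of nonzero elements; being a nonempty subset of the well-ordered ${\Cal T}^{(m)}$ it has a minimum, and ${\bf T}(g)\in S$ since $g-f\in{\sf I}$. It then remains to check ${\bf T}(g)\le{\bf T}(h)$ for every $h$ with $h-f\in{\sf I}$. If on the contrary ${\bf T}(h)<{\bf T}(g)$ for some such $h$, then $g-h=(g-f)-(h-f)\in{\sf I}$ and the maximal term ${\bf T}(g)$ is not cancelled in $g-h$, so ${\bf M}(g-h)={\bf M}(g)$; hence ${\bf M}(g)\in{\bf M}\{{\sf I}\}$, contradicting (b). Therefore ${\bf T}(g)=\min S$, as claimed.

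For part~2, since $f-f'\in{\sf I}$ the sets $\{h:h-f\in{\sf I}\}$ and $\{h:h-f'\in{\sf I}\}$ are literally the same; as $f,f'\notin{\sf I}$ forces $g,g'\neq 0$, part~1 applies to each and yields ${\bf T}(g)={\bf T}(g')=:\tau$, the two minima being taken over one and the same set. Finally $g-g'=(g-f)+(f-f')+(f'-g')\in{\sf I}$, and either $\lc(g)=\lc(g')$, so that $\lc(g)-\lc(g')=0\in{\sf I}_\tau$ by the convention adjoining $0$, or the coefficient of $\tau$ in $g-g'$ equals $\lc(g)-\lc(g')\neq 0$, whence ${\bf T}(g-g')=\tau$, $\lc(g-g')=\lc(g)-\lc(g')$, and this lies in ${\sf I}_\tau$ directly by definition.

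The one delicate point --- and it is a matter of reading the definitions carefully, not a real obstacle --- is isolating fact (b) in a shape uniform over the weak and strong cases; it is also precisely here that the Gröbner hypothesis on $F$ is used, for without it ${\bf M}\{{\sf I}\}$ could strictly contain ${\bf M}\{{\Bbb I}({\bf M}\{F\})\}$ and the contradiction in part~1 would collapse. Everything else is the two-line cancellation argument above.
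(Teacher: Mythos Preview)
Your proof is correct and follows essentially the same approach as the paper's: the key contradiction in part~1 (if ${\bf T}(h)<{\bf T}(g)$ then ${\bf M}(g-h)={\bf M}(g)\in{\bf M}\{{\sf I}\}$, contradicting the normal-form condition) and the case split on $\lc(g)-\lc(g')$ in part~2 are identical to the paper's argument. Your presentation is slightly more careful in isolating the role of the Gr\"obner hypothesis via facts~(a) and~(b) and in noting explicitly that $g\neq0$ forces $f\notin{\sf I}$, but the substance is the same.
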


\begin{proof}\
\begin{enumerate}
\item Let $h\in  {\sf R}^m$ be such that $h-f\in{\sf I}$; then  $h-g\in{\sf I}$ and 
${\bf M}(h-g)\in {\bf M}\{{\sf I}\}$.
If ${\bf T}(g) > {\bf T}(h)$ then ${\bf M}(h-g) = {\bf M}(g)\not\in {\bf M}\{{\sf I}\}$, giving
a contradiction. 
\item The assumption implies that $f-g'\in {\sf I}$ so that, by the previous result, 
${\bf T}(g) \leq
{\bf T}(g').$
Symmetrically, $f'-g\in {\sf I}$ and  ${\bf T}(g') \leq {\bf T}(g).$
Therefore ${\bf T}(g) = {\bf T}(g') =\tau$; morevoer, either
\begin{itemize}
\item ${\bf T}(g-g') < \tau$ and ${\bf M}(g) = {\bf M}(g')$ so that $\lc(g)=\lc(g')$ or
\item ${\bf T}(g-g') = \tau$ and 
${\bf M}(g-g') = {\bf M}(g) - {\bf M}(g') = \Bigl(\lc(g)-\lc(g')\Bigr)\tau;$ thus, since $g-g'\in{\sf I}$, 
$\lc(g)-\lc(g')\in{\sf I}_\tau$.
\end{itemize}
\end{enumerate} 
\end{proof}

\subsection{Canonical forms (skew field case)} 

If $R := {\Bbb K}$ is  a skew field,
for any set $F \subset{\sf R}^m$
we denote ${\bf N}(F)$ the  (left, right, bilateral, restricted) order module
${\bf N}(F) := {\Cal T}^{(m)} \setminus {\bf T}(F)$
and 
$ {\Bbb K}[{\bf N}(F)]$ the (left, right, bilateral, restricted) ${\Bbb K}$-module ${\Bbb K}[{\bf N}(F)] :=\Span_{\Bbb K}({\bf N}(F)).$

\begin{Definition}  
For  any (left, right, bilateral, restricted)  module  ${\sf I} \subset {\sf R}^m$, 
the order module ${\bf N}({\sf I}) :={\Cal T}^{(m)} \setminus {\bf T}\{{\sf I}\} $ is called
the {\em escalier}\index{escalier} of ${\sf I} $.
\end{Definition}

We easily obtain the notion, the properties and
the computational algorithm (Figure~\ref{BuCan}) of (left, right, bilateral, restricted)
canonical forms:
\begin{figure}\index{Buchberger's!reduction}
\caption{Canonical Form Algorithms}
\label{BuCan}
\hrule
\begin{list}{}{}
\item $(g, \sum_{i=1}^\mu c_i \lambda_i  \star  g_i)$ := {\bf LeftCanonicalForm}$(f,G)$
\begin{list}{}{}
\item {\bf where}
\item \begin{list}{}{}
      \item $G$ is the left Gr\"obner basis of the left module ${\sf I} \subset  {\sf R}^m$,
      \item $f \in {\sf R}^m$, $g \in  {\Bbb K}[{\bf N}({\sf I})]$,
            $c_i \in  {\Bbb K}\setminus\{0\}, \lambda_i\in{\Cal T}, g_i \in G$,
      \item $f-g = \sum_{i=1}^\mu c_i \lambda_i  \star  g_i$ is a left strong 
            Gr\"obner representation in terms of $G$,
      \item ${\bf T}(f-g) = \lambda_1\circ {\bf T}(g_1) > \lambda_2\circ{\bf T}(g_2) 
            > \cdots > \lambda_\mu\circ {\bf T}(g_\mu)$.
      \end{list}
\item $h := f, i := 0, g := 0,$
\item {\bf While} $h \neq 0$ {\bf do}
\item \%\% $f = g + \sum_{j=1}^i c_j \lambda_j  \star  g_j + h,$
\item \%\% ${\bf T}(f-g) \geq {\bf T}(h);$
\item \%\% $i > 0 \then {\bf T}(f-g) = \lambda_1\circ{\bf T}(g_1) > \lambda_2\circ{\bf T}(g_2) > \cdots > \lambda_i\circ{\bf T}(g_i) > {\bf T}(h);$
\item \begin{list}{}{}
      \item {\bf If} ${\bf T}(h)\in{\bf T}_L(G)$ {\bf do}
						\item \begin{list}{}{}
            \item {\bf Let} $\lambda  \in{\Cal T}, \gamma \in G : \lambda\circ {\bf T}(\gamma) = {\bf T}(h)$
            \item $i := i+1, c_i := \lc(h)\alpha_{\lambda}\left(\lc(\gamma)\right)^{-1},$ 
                     $ \lambda_i := \lambda, g_i := \gamma,$ $h := h - c_i \lambda_i g_i.$
            \end{list}
      \item {\bf Else}
						\item \begin{list}{}{}
            \item \%\% ${\bf T}(h)\in {\bf N}({\sf I})$
            \item  $g := g + {\bf M}(h),\, h := h - {\bf M}(h)$
            \end{list}
      \end{list}
\end{list}
\hrule
\item $(g, \sum_{i=1}^\mu  g_i  \star  d_i \rho_i)$ := {\bf RightCanonicalForm}$(f,G)$
\begin{list}{}{}
\item {\bf where}
\item \begin{list}{}{}
      \item $G$ is the right Gr\"obner basis of the right module ${\sf I} \subset  {\sf R}^m$,
      \item $f \in {\sf R}^m$,
            $g \in   {\Bbb K}[{\bf N}({\sf I})]$,
            $d_i \in  {\Bbb K}\setminus\{0\}, \rho_i\in{\Cal T}, g_i \in G$,
      \item $f-g = \sum_{i=1}^\mu g_i   \star  d_i\rho_i $ is a right strong 
            Gr\"obner representation in terms of $G$,
      \item ${\bf T}(f-g) = {\bf T}(g_1)\circ\rho_1 > {\bf T}(g_2)\circ\rho_2 
            > \cdots > {\bf T}(g_\mu)\circ\rho_\mu $.
      \end{list}
\item $h := f, i := 0, g := 0,$
\item {\bf While} $h \neq 0$ {\bf do}
\item \begin{list}{}{}
      \item {\bf If} ${\bf T}(h)\in{\bf T}_R(G)$ {\bf do}
						\item \begin{list}{}{}
            \item {\bf Let} $\rho \in{\Cal T}, \gamma \in G :  {\bf T}(\gamma)\circ\rho = {\bf T}(h)$
            \item $i := i+1, d_i := \alpha^{-1}_{{\bf T}(\gamma)}\left(\lc(h)\lc(\gamma)^{-1}\right), 
            \rho_i := \rho, g_i := \gamma,$ 
            \item $h := h - g_i   \star  d_i\rho_i .$
            \end{list}
      \item {\bf Else}
						\item \begin{list}{}{}
            \item $g := g + {\bf M}(h),\, h := h - {\bf M}(h)$
            \end{list}
      \end{list}
\hrule
\end{list}
\end{list}
\end{figure}
\renewcommand{\thefigure}{\arabic{figure} (cont.)}\setcounter{figure}{0}
\begin{figure}\index{Buchberger's!reduction}
\caption{Canonical Form Algorithms}
\hrule
\begin{list}{}{}
\item $(g, \sum_{i=1}^\mu c_i \lambda_i  \star   g_i   \star  \rho_i )$ := {\bf BilateralCanonicalForm}$(f,G)$
\begin{list}{}{}
\item {\bf where}
\item \begin{list}{}{}
      \item $G$ is the bilateral Gr\"obner basis of the bilateral module ${\sf I} \subset  {\sf R}^m$,
      \item $f \in {\sf R}^m$,
            $g \in   {\Bbb K}[{\bf N}({\sf I})]$,
            $c_i \in  {\Bbb K}\setminus\{0\}, \lambda_i,\rho_i\in{\Cal T}, g_i \in G$,
      \item $f-g = \sum_{i=1}^\mu c_i \lambda_i  \star   g_i   \star \rho_i  $ is a bilateral strong 
            Gr\"obner representation in terms of $G$,
      \item ${\bf T}(f-g) = \lambda_1\circ {\bf T}(g_1)\circ\rho_1 > \lambda_2\circ {\bf T}(g_2)\circ\rho_2
            > \cdots > \lambda_\mu\circ {\bf T}(g_\mu)\circ\rho_\mu$.
      \end{list}
\item $h := f, i := 0, g := 0,$
\item {\bf While} $h \neq 0$ {\bf do}
\item \begin{list}{}{}
      \item {\bf If} ${\bf T}(h)\in{\bf T}_2(G)$ {\bf do}
						\item \begin{list}{}{}
            \item {\bf Let} $\lambda,\rho \in{\Cal T}, \gamma \in G :  
                 \lambda\circ{\bf T}(\gamma)\circ\rho = {\bf T}(h)$
            \item $i := i+1, c_i := \lc(h)\alpha_{\lambda}\left(\lc(\gamma)\right)^{-1}, \lambda_i := \lambda,
                  \rho_i := \rho, g_i := \gamma,$
            \item $h := h -c_i \lambda_i  \star   g_i   \star \rho_i .$
            \end{list}
      \item {\bf Else}
						\item \begin{list}{}{}
            \item  $g := g + {\bf M}(h),\, h := h - {\bf M}(h)$
            \end{list}
      \end{list}
\end{list}
\hrule
\item $(g, \sum_{i=1}^\mu c_i g_i   \star  \rho_i )$ := {\bf RestrictedCanonicalForm}$(f,G)$
\begin{list}{}{}
\item {\bf where}
\item \begin{list}{}{}
      \item $G$ is the restricted Gr\"obner basis of the  restricted module ${\sf I} \subset  {\sf R}^m$,
      \item $f \in {\sf R}^m$,
            $g \in   {\Bbb K}[{\bf N}({\sf I})]$,
            $c_i \in  {\Bbb K}\setminus\{0\}, \rho_i\in{\Cal T}, g_i \in G$,
      \item $f-g = \sum_{i=1}^\mu c_i  g_i   \star \rho_i  $ is a  restricted  strong 
            Gr\"obner representation in terms of $G$,
      \item ${\bf T}(f-g) =   {\bf T}(g_1)\circ\rho_1 >   {\bf T}(g_2)\circ\rho_2
            > \cdots >   {\bf T}(g_\mu)\circ\rho_\mu$.
      \end{list}
\item $h := f, i := 0, g := 0,$
\item {\bf While} $h \neq 0$ {\bf do}
\item \begin{list}{}{}
      \item {\bf If} ${\bf T}(h)\in{\bf T}_W(G)$ {\bf do}
						\item \begin{list}{}{}
            \item {\bf Let} $\rho \in{\Cal T}, \gamma \in G :  
                 {\bf T}(\gamma)\circ\rho = {\bf T}(h)$
            \item $i := i+1, c_i := \lc(h)\lc(\gamma)^{-1},  
                  \rho_i := \rho, g_i := \gamma,$
            \item $h := h -c_i   g_i   \star \rho_i .$
            \end{list}
      \item {\bf Else}
						\item \begin{list}{}{}
            \item  $g := g + {\bf M}(h),\, h := h - {\bf M}(h)$
            \end{list}
      \end{list}
\hrule
\end{list}
\end{list}
\end{figure}
%
\begin{Lemma}\label{LeCan} {\rm (cf.  \cite[Lemma~22.2.12]{SPES})} 
Let  ${\sf I} \subset {\sf R}^m$ be a (left, right, bilateral, restricted)  module.
If $R = {\Bbb K}$ is a skew field
and denoting
${\sf A}$ the (left, right,bi\-la\-teral, restricted) module ${\sf A} :={\sf R}^m/{\sf I}$
it holds
\begin{enumerate}
\item ${\sf R}^m \cong {\sf I} \oplus  {\Bbb K}[{\bf N}({\sf I})];$
\item ${\sf A} \cong   {\Bbb K}[{\bf N}({\sf I})];$
\item for each $f\in{\sf R}^m,$ there is a unique 
$$g := \Can(f,{\sf I})  = \sum_{t\in{\bf N}({\sf I})} \gamma(f,t,<) t 
\in   {\Bbb K}[{\bf N}({\sf I})]$$ such that $f - g \in {\sf I}$.

\noindent
Moreover:
\begin{enumerate}
\item $\Can(f_1,{\sf I}) = \Can(f_2,{\sf I}) \iff f_1-f_2 \in {\sf I};$
\item $\Can(f,{\sf I}) = 0 \iff f \in {\sf I}.$
\end{enumerate}
\item For each $f\in{\sf R}^m, f- \Can(f,{\sf I})$ has a (left, right, bilateral, restricted) strong
Gr\"obner representation in terms of any Gr\"obner basis.
\end{enumerate}
\end{Lemma}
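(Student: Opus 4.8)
The plan is to derive all four statements, uniformly for the left, right, bilateral and restricted cases, from two ingredients: the termination together with correctness of the Canonical Form Algorithm of Figure~\ref{BuCan}, and the triviality of the intersection ${\sf I}\cap{\Bbb K}[{\bf N}({\sf I})]$. I would write out the left case; the other three are identical, carried out through the corresponding algorithm of Figure~\ref{BuCan}.

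First I would analyse {\bf LeftCanonicalForm}$(f,G)$ for $G$ a left Gr\"obner basis of ${\sf I}$. Since $R={\Bbb K}$ is a skew field, Proposition~\ref{57P1} makes $G$ simultaneously a weak and a strong Gr\"obner basis, and the defining identity ${\bf M}\{{\sf I}\}={\bf M}_L(G)$, on passing to leading terms, gives ${\bf T}_L(G)={\bf T}\{{\sf I}\}$; hence the algorithm's test ``${\bf T}(h)\in{\bf T}_L(G)$'' is the test ``${\bf T}(h)\in{\bf T}\{{\sf I}\}$'', and its failure means ${\bf T}(h)\in{\bf N}({\sf I})$. In the reduction branch one can choose $\lambda\in{\Cal T}$ and $\gamma\in G$ with $\lambda\circ{\bf T}(\gamma)={\bf T}(h)$, and the coefficient $\lc(h)\,\alpha_\lambda(\lc(\gamma))^{-1}$ is meaningful because $\alpha_\lambda$ is injective (Lemma~\ref{OreLe+}.1), so $\alpha_\lambda(\lc(\gamma))\neq 0$ is a unit of ${\Bbb K}$; by Corollary~\ref{c46CoX2X2X2} the term $c_i\lambda_i\star g_i$ then has maximal monomial exactly ${\bf M}(h)$, so $h-c_i\lambda_i\star g_i$ has strictly smaller maximal term. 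In the other branch the step $h\mapsto h-{\bf M}(h)$ again strictly lowers ${\bf T}(h)$. Thus the successive values of ${\bf T}(h)$ form a strictly decreasing chain in the well-ordering $<$, the loop halts with $h=0$, the returned $g$ has $\supp(g)\subseteq{\bf N}({\sf I})$ --- its monomials are exactly the ${\bf M}(h)$ routed to the ``Else'' branch --- so $g\in{\Bbb K}[{\bf N}({\sf I})]$, and the accumulated equality $f-g=\sum_{i=1}^\mu c_i\lambda_i\star g_i$ satisfies $\lambda_1\circ{\bf T}(g_1)>\dots>\lambda_\mu\circ{\bf T}(g_\mu)$, that is, it is a left strong Gr\"obner representation. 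This yields the existence half of (3); and since the argument used nothing about $G$ beyond its being a Gr\"obner basis of ${\sf I}$, it also proves (4).

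Next I would establish ${\sf I}\cap{\Bbb K}[{\bf N}({\sf I})]=\{0\}$: a nonzero $h$ in this intersection would have ${\bf T}(h)\in{\bf T}\{{\sf I}\}$ (as $h\in{\sf I}$) and ${\bf T}(h)\in{\bf N}({\sf I})={\Cal T}^{(m)}\setminus{\bf T}\{{\sf I}\}$ (as $\supp(h)\subseteq{\bf N}({\sf I})$), a contradiction. Uniqueness in (3) is then immediate, since two candidates for $\Can(f,{\sf I})$ differ by an element of ${\sf I}\cap{\Bbb K}[{\bf N}({\sf I})]$; so $\Can(f,{\sf I})$ is well defined, and, combined with the existence half, every $f$ decomposes uniquely as $(f-g)+g$ with $f-g\in{\sf I}$, $g\in{\Bbb K}[{\bf N}({\sf I})]$ --- this is (1). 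Statement (2) follows because the composite ${\Bbb K}[{\bf N}({\sf I})]\hookrightarrow{\sf R}^m\twoheadrightarrow{\sf A}$ has kernel ${\sf I}\cap{\Bbb K}[{\bf N}({\sf I})]=\{0\}$ and is onto by existence, hence is a ${\Bbb K}$-module isomorphism. The remaining two clauses of (3) are formal consequences of uniqueness: $\Can(f_1,{\sf I})=\Can(f_2,{\sf I})$ forces $f_1-f_2\in{\sf I}$; conversely $f_1-f_2\in{\sf I}$ makes $\Can(f_2,{\sf I})\in{\Bbb K}[{\bf N}({\sf I})]$ a representative with $f_1-\Can(f_2,{\sf I})\in{\sf I}$, so by uniqueness $\Can(f_1,{\sf I})=\Can(f_2,{\sf I})$; specialising to $f_2=0$ gives $\Can(f,{\sf I})=0\iff f\in{\sf I}$.

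The one nonroutine point, where I would take care, lies inside the first step: confirming via Corollary~\ref{c46CoX2X2X2} and the invertibility of leading coefficients over ${\Bbb K}$ that the subtracted monomial genuinely cancels ${\bf M}(h)$, and that no already-cleared term can reappear, so that ${\bf T}(h)$ strictly decreases at every pass and the loop cannot cycle. Once the algorithm is seen to terminate correctly, (1), (2) and the rest of (3) are just the one-line trivial-intersection argument plus bookkeeping.
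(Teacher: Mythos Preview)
Your proposal is correct and matches the paper's approach: the paper does not give an explicit proof of this lemma, treating it as a standard consequence of the Canonical Form Algorithm in Figure~\ref{BuCan} (with a reference to \cite[Lemma~22.2.12]{SPES}), and your argument is precisely the detailed verification --- termination of the algorithm via the well-ordering, correctness of the reduction step using Corollary~\ref{c46CoX2X2X2} and invertibility in ${\Bbb K}$, and the trivial-intersection observation ${\sf I}\cap{\Bbb K}[{\bf N}({\sf I})]=\{0\}$ --- that the paper leaves implicit.
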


\begin{Definition} {\rm (cf.   \cite[Definition~22.2.13]{SPES})}
For each $f\in {\sf R}^m$ the unique element $$g := \Can(f,{\sf I})
\in  {\Bbb K}[{\bf N}({\sf I})]$$ such that $f - g \in {\sf I}$ will be called the 
(left, right, bilateral, restricted) {\em canonical
form}\index{canonical!form}\index{form!canonical} of $f$  w.r.t. ${\sf I}$.
\qed\end{Definition}

\begin{Corollary}\label{C46x1} {\rm (cf.   \cite[Corollary~22.3.14]{SPES})}    If $R = {\Bbb K}$ is a skew field,
there is a unique set $G
\subset {\sf I}$ such that 
\begin{itemize}
\item ${\bf T}\{G\}$ is an irredundant basis of ${\bf T}({\sf I})$;
\item for each $g\in G, \lc(g) = 1$;
\item for each $g\in G, g = {\bf T}(g) - \Can({\bf T}(g),{\sf I}).$
\end{itemize}
$G$ is called the (left, right, bilateral, restricted) {\em reduced
Gr\"obner basis}\index{Gr\"obner!basis!reduced} of ${\sf I}$. 
\qed\end{Corollary}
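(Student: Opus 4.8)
The plan is to reduce the statement to two facts already available: (a) the monomial module ${\bf T}\{{\sf I}\}\subset{\Cal T}^{(m)}$ possesses a \emph{unique} irredundant generating set, which I will call $N$; and (b) by Lemma~\ref{LeCan} every $f\in{\sf R}^m$, in particular every term $\tau\in{\Cal T}^{(m)}$, has a unique canonical form $\Can(\tau,{\sf I})\in{\Bbb K}[{\bf N}({\sf I})]$ with $\tau-\Can(\tau,{\sf I})\in{\sf I}$. Granting (a), I would set $g_\tau:=\tau-\Can(\tau,{\sf I})$ for each $\tau\in N$ and propose $G:=\{g_\tau:\tau\in N\}$ as the desired set; since ${\Cal T}^{(m)}\cong{\Bbb N}^n\times\{1,\ldots,m\}$ is Noetherian, $N$ and hence $G$ are finite.

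For existence, the only thing to check is that ${\bf T}(g_\tau)=\tau$ and $\lc(g_\tau)=1$. First, $g_\tau\in{\sf I}$ by the defining property of $\Can$ in Lemma~\ref{LeCan}. Since $\tau\in N\subseteq{\bf T}\{{\sf I}\}$, $\tau$ does not lie in ${\bf N}({\sf I})={\Cal T}^{(m)}\setminus{\bf T}\{{\sf I}\}$, so it survives with coefficient $1$ in $g_\tau=\tau-\Can(\tau,{\sf I})$. Moreover no term $t\in\supp(\Can(\tau,{\sf I}))$ can satisfy $t>\tau$: otherwise $t$ would be the leading term of $g_\tau\in{\sf I}$, forcing $t\in{\bf T}\{{\sf I}\}$, contradicting $t\in{\bf N}({\sf I})$. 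Hence ${\bf T}(g_\tau)=\tau$ and $\lc(g_\tau)=1$, so ${\bf T}\{G\}=N$ is an irredundant basis of ${\bf T}({\sf I})$, every $g\in G$ has $\lc(g)=1$, and $g_\tau={\bf T}(g_\tau)-\Can({\bf T}(g_\tau),{\sf I})$; that is, the three defining conditions hold, and since ${\bf T}\{G\}$ generates ${\bf T}({\sf I})$, $G$ is in addition a Gr\"obner basis of ${\sf I}$.

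For uniqueness, suppose $G'\subset{\sf I}$ also satisfies the three conditions. The third condition shows that $g'\mapsto{\bf T}(g')$ is injective on $G'$ (two elements sharing a leading term $\tau$ would both equal $\tau-\Can(\tau,{\sf I})$), and the first condition together with the uniqueness in (a) gives ${\bf T}\{G'\}=N$; thus $G'=\{g'_\tau:\tau\in N\}$ with ${\bf T}(g'_\tau)=\tau$. Then the third condition forces $g'_\tau={\bf T}(g'_\tau)-\Can({\bf T}(g'_\tau),{\sf I})=\tau-\Can(\tau,{\sf I})=g_\tau$, the last equality by the uniqueness clause of Lemma~\ref{LeCan}, so $G'=G$.

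The only genuinely non-routine ingredient is (a), and I expect it to be the main obstacle. For the left case, ${\bf T}\{{\sf I}\}$ is closed under $\circ$-divisibility: if $\tau={\bf T}(f)$ with $f\in{\sf I}$ then $\lambda\circ\tau={\bf T}(\lambda\star f)\in{\bf T}\{{\sf I}\}$ by Corollary~\ref{c46CoX2X2X2}. So ${\bf T}\{{\sf I}\}$ is an order filter of ${\Cal T}^{(m)}$ under divisibility; its set of minimal elements is finite by a classical Dickson-type argument, generates ${\bf T}\{{\sf I}\}$, and is contained in every generating set (if a minimal $\mu$ is divisible by a generator $\nu\in{\bf T}\{{\sf I}\}$, then $\nu=\mu$), hence it is the unique irredundant basis. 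For the right, bilateral and restricted variants one only has to observe that, ${\Cal T}$ being commutative, ${\bf T}_L$, ${\bf T}_R$, ${\bf T}_2$ and ${\bf T}_W$ produce the same divisibility filter on a set of terms, so the same argument applies verbatim; likewise ${\bf T}\{{\sf I}\}$ is closed under the relevant operation by the corresponding parts of Corollary~\ref{c46CoX2X2X2}.
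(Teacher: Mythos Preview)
Your proof is correct and complete. The paper itself gives no explicit argument for this corollary: it is stated with a bare \qed\ and a pointer to \cite[Corollary~22.3.14]{SPES}, treating the result as an immediate consequence of Lemma~\ref{LeCan}. Your write-up supplies precisely the routine details the paper omits --- the Dickson-type uniqueness of the minimal generating set $N$ of ${\bf T}\{{\sf I}\}$, the verification that $g_\tau:=\tau-\Can(\tau,{\sf I})$ has leading term $\tau$ and leading coefficient $1$, and the uniqueness argument --- so there is no methodological difference to discuss, only a difference in level of detail.
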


Note that the algorithm described for right canonical forms is assuming that each $\alpha_i$ is an automorphism; alternatively we can assume that ${\sf R}$ is given as a right $R$-module in which case the theory can be developped symmetrically.

\section{Szekeres Theory}


Let 
${\sf I} \subset {\sf R}^m$ be a (left-bilateral) 
module; if 
we denote
for each 
$\tau\in{\Cal T}^{(m)}$, ${\sf I}_\tau$ the additive group 
$${\sf I}_\tau := \{\lc(f) : f\in{\sf I}, {\bf T}(f) = \tau\}\cup\{0\}\subset R,$$
${\Frak I} := \{{\sf I}_\tau : \tau\in {\Cal T}^{(m)}\}$ and,
for each ideal ${\Frak a}\subset R$ , 
$T_{\Frak a}$ and $L_{\Frak a}$ the sets
$$T_{\Frak a} := \{\tau\in{\Cal T}^{(m)} : {\sf I}_\tau \supseteq{\Frak a}\}\subset{\Cal T}^{(m)}
\And L_{\Frak a} := \{\tau\in{\Cal T}^{(m)} : {\sf I}_\tau ={\Frak a}\}\subset{\Cal T}^{(m)},$$
 we have
\begin{enumerate}
\item  for each $\tau\in{\Cal T}^{(m)}$, ${\sf I}_\tau\subset R$ is a left ideal;
\item for each ideals ${\Frak a},{\Frak b}\subset R$,
${\Frak a}\subset{\Frak b} \then T_{\Frak a} \supset T_{\Frak b};$
\item $T_{\Frak a} = \bigsqcup\limits_{{\Frak b}\supseteq{\Frak a}}  L_{\Frak b}$,
$L_{\Frak a} = T_{\Frak a}\setminus\bigcup\limits_{{\Frak b}\supsetneqq{\Frak a}}  T_{\Frak b}$;
\item for terms $\tau,\omega\in{\Cal T}^{(m)}$, $\tau\mid\omega \then {\sf I}_\tau\subset{\sf I}_\omega$;
\item for each ideal ${\Frak a}\subset R$, $T_{\Frak a}\subset{\Cal T}^{(m)}$ is a right semigroup module.
\qed\end{enumerate}

 If $R$ is a skew field, the situation is quite trivial: for any ideal ${\sf I}$ we have
$${\Frak I} = \{(0), R\}, T_{R} = L_{R} = {\bf T}({\sf I}),
T_{(0)} = {\Cal T}^{(m)}, L_{(0)} =  {\Cal T}^{(m)}\setminus{\bf T}({\sf I}).$$

Szekeres notation is related with a pre-Buch\-ber\-ger construction of ``canonical'' ideals for the case of polynomial rings $R[Y_1,\ldots,Y_n]$ over a PID $R$.

In connection  recall that  \cite{C1,C2}
a  not necessarily commutative  ring $R$ is called a
(left, right, bilateral)  {\em B\'ezout ring} if every finitely generated (left, right, bilateral) ideal is principal 
and is called a {\em B\'ezout domain} if it is both a B\'ezout ring and is a domain, 
and  remark that, if $R$ is a noetherian (left, bilateral) B\'ezout ring, then  for each 
$\tau\in{\Cal T}^{(m)}$,
there is  a value $c_\tau\in R$ satisfying ${\sf I}_\tau = {\Bbb I}(c_\tau)$.

\begin{Definition}\label{SzeDef} With the present notation, we call {\em Szekeres ideal}\index{Szekeres!ideal}\index{ideal!Szekeres} each ideal  ${\sf I}_\tau\subset R$  and {\em Szekeres level}\index{Szekeres!level}
each set $L_{\Frak a}\subset {\Cal T}^{(m)}$,  {\em Szekeres semigroup}\index{Szekeres!semigroup}
each semigroup $T_{\Frak a}\subset {\Cal T}^{(m)}$.

Finally, if $R$ is a noetherian left  B\'ezout ring we call
 {\em Szekeres generator}\index{Szekeres!principal generator} each 
value $c_\tau\in R$ satisfying ${\sf I}_\tau =  {\Bbb I}_L(c_\tau)$.
\end{Definition} 

Note that if $R$ is  a 
noetherian  B\'ezout ring, we have, 
$$\omega\mid\tau \then c_\tau \mid_L \alpha_\lambda(c_\omega)  \Forall 
\lambda,\rho\in{\Cal T} \textrm{ s.t. } \tau=\lambda\circ\omega\circ\rho.$$

\begin{Proposition}[Szekeres] \cite{S}\label{SzePro} Let $R$ be  a 
noetherian left B\'ezout ring and ${\sf I} \subset {\sf R}^m$ be a (left,bilateral) module.
Denote
$${\sf T} := \left\{\tau\in{\Cal T}^{(m)} \textrm{ s.t. } c_\tau\notin{\Bbb I}(\alpha_\lambda(c_\omega), \,   
\omega\in {\Cal T}^{(m)}, \lambda,\rho\in{\Cal T},\tau=\lambda\circ\omega\circ\rho)\right\}\subset {\Cal T}^{(m)}$$
and fix, for each $\tau\in{\sf T}$, any element $f_\tau\in{\sf I}$ such that\footnote{Of course for the extreme case ${\sf I}_\tau = (0)$ so that $c_\tau = 0$, we have $f_\tau := 0$.
 } ${\bf M}(f_\tau) = c_\tau\tau$.\\
Then the basis 
$S_w := \left\{f_\tau \textrm{ s.t. } \tau\in {\sf T}\right\}$   
is a left/bilateral weak Gr\"obner basis of ${\sf I}$.
\end{Proposition}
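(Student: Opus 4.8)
The plan is to show that every nonzero $f\in{\sf I}$ has a left/bilateral weak Gr\"obner representation in terms of $S_w$; by Proposition~\ref{57P1} (the equivalence $(4)\iff(5)$) this is equivalent to $S_w$ being a weak Gr\"obner basis. Since $<$ is a well-ordering on ${\Cal T}^{(m)}$, I would argue by Noetherian induction on ${\bf T}(f)$: given $f\in{\sf I}\setminus\{0\}$ with ${\bf T}(f)=\tau$ and $\lc(f)=c\in{\sf I}_\tau\setminus\{0\}$, the goal is to produce an element of the form $\sum_i a_i\lambda_i\star f_{\tau_i}\star b_i\rho_i$ (bilateral case; drop the $\rho_i$'s in the left case) whose maximal monomial equals ${\bf M}(f)=c\tau$ and all of whose summands have $\circ$-leading term exactly $\tau$; subtracting it from $f$ strictly decreases the maximal term and the induction closes.

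First I would fix $\tau\in{\Cal T}^{(m)}$ and analyze ${\sf I}_\tau$. Because $R$ is a Noetherian left B\'ezout ring, ${\sf I}_\tau={\Bbb I}_L(c_\tau)$ for the Szekeres generator $c_\tau$, and by the definition of ${\sf T}$ we have $c_\tau\in{\Bbb I}_L\bigl(\alpha_\lambda(c_\omega):\omega\in{\Cal T}^{(m)},\ \lambda,\rho\in{\Cal T},\ \tau=\lambda\circ\omega\circ\rho\bigr)$ whenever $\tau\notin{\sf T}$; when $\tau\in{\sf T}$ itself, $f_\tau\in S_w$ contributes $c_\tau$ directly. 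Combining these, $c_\tau$ — and hence every element of ${\sf I}_\tau$ — lies in the left ideal generated by the finitely many $\alpha_\lambda(c_\omega)$ with $\omega\in{\sf T}$ and $\tau=\lambda\circ\omega\circ\rho$ (one finishes the descent from general $\omega$ to $\omega\in{\sf T}$ by a second, nested Noetherian induction, using point 4 of the Szekeres list, ${\sf I}_\omega\subset{\sf I}_\tau$, together with the divisibility remark $c_\tau\mid_L\alpha_\lambda(c_\omega)$). So write $c=\sum_i r_i\,\alpha_{\lambda_i}(\lc(f_{\tau_i}))$ with $r_i\in R$, $f_{\tau_i}\in S_w$, $\tau=\lambda_i\circ\tau_i\circ\rho_i$. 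The key computational point, supplied by Corollary~\ref{c46CoX2X2X2} together with \eqref{c46Eq2} and \eqref{c46Eq3}, is that ${\bf M}(a\lambda_i\star f_{\tau_i}\star b\rho_i)=\alpha_{\lambda_i}(\lc(f_{\tau_i}))\,\alpha_{\lambda_i\tau_i}(b)\,\tau$ and its leading term is $\lambda_i\circ\tau_i\circ\rho_i=\tau$; choosing the $b$'s as needed (e.g. $b_i=1$, absorbing $r_i$ into $a_i$) one assembles $h:=\sum_i a_i\lambda_i\star f_{\tau_i}\star b_i\rho_i\in{\sf I}$ with $\lc(h)=c$, ${\bf T}(h)=\tau$, and every summand having $\circ$-leading term $\tau$.

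Then ${\bf T}(f-h)<\tau$; if $f-h=0$ we are done, otherwise $f-h\in{\sf I}\setminus\{0\}$ has strictly smaller maximal term and by the induction hypothesis has a weak Gr\"obner representation, which we concatenate with $h$'s representation. This gives a weak Gr\"obner representation of $f$, establishing condition $(4)$ of Proposition~\ref{57P1} and hence $(5)$: $S_w$ is a left (resp.\ bilateral) weak Gr\"obner basis of ${\sf I}$.

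The main obstacle I expect is the reduction from arbitrary $\omega$ appearing in the definition of ${\sf T}$ to $\omega\in{\sf T}$, i.e.\ proving that the left ideal ${\sf I}_\tau$ is actually generated by the $\alpha_\lambda(c_\omega)$ with $\omega\in{\sf T}$ and not merely by those with arbitrary $\omega$. This requires the nested Noetherian induction on the (well-founded) divisibility order among terms together with careful bookkeeping of how $\alpha_\lambda$ interacts with the factorization $\tau=\lambda\circ\omega\circ\rho$ — specifically that if $\omega\notin{\sf T}$ then $c_\omega\in{\Bbb I}_L(\alpha_\mu(c_{\omega'}):\omega=\mu\circ\omega'\circ\nu)$, and applying $\alpha_\lambda$ (a ring homomorphism, by Lemma~\ref{OreLe+}) and the multiplicativity $\alpha_\lambda\alpha_\mu=\alpha_{\lambda\circ\mu}$ propagates this into a generation statement at level $\tau$. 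Everything else is a routine application of the monomial arithmetic already recorded in Section~2.1 and of the well-ordering property of $<$.
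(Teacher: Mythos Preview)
Your proof is correct and shares the same core computation with the paper --- expressing $\lc(f)$ as $\sum_i d_i\,\alpha_{\lambda_i}(c_{\omega_i})$ with $\omega_i\in{\sf T}$ and $\tau=\lambda_i\circ\omega_i\circ\rho_i$, and then reading this off as a decomposition of ${\bf M}(f)$ --- but you take a longer path to the conclusion. The paper does not build a full weak Gr\"obner representation of $f$ and does not induct on ${\bf T}(f)$: it simply verifies condition~(iv)/(v) of the weak-Gr\"obner-basis definition directly, writing $\lc(f)=dc_\tau$ and then ${\bf M}(f)=\sum_i (dd_i\lambda_i)\ast{\bf M}(f_{\omega_i})\ast\rho_i$, and stops there. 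Your outer Noetherian induction on ${\bf T}(f)$ is therefore unnecessary once you have decided to invoke the equivalence $(4)\Leftrightarrow(5)$ of Proposition~\ref{57P1}; the single-step statement about ${\bf M}(f)$ already gives condition~(v). On the other hand, you are more careful than the paper about the point you flag as the ``main obstacle'': the paper simply asserts that the $\omega_i$ may be taken in ${\sf T}$, whereas you correctly identify that this requires a separate (and genuinely necessary) Noetherian descent along the divisibility order on ${\Cal T}^{(m)}$, using that each $\alpha_\lambda$ is a ring endomorphism (Lemma~\ref{OreLe+}) and that $\alpha_\lambda\alpha_\mu=\alpha_{\lambda\circ\mu}$.
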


\begin{proof} For each $f\in{\sf I}$, denoting $\tau := {\bf T}(f)$ we have $\lc(f)\in{\Bbb I}_L(c_\tau)$ and $\lc(f) = dc_\tau$ for suitable $d\in R\setminus\{0\}$.
Thus if $\tau\in{\sf T}$ we have ${\bf M}(f) = d {\bf M}(f_\tau)$; if, instead, $\tau\not\in{\sf T}$
there are suitable 
$d_i,\in R\setminus\{0\},\omega_i\in{\sf T}\subset{\Cal T}^{(m)},\lambda_i,\rho_i\in{\Cal T}$
for which
$\lambda_i\circ\omega_i\circ\rho_i=\tau$ and $c_\tau = \sum_i d_i\alpha_{\lambda_i}(c_{\omega_i})$
so that
\begin{eqnarray*}
{\bf M}(f) = d c_\tau \tau
&=&
d \left(\sum_i d_i \alpha_{\lambda_i}(c_{\omega_i})
\lambda_i\circ\omega_i\circ\rho_i\right) 
\\&=&
\sum_i \left(d d_i \lambda_i\right)\cdot \left(c_{\omega_i} \omega_i\right)\cdot\rho_i
\\&=&
\sum_i  \left(d d_i\lambda_i\right)\ast{\bf M}(f_{\omega_i})\ast\rho_i.\end{eqnarray*}
\end{proof}

\begin{Remark}\label{RightSzaRem+} Remark that  in the case in which 
each endomorphism $\alpha_\tau,\tau\in{\Cal T}^{(m)},$ is an automorphism, we can consider also {\em right} modules ${\sf I}$ to which we can associate 
$${\sf I}_\tau = \{\lc(f) : f\in{\sf I}, {\bf T}(f) = \tau\}\cup\{0\}$$
which are   {\em right} ideals themselves; in fact if we represent 
 $f\in{\sf R}^m$ as (see 
Remark~ \ref{OreRem1}) $f=\sum_{i=1}^n  Y^i \bar{a}_i$
and we denote $_\tau{\sf I}$ the {\em right} ideal
$$_\tau{\sf I} := \left\{c\in R : \tau c\in{\bf M}\{{\sf I}\}\right\}\cup\{0\}\subset R$$
then ${\sf I}_\tau$ is the right ideal $\alpha_\tau({_\tau{\sf I}})$.

However, in this setting, Szekeres Theory can be built more easily by considering the ideals 
$_\tau{\sf I}$ obtained through the right representation of 
Remark~\ref{OreRem1}
and adapting to them the results reported above.

Remark that  if an endomorphism $\alpha_\tau$ is not invertible, in general ${\sf I}_\tau$ is not an ideal but just an additive group.

Finally note that for {\em restricted} modules, one apply {\em verbatim},  the classical Szekeres theory and subistitute in the results above each instance of 
$\alpha_\lambda(c_\omega),  \tau=\lambda\circ\omega\circ\rho$ with $c_\omega,
\tau=\omega\circ\rho$
\qed\end{Remark}

\begin{Example}  In the  Ore extension
$${\sf R}:=R[Y;\alpha], R={\Bbb Z}_2[x] \Where\alpha : R \to R  : x \mapsto x^2$$
we can consider, as a left module, the two-sided ideal
${\Bbb I}_2(x) = {\Bbb I}_L\{xY^i: i\in{\Bbb N}\}$; we thus have 
$${\sf I}_\tau =  {\Bbb I}(x)\subset R \Forall 
\tau\in\{Y^i, i \geq 0\},$$
so that, setting ${\Frak a} :=  {\Bbb I}(x)\subset R$, it holds
${\Frak I} = \{ {\Frak a}\}, T_{\Frak a} = L_{\Frak a} = \{Y^i :i\in{\Bbb N}\},$
and $S_w=\{xY^i: i\in{\Bbb N}\}$ is both a weak and a strong   
 Gr\"obner basis of ${\Bbb I}_L(x)$.

For the right ideal
${\Bbb I}_R(xY)$ the sets ${\sf I}_\tau$ are {\em not}  ideals;
we have, {\em e.g.}
$${\sf I}_{Y^i} = \{x\phi(x^{e^i}) | \phi(x)\in k[x]\}.$$
  \end{Example}

\section{Zacharias canonical representation}\label{46S1G}

Following Zacharias approach  to Buchberger Theory  \cite{Z},  if each module ${\sf I}\subset{\sf R}^m$ has a groebnerian property, necessarily the same property must be satisfied at least by the modules 
${\sf I}\subset R^m\subset{\sf R}^m$ and thus such property in $R^m$ can be used to device a procedure granting the same property in ${\sf R}^m$. The most elementary application of Zacharias approach is the generalization of the property of canonical forms from the case in which $R={\Bbb K}$ is a skew field to the general case: all we need is an effective notion of canonical forms for modules in $R$:

\begin{Definition}[Zacharias]  \cite{Z} \label{c46D1}
 A   ring $R$ is said to have
{\em  canonical representatives}\index{canonical!representatives} if
 there is an algorithm  which, given an element $c\in R^m$ and a  (left,bilateral, right)  module
 ${\sf J}\subset R^m$,
computes a {\em unique} element $\Rep(c,{\sf J})$ such that
\begin{itemize}
\renewcommand\labelitemi{\bf --}
\item  $c-\Rep(c,{\sf J})\in{\sf J}$,
\item $\Rep(c,{\sf J}) = 0 \iff c\in{\sf J}$.
\end{itemize}
The set 
$${\bf Zach}(R^m/{\sf J}) := \Rep({\sf J}) := \left\{\Rep(c,{\sf J}) :c\in R^m \right\}\cong R^m/{\sf J}$$
is called the {\em canonical Zacharias representation} of the module $R^m/{\sf J}$.
\qed\end{Definition}

Remark that, for each $c,d\in R^m$ and each module ${\sf J}\subset R^m$, we have
$$c-d\in{\sf J} \iff \Rep(c,{\sf J})=\Rep(d,{\sf J}).$$

Using Szekeres notation for a  (left, right, bilateral)  module  ${\sf I} \subset {\sf R}^m$ we obtain 
\begin{itemize}
\item the partition 
${\Cal T}^{(m)} = {\bf L}({\sf I})\sqcup{\bf R}({\sf I})\sqcup{\bf N}({\sf I})$
of ${\Cal T}^{(m)}$
where 
\begin{itemize}
\renewcommand\labelitemi{\mathbf --}
\item ${\bf N}({\sf I}) := {\bf L}_{(0)} = \{\omega\in{\Cal T}^{(m)} : {\sf I}_\omega = (0)\}$,
\item ${\bf L}({\sf I}) := {\bf L}_R = \{\omega\in{\Cal T}^{(m)} : {\sf I}_\omega = R\}$,
\item ${\bf R}({\sf I}) := \left\{\omega\in{\Cal T}^{(m)} : {\sf I}_\omega  \notin\left\{(0),R\right\}\right\};$
\end{itemize}
\item  the {\em canonical Zacharias representation} 
\begin{eqnarray*}
{\bf Zach}({\sf R}^m/{\sf I}) := \Rep({\sf I}) := 
 \Bigl\{\Rep(c,{\sf I}) :c\in {\sf R}^m \Bigr\} &=&
\bigoplus\limits_{{\Frak a}\in{\Frak I}} \bigoplus\limits_{\tau\in L_{\Frak a}}
\Rep({\Frak a})\tau 
\\ &=& 
 \bigoplus\limits_{\tau\in{\Cal T}^{(m)}} \Rep({\sf I}_\tau)\tau
\\ &=& \bigoplus\limits_{\tau\in{\Cal T}^{(m)}}{\bf Zach}(R/{\sf I}_\tau)\tau
\cong {\sf R}^m/{\sf I}
\end{eqnarray*}
of the module ${\sf R}^m/{\sf I}.$
\end{itemize}

If $R$ has canonical representatives and there is an algorithm ({\em cf.} Definition~\ref{c46D3} (c), (e))  which, given an element $c\in R^m$ and a 
 (left, right, bilateral) 
module ${\sf J}\subset R^m$ computes the unique canonical representative
 $\Rep(c,{\sf J})$, an easy adaptation of  Figure~\ref{BuCan} allows to extend, from the field coefficients
 case to the Zacharias ring  \cite{Z,Zac} coefficients case, the notion of canonical forms, the algorithm  (Figure~\ref{BeLCanF}) for computing them and 
their characterizing properties:

\begin{Lemma}\label{c46LCan2} If $R$ has canonical representatives,
also ${\sf R}$ has  canonical representatives\index{Zacharias!canonical representation}.

With the present notation and denoting
${\sf A}$ the (left, right, bilateral) module ${\sf A} :={\sf R}^m/{\sf I}$
it holds:
\begin{enumerate}
\item ${\sf R}^m \cong {\sf I} \oplus  \Rep({\sf I});$
\item ${\sf A} \cong  \Rep({\sf I});$
\item for each  $f\in{\sf R}^m,$ there is a unique 
(left, right, bilateral) {\em canonical
form}\index{canonical!form} of $f$
$$g := \Can(f,{\sf I})  = 
\sum_{{\Frak a}\in{\Frak I}} \sum_{\tau\in L_{\Frak a}}\gamma(f,\tau,{\sf I},<) \tau \in \Rep({\sf I}),
\quad \gamma(f,\tau,{\sf I},<)\in \Rep({\sf I}_\tau),
$$
such that
\begin{itemize}
\item $f - g \in {\sf I}$,
\item  $\gamma(f,\tau,{\sf I},<) = \Rep(\gamma(f,\tau,{\sf I},<),{\sf I}_\tau)\in \Rep({\sf I}_\tau)$,
for each $\tau\in{\Cal T}^{(m)}$.
\end{itemize}

\noindent
Moreover:
\begin{enumerate}
\item $\Can(f_1,{\sf I}) = \Can(f_2,{\sf I}) \iff f_1-f_2 \in {\sf I};$
\item $\Can(f,{\sf I}) = 0 \iff f \in {\sf I};$
\end{enumerate}
\item for each $f\in{\sf R}^m, f- \Can(f,{\sf I})$ has a  
(left, right, bilateral) (weak, strong) Gr\"obner representation in terms of any 
 (weak, strong)  Gr\"obner basis.
\qed\end{enumerate}\end{Lemma}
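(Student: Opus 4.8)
The plan is to mimic, \emph{verbatim} where possible, the skew-field proof of Lemma~\ref{LeCan}, replacing the single scalar $\lc(h)\alpha_\lambda(\lc(\gamma))^{-1}$ of the field case by the Zacharias-canonical machinery in $R$. First I would set up the reduction algorithm itself (the analogue of Figure~\ref{BuCan}, the ``BeLCanF'' of the statement): given $f\in{\sf R}^m$ we build $h:=f$, $g:=0$ and loop while $h\neq 0$; at each step, with $\tau:={\bf T}(h)$ and $c:=\lc(h)$, we ask whether $c\in{\sf I}_\tau$ --- equivalently whether $\Rep(c,{\sf I}_\tau)=0$, which is decidable since $R$ has canonical representatives and $G$ is a Gr\"obner basis giving access to ${\sf I}_\tau$ via Szekeres generators. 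If $c\notin{\sf I}_\tau$, we replace $c$ by $\Rep(c,{\sf I}_\tau)$, i.e.\ subtract from $h$ a suitable combination $\sum_i a_i\lambda_i\star g_i$ of basis elements whose maximal monomials are $a_i\alpha_{\lambda_i}(\lc(g_i))\alpha_{\lambda_i\tau_i}(\cdots)\tau = $ (the $R$-syzygy expressing $c-\Rep(c,{\sf I}_\tau)$ in terms of the $c_{\tau_i}$'s, as in the proof of Proposition~\ref{SzePro}); this lowers nothing but replaces $\lc(h)$ by its canonical representative without changing ${\bf T}(h)$. Then we move ${\bf M}(h)$ into $g$ and subtract it from $h$, strictly decreasing ${\bf T}(h)$. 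Termination is immediate since $<$ is a well-ordering; what remains is to read off the four asserted properties.

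For (1) and (2): the map $f\mapsto\Can(f,{\sf I})$ just defined is $R$-linear on each graded piece, its image lies in $\Rep({\sf I})=\bigoplus_\tau\Rep({\sf I}_\tau)\tau$ by construction (every coefficient that survives into $g$ has been replaced by its $\Rep(\cdot,{\sf I}_\tau)$-value), and $f-\Can(f,{\sf I})\in{\sf I}$ because every monomial subtracted from $h$ during the run lies in ${\sf I}$ (it is a combination of the $g_i\in G\subset{\sf I}$). Conversely $\Can(f,{\sf I})=0$ forces $f\in{\sf I}$; and two elements with equal canonical forms differ by an element of ${\sf I}$. Uniqueness in (3) is the one genuinely new point: if $g,g'\in\Rep({\sf I})$ both satisfy $f-g,f-g'\in{\sf I}$ then $g-g'\in{\sf I}\cap\Rep({\sf I})$, so if $g\neq g'$ pick $\tau:={\bf T}(g-g')$; then $\lc(g-g')\in{\sf I}_\tau$, but $\lc(g-g')=\gamma(f,\tau,{\sf I},<)-\gamma(f,\tau,{\sf I},<)'$ is a difference of two elements of $\Rep({\sf I}_\tau)$ lying in ${\sf I}_\tau$, which by the defining property of $\Rep(\cdot,{\sf I}_\tau)$ (injectivity of $\Rep$ on cosets) must vanish --- contradiction. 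This gives (3), (3a), (3b) at once. Finally (4): the sequence of subtractions $h_{k}\mapsto h_{k+1}$ performed by the algorithm exhibits $f-\Can(f,{\sf I})$ as $\sum_i a_i\lambda_i\star g_i$ with, by the loop invariant, ${\bf T}(\lambda_i\star g_i)\leq{\bf T}(f)$ throughout (strictly decreasing across the ``Else'' branches), which is exactly a left weak Gr\"obner representation; since any other Gr\"obner basis $G'$ of ${\sf I}$ generates the same ${\bf T}({\sf I})$ and the same ${\sf I}_\tau$, rerunning the algorithm with $G'$ yields the same $\Can(f,{\sf I})$ and a Gr\"obner representation in terms of $G'$; the strong case is identical once one chooses $G'$ a strong Gr\"obner basis so that each step uses a single $g_i$ with ${\bf M}(g_i)\mid{\bf M}(h)$.

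The main obstacle I anticipate is not any single step but the bookkeeping in the ``replace $\lc(h)$ by $\Rep(\lc(h),{\sf I}_\tau)$'' move: one must be sure that the $R$-level canonical representative can actually be realized as the leading coefficient of $h$ minus an ${\sf R}$-combination of the $g_i$ whose maximal terms are all exactly $\tau$ and whose lower terms do not spoil the invariant ${\bf T}(h-g)\geq{\bf T}(h)$. This is precisely where one needs the hypothesis (stated in the lemma's preamble, ``{\em cf.} Definition~\ref{c46D3}(c),(e)'') that $R$ has an \emph{algorithm} computing $\Rep(c,{\sf J})$ together with an explicit expression of $c-\Rep(c,{\sf J})$ as an $R$-combination of the given generators of ${\sf J}$; feeding those generators (the Szekeres generators $c_{\tau_i}$ pulled back through the $\alpha_{\lambda_i}$ as in Proposition~\ref{SzePro}) into that algorithm produces the required ${\sf R}$-combination. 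Once that is granted, everything else is the routine well-ordering induction of the field case, and the left/right/bilateral variants are obtained by the notational convention of the Notation block, deleting the irrelevant one-sided factors.
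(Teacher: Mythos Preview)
Your proposal is correct and follows exactly the approach the paper intends: the paper does not give a written proof of this lemma at all --- it marks it with \qed, calling it ``an easy adaptation of Figure~\ref{BuCan}'' and supplying only the explicit algorithm of Figure~\ref{BeLCanF} (which is precisely the loop you describe: at each step compute $\gamma:=\Rep(\lc(h),{\sf I}_\tau)$, move $\gamma\tau$ into $g$, and if $c\neq\gamma$ subtract the combination realizing $c-\gamma$ in terms of the Gr\"obner basis). Your write-up actually supplies the details the paper omits, in particular the uniqueness argument via $\Rep({\sf I}_\tau)$ meeting each coset in a single point, and your identification of the one delicate bookkeeping point (lifting the $R$-level expression of $c-\Rep(c,{\sf I}_\tau)$ to an ${\sf R}$-combination with the correct leading terms) is exactly the content of the ``{\bf If} $c\neq\gamma$'' branch of Figure~\ref{BeLCanF}.
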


\renewcommand{\thefigure}{\arabic{figure}}
\begin{figure}{\small  
\caption{Canonical Form Algorithms}
\label{BeLCanF}
\hrule
\begin{list}{}{}
\item $(g, \sum_{i=1}^\mu a_i \lambda_i  \star  g_i)$ := {\bf LeftCanonicalForm}$(f,F)$
\begin{list}{}{}
\item {\bf where}
\item \begin{list}{}{}
          \item ${\sf R} := R[ {\Cal T}]$, $R$ a ring with canonical representatives,
           \item $f \in {\sf R}^m$,
           $F$ is the left Gr\"obner basis of the left module ${\sf I} \subset {\sf R}^m$,
          \item $g := \Can(f,{\sf I}) \in{\bf Rep}({\sf I})$, $a_i \in R\setminus\{0\}, \lambda_i\in {\Cal T}, g_i \in F$,
          \item $f-g = \sum_{i=1}^\mu a_i \lambda_i  \star  g_i$ is a left weak 
           Gr\"obner representation in terms of $F$,
           \end{list}
\item $h := f, \mu := 0, g := 0$
\item {\bf While} $h \neq 0$ {\bf do}
\item \begin{list}{}{}
           \item {\bf Let} $c\tau := {\bf M}(h), \gamma:=\Rep(c,{\sf I}_\tau)$
           \item $h := h -\gamma\tau, g := g+\gamma\tau$,
          \item {\bf If}  $c\neq\gamma$, 
                      {\bf let} $g_i\in F, \lambda_i\in{\Cal T}, a_i\in R\setminus\{0\}:$
                     \item \begin{list}{}{}
                               \item $c -  \gamma=\sum_{i=\mu+1}^\nu a_i\alpha_{\lambda_i}(\lc(g_i))$,
                                  ${\bf T}(g) =\lambda_i\circ {\bf T}(g_i), \mu < i \leq \nu$,     
                                \end{list}
                       \item  $h := h-\sum_{i=\mu+1}^{\nu} a_i \lambda_i  \star  g_i$, $\mu:= \nu$, 
                       \end{list}
\end{list}
\hrule
\item
\item $(g, \sum_{i=1}^\mu  g_i  \star  b_i \rho_i)$ := {\bf RightCanonicalForm}$(f,F)$
\begin{list}{}{}
\item {\bf where}
\item \begin{list}{}{}
          \item ${\sf R} := R[ {\Cal T}]$, $R$ a ring with canonical representatives,
           \item $f \in {\sf R}^m$,
           $F$ is the right Gr\"obner basis of the right module ${\sf I} \subset {\sf R}^m$,
          \item $g := \Can(f,{\sf I}) \in{\bf Rep}({\sf I})$, $b_i \in R\setminus\{0\}, \rho_i\in {\Cal T}, g_i \in F$,
          \item $f-g = \sum_{i=1}^\mu g_i  \star  b_i \rho_i$ is a right weak 
           Gr\"obner representation in terms of $F$,
      \end{list}
\item $h := f, \mu := 0, g := 0$
\item {\bf While} $h \neq 0$ {\bf do}
\item \begin{list}{}{}
       \item{\bf Let} $c\tau := {\bf M}(h), \gamma:=\Rep(c,{\sf I}_\tau)$
           \item $h := h -\gamma\tau, g := g+\gamma\tau$,
          \item {\bf If}  $c\neq\gamma$, 
                      {\bf let} $g_i\in F, \rho_i\in{\Cal T}, b_i\in R\setminus\{0\} :$
                     \item \begin{list}{}{}
                                \item $c -  \gamma=\sum_{i=\mu+1}^\nu \lc(g_i)b_i$,
                                 ${\bf T}(g) = {\bf T}(g_i)\circ\rho_i, \mu < i \leq \nu$,     
                                \end{list}
                       \item   $h := h -\sum_{i=\mu+1}^{\nu} g_i  \star  \alpha^{-1}_{{\bf T}(g_i)}(b_i) \rho_i,$ $\mu := \nu$, 
                       \end{list}
\end{list}
\hrule
\end{list}
}\end{figure}

\renewcommand{\thefigure}{\arabic{figure} (cont.)}\setcounter{figure}{1}
\begin{figure}{ 
\caption{Canonical Form Algorithms}
\hrule
\begin{list}{}{}
\item $(g, \sum_{i=1}^\mu a_i \lambda_i\star g_i\star  \rho_i)$ := {\bf BilateralCanonicalForm}$(f,F)$
\begin{list}{}{}
\item {\bf where}
\item \begin{list}{}{}
          \item ${\sf R} := R[ {\Cal T}]$, $R$ a ring with canonical representatives,
           \item $f \in {\sf R}^m$,
           $F$ is the bilateral Gr\"obner basis of the bilateral module ${\sf I} \subset {\sf R}^m$,
          \item $g := \Can(f,{\sf I}) \in{\bf Rep}({\sf I})$, $a_i\in R\setminus\{0\}, \lambda_i,\rho_i\in {\Cal T}, g_i \in F$,
          \item $f-g = \sum_{i=1}^\mu a_i \lambda_i\star  g_i\star \rho_i$ is a bilateral 
           Gr\"obner representation in terms of $F$,
      \end{list}
\item $h := f, \mu := 0, g := 0$
\item {\bf While} $h \neq 0$ {\bf do}
           \item {\bf Let} $c\tau := {\bf M}(h), \gamma:=\Rep(c,{\sf I}_\tau)$
           \item $h := h -\gamma\tau, g := g+\gamma\tau$,
          \item {\bf If}  $c\neq\gamma$, 
                    \item {\bf let} $g_i\in F, \lambda_i, \rho_i\in{\Cal T}, a_i \in R\setminus\{0\} :$
                     \item \begin{list}{}{}
                                \item $c -  \gamma=\sum_{i=\mu+1}^\nu a_i\alpha_{\lambda_i}(\lc(g_i))$,
                                 ${\bf T}(g) =\lambda_i\circ {\bf T}(g_i)\circ\rho_i, \mu < i \leq \nu$,     
                                \end{list}
                       \item $h := h - \sum_{i=\mu+1}^\nu a_i \lambda_i\star  g_i \star  \rho_i$, $\mu := \nu$, 
\end{list}
\end{list}
\hrule
}\end{figure}

\section{M\"oller's Lifting Theorem}\label{LeLiTh}
\subsection{Left case}
The validity of Eqs. (\ref{Eq3}) and  (\ref{Eq4}) allows  
 to intoduce the groebnerian  terminology and, as in the  standard theory of commutative polynomial rings over a field  \cite[\S~21.1-2]{SPES} or a Zacharias ring  \cite{Z},
 the ability of imposing a ${\Cal T}^{(m)}$-valuation on modules over ${\sf R}$ 
and its associated  graded Ore extension  ${\sf S} := G({\sf R})$ (see Remark~\ref{RemVal}).

The only twist w.r.t. the classical theory  is that there the ring was coinciding with its  associated  graded ring; here they coincide as  sets and as left $R$-modules,  but as rings  have two different multiplications.

Consequently, denoting by $\star$ the one of ${\sf R}$ and by $\ast$  the one of ${\sf S}$,
given a finite basis
$$F := \{g_1,\ldots,g_u\}\subset {\sf R}^m, 
g_i = {\bf M}(g_i)-p_i =: c_i \tau_i {\bf e}_{l_i} - p_i,$$
with respect to  the module ${\sf M} := {\Bbb I}_L(F)\subset {\sf R}^m$ we need to consider 
the morphisms
\begin{eqnarray*}
{\Frak s}_L : {\sf S}^u \to {\sf S}^m 
&:& {\Frak s}_L\left(\sum_{i=1}^u h_i e_i\right) := \sum_{i=1}^u h_i\ast {\bf M}(g_i),
\\
{\Frak S}_L : {\sf R}^u \to {\sf M}\subset{\sf R}^m 
&:& {\Frak S}_L\left(\sum_{i=1}^u h_i e_i\right) := \sum_{i=1}^u h_i\star g_i,
\end{eqnarray*}
where the symbols $\{e_1,\ldots,e_u\}$ denote the common canonical basis of
${\sf S}^u$ and ${\sf R}^u$ which, as $R$-modules, coincide.

We can then consider 
\begin{itemize}
\item the ${\Cal T}^{(m)}$-valuation $v : {\sf R}^u \to {\Cal T}^{(m)}$ 
defined,
 for each $\sigma  := \sum_{i=1}^u h_i e_i\in{\sf R}^u\setminus\{0\}$,
by
$$v(\sigma) := \max_<\{{\bf T}(h_i\star g_i)\} = 
\max_<\{{\bf T}_\prec(h_i)\circ{\bf T}_<(g_i)\} 
= 
\max_<\{{\bf T}_\prec(h_i)\circ\tau_i{\bf e}_{l_i}\} =: \delta\epsilon
$$
under which we further have ${\sf S}^u= G({\sf R}^u)$;
\item  the corresponding {\em leading form}\index{leading!form}\index{form!leading} 
${\Cal L}_L(\sigma) := \sum_{h\in H} {\bf M}(h_h) e_{h}\in {\sf S}^u $ -- which is ${\Cal T}^{(m)}$-homo\-geneous of ${\Cal T}^{(m)}$-degree $v(\sigma)=\delta\epsilon$
-- where
$$H := \left\{j : {\bf T}_<(h_j\star g_j) = {\bf T}_\prec(h_j)\circ\tau_j{\bf e}_{l_j}
=\delta\epsilon = v(\sigma)\right\}.$$
\end{itemize}

\begin{Definition}\label{46D2} 
With the notation above and denoting
for each set $S\subset {\sf R}^u$,
${\Cal L}_L\{S\} := \{{\Cal L}_L(g) : g\in S\}\subset {\sf S}^u$,
\begin{itemize}
\item for a left ${\sf R}$-module ${\sf N}\subset {\sf R}^u$,
a set $B\subset{\sf N}$ is called a left {\em standard basis}\index{standard!basis} 
if  ${\Bbb I}_L({\Cal L}_L\{B\}) = {\Bbb I}_L({\Cal L}_L\{{\sf N}\});$ 
\item for each $h\in {\sf N}$  a representation 
$$h = \sum_i h_i\star g_i : h_i\in {\sf R}, g_i \in B,$$ is called a 
 {\em left standard representation}\index{standard!representation} in ${\sf R}$ in terms
of $B$ iff $$v(h) \geq   v(h_i\star g_i), \mbox{\rm\ for each } i;$$ 
\item if $u\in \ker({\Frak s}_L)$ is ${\Cal T}^{(m)}$-homogeneous
and $U\in \ker({\Frak S}_L)$ is such that $u = {\Cal L}_L(U)$, we say that 
$u$ {\em lifts} to $U$, or $U$ is a {\em lifting} of $u$, or simply $u$ {\em has
a lifting};\index{lift}
\item a {\em left Gebauer--M\"oller set}\index{Gebauer--M\"oller!set} for $F$ is any 
${\Cal T}^{(m)}$-homogeneous basis of
$\ker({\Frak s}_L)$;
\item for each ${\Cal T}^{(m)}$-homogeneous element $\sigma\in{\sf R}^u$,
we say that ${\Frak S}_L(\sigma)$  has a 
left  {\em quasi-Gr\"obner representation}
 \index{quasi-Gr\"obner representation}
 in terms of $F$ if it can be written as
 ${\Frak S}_L(\sigma) = 
  \sum_{i=1}^u l_i \star  g_{i}$
with\\ $v(\sigma) > {\bf T}(l_i \star  g_{i}) = {\bf T}(l_i)\circ {\bf T}(g_i) \Forall i.$
\end{itemize}\end{Definition}

\begin{Remark} Note that 
each Gr\"obner representation of  ${\Frak S}_L(\sigma)$ in terms of $F$ gives also a 
quasi-Gr\"obner representation since ${\bf T}(l_i)\circ {\bf T}(g_i)\leq{\bf T}({\Frak S}_L(\sigma))<v(\sigma)$;
on the other side, a quasi-Gr\"obner representation grants only 
${\bf T}(l_i)\circ {\bf T}(g_i)<v(\sigma)$
but not necessarily 
${\bf T}(l_i)\circ {\bf T}(g_i)\leq{\bf T}({\Frak S}_L(\sigma))$, since in principle we could have 
${\bf T}({\Frak S}_L(\sigma)) < {\bf T}(l_i)\circ {\bf T}(g_i)<v(\sigma)$
so that we don't necessarily obtain a Gr\"obner representation of the S-polynomial
${\Frak S}_L(\sigma)$.

This relaxation was   introduced by
Gebauer-M\"oller in their reformulation of  Buchberger Theory for polynomial rings over a field  \cite{GM};
in that setting, it allowed to better remove useless S-pairs and thus granted  a more efficient reformulation of the algorithm; in the more general setting we are considering now, viz polynomials over {\em rings}, it becomes essential also for a smooth reformulation of the theory.
\qed\end{Remark}
Observe that  if $\sigma := \sum_{j=1}^u h_je_j\in\ker({\Frak S}_L)$ 
then denoting 
$$\delta\epsilon := v(\sigma) \And
H := \left\{j, 1\leq j\leq u : {\bf T}(h_j)\circ{\bf T}(g_j) =\delta\epsilon\right\},$$
its {\em leading form} ${\Cal L}_L(\sigma) := \sum_{j=1}^u d_j\lambda_je_j\in{\sf S}^u$ 
is 
${\Cal T}^{(m)}$-homogeneous of ${\Cal T}^{(m)}$-degree $v(\sigma) := \delta\epsilon\in{\Cal T}^{(m)}$,
satisfies
\begin{itemize}
\item $0 \neq d_j  \iff j\in H\And  {\bf M}(h_j) = d_j \lambda_j $,
\item  $\sum_{j=1}^u d_j\lambda_j\ast{\bf M}(g_j) 
 = \sum_{j\in H} (d_j\lambda_j)\ast(c_j \tau_j {\bf e}_{l_j})
=\left(\sum_{j\in H} \left(d_j\alpha_{\lambda_j}(c_j)\right)\cdot\left(\lambda_j\tau_j\right) \right)\epsilon = 0$,
\item $\sum_{j\in H} d_j \alpha_{\lambda_j}(\lc(g_j)) = 0$ and $\lambda_j\circ {\bf T}(g_j) = \delta\epsilon$ for each $j\in H$ , so that in particular
\item $\epsilon= {\bf e}_{l_j}$  for each $j\in H$,
\end{itemize}
and belongs to $\ker({\Frak s}_L)$.
 
\begin{Theorem}[M\"oller; Left Lifting Theorem] \emph{\cite{M}} \label{SAASSAAS}\

With the present notation and denoting 
$\GM(F)$   any left  Gebauer--M\"oller set for $F$, the following
conditions are equivalent: 
\begin{enumerate}
\item $F$ is a left Gr\"obner basis of ${\sf M}$;
\item $f \in {\sf M}\iff f$  has a left Gr\"obner representation in terms of $F$;
\item for each  
$\sigma\in\GM(F)$, 
the S-polynomial\index{S-polynomial} ${\Frak S}_L(\sigma)$ has a   quasi-Gr\"obner representation 
$${\Frak S}_L(\sigma) = \sum_{i=1}^u  l_i g_i;$$
\item each $\sigma\in\GM(F)$ has a lifting $\lift(\sigma)$;
\item each ${\Cal T}^{(m)}$-homogeneous element 
$u\in\ker({\Frak s}_L)$ has a lifting $\lift(u)$.
\end{enumerate}
\end{Theorem}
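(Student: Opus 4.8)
The plan is to establish the five conditions by closing the cycle
$$(1)\iff(2)\implies(3)\implies(4)\implies(5)\implies(2).$$
The equivalence $(1)\iff(2)$ I would not reprove but quote from Proposition~\ref{57P1}: it is the equivalence of its conditions $(5)$ and $(6)$, read in the left case with ${\sf M}={\Bbb I}_L(F)$, together with the observation that a left Gröbner representation in the sense of Definition~\ref{49D1} splits termwise into a left weak one and conversely. All the content therefore lies in the four implications among $(2)$--$(5)$; the converse halves of the biconditionals in $(2)$ are trivial, since a Gröbner representation of $f$ is in particular an expression of $f$ as a member of ${\Bbb I}_L(F)$.

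The two cheap implications come first. For $(2)\implies(3)$: for $\sigma\in\GM(F)$ the S-polynomial ${\Frak S}_L(\sigma)=\sum_j d_j\lambda_j\star g_j$ lies in ${\sf M}$, hence by $(2)$ has a left Gröbner representation; since $\sigma\in\ker({\Frak s}_L)$ annihilates the $v(\sigma)$-homogeneous component of ${\Frak S}_L(\sigma)$, one has ${\bf T}({\Frak S}_L(\sigma))<v(\sigma)$, and the Remark following Definition~\ref{46D2} converts the Gröbner representation into a quasi-Gröbner one. For $(3)\implies(4)$: from a quasi-Gröbner representation ${\Frak S}_L(\sigma)=\sum_i l_i\star g_i$ put $U:=\sigma-\sum_i l_ie_i\in{\sf R}^u$; then ${\Frak S}_L(U)={\Frak S}_L(\sigma)-\sum_i l_i\star g_i=0$, and because $v(\sum_i l_ie_i)=\max_i{\bf T}(l_i\star g_i)<v(\sigma)$ while $\sigma$ is ${\Cal T}^{(m)}$-homogeneous, subtracting does not disturb the leading form, so ${\Cal L}_L(U)=\sigma$ and $\sigma$ lifts to $U$.

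The implication $(4)\implies(5)$ is the recombination step. The key structural facts are that ${\Frak s}_L$ is a graded morphism --- it carries $e_i$ to the homogeneous monomial ${\bf M}(g_i)$ --- so $\ker({\Frak s}_L)$ is a graded ${\sf S}$-submodule of ${\sf S}^u$ generated by the homogeneous set $\GM(F)$; hence a homogeneous $u\in\ker({\Frak s}_L)$ of ${\Cal T}^{(m)}$-degree $T$ admits an expression $u=\sum_k m_k\ast\sigma_k$ with $\sigma_k\in\GM(F)$ and $m_k\in{\sf M}({\sf S})={\sf M}({\sf R})$ monomials, each $m_k\ast\sigma_k$ homogeneous of degree $T$ (take the degree-$T$ homogeneous components of a finite ${\sf S}$-combination). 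I would then set $\lift(u):=\sum_k m_k\star\lift(\sigma_k)$, viewing each $m_k$ inside ${\sf R}$: ${\sf R}$-linearity of ${\Frak S}_L$ gives ${\Frak S}_L(\lift(u))=\sum_k m_k\star{\Frak S}_L(\lift(\sigma_k))=0$, and Corollary~\ref{c46CoX2X2X2} / Eq.~(\ref{Eq4}) applied in each component gives ${\Cal L}_L(m_k\star\lift(\sigma_k))=m_k\ast{\Cal L}_L(\lift(\sigma_k))=m_k\ast\sigma_k$; summing, the degree-$T$ parts add up to $u\neq0$, so no cancellation occurs at valuation $T$ and ${\Cal L}_L(\lift(u))=u$. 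The point demanding care is precisely this degree bookkeeping across the change of multiplications $\ast\leadsto\star$, which is why Eq.~(\ref{Eq4}) is invoked rather than a naive degree count.

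Finally $(5)\implies(2)$ is the descent argument, which I expect to be the main obstacle. Given $0\neq f\in{\sf M}$, among all representations $f=\sum_i h_i\star g_i$ pick one minimizing $T:=v(\sum_i h_ie_i)=\max_i{\bf T}(h_i\star g_i)$, which exists since $<$ well-orders ${\Cal T}^{(m)}$; one always has $T\geq{\bf T}(f)$, and if $T={\bf T}(f)$ the representation is already a left Gröbner representation. If $T>{\bf T}(f)$, the degree-$T$ component of $\sum_i h_i\star g_i=f$ vanishes, so $\sigma:=\sum_{i\in H}{\bf M}(h_i)e_i$ with $H:=\{i:{\bf T}(h_i\star g_i)=T\}$ lies in $\ker({\Frak s}_L)$ and is homogeneous of degree $T$; by $(5)$ it lifts to $U=\sum_i u_ie_i\in\ker({\Frak S}_L)$ with ${\Cal L}_L(U)=\sigma$. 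Then $f=\sum_i(h_i-u_i)\star g_i+{\Frak S}_L(U)=\sum_i(h_i-u_i)\star g_i$, and this representation has strictly smaller $v$-value: for $i\in H$ the monomial ${\bf M}(h_i)$ cancels against the unique top monomial of $u_i$ (uniqueness from cancellativity of $\circ$ on ${\Cal T}$), and for $i\notin H$ both ${\bf T}(h_i\star g_i)$ and ${\bf T}(u_i\star g_i)$ are already $<T$; this contradicts minimality, forcing $T={\bf T}(f)$. The delicate part I would write out most carefully is the verification that after the substitution no summand $(h_i-u_i)\star g_i$ still reaches valuation $T$, which rests on the uniqueness of the degree-$T$ contribution in each component together with the exact meaning of ${\Cal L}_L(U)=\sigma$ (namely that the unique top monomial of $u_i$ equals ${\bf M}(h_i)$ for $i\in H$ and that $u_i$ contributes nothing at degree $T$ for $i\notin H$).
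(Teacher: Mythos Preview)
Your proposal is correct and follows essentially the same route as the paper: the paper proves the cycle $(1)\Rightarrow(2)\Rightarrow(3)\Rightarrow(4)\Rightarrow(5)\Rightarrow(1)$ directly, whereas you cite Proposition~\ref{57P1} for $(1)\iff(2)$ and close the loop at $(2)$, but the substantive steps --- the quasi-Gr\"obner-to-lift construction in $(3)\Rightarrow(4)$, the monomial recombination in $(4)\Rightarrow(5)$, and the well-ordered descent on $v$ in $(5)\Rightarrow(2)$/$(5)\Rightarrow(1)$ --- are identical to the paper's. The only cosmetic difference is that the paper's descent concludes with ${\bf M}(g)\in{\bf M}\{{\Bbb I}_L({\bf M}\{F\})\}$ (condition~(1)) while yours concludes with a Gr\"obner representation (condition~(2)), which amounts to reading off a different consequence of the same terminal equality $T={\bf T}(f)$.
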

 
 \begin{proof} \
\begin{description}
\item[$(1) \then (2)$] Let $f\in{\sf M}$; by assumption $${\bf M}(f) =
\sum_{i=1}^u a_i \lambda_i\ast {\bf M}(g_i)$$ where $(a_1 \lambda_1,\ldots,a_u
\lambda_u)$ is ${\Cal T}^{(m)}$-homogeneous  of ${\Cal T}^{(m)}$-degree ${\bf T}(f)$.

Therefore $g := f - \sum_{i=1}^u  a_i \lambda_i \star g_i \in {\sf M}$ and
${\bf T}(g) < {\bf T}(f)$.

Thus, we can assume by induction the existence of a  
 Gr\"obner representation
$g := \sum_{i=1}^u l_i\star g_i$ of $g$; whence
$f := \sum_{i=1}^u (a_i \lambda_i+l_i)\star g_i$
is the required  Gr\"obner representation  of $f$.

\item[$(2) \then (3)$] ${\Frak S}_L(\sigma)\in{\sf M}$ and ${\bf T}({\Frak S}_L(\sigma)) < v(\sigma).$
\item[$(3) \then (4)$] Let ${\Frak S}_L(\sigma) = \sum_{i=1}^u l_i \star g_i$ be a
quasi-Gr\"obner representation in terms of $F$; then 
${\bf T}(l_i\star g_i) < v(\sigma)$ so that 
$\lift(\sigma) := \sigma -
\sum_{i=1}^u l_i e_i$ is the required lifing of $\sigma$. 
\item[$(4) \then (5)$] Let 
$u := \sum_{i=1}^u a_i \lambda_i e_i, a_i \neq 0 \then  \lambda_i\circ \tau_i {\bf e}_{l_i} = v(u)$,
be a ${\Cal T}^{(m)}$-homogeneous element in $\ker({\Frak s}_L)$ of ${\Cal T}^{(m)}$-degree $v(u)$.

Then there are $c_\sigma\in R, \lambda_\sigma\in{\Cal T}$
for which
$$u  = \sum_{\sigma\in\GM(F)} c_\sigma \lambda_\sigma\ast \sigma,\quad
 \lambda_\sigma \circ v(\sigma) = v(u).$$
 
 For each $\sigma\in\GM(F)$ denote
 $$\bar{\sigma} := \sigma - \lift(\sigma) =  {\Cal L}_L(\lift(\sigma)) -
\lift(\sigma) := \sum_{i=1}^u l_{i\sigma}e_i$$
 and remark that
 ${\bf T}(l_{i\sigma})\circ \tau_i {\bf e}_{l_i} \leq v(\bar{\sigma}) < v(\sigma)$,
 ${\Frak S}_L(\lift(\sigma)) = 0$ and 
 ${\Frak S}_L(\bar{\sigma}) = {\Frak S}_L(\sigma).$
 
  It is sufficient to define
 $$\lift(u)  := \sum_{\sigma\in\GM(F)} c_\sigma \lambda_\sigma\star \lift(\sigma),
\And 
\bar{u}  := \sum_{\sigma\in\GM(F)} c_\sigma \lambda_\sigma\star \bar{\sigma}$$
to obtain
$\lift(u) = u - \bar{u},$ ${\Cal L}_L(\lift(u)) = u,$ ${\Frak S}_L(\bar{u}) = {\Frak S}_L(u),$ 
${\Frak S}_L(\lift(u)) = 0.$

\item[$(5) \then (1)$]
Let $g\in{\sf M}$, so that there are 
$l_i\in {\sf R},$
such that
$\sigma_1 := \sum_{i=1}^u l_i e_i\in {\sf R}^u$
satisfies 
$g = {\Frak S}_L(\sigma_1) = \sum_{i=1}^u l_i\star g_i.$

Denoting $H := \{i : {\bf T}(l_i)\circ \tau_i {\bf e}_{l_i} = v(\sigma_1) \}$, then 
either 
\begin{itemize}
\item 
${\bf T}(g) = v(\sigma_1)$, 
so that
${\bf M}(g) = \sum_{i\in H}  {\bf M}(l_i)\ast {\bf M}(g_{i})\in
{\bf M}\{{\Bbb I}_L({\bf M}\{F\})\}$ and we are through, or
\item ${\bf T}(g) < v(\sigma_1)$, 
$0 = \sum_{i\in H} {\bf M}(l_i)\ast {\bf M}(g_{i}) = {\Frak s}_L({\Cal L}_L(\sigma_1))$ and
the ${\Cal T}^{(m)}$-homoge\-neous element 
${\Cal L}_L(\sigma_1)\in \ker({\Frak s}_L)$
has a  lifting $U := {\Cal L}_L(\sigma_1)-\sum_{i=1}^u l'_i e_i$
with $$\sum_{i=1}^u l'_i\star g_i = 
\sum_{i\in H}  {\bf M}(l_i)\star g_i \And
{\bf T}(l'_i)\circ\tau_i {\bf e}_{l_i} < v(\sigma_1),$$
so that
$g = {\Frak S}_L(\sigma_2)$ and  $v(\sigma_2) < v(\sigma_1)$
for
$$\sigma_2 := 
\sum_{i\not\in H} l_i e_i + 
\sum_{i\in H}  \left(l_i-{\bf M}(l_i)\right) e_i +
\sum_{i=1}^u l'_i e_i
\in{\sf R}^u$$
\end{itemize}
and the claim follows by the well-orderedness of $<$.
\end{description}\end{proof}

\begin{Theorem}[Janet---Schreyer] \cite{J,Sch1,Sch2} \

With the same notation the equivalent conditions (1-5)  imply that 
\begin{enumerate}\setcounter{enumi}{5}
\item $\{\lift(\sigma) : \sigma\in\GM(F)\}$ is a left standard basis of $\ker({\Frak S}_L)$. 
\end{enumerate}
\end{Theorem}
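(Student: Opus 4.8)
The plan is to show that $\{\lift(\sigma) : \sigma\in\GM(F)\}$ generates $\ker({\Frak S}_L)$ as a left $\sf R$-module in the standard-basis sense, i.e.\ that its set of leading forms generates $\Jm_L({\Cal L}_L\{\ker({\Frak S}_L)\})$. Since the assumed conditions (1--5) are in force, the central lifting apparatus constructed in the proof of Theorem~\ref{SAASSAAS} is available: in particular the construction which, for a ${\Cal T}^{(m)}$-homogeneous $u\in\ker({\Frak s}_L)$, produces $\lift(u)$ with ${\Cal L}_L(\lift(u)) = u$. The key preliminary observation is that $\ker({\Frak s}_L) = {\Cal L}_L\{\ker({\Frak S}_L)\}$ (one inclusion is the definition of a lifting being a syzygy; the other is that any syzygy has a leading form lying in $\ker({\Frak s}_L)$, as spelled out in the bulleted computation preceding Theorem~\ref{SAASSAAS}). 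Hence it suffices to prove that, for every ${\Cal T}^{(m)}$-homogeneous $u\in\ker({\Frak s}_L)$, the element $\lift(u)$ lies in the left $\sf R$-module generated by $\{\lift(\sigma):\sigma\in\GM(F)\}$; the standard-basis property then follows because ${\Cal L}_L(\lift(u)) = u$ ranges over a generating set of $\ker({\Frak s}_L)$.

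First I would recall that, since $\GM(F)$ is by definition a ${\Cal T}^{(m)}$-homogeneous basis of $\ker({\Frak s}_L)$, any homogeneous $u\in\ker({\Frak s}_L)$ can be written $u = \sum_{\sigma\in\GM(F)} c_\sigma\lambda_\sigma\ast\sigma$ with $\lambda_\sigma\circ v(\sigma) = v(u)$ — this is exactly the decomposition used in the $(4)\then(5)$ step of Theorem~\ref{SAASSAAS}. Next I would define, mimicking that same step, $\lift(u) := \sum_{\sigma\in\GM(F)} c_\sigma\lambda_\sigma\star\lift(\sigma)$, which manifestly lies in the left $\sf R$-module generated by $\{\lift(\sigma)\}$. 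The content is then to verify that this element is genuinely a lifting of $u$, i.e.\ that ${\Frak S}_L(\lift(u)) = 0$ and ${\Cal L}_L(\lift(u)) = u$; both are carried out verbatim in the proof of Theorem~\ref{SAASSAAS}, using that ${\bf T}(l_{i\sigma})\circ\tau_i{\bf e}_{l_i} \le v(\bar\sigma) < v(\sigma)$ so that no cancellation of leading terms occurs among the summands of degree $v(u)$. This shows the leading form of a module element built from the $\lift(\sigma)$'s recovers $u$, giving the inclusion ${\Cal L}_L\{\ker({\Frak s}_L)\} \subseteq \Jm_L({\Cal L}_L\{\{\lift(\sigma)\}\})$ after combining over all homogeneous components.

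To finish, I would argue that an arbitrary $\rho\in\ker({\Frak S}_L)$ has a standard representation in terms of $\{\lift(\sigma)\}$: set $u := {\Cal L}_L(\rho)\in\ker({\Frak s}_L)$, write $u$ and $\lift(u)$ as above, and consider $\rho - \lift(u) \in \ker({\Frak S}_L)$; by construction $v(\rho - \lift(u)) < v(\rho)$, so by Noetherian induction on the well-order $<$ of ${\Cal T}^{(m)}$ (or equivalently on $v(\rho)$), $\rho - \lift(u)$ already has a standard representation, and adding back $\sum c_\sigma\lambda_\sigma\star\lift(\sigma)$, whose valuations are $\le v(\rho)$, yields one for $\rho$. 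The main obstacle, and the only place requiring genuine care rather than bookkeeping, is confirming that passing from the graded ring $\sf S$ to the filtered ring $\sf R$ does not spoil the valuation estimates — one must check that the identities $v(h\star g) = {\bf T}_\prec(h)\circ{\bf T}_<(g)$ and ${\Cal L}_L$-additivity up to strictly lower degree survive the non-commutative multiplication $\star$, which is precisely what Eqs.~(\ref{Eq3}) and (\ref{Eq4}) and Corollary~\ref{c46CoX2X2X2} guarantee; with those in hand the argument is the classical Schreyer/Janet one transported to the present setting.
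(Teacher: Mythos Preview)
Your proposal is correct and, at its core, follows the same route as the paper: the paper's proof (presented as $(4)\Rightarrow(6)$) is exactly your third paragraph --- take $\sigma_1\in\ker({\Frak S}_L)$, write ${\Cal L}_L(\sigma_1)=\sum_\sigma c_\sigma\lambda_\sigma\ast\sigma$, set $\sigma_2:=\sigma_1-\sum_\sigma c_\sigma\lambda_\sigma\star\lift(\sigma)$, check $\sigma_2\in\ker({\Frak S}_L)$ with $v(\sigma_2)<v(\sigma_1)$, and induct on the well-order. Your first two paragraphs add the observation that $\ker({\Frak s}_L)={\Cal L}_L\{\ker({\Frak S}_L)\}$ (immediate from condition~(5)) and that the standard-basis equality ${\Bbb I}_L({\Cal L}_L\{\lift(\sigma)\})={\Bbb I}_L({\Cal L}_L\{\ker({\Frak S}_L)\})$ then reduces to the defining property of a Gebauer--M\"oller set; this is a slight reorganisation rather than a different argument, and the paper leaves it implicit, proving directly that every element of $\ker({\Frak S}_L)$ has a standard representation in terms of the $\lift(\sigma)$.
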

 
\begin{proof}  
\begin{description}

\item[$(4) \then (6)$] Let 
$\sigma_1 := \sum_{i=1}^u l_i e_i\in\ker({\Frak S}_L)\subset{\sf R}^u.$ 

Denoting
$H := \{i : {\bf T}(l_i)\circ \tau_i {\bf e}_{l_i} = v(\sigma_1) \}$, we have
$${\Cal L}_L(\sigma_1) = \sum_{i\in H}  {\bf M}(l_i) e_i\in\ker({\Frak s}_L)$$
and there is a ${\Cal T}^{(m)}$-homogeneous representation
$${\Cal L}_L(\sigma_1) = \sum_{\sigma\in\GM(F)} c_\sigma \lambda_\sigma\ast \sigma,\, 
\lambda_\sigma\circ v(\sigma)=v(\sigma_1),
c_\sigma\in R, \lambda_\sigma\in{\Cal T}.$$
Then

\begin{eqnarray*}
\sigma_2 &:=& \sigma_1 -  \sum_{\sigma\in\GM(F)}c_\sigma
\lambda_\sigma\star \lift(\sigma)
\\ &=&
\sigma_1 -  \sum_{\sigma\in\GM(F)}c_\sigma
\lambda_\sigma\star (\sigma- \bar{\sigma})
\\ &=&
\left(\sigma_1-{\Cal L}_L(\sigma_1)\right)
+ \sum_{\sigma\in\GM(F)}c_\sigma
\lambda_\sigma\star  \bar{\sigma}
\\ &=&
\sum_{i\in H} \left(\left(l_i-{\bf M}(l_i)\right)+ \sum_{\sigma\in\GM(F)} \lambda_\sigma\star l_{i\sigma}\right) e_i
+
\sum_{i\notin H} \left(l_i+ \sum_{\sigma\in\GM(F)} \lambda_\sigma\star l_{i\sigma}\right) e_i
\end{eqnarray*}
satisfies both $\sigma_2\in\ker({\Frak S}_L)$ and
$v(\sigma_2) < v(\sigma_1)$;
thus the claim follows by induction.
\end{description}\end{proof}

\begin{Example}\label{GOeX}
Let us consider the ring of Example~\ref{GOeY} and three elements $f_1,f_2,f_3\in{\sf R}$ with 
$${\bf M}(f_1) = (5x-1)Y_1Y_2^2Y_3^2,
{\bf M}(f_2) =(5x-1)Y_1^2Y_2Y_3^2,
{\bf M}(f_3) =(5x-1)Y_1^2Y_2^2Y_3.$$

Under the natural  ${\Cal T}$-pseudovaluation on ${\sf R}^3$,
an element
\begin{equation}\label{SyzEq2}
\sigma:=\left(\alpha Y_1^{\alpha_1}Y_2^{\alpha_2}Y_3^{\alpha_3},
\beta Y_1^{\beta_1}Y_2^{\beta_2}Y_3^{\beta_3},
\gamma Y_1^{\gamma_1}Y_2^{\gamma_2}Y_3^{\gamma_3}\right)\in{\sf S}^3
\end{equation}
is homogeneous of ${\Cal T}$-degree $Y_1^{a+2}Y_2^{b+2}Y_3^{c+2}$ iff
$$\alpha_1 -1 = \beta_1 = \gamma_1 =: a,
\alpha_2 = \beta_2 -1 = \gamma_2 =: b,
\alpha_3 = \beta_3 = \gamma_3 -1 =: c.$$

Let us now specialize ourselves to the case $a=b=c=0$ and consider the ${\Bbb Z}$-module of the homogeneous syzygies  of ${\Cal T}$-degree $Y_1^{2}Y_2^{2}Y_3^{2}$; (\ref{SyzEq2}) 
is a syzygy in  $\ker({\Frak s}_L)$ iff
\begin{eqnarray*}
0&=&{\Frak s}_L(\sigma)
\\ &=&
\alpha Y_1\ast {\bf M}(f_1)+
\beta Y_2\ast {\bf M}(f_2)+
\gamma Y_3\ast {\bf M}(f_3)
\\ &=&
\left(\alpha (y^2-1)+\beta(y^3-1)+\gamma(y^4-1)\right)
Y_1^{2}Y_2^{2}Y_3^{2}.
\end{eqnarray*}

A minimal Gebauer-M\"oller set consists of
$$\sigma_1 := (-(y^2+y+1)Y_1,(y+1)Y_2,0) \And \sigma_2 := (-(y^2+1)Y_1,0,Y_3).$$

In fact  a generic syzygy (\ref{SyzEq2}) 
satisfies
$$\alpha(y+1)+\beta(y^2+y+1)+\gamma(y^2+1)(y+1)=0$$
so that 
$(y+1)\mid\beta$ and setting $\beta=(y+1)\delta$ we have
$\alpha=-\delta(y^2+y+1)-\gamma(y^2+1)$ whence
$$\sigma := \left(\left(-\delta(y^2+y+1)-\gamma(y^2+1)
\right)Y_1,
(y+1)\delta Y_2,\gamma Y_3\right)
=\delta\sigma_1+\gamma\sigma_2.$$ 
\end{Example}

\begin{Remark} \label{GOeR}
We can consider also the 
homogeneous syzygy of ${\Cal T}$-degree $Y_1^{2}Y_2^{2}Y_3^{2}$
$$\sigma_3 := (0,-(y^2+1)(y+1)Y_2,(y^2+y+1)Y_3) -(y^2+1)\sigma_1+(y^2+y+1)\sigma_2.$$

Moreover, since
$$1=(y^2+y+1)-y(y+1)=(y^3+y^2+y+1)-y(y^2+y+1)$$
setting 
$$\varsigma_A := (yY_1,Y_2,0)\in{\sf S}^3,\varsigma_B := (0,-yY_2,Y_3)\in{\sf S}^3$$
we have 
$${\Frak s}_L(\varsigma_A)={\Frak s}_L(\varsigma_B)=(y-1)Y_1^{2}Y_2^{2}Y_3^{2};$$
note that
$$\varsigma_A-\varsigma_B := (yY_1,(y+1)Y_2,Y_3)=\sigma_1+\sigma_2\in\ker({\Frak s}_L).$$
\qed\end{Remark}
\setcounter{Theorem}{34}
\begin{Example}[cont.]
Setting now $\tau:=Y_1^{a}Y_2^{b}Y_3^{c}$ and $z:=y^{2^{a}3^{b}4^{c}}$, for the syzygy (\ref{SyzEq2}) we have

\begin{eqnarray*}
0&=&{\Frak s}_L(\sigma) 
\\ &=&
\alpha \tau Y_1\ast {\bf M}(f_1)+
\beta \tau Y_2\ast {\bf M}(f_2)+
\gamma \tau Y_3\ast {\bf M}(f_3)
\\ &=&
\left(\alpha\tau\ast (y^2-1)+\beta\tau\ast(y^3-1)+\gamma\tau\ast(y^4-1)\right)
Y_1^{2}Y_2^{2}Y_3^{2}.
\\ &=&
\left(\alpha(z^2-1)+\beta(z^3-1)+\gamma(z^4-1)\right)
Y_1^{2}Y_2^{2}Y_3^{2}\tau
\end{eqnarray*}
whence
$$\alpha=-\delta(z^2+z+1)-\gamma(z^2+1),\quad \beta=(y+1)\delta$$
and
$$\sigma := \beta\tau\ast\sigma_1+\gamma\tau\ast\sigma_2.$$ 

Thus, $\{\sigma_1,\sigma_2\}$ is a minimal basis of $\ker({\Frak s}_L)$.
\end{Example}

\subsection{Bilateral case}\label{46S5B} 

Considering ${\sf R}$ as a left $R$-module, the adaptation of M\"oller lifting theorem to the bilateral case requires a few elementary adaptations; given a finite set
$$F := \{g_1,\ldots,g_u\}\subset {\sf R}^m, \,
g_i = {\bf M}(g_i)-p_i =: c_i \tau_i {\bf e}_{\iota_i} - p_i,$$
and the bilateral module ${\sf M} := {\Bbb I}_2(F)$, 
denote 
$$\hat{R} := \{a\in R: ah = a\star h = h\star a, \Forall h\in{\sf R}\}$$ 
the commutative subring $\hat{R}\subset R$ of $R$ consisting of the elements belonging   to the  center of ${\sf R}$ and remark that \index{center}
the subring of $R$ generated by ${\bf 1}_R$ is a subring of $\hat{R}$ and that
$\hat{R}$ is also a subring of the  center of the associated  graded Ore extension ${\sf S}$ of ${\sf R}$. 
 
Considering both the ${\sf R}$-bimodule
${\sf R}\otimes_{\hat{R}}{\sf R}^{\op}$ and the ${\sf S}$-bimodule
${\sf S}\otimes_{\hat{R}}{\sf S}^{\op}$, which, as sets, coincide, 
we impose on the bilateral ${\sf R}$-module $\left({\sf R}\otimes_{\hat{R}}{\sf R}^{\op}\right)^u$, whose canonical basis is denoted
$\{e_1,\ldots,e_u\}$ and whose generic element has the shape
$$\sum_i a_i \lambda_i e_{\ell_i} b_i \rho_i, \, a_i,b_i\in R\setminus\{0\},
\lambda_i,\rho_i\in{\Cal T}, 1\leq \ell_i \leq u,$$
the ${\Cal T}^{(m)}$-graded structure given by 
the valuation
$v :\left({\sf R}\otimes_{\hat{R}}{\sf R}^{\op}\right)^u \to {\Cal T}$
 as
$$v(\sigma) := 
 \max_<\{{\bf T}(\lambda_i\star  g_{\ell_i} \star \rho_i )\} =
\max_<\{\lambda_i\ast {\bf T}(g_{\ell_i} )\ast \rho_i\} = \max_<\{\lambda_i\circ \tau_{\ell_i} \circ \rho_i{\bf e}_{\iota_i}\}
 =: \delta\epsilon$$ 
for each
$$\sigma  :=\sum_i a_i \lambda_i e_{\ell_i} b_i \rho_i\in\left({\sf R}\otimes_{\hat{R}}{\sf R}^{\op}\right)^u\setminus\{0\}
$$ 
so that
$$G\left(\left({\sf R}\otimes_{\hat{R}}{\sf R}^{\op}\right)^u\right) =
\left(G\left({\sf R}\otimes_{\hat{R}}{\sf R}^{\op}\right)\right)^u=
\left({\sf S}\otimes_{\hat{R}}{\sf S}^{\op}\right)^u$$
and its corresponding
${\Cal T}^{(m)}$-homogeneous {\em leading form}\index{leading!form}\index{form!leading} 
is
$${\Cal L}_2(\sigma) := \sum_{h\in H} a_h \lambda_h e_{\ell_h}b_h\rho_h \in \left({\sf S}\otimes_{\hat{R}}{\sf S}^{\op}\right)^u$$
where
$H := \{ j :\lambda_j\circ \tau_j\circ\rho_j{\bf e}_{\iota_{\ell_j}} = v(\sigma) = \delta\epsilon\};$ we also denote, for each set $S\subset \left({\sf R}\otimes_{\hat{R}}{\sf R}^{\op}\right)^u$,
$${\Cal L}_2\{S\} := \{{\Cal L}_2(g) : g\in S\}\subset \left({\sf S}\otimes_{\hat{R}}{\sf S}^{\op}\right)^u.$$ 

We can therefore consider the morphisms
\begin{eqnarray*}
{\Frak s}_2 :  \left({\sf S}\otimes_{\hat{R}}{\sf S}^{\op}\right)^u \to {\sf S}^m
&:& {\Frak s}_2\left(\sum_i a_i \lambda_i e_{\ell_i} b_i   \rho_i\right) :=  
\sum_i a_i \lambda_i \ast {\bf M}(g_{\ell_i})\ast  b_i \rho_i,
\\
{\Frak S}_2 :  \left({\sf R}\otimes_{\hat{R}}{\sf R}^{\op}\right)^u \to {\sf R}^m
&:& {\Frak S}_2\left(\sum_i a_i \lambda_i e_{\ell_i} b_i  \rho_i\right) := 
\sum_i a_i \lambda_i \star g_{\ell_i}  \star  b_i \rho_i.
\end{eqnarray*}
\begin{Definition}
With the notation above 
\begin{itemize}
\item for a bilateral  ${\sf R}$-module ${\sf N}$,
a set $F\subset{\sf N}$ is called a bilateral  {\em standard basis}\index{standard!basis} 
if  
$${\Bbb I}_2({\Cal L}_2\{F\}) = {\Bbb I}_2({\Cal L}_2\{{\sf N}\});$$
\item for each $h\in {\sf N}$  a representation 
$$h =\sum_i a_i \lambda_i\star   g_{\ell_i}\star b_i \rho_i :
a_i,b_i\in R\setminus\{0\},
\lambda_i,\rho_i\in{\Cal T}, g_{\ell_i}\in F,$$ is called a 
 {\em standard representation}\index{standard!representation} in ${\sf R}$ in terms
of $F$ iff $$v(h)\geq v(\lambda_i\star g_{\ell_i}\star  \rho_i) = \lambda_i \circ v(g_{\ell_i})\circ  \rho_i, \mbox{\rm\ for each } i;$$ 
\item if $u\in \ker({\Frak s}_2)$ is ${\Cal T}^{(m)}$-homogeneous
and $U\in \ker({\Frak S}_2)$ is such that $u = {\Cal L}_2(U)$, we say that 
$u$ {\em lifts} to $U$, or $U$ is a {\em lifting} of $u$, or simply $u$ {\em has
a lifting};\index{lift}
\item a bilateral  {\em Gebauer--M\"oller set}\index{Gebauer--M\"oller!set} for $F$ is any 
${\Cal T}^{(m)}$-homogeneous basis of
$\ker({\Frak s}_2)$;
\item for each ${\Cal T}^{(m)}$-homogeneous element 
$\sigma\in \left({\sf R}\otimes_{\hat{R}}{\sf R}^{\op}\right)^u$,
we say that ${\Frak S}_2(\sigma)$  has a bilateral 
 {\em quasi-Gr\"obner representation}\index{Gr\"obner!representation!quasi}
 in terms of $G$ if it can be written as
$${\Frak S}_2(\sigma) = 
\sum_i a_i \lambda_i\star  g_{\ell_i}\star b_i \rho_i : 
a_i,b_i \in R\setminus\{0\}, \lambda_i,\rho_i\in{\Cal T}, g_{\ell_i}\in F$$
with 
$ \lambda_i\circ {\bf T}(g_{\ell_i})\circ  \rho_i < v(\sigma)$ for each $i$.
\qed\end{itemize}\end{Definition}

\begin{Theorem}[M\"oller--Pritchard]\label{50BiLiTh} 
With the present notation and denoting 
$\GM(F)$   any  bilateral Gebauer--M\"oller set for $F$, the following
conditions are equivalent: 
\begin{enumerate}
\item $F$ is a bilateral Gr\"obner basis of ${\sf M}$;
\item $f \in {\sf M}\iff f$  has a bilateral  Gr\"obner representation in terms of $F$;
\item for each $\sigma\in\GM(F)$, 
the bilateral S-polynomial ${\Frak S}_2(\sigma)$ has a bilateral quasi-Gr\"obner representation
${\Frak S}_2(\sigma) = \sum_{l=1}^\mu a_l \lambda_l \star g_{\ell_l}\star  b_l  \rho_l, $  in terms of $F$;
\item each $\sigma\in\GM(F)$ has a lifting $\lift(\sigma)$;
\item each ${\Cal T}^{(m)}$-homogeneous element 
$u\in\ker({\Frak s}_2)$ has a lifting $\lift(u)$.
\end{enumerate}
\end{Theorem}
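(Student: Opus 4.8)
The strategy is to run the same cycle of implications $(1)\Rightarrow(2)\Rightarrow(3)\Rightarrow(4)\Rightarrow(5)\Rightarrow(1)$ as in the left Lifting Theorem~\ref{SAASSAAS}, transporting each step through the dictionary: replace ${\sf R}^u$ by the bimodule $\left({\sf R}\otimes_{\hat{R}}{\sf R}^{\op}\right)^u$, the maps ${\Frak s}_L,{\Frak S}_L$ by ${\Frak s}_2,{\Frak S}_2$, the leading form ${\Cal L}_L$ by ${\Cal L}_2$, the valuation on syzygies by the bilateral one, and every one-sided product $\lambda\star g$ by a two-sided product $\lambda\star g\star\rho$. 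What legitimizes this transport is, on the one hand, the identification $G\left(\left({\sf R}\otimes_{\hat{R}}{\sf R}^{\op}\right)^u\right)=\left({\sf S}\otimes_{\hat{R}}{\sf S}^{\op}\right)^u$ and Eq.~(\ref{Eq4}) in the form ${\bf M}(l\star f\star r)={\bf M}(l)\ast{\bf M}(f)\ast{\bf M}(r)$, and on the other hand the fact that $\hat R$ lies in the centre of both ${\sf R}$ and ${\sf S}$, which is exactly what makes ${\Frak s}_2$ and ${\Frak S}_2$ well defined on the tensor products.

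The three ``easy'' implications go as in Theorem~\ref{SAASSAAS}. For $(1)\Rightarrow(2)$: given $f\in{\sf M}$, the bilateral Gr\"obner property gives ${\bf M}(f)\in{\bf M}\{{\Bbb I}_2({\bf M}\{F\})\}$, i.e. ${\bf M}(f)=\sum_i a_i\lambda_i\ast{\bf M}(g_{\ell_i})\ast b_i\rho_i$ with every summand of ${\Cal T}^{(m)}$-degree ${\bf T}(f)$; this is ${\Frak s}_2$ applied to a ${\Cal T}^{(m)}$-homogeneous element of degree ${\bf T}(f)$, so subtracting the corresponding ${\Frak S}_2$-image yields $g\in{\sf M}$ with ${\bf T}(g)<{\bf T}(f)$, and one concludes by well-founded induction on ${\bf T}$. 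For $(2)\Rightarrow(3)$: ${\Frak S}_2(\sigma)\in{\sf M}$ and ${\bf T}({\Frak S}_2(\sigma))<v(\sigma)$, so any bilateral Gr\"obner representation of ${\Frak S}_2(\sigma)$ is a fortiori a bilateral quasi-Gr\"obner representation. For $(3)\Rightarrow(4)$: if ${\Frak S}_2(\sigma)=\sum_l a_l\lambda_l\star g_{\ell_l}\star b_l\rho_l$ is quasi-Gr\"obner then $\lift(\sigma):=\sigma-\sum_l a_l\lambda_l e_{\ell_l}b_l\rho_l$ satisfies ${\Frak S}_2(\lift(\sigma))=0$ and ${\Cal L}_2(\lift(\sigma))=\sigma$, since each deleted summand has ${\Cal T}^{(m)}$-degree strictly below $v(\sigma)$.

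The two substantive steps are $(4)\Rightarrow(5)$ and $(5)\Rightarrow(1)$. For $(4)\Rightarrow(5)$: a ${\Cal T}^{(m)}$-homogeneous $u\in\ker({\Frak s}_2)$ of degree $v(u)$ is, by the definition of a bilateral Gebauer--M\"oller set, an $\hat R$-linear combination $u=\sum_{\sigma\in\GM(F)}c_\sigma\lambda_\sigma\ast\sigma\ast\mu_\sigma$ with $\lambda_\sigma\circ v(\sigma)\circ\mu_\sigma=v(u)$; setting $\bar\sigma:=\sigma-\lift(\sigma)$ (so that $v(\bar\sigma)<v(\sigma)$, ${\Frak S}_2(\bar\sigma)={\Frak S}_2(\sigma)$, ${\Frak S}_2(\lift(\sigma))=0$) and defining $\lift(u):=\sum_\sigma c_\sigma\lambda_\sigma\star\lift(\sigma)\star\mu_\sigma$, $\bar u:=\sum_\sigma c_\sigma\lambda_\sigma\star\bar\sigma\star\mu_\sigma$, one gets $\lift(u)=u-\bar u$, ${\Cal L}_2(\lift(u))=u$, ${\Frak S}_2(\lift(u))=0$. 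For $(5)\Rightarrow(1)$: take $g\in{\sf M}$, pick any $\sigma_1\in\left({\sf R}\otimes_{\hat{R}}{\sf R}^{\op}\right)^u$ with ${\Frak S}_2(\sigma_1)=g$, and split on whether ${\bf T}(g)=v(\sigma_1)$ --- in which case ${\bf M}(g)$ lies in the bilateral module generated by ${\bf M}\{F\}$ and we are done --- or ${\bf T}(g)<v(\sigma_1)$; in the latter case ${\Cal L}_2(\sigma_1)\in\ker({\Frak s}_2)$, and replacing it by a lifting provided by (5) produces $\sigma_2$ with ${\Frak S}_2(\sigma_2)=g$ and $v(\sigma_2)<v(\sigma_1)$, so the well-ordering of $<$ terminates the induction.

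\textbf{Main obstacle.} The difficulty is bookkeeping rather than conceptual: one must verify that $v$, ${\Cal L}_2$, ${\Frak s}_2$ and ${\Frak S}_2$ genuinely equip $\left({\sf R}\otimes_{\hat{R}}{\sf R}^{\op}\right)^u$ with a ${\Cal T}^{(m)}$-graded structure whose associated graded object is $\left({\sf S}\otimes_{\hat{R}}{\sf S}^{\op}\right)^u$, and in particular that ${\Cal L}_2(\sigma)\in\ker({\Frak s}_2)$ whenever $\sigma\in\ker({\Frak S}_2)$ --- the bilateral analogue of the bulleted computation preceding Theorem~\ref{SAASSAAS}, where expanding ${\Frak s}_2({\Cal L}_2(\sigma))$ shows it collects exactly the degree-$v(\sigma)$ component of ${\Frak S}_2(\sigma)$ (using ${\bf M}(a\lambda\star g\star b\rho)=a\alpha_\lambda(\lc(g))\alpha_{\lambda\tau}(b)\,\lambda\tau\rho$) and hence vanishes. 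A secondary point worth stating explicitly is that $\ker({\Frak s}_2)$ need not be finitely generated, so $\GM(F)$ is required only to be a ${\Cal T}^{(m)}$-homogeneous generating set; such a set exists because ${\Frak s}_2$ is a ${\Cal T}^{(m)}$-graded morphism, and that is all the argument uses.
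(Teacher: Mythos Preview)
Your proposal is correct and follows the same cycle $(1)\Rightarrow(2)\Rightarrow(3)\Rightarrow(4)\Rightarrow(5)\Rightarrow(1)$ with essentially the same arguments as the paper. One notational slip: in $(4)\Rightarrow(5)$ the decomposition of $u$ via the bilateral Gebauer--M\"oller set should carry separate left and right coefficients $a_\sigma,b_\sigma\in R$ (so $u=\sum_\sigma a_\sigma\lambda_\sigma\ast\sigma\ast b_\sigma\rho_\sigma$, as the paper writes it), not a single central $c_\sigma\in\hat R$, since $\GM(F)$ generates $\ker({\Frak s}_2)$ as a \emph{bimodule} over ${\sf S}\otimes_{\hat R}{\sf S}^{\op}$ rather than merely as an $\hat R$-module.
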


\begin{proof} \
\begin{description}
\item[$(1) \then (2)$] Let $f\in{\sf M}$; by assumption 
$${\bf M}(f) =
\sum_{i=1}^\mu a_i \lambda_i \ast{\bf M}(g_{\ell_i})\ast b_i\rho_i$$ where 
$\sum_{i=1}^\mu a_i \lambda_i e_{\ell_i}b_i \rho_i\in\left({\sf S}\otimes_{\hat{R}}{\sf S}^{\op}\right)^u$ is ${\Cal T}^{(m)}$-homogeneous  of ${\Cal T}^{(m)}$-degree ${\bf T}(f)$.

Therefore $g := f - \sum_{i=1}^\mu  a_i \lambda_i\star g_{\ell_i}\star b_i\rho_i \in {\sf M}$ and
${\bf T}(g) < {\bf T}(f)$.

Thus, the claim follows by induction since $<$ is a well-ordering.
\item[$(2) \then (3)$] ${\Frak S}_2(\sigma)\in{\sf M}$ and ${\bf T}({\Frak S}_2(\sigma)) < v(\sigma)$.
\item[$(3) \then (4)$] Let 
$${\Frak S}_2(\sigma) = 
\sum_{i=1}^\mu a_i \lambda_i \star g_{\ell_i}\star  b_i\rho_i,
v(\sigma)> \lambda_i\circ\tau_{\ell_i}\circ \rho_i {\bf e}_{\iota_{\ell_i}}$$
be a bilateral
quasi-Gr\"obner representation in terms of $F$; then $$\lift(\sigma) := \sigma -
\sum_{i=1}^\mu a_i \lambda_i e_{\ell_i}b_i   \rho_i$$ is the required lifting of $\sigma$. 
\item[$(4) \then (5)$] Let 
$u := \sum_i a_i \lambda_i e_{\ell_i} b_i  \rho_i\in\left({\sf S}\otimes_{\hat{R}}{\sf S}^{\op}\right)^u,\,
 \lambda_i\circ\tau_{\ell_i}\circ\rho_i  {\bf e}_{\iota_{\ell_i}}= v(u),$
be a ${\Cal T}^{(m)}$-homogeneous element in $\ker({\Frak s}_2)$ of ${\Cal T}^{(m)}$-degree $v(u)$.

Then there are $\lambda_\sigma,\rho_\sigma\in{\Cal T},a_\sigma,b_\sigma\in R\setminus\{0\},$
for which
$$u  = \sum_{\sigma\in\GM(F)} 
a_\sigma \lambda_\sigma \ast\sigma\ast  b_\sigma\rho_\sigma,
 \lambda_\sigma\circ v(\sigma)\circ\rho_\sigma = v(u).$$

For each $\sigma\in\GM(F)$ denote
{\small $$\bar{\sigma} := \sigma - \lift(\sigma) =  {\Cal L}_2(\lift(\sigma)) -
\lift(\sigma) := 
\sum_{i=1}^{\mu_\sigma} a_{i\sigma} \lambda_{i\sigma} e_{\ell_{i\sigma}}  b_{i\sigma} \rho_{i\sigma}
\in\left({\sf R}\otimes_{\hat{R}}{\sf R}^{\op}\right)^u$$}
and remark that
 $\lambda_{i\sigma}\circ\tau_{\ell_{i\sigma}} \circ\rho_{i\sigma}{\bf e}_{\iota_{\ell_{i\sigma}}}\leq v(\bar{\sigma}) < v(\sigma)$ and 
 ${\Frak S}_2(\bar{\sigma}) = {\Frak S}_2(\sigma).$
 
It is sufficient to define
 $$\lift(u)  := \sum_{\sigma\in\GM(F)} a_\sigma \lambda_\sigma\star \lift(\sigma)\star b_\sigma\rho_\sigma ,
\And 
\bar{u}  := \sum_{\sigma\in\GM(F)} a_\sigma \lambda_\sigma\star \bar{\sigma}\star b_\sigma\rho_\sigma$$
to obtain
$$\lift(u) = u - \bar{u},{\Cal L}_2(\lift(u)) = u,{\Frak S}_2(\bar{u}) = {\Frak S}_2(u), 
{\Frak S}_2(\lift(u)) = 0.$$

\item[$(5) \then (1)$] Let
$g\in{\sf M}$, so that there are 
$\lambda_i,\rho_i\in{\Cal T}, a_i,b_i\in R\setminus\{0\}, 1\leq \ell_i \leq u,$ 
such that
$\sigma_1 := \sum_{i=1}^\mu a_i \lambda_i e_{\ell_i} b_i \rho_i\in  \left({\sf R}\otimes_{\hat{R}}{\sf R}^{\op}\right)^u$
satisfies 
$$g = {\Frak S}_2(\sigma_1) =  \sum_{i=1}^\mu a_i \lambda_i \star g_{\ell_i}\star  b_i\rho_i.$$
Denoting $H := \{i : \lambda_i\circ{\bf T}(g_{\ell_i})\circ \rho_i = 
\lambda_i\circ\tau_{\ell_i}\circ \rho_i{\bf e}_{\iota_{\ell_i}} = 
v(\sigma_1) \}$, then 
either 
\begin{itemize}
\item $v(\sigma_1) = {\bf T}(g)$ so that, for each $i\in H$,
$ {\bf M}(a_i \lambda_i \star {\bf M}(g_{\ell_i})\star  b_i\rho_i) =  a_i \lambda_i\ast{\bf M}(g_{\ell_i})\ast  b_i\rho_i$
and
$${\bf M}(g) = \sum_{i\in H}  a_i \lambda_i\ast{\bf M}(g_{\ell_i})\ast  b_i\rho_i\in
{\bf M}\{{\Bbb I}_2({\bf M}\{F\})\},$$ and we are through, or
\item ${\bf T}(g) < v(\sigma_1)$, in which case 
$0 = \sum_{i\in H}  a_i \lambda_i\ast{\bf M}(g_{\ell_i})\ast b_i\rho_i = {\Frak s}_2({\Cal L}_2(\sigma_1))$
and 
the ${\Cal T}^{(m)}$-homoge\-neous element 
${\Cal L}_2(\sigma_1)\in \ker({\Frak s}_2)$
has a  lifting $$U := {\Cal L}_2(\sigma_1)-
\sum_{j=1}^\nu a_j \lambda_j e_{\ell_j}   b_j\rho_j\in \left({\sf R}\otimes_{\hat{R}}{\sf R}^{\op}\right)^u$$
with $$
\sum_{j=1}^\nu a_j \lambda_j \star g_{\ell_j}\star   b_j\rho_j = 
\sum_{i\in H} a_i \lambda_i\star g_{\ell_i}\star  b_i  \rho_i\And
\lambda_j\circ\tau_{l_j}\circ \rho_j{\bf e}_{\iota_{\ell_j}} < v(\sigma_1)$$
so that
$g = {\Frak S}_2(\sigma_2)$ and  $v(\sigma_2)< v(\sigma_1)$ holds
for
$$\sigma_2 := 
\sum_{i\not\in H} a_i \lambda_i e_{\ell_i}b_i  \rho_i + 
\sum_{j=1}^\nu a_j \lambda_j e_{\ell_j}  b_j \rho_j
\in \left({\sf R}\otimes_{\hat{R}}{\sf R}^{\op}\right)^u$$
\end{itemize}
and the claim follows by the well-orderedness of $<$.
\end{description}\end{proof}

\begin{Theorem}[Janet---Schreyer]  \

With the same notation the equivalent conditions (1-5)  imply that 
\begin{enumerate}\setcounter{enumi}{5}
\item $\{\lift(\sigma) : \sigma\in\GM(F)\}$ is a bilateral standard basis of $\ker({\Frak S}_2)$. 
\end{enumerate}
\end{Theorem}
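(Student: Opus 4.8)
The plan is to mirror the proof of the left-case Janet--Schreyer theorem (the implication $(4)\then(6)$ proved just after Theorem~\ref{SAASSAAS}), adapting the bookkeeping to the bimodule setting of Section~\ref{46S5B}. So I would prove the implication $(4)\then(6)$, since $(4)$ is one of the equivalent conditions (1-5) already established in Theorem~\ref{50BiLiTh}. Given a bilateral syzygy $\sigma_1 := \sum_{i=1}^\mu a_i \lambda_i e_{\ell_i} b_i \rho_i\in\ker({\Frak S}_2)\subset\left({\sf R}\otimes_{\hat{R}}{\sf R}^{\op}\right)^u$, I want to show it lies in the bilateral module generated by $\{\lift(\sigma):\sigma\in\GM(F)\}$ with a bound on the valuations of the combination, i.e.\ that this set is a bilateral standard basis of $\ker({\Frak S}_2)$.

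First I would set $H := \{i : \lambda_i\circ\tau_{\ell_i}\circ\rho_i{\bf e}_{\iota_{\ell_i}} = v(\sigma_1)\}$ and form the leading form ${\Cal L}_2(\sigma_1) = \sum_{i\in H} a_i\lambda_i e_{\ell_i} b_i\rho_i\in\ker({\Frak s}_2)$. Because $\GM(F)$ is by definition a ${\Cal T}^{(m)}$-homogeneous basis of $\ker({\Frak s}_2)$, there is a homogeneous expression ${\Cal L}_2(\sigma_1) = \sum_{\sigma\in\GM(F)} a_\sigma\lambda_\sigma\ast\sigma\ast b_\sigma\rho_\sigma$ with $\lambda_\sigma\circ v(\sigma)\circ\rho_\sigma = v(\sigma_1)$, $a_\sigma,b_\sigma\in R\setminus\{0\}$, $\lambda_\sigma,\rho_\sigma\in{\Cal T}$. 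Writing $\bar\sigma := \sigma - \lift(\sigma)$ as in the theorem's proof, so that $v(\bar\sigma) < v(\sigma)$ and ${\Frak S}_2(\bar\sigma) = {\Frak S}_2(\sigma)$, I would then set
$$\sigma_2 := \sigma_1 - \sum_{\sigma\in\GM(F)} a_\sigma\lambda_\sigma\star\lift(\sigma)\star b_\sigma\rho_\sigma = \bigl(\sigma_1 - {\Cal L}_2(\sigma_1)\bigr) + \sum_{\sigma\in\GM(F)} a_\sigma\lambda_\sigma\star\bar\sigma\star b_\sigma\rho_\sigma.$$
Both summands have valuation strictly below $v(\sigma_1)$: the first because we subtracted off the leading form, the second because $\lambda_\sigma\circ v(\bar\sigma)\circ\rho_\sigma < \lambda_\sigma\circ v(\sigma)\circ\rho_\sigma = v(\sigma_1)$. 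Hence $\sigma_2\in\ker({\Frak S}_2)$ with $v(\sigma_2) < v(\sigma_1)$, and induction on the well-ordering $<$ finishes: $\sigma_1$ is a $\left({\sf R}\otimes_{\hat{R}}{\sf R}^{\op}\right)$-combination of the $\lift(\sigma)$ with valuation control, so ${\Cal L}_2(\sigma_1)\in{\Bbb I}_2({\Cal L}_2\{\lift(\sigma):\sigma\in\GM(F)\})$, giving the standard basis property.

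The main obstacle I anticipate is purely clerical rather than conceptual: carefully verifying, in the noncommutative bimodule $\left({\sf R}\otimes_{\hat{R}}{\sf R}^{\op}\right)^u$, that the valuation is sub-multiplicative on both sides, i.e.\ that $v(\lambda_\sigma\star\xi\star\rho_\sigma) = \lambda_\sigma\circ v(\xi)\circ\rho_\sigma$, using Corollary~\ref{c46CoX2X2X2}; and checking that the homogeneous decomposition of ${\Cal L}_2(\sigma_1)$ through $\GM(F)$ respects the ${\Cal T}^{(m)}$-grading on the left {\em and} right simultaneously, which is exactly what the restriction of the tensor product to $\hat R$ (the center) is designed to guarantee. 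Once those compatibilities are in hand, the induction is identical to the left-case argument.
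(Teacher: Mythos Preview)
Your proposal is correct and follows exactly the same route as the paper's proof: compute the leading form ${\Cal L}_2(\sigma_1)$, express it as a ${\Cal T}^{(m)}$-homogeneous combination $\sum_\sigma a_\sigma\lambda_\sigma\ast\sigma\ast b_\sigma\rho_\sigma$ over $\GM(F)$, subtract the corresponding $\star$-combination of the $\lift(\sigma)$, and observe that the remainder $\sigma_2$ lies in $\ker({\Frak S}_2)$ with strictly smaller valuation, so induction on the well-ordering $<$ finishes. The paper proves the implication $(4)\then(6)$ in precisely this way (even expanding $\sigma_2$ term-by-term), and the ``clerical'' compatibility checks you flag are exactly the points it glosses over.
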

 
\begin{proof}  
\begin{description}

\item[$(4) \then (6)$] Let 
$\sigma_1 := 
 \sum_{i=1}^\mu a_i \lambda_i e_{\ell_i} b_i  \rho_i\in\ker({\Frak S}_2)\subset \left({\sf R}\otimes_{\hat{R}}{\sf R}^{\op}\right)^u.$ 

Denoting
 $H := \{i : 
\lambda_i\circ\tau_{\ell_i}\circ \rho_i{\bf e}_{\iota_{\ell_i}} = 
v(\sigma_1) \}$, we have
$${\Cal L}_2(\sigma_1) = \sum_{i\in H}  
a_i \lambda_i e_{\ell_i}  b_i  \rho_i\in\ker({\Frak s}_2)$$
and there is a ${\Cal T}^{(m)}$-homogeneous representation
$${\Cal L}_2(\sigma_1) = \sum_{\sigma\in\GM(F)} a_\sigma \lambda_\sigma \ast\sigma\ast
b_\sigma  \rho_\sigma,   \lambda_\sigma\circ v(\sigma)\circ\rho = v(\sigma_1)$$
with $\lambda_\sigma,\rho_\sigma\in{\Cal T}, a_\sigma,b_\sigma\in R\setminus\{0\}$.

Then
\begin{eqnarray*}
\sigma_2
&:=& 
\sigma_1 -  \sum_{\sigma\in\GM(F)}
a_\sigma \lambda_\sigma \star\lift(\sigma)\star  b_\sigma\rho_\sigma
\\&=& 
\sigma_1 -  \sum_{\sigma\in\GM(F)}
a_\sigma \lambda_\sigma \star\left(\sigma-\bar\sigma\right)\star  b_\sigma \rho_\sigma
\\&=&
\sigma_1-{\Cal L}_2(\sigma_1)
+ \sum_{\sigma\in\GM(F)}
a_\sigma \lambda_\sigma \star\bar{\sigma}\star   b_\sigma\rho_\sigma
\\&=& 
\sum_{i\not\in H} a_i \lambda_i e_{\ell_i} b_i  \rho_i
\\&+&
\sum\limits_{\sigma\in\GM(F)} \sum_{i=1}^{\mu_\sigma}
\Bigl(
\left(a_\sigma\alpha_{\lambda_\sigma}(a_{i\sigma})\right)\cdot\left(\lambda_\sigma\circ\lambda_{i\sigma}\right)\Bigr)
e_{\ell_{i\sigma}}
\Bigl(
\left(b_{i\sigma}\alpha_{\rho_\sigma}(b_\sigma)\right)\cdot\left(\rho_{i\sigma}\circ\rho_{\sigma}\right)\Bigr)
\end{eqnarray*}
satisfies both $\sigma_2\in\ker({\Frak S}_2)$ and
$v(\sigma_2)< v(\sigma_1)$;
thus the claim follows by induction.
\end{description}\end{proof}

\subsection{Restricted case} 

In order to deal with restricted modules, we need simply to adapt and simplify the bilateral case.

Thus, we consider both the left $R$-modules
$R\otimes{\sf R}^{\op}$ and 
$R\otimes{\sf S}^{\op}$, which, as sets, coincide, 
we impose on the bilateral $R$-module $\left(R\otimes{\sf R}^{\op}\right)^u$, whose canonical basis is denoted
$\{e_1,\ldots,e_u\}$ and whose generic element has the shape
$$\sum_i a_i   e_{\ell_i}  \rho_i, a_i\in R\setminus\{0\},
\rho_i\in{\Cal T}, 1\leq \ell_i \leq u,$$
the ${\Cal T}^{(m)}$-graded structure given by 
the valuation
$v :\left(R\otimes{\sf R}^{\op}\right)^u \to {\Cal T}$
 as
$$v(\sigma) := 
 \max_<\{{\bf T}(g_{\ell_i} \star \rho_i )\} =
\max_<\{{\bf T}(g_{\ell_i} )\ast \rho_i\} = \max_<\{\tau_{\ell_i} \circ \rho_i{\bf e}_{\iota_i}\}
 =: \delta\epsilon$$ 
for each
$$\sigma  :=\sum_i a_i   e_{\ell_i}  \rho_i\in\left(R\otimes_{\hat{R}}{\sf R}^{\op}\right)^u\setminus\{0\}
$$ 
so that
$$G\left(\left(R\otimes{\sf R}^{\op}\right)^u\right) =
\left(G\left(R\otimes{\sf R}^{\op}\right)\right)^u=
\left(R\otimes{\sf S}^{\op}\right)^u$$
and its corresponding
${\Cal T}^{(m)}$-homogeneous {\em leading form}\index{leading!form}\index{form!leading} 
is
$${\Cal L}_W(\sigma) := \sum_{h\in H} a_h  e_{\ell_h}    \rho_h \in \left(R\otimes{\sf S}^{\op}\right)^u$$
where
$H := \{ j : \tau_j\circ\rho_j{\bf e}_{\iota_{\ell_j}} = v(\sigma) = \delta\epsilon\};$ we also denote, for each set $S\subset \left(R\otimes{\sf R}^{\op}\right)^u$,
$${\Cal L}_W\{S\} := \{{\Cal L}_W(g) : g\in S\}\subset \left(R\otimes{\sf S}^{\op}\right)^u.$$ 

We can therefore consider the morphisms
\begin{eqnarray*}
{\Frak s}_W :  \left(R\otimes{\sf S}^{\op}\right)^u \to {\sf S}^m
&:& {\Frak s}_W\left(\sum_i a_i  e_{\ell_i}    \rho_i\right) :=  
\sum_i a_i   {\bf M}(g_{\ell_i})\ast   \rho_i,
\\
{\Frak S}_W :  \left(R\otimes{\sf R}^{\op}\right)^u \to {\sf R}^m
&:& {\Frak S}_W\left(\sum_i a_i  e_{\ell_i}   \rho_i\right) := 
\sum_i a_i   g_{\ell_i}  \star   \rho_i.
\end{eqnarray*}
\begin{Definition}
With the notation above 
\begin{itemize}
\item for a restricted   module ${\sf N}$,
a set $F\subset{\sf N}$ is called a restricted  {\em standard basis}\index{standard!basis} 
if  
$${\Bbb I}_W({\Cal L}_W\{F\}) = {\Bbb I}_W({\Cal L}_W\{{\sf N}\});$$
\item for each $h\in {\sf N}$  a representation 
$$h =\sum_i a_i    g_{\ell_i}\star \rho_i :
a_i\in R\setminus\{0\},
\rho_i\in{\Cal T}, g_{\ell_i}\in F,$$ is called a 
 {\em standard representation}  in ${\sf R}$ in terms
of $F$ iff $$v(h)\geq v( g_{\ell_i}\star  \rho_i) =  v(g_{\ell_i})\circ  \rho_i, \mbox{\rm\ for each } i;$$ 
\item if $u\in \ker({\Frak s}_W)$ is ${\Cal T}^{(m)}$-homogeneous
and $U\in \ker({\Frak S}_W)$ is such that $u = {\Cal L}_W(U)$, we say that 
$u$ {\em lifts} to $U$, or $U$ is a {\em lifting} of $u$, or simply $u$ {\em has
a lifting};\index{lift}
\item a restricted  {\em Gebauer--M\"oller set}\index{Gebauer--M\"oller!set} for $F$ is any 
${\Cal T}^{(m)}$-homogeneous basis of
$\ker({\Frak s}_W)$;
\item for each ${\Cal T}^{(m)}$-homogeneous element 
$\sigma\in \left(R\otimes_{\hat{R}}{\sf R}^{\op}\right)^u$,
we say that ${\Frak S}_W(\sigma)$  has a restricted
 {\em quasi-Gr\"obner representation}\index{Gr\"obner!representation!quasi}
 in terms of $G$ if it can be written as
$${\Frak S}_W(\sigma) = 
\sum_i a_i   g_{\ell_i}\star  \rho_i : 
a_i \in R\setminus\{0\}, \rho_i\in{\Cal T}, g_{\ell_i}\in F$$
with 
$  {\bf T}(g_{\ell_i})\circ  \rho_i < v(\sigma)$ for each $i$.
\qed\end{itemize}\end{Definition}

\begin{Theorem}[M\"oller--Pritchard]\label{50ReLiTh} 
With the present notation and denoting 
$\GM(F)$   any  restricted Gebauer--M\"oller set for $F$, the following
conditions are equivalent: 
\begin{enumerate}
\item $F$ is a restricted Gr\"obner basis of ${\Bbb I}_W(F)$;
\item $f \in {\Bbb I}_W(F) \iff f$  has a restricted  Gr\"obner representation in terms of $F$;
\item for each $\sigma\in\GM(F)$, 
the restricted S-polynomial ${\Frak S}_W(\sigma)$ has a restricted quasi-Gr\"obner representation
${\Frak S}_W(\sigma) = \sum_{l=1}^\mu a_l   g_{\ell_l}\star    \rho_l, $ 
in terms of $F$;
\item each $\sigma\in\GM(F)$ has a lifting $\lift(\sigma)$;
\item each ${\Cal T}^{(m)}$-homogeneous element 
$u\in\ker({\Frak s}_W)$ has a lifting $\lift(u)$.
\end{enumerate}
\end{Theorem}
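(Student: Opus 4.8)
The plan is to run exactly the cycle of implications already carried out for the left case (Theorem~\ref{SAASSAAS}) and the bilateral case (Theorem~\ref{50BiLiTh}), namely $(1)\then(2)\then(3)\then(4)\then(5)\then(1)$, together with the trivial reverse links $(2)\then(1)$ etc.\ that make (1)--(5) mutually equivalent. The only structural datum needed is that a generic element of $\left(R\otimes{\sf R}^{\op}\right)^u$ is $\sum_i a_ie_{\ell_i}\rho_i$ with a \emph{left} coefficient $a_i\in R\setminus\{0\}$ and a \emph{right} term $\rho_i\in{\Cal T}$ only; hence, by Corollary~\ref{c46CoX2X2X2} and (\ref{Eq4}), ${\bf M}(a\,g_\ell\star\rho)=a\,{\bf M}(g_\ell)\ast\rho$, ${\bf T}(a\,g_\ell\star\rho)={\bf T}(g_\ell)\circ\rho$, and the valuation $v$ turns $\left(R\otimes{\sf R}^{\op}\right)^u$ into a ${\Cal T}^{(m)}$-graded left $R$-module whose associated graded object is $\left(R\otimes{\sf S}^{\op}\right)^u$, which is precisely what is recorded immediately before the statement. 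Throughout I would use Noetherian induction on ${\bf T}(\,\cdot\,)$ with respect to the well-ordering $<$ on ${\Cal T}^{(m)}$.

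For $(1)\then(2)$: writing ${\bf M}(f)=\sum_i a_i{\bf M}(g_{\ell_i})\ast\rho_i$ from the restricted Gr\"obner basis hypothesis, the element $f-\sum_i a_i\,g_{\ell_i}\star\rho_i$ lies in ${\Bbb I}_W(F)$ with strictly smaller leading term, and induction supplies the restricted Gr\"obner representation. The step $(2)\then(3)$ is immediate, since for a ${\Cal T}^{(m)}$-homogeneous $\sigma$ one has ${\Frak S}_W(\sigma)\in{\Bbb I}_W(F)$ and ${\bf T}({\Frak S}_W(\sigma))<v(\sigma)$. For $(3)\then(4)$, a quasi-Gr\"obner representation ${\Frak S}_W(\sigma)=\sum_i a_i\,g_{\ell_i}\star\rho_i$ has ${\bf T}(g_{\ell_i})\circ\rho_i<v(\sigma)$, so $\lift(\sigma):=\sigma-\sum_i a_ie_{\ell_i}\rho_i$ kills ${\Cal L}_W$ and stays in $\ker({\Frak S}_W)$, i.e.\ it is a lifting. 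For $(4)\then(5)$ I would use that $\GM(F)$ generates $\ker({\Frak s}_W)$: a ${\Cal T}^{(m)}$-homogeneous $u\in\ker({\Frak s}_W)$ decomposes as $u=\sum_{\sigma\in\GM(F)}a_\sigma\,\sigma\ast\rho_\sigma$ with $\rho_\sigma\circ v(\sigma)=v(u)$; setting $\bar\sigma:=\sigma-\lift(\sigma)$ and $\lift(u):=\sum_\sigma a_\sigma\,\lift(\sigma)\star\rho_\sigma$, $\bar u:=\sum_\sigma a_\sigma\,\bar\sigma\star\rho_\sigma$ gives $\lift(u)=u-\bar u$, ${\Cal L}_W(\lift(u))=u$, ${\Frak S}_W(\lift(u))=0$ — the bilateral computation with the left multiplier $a_\sigma\lambda_\sigma$ replaced by the scalar $a_\sigma$ and the right multiplier $b_\sigma\rho_\sigma$ by $\rho_\sigma$.

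The substantive step, as in the two earlier theorems, is $(5)\then(1)$. Given $g\in{\Bbb I}_W(F)$, choose $\sigma_1=\sum_i a_ie_{\ell_i}\rho_i$ with $g={\Frak S}_W(\sigma_1)$, let $H:=\{i:\tau_{\ell_i}\circ\rho_i{\bf e}_{\iota_{\ell_i}}=v(\sigma_1)\}$, and split: if ${\bf T}(g)=v(\sigma_1)$ then ${\bf M}(g)=\sum_{i\in H}a_i{\bf M}(g_{\ell_i})\ast\rho_i\in{\bf M}\{{\Bbb I}_W({\bf M}\{F\})\}$ and we are done; otherwise ${\Cal L}_W(\sigma_1)\in\ker({\Frak s}_W)$ is a genuine ${\Cal T}^{(m)}$-homogeneous syzygy, its lifting $U$ exists, and replacing $\sigma_1$ by $\sigma_2:=\sum_{i\notin H}a_ie_{\ell_i}\rho_i$ plus the $e_j$-components of $U$ yields $g={\Frak S}_W(\sigma_2)$ with $v(\sigma_2)<v(\sigma_1)$, so the well-orderedness of $<$ terminates the descent. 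I expect no genuine obstacle: as observed in Remark~\ref{RightSzaRem+}, the restricted set-up is obtained from the bilateral one by dropping every left term $\lambda$ and every right coefficient $b$ (equivalently, setting them to $1$), so each line of the bilateral argument survives under this substitution; the only point deserving care is the bookkeeping of the index set $H$ and checking that the ${\Cal T}^{(m)}$-homogeneous decomposition of $\ker({\Frak s}_W)$ afforded by $\GM(F)$ really carries the degree-matching $\rho_\sigma\circ v(\sigma)=v(u)$, which is forced by the ${\Cal T}^{(m)}$-homogeneity of the chosen basis.
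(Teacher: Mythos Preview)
Your proposal is correct and follows exactly the approach the paper intends: the paper gives no explicit proof of Theorem~\ref{50ReLiTh}, having already announced at the start of the restricted subsection that one need only ``adapt and simplify the bilateral case,'' and your argument is precisely that adaptation of the cycle $(1)\then(2)\then(3)\then(4)\then(5)\then(1)$ from Theorems~\ref{SAASSAAS} and~\ref{50BiLiTh}. The only cosmetic imprecision is in $(5)\then(1)$, where ``plus the $e_j$-components of $U$'' should read ``plus the correction terms ${\Cal L}_W(\sigma_1)-U$'' (compare the bilateral $\sigma_2$), but the content is right.
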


\begin{Theorem}[Janet---Schreyer]  \

With the same notation the equivalent conditions (1-5)  imply that 
\begin{enumerate}\setcounter{enumi}{5}
\item $\{\lift(\sigma) : \sigma\in\GM(F)\}$ is a restricted standard basis of $\ker({\Frak S}_W)$. 
\end{enumerate}
\end{Theorem}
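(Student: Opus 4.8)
The plan is to follow the template of the left and bilateral Janet--Schreyer theorems and to prove the implication $(4)\then(6)$ by Noetherian descending induction on the ${\Cal T}^{(m)}$-valuation $v$, exploiting that $<$ is a well-ordering. Throughout set $B := \{\lift(\sigma) : \sigma\in\GM(F)\}\subset\ker({\Frak S}_W)$; since ${\Cal L}_W(\lift(\sigma))=\sigma$ by the definition of a lifting, we have ${\Cal L}_W\{B\}=\GM(F)$ and hence ${\Bbb I}_W({\Cal L}_W\{B\})=\ker({\Frak s}_W)$ (recall $\GM(F)$ is by definition a basis of $\ker({\Frak s}_W)$).

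First I would fix a nonzero $\sigma_1 := \sum_{i=1}^\mu a_i e_{\ell_i}\rho_i\in\ker({\Frak S}_W)\subset\left(R\otimes{\sf R}^{\op}\right)^u$ and put $H := \{i : \tau_{\ell_i}\circ\rho_i{\bf e}_{\iota_{\ell_i}} = v(\sigma_1)\}$, so that ${\Cal L}_W(\sigma_1) = \sum_{i\in H} a_i e_{\ell_i}\rho_i$. The first step is to note that ${\Cal L}_W(\sigma_1)\in\ker({\Frak s}_W)$: by the compatibility identities ${\bf T}(g_{\ell_i}\star\rho_i)=\tau_{\ell_i}\circ\rho_i$ and ${\bf M}(a_i g_{\ell_i}\star\rho_i)=a_i{\bf M}(g_{\ell_i})\ast\rho_i$ (Corollary~\ref{c46CoX2X2X2}), the part of ${\Cal T}^{(m)}$-degree $v(\sigma_1)$ of the vanishing sum ${\Frak S}_W(\sigma_1)=\sum_i a_i g_{\ell_i}\star\rho_i$ comes exactly from the indices $i\in H$ and equals ${\Frak s}_W({\Cal L}_W(\sigma_1))$; thus ${\Frak s}_W({\Cal L}_W(\sigma_1))=0$. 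Together with $B\subset\ker({\Frak S}_W)$ this already gives ${\Bbb I}_W({\Cal L}_W\{B\})={\Bbb I}_W({\Cal L}_W\{\ker({\Frak S}_W)\})$, i.e. the literal standard-basis assertion; but I would press on to the stronger fact that $B$ generates $\ker({\Frak S}_W)$, from which the standard-basis property is immediate.

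Next, since $\GM(F)$ is a ${\Cal T}^{(m)}$-homogeneous basis of $\ker({\Frak s}_W)$, I would write ${\Cal L}_W(\sigma_1)=\sum_{\sigma\in\GM(F)} a_\sigma\,\sigma\ast\rho_\sigma$ with $a_\sigma\in R\setminus\{0\}$, $\rho_\sigma\in{\Cal T}$ and $v(\sigma)\circ\rho_\sigma=v(\sigma_1)$ for each contributing $\sigma$. By $(4)$ of Theorem~\ref{50ReLiTh} each such $\sigma$ has a lifting $\lift(\sigma)$; set $\bar\sigma:=\sigma-\lift(\sigma)={\Cal L}_W(\lift(\sigma))-\lift(\sigma)$, so that $v(\bar\sigma)<v(\sigma)$ and ${\Frak S}_W(\bar\sigma)={\Frak S}_W(\sigma)$ (because ${\Frak S}_W(\lift(\sigma))=0$). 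Then define
$$\sigma_2 := \sigma_1 - \sum_{\sigma\in\GM(F)} a_\sigma\,\lift(\sigma)\star\rho_\sigma = \bigl(\sigma_1-{\Cal L}_W(\sigma_1)\bigr) + \sum_{\sigma\in\GM(F)} a_\sigma\,\bar\sigma\star\rho_\sigma;$$
this lies in $\ker({\Frak S}_W)$ (as ${\Frak S}_W$ is a morphism of restricted modules and ${\Frak S}_W(\lift(\sigma))=0$), and, both displayed summands having ${\Cal T}^{(m)}$-valuation strictly below $v(\sigma_1)$, it satisfies $v(\sigma_2)<v(\sigma_1)$. The induction hypothesis then gives $\sigma_2\in{\Bbb I}_W(B)$, whence $\sigma_1\in{\Bbb I}_W(B)$; the base case $\sigma_1=0$ being trivial, $B$ generates $\ker({\Frak S}_W)$ and in particular is a restricted standard basis.

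I expect the only real obstacle to be bookkeeping: one must track the one-sided $R\otimes{\sf R}^{\op}$ (resp. $R\otimes{\sf S}^{\op}$) actions carefully enough that scaling a lifting by $a_\sigma$ and multiplying it on the right by $\rho_\sigma$ neither raises its valuation above $v(\sigma_1)$ nor spoils the exact cancellation of leading forms. This is precisely where Corollary~\ref{c46CoX2X2X2} and the associativity of $\star$ and $\ast$ enter, and it is the only place where the present argument differs — cosmetically — from the bilateral one, the left factors disappearing and the tensor over the center $\hat R$ being replaced by the plain left action of $R$.
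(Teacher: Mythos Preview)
Your proposal is correct and follows essentially the same route as the paper's template proofs for the left and bilateral cases (the paper gives no separate proof in the restricted case, tacitly deferring to those templates): take $\sigma_1\in\ker({\Frak S}_W)$, observe ${\Cal L}_W(\sigma_1)\in\ker({\Frak s}_W)$, expand it in the homogeneous basis $\GM(F)$, subtract the corresponding restricted combination of the $\lift(\sigma)$'s to obtain $\sigma_2$ with $v(\sigma_2)<v(\sigma_1)$, and descend by well-foundedness. Your extra remark that ${\Cal L}_W\{B\}=\GM(F)$ already yields ${\Bbb I}_W({\Cal L}_W\{B\})={\Bbb I}_W({\Cal L}_W\{\ker({\Frak S}_W)\})$ is a nice shortcut the paper does not make explicit, but it does not change the substance of the argument.
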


\section{Gr\"obner basis Computation for Multivariate Ore Extensions of Zacharias Domains}

We recall the definition of Zacharias ring  \cite{Z},  \cite[\S 26.1]{SPES}, \cite{Zac}.

\begin{Definition}   \label{c46D3}
\renewcommand\theenumi{{\rm (\alph{enumi})}}
 A ring $R$ with identity is called 
 a (left) {\em Zacharias ring}\index{Zacharias!ring}\index{ring!Zacharias}
if it satisfies the following properties:
\begin{enumerate}
\item R is a noetherian ring;
\item  there is an algorithm which, for each 
$c \in R^m, \, C := \{c_1,\ldots c_t\} \subset R^m\setminus\{0\},$
 allows to decide whether 
$c\in {\Bbb I}_L(C)$  in which case it produces elements 
$d_i\in R :  c = \sum_{i=1}^t d_i c_i;$
\item there is an algorithm  which, given
$\{c_1,\ldots c_t\} \subset R^m\setminus\{0\},$ computes a finite set of
generators for 
the left syzygy R-module  $\left\{(d_1,\cdots,d_t) \in R^t :
\sum_{i=1}^t d_i c_i = 0\right\}$.
\end{enumerate}

Note that
 \cite{M}  for a  ring $R$ with identity which satisfies (a) and (b), (c) is equivalent to
\begin{enumerate}
\setcounter{enumi}{3}
\item there is an algorithm  which, given
$\{c_1,\ldots c_s\} \subset R^m\setminus\{0\},$ computes a finite basis of the
ideal
$${\Bbb I}_L(\{c_i : 1\leq i < s\}) : {\Bbb I}_L(c_s).$$
\end{enumerate}

If $R$ has canonical representatives,
we improve the computational assumptions
of Zacharias rings, requiring also  the following property:
\begin{enumerate}\setcounter{enumi}{4}
\renewcommand\theenumi{{\rm (\alph{enumi})}}
\item there is an algorithm  which, given an element $c\in R^m$ and a 
 left
module ${\sf J}\subset R^m$, computes the unique canonical representative
 $\Rep(c,{\sf J})$.
 \qed\end{enumerate}
\end{Definition}

If  $R$ is a left Zacharias domain, the three algorithms proposed by M\"oller  \cite{M} for computing Gr\"obner bases in the polynomial ring over $R$ can be easily adapted to
 multivariate Ore extensions of Zacharias domains, provided that
each $\alpha_i$, and therefore each $\alpha_\tau$, is an automorphism.

\subsection{First algorithm}\label{1Alg}         
 
 Still considering a  finite basis
$$F := \{g_1,\ldots,g_u\}\subset {\sf R}^m, \,
g_i = {\bf M}(g_i)-p_i =: c_i \tau_i {\bf e}_{l_i} - p_i,$$
of the module ${\sf M} := {\Bbb I}_L(F)$
and denoting
\begin{itemize} 
\renewcommand\labelitemi{\bf --}
\item ${\Frak H}(F): = \{\{i_1,i_2,\ldots,i_r\}\subseteq\{1,\ldots,u\} : l_{i_1} = \cdots  = l_{i_r}\};$
\item for each $H := \{i_1,i_2,\ldots,i_r\}\in{\Frak H}(F)$,
\begin{itemize}
\item $\varepsilon_H :={\bf e}_{l_{i_1}} = \cdots 
={\bf e}_{l_{i_1}},$ 
\item $\tau_H := \lcm\left(\tau_i : i\in H\right),$
\item  for each $I\subset H$,
\begin{itemize}
\renewcommand\labelitemiii{\bf --}
\item $\tau_{H,I}  := \frac{\tau_{H}}{\tau_I}$,
\item $\alpha_{H,I} : R\to  R$ the morphism 
$\alpha_{\tau_{H,I}}$;
\end{itemize}
\item ${\bf T}(H) := \tau_H\varepsilon_H,$
\end{itemize}
and, if $R$ is a PID,
\begin{itemize}
\item $c_H := \lcm(\alpha_{H,i}(c_i) :i\in H)$,
\item $\mu(H) := c_H\tau_H$ and
\item ${\bf M}(H) = c_H{\bf T}(H) =c_H\tau_H\varepsilon_H = \mu(H)\varepsilon_H$;
\end{itemize}
\item ${\sf T} := \{{\bf T}(H):   H\in{\Frak H}(F)\}$;
\item for any ${\sf m} =\delta\epsilon\in {\sf T}$, 
\begin{itemize} 
\item for each $i, 1 \leq i \leq u, t_i({\sf m}) :=
\begin{cases}\frac{\delta}{ \tau_i} &\mbox{  if $ {\bf T}(g_i) \mid {\sf m}$,}\cr
1&\mbox{   otherwise;}\cr\end{cases}$
\item $v({\sf m}) = (v({\sf m})_1,\ldots,v({\sf m})_u)\in R^u$ the vector such
that  
$$v({\sf m})_i :=  \begin{cases}\alpha_{t_i({\sf m})}(\lc(g_i)) &\mbox{  if $ {\bf T}(g_i) \mid {\sf m}$,}\cr
0&\mbox{   otherwise;}\cr\end{cases}$$
\item $C({\sf m}) \subset R^u$ a finite basis of the syzygy module
$$\Syz_L\left(v({\sf m})_1,\ldots,v({\sf m})_u\right) := \left\{(d_1,\ldots,d_u)\in R^u : 
\sum_{i=1}^u d_i  v({\sf m})_i = 0\right\};$$ 
\item $S({\sf m}) := \{(d_1 t_1({\sf m}), \ldots, d_u
t_u({\sf m})) : (d_1,\ldots,d_u)\in C({\sf m})\};$
\end{itemize}
\item ${\Cal S}(F) := \bigcup_{{\sf m}\in {\sf T}} S({\sf m});$
\item ${\Cal S}'(F)\subset{\Cal S}(F)$ any subset satisfying
\begin{itemize}
\renewcommand\labelitemi{\bf --}
\item for each $\sigma\in {\Cal S}(F)\setminus{\Cal S}'(F)$
exist  $\sigma_j\in{\Cal S}'(F), d_j\in R, \tau_j\in{\Cal T},$ 
such that
$\sigma = \sum_j d_j\tau_j\ast\sigma_j$;
\end{itemize}
\item ${\Cal R}(F) := \left\{\sum_i m_i\star g_i : (m_1,\ldots,m_u)\in
{\Cal S}'(F)\right\},$  
\end{itemize}
we have that ({\em cf.} \cite{M}, \cite[Theorem~26.1.4]{SPES})

\begin{Lemma}\label{47L1}  ${\Cal S}(F)$ is a Gebauer--M\"oller set for $F$.
\end{Lemma}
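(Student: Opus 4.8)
The goal is to show that $\Cal S(F) = \bigcup_{{\sf m}\in{\sf T}} S({\sf m})$ is a left Gebauer--M\"oller set for $F$, i.e. a ${\Cal T}^{(m)}$-homogeneous basis of $\ker({\Frak s}_L)$. The plan is to verify the two halves separately: (i) every element of $\Cal S(F)$ lies in $\ker({\Frak s}_L)$ and is ${\Cal T}^{(m)}$-homogeneous, and (ii) the elements of $\Cal S(F)$ generate $\ker({\Frak s}_L)$ as a left ${\sf S}$-module. The key structural fact to exploit throughout is Corollary~\ref{c46CoX2X2X2}, which lets us compute leading monomials of products via the $\ast$-multiplication of $G({\sf R})$, and the identity ${\bf M}(\lambda\star g_i) = \alpha_\lambda(\lc(g_i))\,(\lambda\circ\tau_i){\bf e}_{l_i}$ for $\lambda\in{\Cal T}$.

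First I would handle homogeneity and membership. Fix ${\sf m} = \delta\epsilon\in{\sf T}$ and $(d_1,\dots,d_u)\in C({\sf m})$, and set $\sigma = (d_1 t_1({\sf m}),\dots,d_u t_u({\sf m}))\in S({\sf m})$. For each $i$ with ${\bf T}(g_i)\mid{\sf m}$ we have $t_i({\sf m})\circ\tau_i = \delta$, so the $i$-th summand of ${\Frak s}_L(\sigma)$ is $d_i t_i({\sf m})\ast(c_i\tau_i\varepsilon_H) = d_i\alpha_{t_i({\sf m})}(c_i)\,\delta\epsilon = d_i\,v({\sf m})_i\,\delta\epsilon$ (using $\ast$-arithmetic, $c_i = \lc(g_i)$); the remaining summands vanish because $d_i t_i({\sf m})$ pairs with an index $i$ where $v({\sf m})_i=0$, but in fact for $S({\sf m})$ we should note the entries with ${\bf T}(g_i)\nmid{\sf m}$ are forced to have $d_i=0$ in $C({\sf m})$ since $v({\sf m})_i=0$ and we may assume $C({\sf m})$ is chosen so that such coordinates are zero — or simply observe they contribute $0$ to ${\Frak s}_L(\sigma)$ regardless. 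Hence ${\Frak s}_L(\sigma) = \bigl(\sum_i d_i v({\sf m})_i\bigr)\delta\epsilon = 0$ by the syzygy property, and $\sigma$ is manifestly ${\Cal T}^{(m)}$-homogeneous of degree $\delta\epsilon$. This part is routine bookkeeping.

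The substantive part is (ii): generation. I would argue exactly as in the classical M\"oller proof (\cite{M}, \cite[Theorem~26.1.4]{SPES}). Take an arbitrary ${\Cal T}^{(m)}$-homogeneous $u = \sum_{i=1}^u a_i\lambda_i e_i\in\ker({\Frak s}_L)$ of degree ${\sf n} = \delta\epsilon$, with $a_i\ne 0 \Rightarrow \lambda_i\circ\tau_i{\bf e}_{l_i} = {\sf n}$. Let $H := \{i : a_i\ne 0\}$; then all ${\bf e}_{l_i}$ with $i\in H$ equal $\epsilon$, so $H\in{\Frak H}(F)$, and $\tau_i\mid\delta$ for all $i\in H$, hence $\tau_H = \lcm(\tau_i : i\in H)$ divides $\delta$; write $\delta = \tau_H\circ\omega$ for some $\omega\in{\Cal T}$, so ${\bf T}(H) = \tau_H\epsilon$ divides ${\sf n}$ and ${\sf m}:={\bf T}(H)\in{\sf T}$. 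One checks $t_i({\sf m})\circ\tau_i = \tau_H$, and comparing with $\lambda_i\circ\tau_i = \delta = \omega\circ\tau_H = \omega\circ t_i({\sf m})\circ\tau_i$ gives $\lambda_i = \omega\circ t_i({\sf m})$ (cancellation in the commutative monoid ${\Cal T}$). The vanishing ${\Frak s}_L(u)=0$ reads $\sum_{i\in H} a_i\alpha_{\lambda_i}(c_i) = 0$, and since $\alpha_{\lambda_i} = \alpha_\omega\circ\alpha_{t_i({\sf m})}$ with $\alpha_\omega$ injective (Lemma~\ref{OreLe+}), this forces $\sum_{i\in H} \alpha_\omega\bigl(a_i'\bigr)\cdot(\text{something})=0$ — more precisely one must massage $a_i\alpha_\omega(\alpha_{t_i({\sf m})}(c_i))$ into the form $\alpha_\omega(b_i)\,v({\sf m})_i$; here one uses that $\alpha_\omega$ is an automorphism (this is where the standing hypothesis ``each $\alpha_i$, hence each $\alpha_\tau$, is an automorphism'' is essential) to set $b_i := \alpha_\omega^{-1}(a_i)$, obtaining $\alpha_\omega\bigl(\sum_i b_i v({\sf m})_i\bigr)=0$, hence $\sum_i b_i v({\sf m})_i = 0$. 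Thus $(b_1,\dots,b_u)\in\Syz_L(v({\sf m}))$, so it is an $R$-combination $(b_i) = \sum_k e_k (d_1^{(k)},\dots,d_u^{(k)})$ of elements of $C({\sf m})$. Translating back through multiplication by $\omega$ and reassembling, $u = \sum_k \alpha_\omega(e_k)\,\omega\ast\sigma_k$ where $\sigma_k\in S({\sf m})\subset\Cal S(F)$ — here again the $\ast$-arithmetic of Corollary~\ref{c46CoX2X2X2} is what makes $\omega\ast\sigma_k$ land with leading term degree ${\sf n}$. This exhibits $u$ as an ${\sf S}$-combination of $\Cal S(F)$, completing generation.

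The main obstacle, and the only real departure from the commutative/field case, is the careful tracking of the twisting maps $\alpha_\tau$ when one ``divides'' a syzygy $u$ of degree ${\sf n}$ down to the level ${\sf m} = {\bf T}(H)$: in the classical setting the coefficients $a_i$ transfer verbatim, whereas here one must recognize that $a_i\lambda_i$ sitting over $g_i$ contributes $\alpha_{\lambda_i}(c_i) = \alpha_\omega(\alpha_{t_i({\sf m})}(c_i))$ to ${\Frak s}_L$, so the ``numerical content'' of $u$ is the vector $(\alpha_\omega^{-1}(a_i))$ rather than $(a_i)$, and the pass-through is legitimate only because every $\alpha_\tau$ is invertible. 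Once this substitution is set up correctly, everything else is the standard lcm/syzygy argument, and I would simply cite \cite{M} and \cite[Theorem~26.1.4]{SPES} for the remaining routine verifications.
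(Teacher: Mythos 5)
Your proof is essentially the same as the paper's: you set $H$ to be the support of the homogeneous syzygy, take ${\sf m} = {\bf T}(H)$, peel off a cofactor $\omega$ (the paper's $\upsilon$) with $\delta = \tau_H\circ\omega$, apply $\alpha_\omega^{-1}$ to pull the coefficient vector down to a syzygy in $\Syz_L(v({\sf m}))$, expand in $C({\sf m})$, and push back through $\omega\ast(\cdot)$ — exactly M\"oller's argument transported to the Ore setting, with invertibility of $\alpha_\omega$ used at the same place. One small caveat worth fixing: in your preliminary check that $S({\sf m})\subset\ker({\Frak s}_L)$, the fallback claim that a coordinate $i$ with $v({\sf m})_i=0$ ``contributes $0$ to ${\Frak s}_L(\sigma)$ regardless'' is false — since $t_i({\sf m})=1$, a nonzero $d_i$ would contribute $d_i{\bf M}(g_i)\neq 0$ at a lower degree; the correct reading (which you also offer) is that $C({\sf m})$ is chosen with those free coordinates set to zero, so $S({\sf m})$ is genuinely ${\Cal T}^{(m)}$-homogeneous of degree ${\sf m}$.
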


\begin{proof}
Let us consider a generic ${\Cal T}^{(m)}$-homogeneus element
$$\sigma :=  \sum_{i=1}^u a_i \lambda_i e_i\in{\sf R}^u\setminus\{0\},$$
with  
$a_i\in R, \lambda_i\in{\Cal T}, v(\sigma) := \tau\epsilon$,  and
$a_i\neq 0 \then  \lambda_i \tau_i=\tau, \epsilon = {\bf e}_{l_i}$,
and assume that it is a left syzygy in $\ker({\Frak s}_L)$.

Denoting $I:= \{i\leq u : a_i\neq 0\}$ and  setting 
${\sf m} :=\delta\epsilon := \lcm\{  {\bf T}(g_i) : i\in H\}\mid v(\sigma)$,
there is $\upsilon\in {\Cal T} : \upsilon\delta = \tau$.
With  the present notation we also have
$\delta = t_i({\sf m})\tau_i$;
thus $\upsilon t_i({\sf m})\tau_i = \upsilon\delta = \lambda_i \tau_i$
and
$\lambda_i = \upsilon t_i({\sf m})$.
We also  have
$$0 = \sum_{i=1}^u a_i \alpha_{\lambda_i}(\lc(g_i))$$
so that
$$0=\alpha_\upsilon^{-1}\left(\sum_{i=1}^u a_i \alpha_{\lambda_i}(\lc(g_i))\right) =
\sum_{i=1}^u \alpha_\upsilon^{-1}(a_i) \alpha_{t_i({\sf m})}(\lc(g_i)),$$
so that
$(\alpha_\upsilon^{-1}(a_1),\ldots, \alpha_\upsilon^{-1}(a_u))\in\Syz_L(v({\sf m})_1,\ldots,v({\sf m})_u)$.

Therefore, if we enumerate as
$$(d_{11},\ldots,d_{1u}),\ldots, (d_{v1},\ldots,d_{vu})$$ a basis of $C({\sf m})$ 
and we denote ${\sf s}_j :=\sum_{i=1}^u d_{ji} t_i({\sf m}) e_i, 1\leq j \leq v$, the elements of $S({\sf m})$, 
we have
$(\alpha_\upsilon^{-1}(a_1),\ldots, \alpha_\upsilon^{-1}(a_u)) = \sum_{j=1}^v b_j (d_{j1},\ldots,d_{ju})$ for suitable $b_j\in R$
and
\begin{eqnarray*}
\sigma &=&  \sum_{i=1}^u a_i \lambda_i e_i 
\\  &=& 
\sum_{i=1}^u a_i \upsilon t_i({\sf m}) e_i
\\  &=& 
\sum_{i=1}^u \upsilon\ast\alpha_\upsilon^{-1}(a_i)  t_i({\sf m}) e_i
\\  &=& 
 \upsilon\ast\sum_{i=1}^u \sum_{j=1}^v b_jd_{ji} t_i({\sf m}) e_i
\\  &=& 
\sum_{j=1}^v\alpha_\upsilon(b_j) \upsilon \ast  \left(\sum_{i=1}^u d_{ji} t_i({\sf m})e_i\right) 
\\  &=& 
\sum_{j=1}^v\alpha_\upsilon(b_j) \upsilon\ast{\sf s}_j.
\end{eqnarray*}
\end{proof}

\begin{Corollary} The following holds:
\begin{enumerate}
\item ${\Cal S}'(F)$ is a Gebauer--M\"oller set for $F$.
\item $F$ is a left Gr\"obner basis of the module it generates iff 
each $h\in{\Cal R}(F)$ has a left Gr\"obner representation in terms of $F$.
\qed\end{enumerate}\end{Corollary}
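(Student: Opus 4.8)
The plan is to deduce both parts directly from Lemma~\ref{47L1} and M\"oller's Left Lifting Theorem, Theorem~\ref{SAASSAAS}. For part~(1) I would first recall that, by Definition~\ref{46D2}, a left Gebauer--M\"oller set for $F$ is by definition a ${\Cal T}^{(m)}$-homogeneous basis of $\ker({\Frak s}_L)$, i.e.\ a set of ${\Cal T}^{(m)}$-homogeneous elements generating $\ker({\Frak s}_L)$. Since ${\Cal S}'(F)\subseteq{\Cal S}(F)$, its elements are automatically ${\Cal T}^{(m)}$-homogeneous, so the only thing to check is that ${\Cal S}'(F)$ still generates $\ker({\Frak s}_L)$. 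By Lemma~\ref{47L1}, ${\Cal S}(F)$ generates this kernel; and by the defining property of ${\Cal S}'(F)$, each $\sigma\in{\Cal S}(F)$ lies in the submodule generated by ${\Cal S}'(F)$ — this is trivial when $\sigma\in{\Cal S}'(F)$, and for $\sigma\in{\Cal S}(F)\setminus{\Cal S}'(F)$ it is exactly the representation $\sigma=\sum_j d_j\tau_j\ast\sigma_j$ with $\sigma_j\in{\Cal S}'(F)$ furnished by the definition (and, since $\sigma$ is ${\Cal T}^{(m)}$-homogeneous, one may keep only the summands of ${\Cal T}^{(m)}$-degree $v(\sigma)$ to make the representation homogeneous). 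Hence ${\Cal S}'(F)$ generates $\ker({\Frak s}_L)$ and is a Gebauer--M\"oller set for $F$.

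For part~(2) I would use part~(1) to take $\GM(F):={\Cal S}'(F)$ in Theorem~\ref{SAASSAAS}, whose conditions $(1)$, $(2)$, $(3)$ are then equivalent. Read with $\GM(F)={\Cal S}'(F)$, condition $(3)$ says that for every $\sigma\in{\Cal S}'(F)$ the S-polynomial ${\Frak S}_L(\sigma)$ admits a quasi-Gr\"obner representation in terms of $F$; and by construction ${\Cal R}(F)=\{{\Frak S}_L(\sigma):\sigma\in{\Cal S}'(F)\}$. For the ``only if'' part, if $F$ is a left Gr\"obner basis of ${\sf M}:={\Bbb I}_L(F)$ then, since ${\Cal R}(F)\subseteq{\sf M}$, condition $(2)$ already produces a genuine left Gr\"obner representation of every $h\in{\Cal R}(F)$. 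For the ``if'' part, if every $h\in{\Cal R}(F)$ has a left Gr\"obner representation, then by the Remark following Definition~\ref{46D2} it a fortiori has a quasi-Gr\"obner representation, so condition $(3)$ holds for $\GM(F)={\Cal S}'(F)$, whence $F$ is a left Gr\"obner basis by the implication $(3)\then(1)$.

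Everything here is bookkeeping once Lemma~\ref{47L1} and Theorem~\ref{SAASSAAS} are available, so I do not expect a serious obstacle. The one point deserving care is in part~(1): one must check that trimming ${\Cal S}(F)$ down to ${\Cal S}'(F)$ preserves the property of generating $\ker({\Frak s}_L)$ — which is precisely what the defining closure condition on ${\Cal S}'(F)$ guarantees — and that one may do so while staying within ${\Cal T}^{(m)}$-homogeneous generators. A minor secondary point is the discrepancy between ``quasi-Gr\"obner representation'' in condition $(3)$ of Theorem~\ref{SAASSAAS} and ``Gr\"obner representation'' in the statement of the corollary: the stronger form is needed only in the ``only if'' direction, where it is supplied directly by condition $(2)$, while in the ``if'' direction the weaker quasi-Gr\"obner form is all that condition $(3)$ requires.
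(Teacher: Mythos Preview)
Your proposal is correct and follows precisely the reasoning the paper leaves implicit: the corollary in the paper is stated with a bare \qed\ and no proof, since part~(1) is immediate from Lemma~\ref{47L1} together with the defining property of ${\Cal S}'(F)$, and part~(2) then follows from Theorem~\ref{SAASSAAS} with $\GM(F)={\Cal S}'(F)$, exactly as you argue. Your handling of the Gr\"obner vs.\ quasi-Gr\"obner distinction is also the right one.
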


\begin{Example} 
If we consider the ring of Example~\ref{GOeY}  as a left ${\Bbb Z}[x]$-module endowed 
with the $\Gamma$-pseudovaluation, 
$\Gamma =  \{Y_1^{a_1}Y_2^{a_2}Y_3^{a_3} : (a_1,a_2,a_1)\in{\Bbb N}^3\},$
we obtain a  similar solution as the one described  in Example~\ref{GOeX} .

Expressing each ${\bf M}(f_i)$ as ${\bf M}(f_i)=\lc(f_i){\bf T}(f_i)$, according Zacharias approach we need to compute a syzygy bases in ${\Bbb Z}[x]$ among
$\alpha_{Y_1}(\lc(f_1)) = (y^2-1)$, $\alpha_{Y_2}(\lc(f_2)) = (y^3-1)$ and
$\alpha_{Y_3}(\lc(f_3))  = (y^4-1)$; the natural solutions  
$(-(y^2+y+1),(y+1),0)$, $(-(y^2-1),0,1)$
produce $\sigma_1$ and  $\sigma_2$.
\end{Example}
\begin{Example}\label{Azzurro}
Let us now specialize the ring of Example~\ref{GOe}
to the case
$$n=3, e_1=e_2=e_3=1,c_1=20,c_2=6,c_3=15,$$
and let us consider four elements $f_1,f_2,f_3,f_4\in{\sf R}$ with 
$${\bf M}(f_1) = xY_1Y_2^3Y_3^2,
{\bf M}(f_2) =x^2Y_1^2Y_2Y_3^2,
{\bf M}(f_3) = xY_1^2Y_2^3Y_3,
{\bf M}(f_4) =xY_1^2Y_2^2Y_3^2.$$

We have
$${\sf T} =\{Y_1Y_2^3Y_3^2,
Y_1^2Y_2Y_3^2,
Y_1^2Y_2^2Y_3^2,
Y_1^2Y_2^3Y_3,
Y_1^2Y_2^3Y_3^2\},$$
and
$$\begin{array}{lll}
{\sf m}&(t_1({\sf m}),\ldots,t_4({\sf m}))&v({\sf m})\\
\hline
Y_1Y_2^3Y_3^2&(1,0,0,0)&(x,0,0,0) \\
Y_1^2Y_2Y_3^2&(0,1,0,0)&(0,x^2,0,0)\\
Y_1^2Y_2^2Y_3^2&(0,Y_2,0,1)&(0,6^2x^2,0,x)\\
Y_1^2Y_2^3Y_3&(0,0,1,0)&(0,0,x,0)\\
Y_1^2Y_2^3Y_3^2&(Y_1,Y_2^2,Y_3,Y_2)&(20x,6^4x^2,15x,6x),\\
\end{array}$$
Denoting
\begin{eqnarray*}
b(1,3)&:=&(-3Y_1,0,4Y_3,0),\\
b(2,4)&:&(0,-Y_2,0,6^2x),\\
b(3,4)&:=&(0,0,-2Y_3,0,5Y_2) 
\end{eqnarray*}
we have $S(Y_1^2Y_2^2Y_3^2) =\{b(2,4)\}$
and, since $$Y_2\ast b(2,4) =(0,-Y_2^2,0,6^2Y_2\ast x )=(0,-Y_2^2,0,6^3xY_2)$$
 we can take
$S(Y_1^2Y_3^2Y_3^2) =\{b(1,3),Y_2\ast b(2,4),b(3,4)\}$;
thus $${\Cal S}' := \{b(1,3), b(2,4),b(3,4)\}$$  is the required Gebauer--M\"oller set.
\qed\end{Example}

\subsection{Second algorithm}\label{2Alg} 

M\"oller proposes an (essentially) equivalent alternative computation:
for any $s, 1\leq s \leq u$, let us consider the 
syzygy module 
$${\Cal S}_s := \left\{(h_1,\ldots,h_s) : \sum_{i=1}^s h_i\star{\bf M}(g_i) = 0\right\}\subset
{\sf R}^s$$
and let us compute  ${\Cal S}(F) = {\Cal S}_u$ by inductively
extending ${\Cal S}_{s-1}$ to  ${\Cal S}_{s}$, the inductive seed being
${\Cal S}_{1}=\emptyset$.

A direct application of the property (d)  of  a Zacharias ring allows to compute a Gebauer-M\"oller set via
\begin{Definition}\label{c46D4} A subset $H\subset\{1,\ldots,s\}\cap{\Frak H}(F), s \leq u$, is
said to be 
\begin{description}
\item[] {\em maximal} for a term $\delta\epsilon\in{\Cal T}^{(m)}$ if 
$H = \{i, 1\leq i\leq s : \tau_i\mid\delta, 
{\bf e}_{l_i} = \epsilon\}$,
\item[] {\em basic} if $s\in H$ and $H$ is maximal for ${\bf T}(H)$.
\end{description}

For a basic subset $H\subset\{1,\ldots,s\}\cap{\Frak H}(F)$, denote $H^{\times} :=
H\setminus\{s\}$.

For any 
$$d_s\in{\Bbb I}_L(\{\alpha_{H,i}(c_i) : i\in H^{\times}\}) : {\Bbb I}_L(\alpha_{H,s}(c_s)),$$
a {\em syzygy associated to $H$ and $d_s$} is any
${\Cal T}^{(m)}$-homogeneous syzygy
$$\sum_{i\in H^{\times}} d_i \frac{\tau_H}{\tau_i} e_i 
+ d_s \frac{ \tau_H}{\tau_s} e_s\in{\Cal S}_s$$
where $d_i\in R$ are suitable elements for which 
$d_s\alpha_{H,s}(c_s) = - \sum_{i\in H^{\times}} d_i \alpha_{H,i}(c_i).$
\qed\end{Definition}

\begin{Theorem}[M\"oller]\label{48Tx1} \cite{M}
 With the present notation, denoting
\begin{itemize}
\item $\{A_1,\ldots,A_\mu\}$ a ${\Cal T}^{(m)}$-homogeneous basis of ${\Cal S}_{s-1},$ 
\item ${\Cal H}$ the set of all basic subsets $H\subset\{1,\ldots,s\}\cap{\Frak H}(F),$ 
\item $\{d_{1H},\ldots,d_{r_HH}\}$ a basis of the ideal ${\Bbb I}_L(\{\alpha_{H,i}(c_i) \textrm{ s.t. } i\in H^{\times}\}) : {\Bbb I}_L(\alpha_{H,s}(c_s))$ for each basic subset $H\in{\Cal H},$ 
\item $D_{jH}\in{\sf R}^s$ a syzygy associated to $H$ and $d_{jH}$, for each  basic subset $H\in{\Cal H}$ and each $j, 1 \leq j \leq r_H$ 
\end{itemize}
the set 
$\{A_1,\ldots,A_\mu\} \cup
 \{D_{jH} : H\in{\Cal H},1 \leq j \leq r_H\}$
is a  ${\Cal T}^{(m)}$-homogeneous basis of ${\Cal S}_{s}$.
\end{Theorem}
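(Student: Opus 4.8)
The plan is to prove Theorem~\ref{48Tx1} by the standard inductive argument from M\"oller's lifting theory, adapted to the Ore setting where each $\alpha_\tau$ is an automorphism. We want to show that $\{A_1,\ldots,A_\mu\} \cup \{D_{jH} : H\in{\Cal H}, 1\leq j\leq r_H\}$ generates ${\Cal S}_s$ as a left ${\sf R}$-module, given that $\{A_1,\ldots,A_\mu\}$ generates ${\Cal S}_{s-1}$. One inclusion is immediate: each $A_k$, viewed in ${\sf R}^s$ by appending a zero in the $s$-th coordinate, lies in ${\Cal S}_s$ (since the first $s-1$ components of $\sum h_i\star{\bf M}(g_i)$ vanish), and each $D_{jH}$ lies in ${\Cal S}_s$ by construction. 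So the content is the reverse inclusion.

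First I would take an arbitrary $\sigma = \sum_{i=1}^s h_i e_i \in {\Cal S}_s$ and reduce to the ${\Cal T}^{(m)}$-homogeneous case: decompose $\sigma$ into its leading form and lower-valuation remainder, and argue by Noetherian induction on the valuation $v(\sigma)$ using well-orderedness of $<$, exactly as in the proofs of Theorems~\ref{SAASSAAS} and \ref{50BiLiTh}. So assume $\sigma$ is homogeneous of degree $\delta\epsilon$. If $h_s = 0$ then $\sigma$ (dropping the last coordinate) is a homogeneous syzygy in ${\Cal S}_{s-1}$, hence a combination of the $A_k$, and we are done. If $h_s \neq 0$, write $h_s = a_s\lambda_s$ with $a_s\in R\setminus\{0\}$, $\lambda_s\in{\Cal T}$, and $\lambda_s\circ\tau_s = \delta$. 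Let $H := \{i : \tau_i\mid\delta,\ {\bf e}_{l_i}=\epsilon\}$; this is maximal for $\delta\epsilon$, contains $s$, and $\tau_H \mid \delta$, so there is $\upsilon\in{\Cal T}$ with $\upsilon\circ\tau_H = \delta$. The syzygy relation, after applying $\alpha_\upsilon^{-1}$ (here I use that $\alpha_\upsilon$ is an automorphism), becomes a relation $\sum_{i\in H} \alpha_\upsilon^{-1}(\ldots)\,\alpha_{H,i}(c_i) = 0$ among the $\alpha_{H,i}(c_i)$, which shows that the appropriately-twisted $s$-th coefficient lies in the colon ideal ${\Bbb I}_L(\{\alpha_{H,i}(c_i):i\in H^\times\}) : {\Bbb I}_L(\alpha_{H,s}(c_s))$, hence is an $R$-combination of the generators $d_{jH}$. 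Subtracting off the corresponding left-multiple $\sum_j (\cdot)\,\upsilon\ast D_{jH}$ kills the $s$-th coordinate while keeping the valuation $\leq \delta\epsilon$ (and the term $\lambda_s\circ\tau_s{\bf e}_\cdot$ of degree exactly $\delta\epsilon$ disappears), so the difference lies in ${\Cal S}_s$ with strictly smaller valuation, or in ${\Cal S}_{s-1}$. Induction on valuation then finishes it. The bookkeeping here mirrors the $\upsilon\ast{\sf s}_j$ computation in the proof of Lemma~\ref{47L1}.

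The main obstacle I anticipate is the careful tracking of the $\alpha$-twists: because ${\sf R}$ is not a monoid ring, left-multiplication by $\upsilon$ interacts with coefficients via $\alpha_\upsilon$, and one must verify that subtracting $\alpha_\upsilon(b_j)\,\upsilon\ast D_{jH}$ produces exactly the cancellation of the leading coefficient $a_s\alpha_{\lambda_s}(c_s)$ in the $s$-th slot. This requires the identity $\alpha_{\lambda_s} = \alpha_\upsilon\circ\alpha_{\tau_{H,s}} = \alpha_\upsilon\circ\alpha_{H,s}$ (since $\lambda_s = \upsilon\circ\tau_{H,s}$, which follows from $\upsilon\circ\tau_H = \delta = \lambda_s\circ\tau_s$ and $\tau_H = \tau_{H,s}\circ\tau_s$) together with the multiplicativity and injectivity of the $\alpha$'s from Lemma~\ref{OreLe+}. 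A secondary point is to confirm that it suffices to use only \emph{basic} subsets $H$ (those with $s\in H$ and $H$ maximal for ${\bf T}(H)$) rather than all maximal subsets: any homogeneous syzygy with nonzero $s$-th coordinate and valuation $\delta\epsilon$ can be handled at the coarser degree ${\bf T}(H)\mid\delta\epsilon$ of the basic subset $H$ it determines, exactly because $\delta = \upsilon\circ\tau_H$ factors through $\tau_H$; this is the same reduction that makes the first-algorithm set ${\Cal S}(F) = \bigcup_{{\sf m}\in{\sf T}} S({\sf m})$ suffice in Lemma~\ref{47L1}. Once these twist-identities are nailed down, the rest is the routine well-ordering induction.
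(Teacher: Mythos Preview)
Your proposal is correct and follows essentially the paper's approach. The paper works directly with a homogeneous $S\in{\Cal S}_s$ (so your outer valuation-induction wrapper is unnecessary, since a \emph{homogeneous} basis is what is sought) and takes $H$ to be maximal for $\tau_K$ (the lcm over the support $K$ of $S$) rather than for $\delta$; both choices yield a basic subset containing $s$, and after subtracting $\sum_j \alpha_\upsilon(u_j)\,\upsilon\ast D_{jH}$ the $s$-th coordinate vanishes, so the difference lands directly in ${\Cal S}_{s-1}$ rather than merely at lower valuation.
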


\begin{proof} Let 
$S := (d_1\lambda_1,\ldots,d_s\lambda_s)\in{\Cal S}_{s}, d_s\neq 0$,
be a 
 ${\Cal T}^{(m)}$-homogeneous element of ${\Cal T}^{(m)}$-degree $\delta\epsilon$
and let 
$$K := \{i, 1\leq i \leq s : d_i \neq 0\};$$ since by ${\Cal T}^{(m)}$-homogeneity, 
$\tau_i\mid\delta$ and ${\bf e}_{\iota_i} = \epsilon$ for each $i\in K$, we have ${\bf T}(K)\mid\delta\epsilon$; 
we also have 
$d_i = 0  \Forall i\notin K
\And \lambda_i\tau_i = \delta, {\bf e}_{\iota_i} = \epsilon \Forall i\in K.$

For the set $H := \{i, 1\leq i \leq s : 
\tau_i  \mid \tau_K, {\bf e}_{\iota_i} = \varepsilon_K\}$
clearly we have 
$\tau_H  \mid \tau_K$ and $K \subseteq H$ so that
$\tau_H  \mid \tau_K \mid \delta$; we also have $\varepsilon_H = \varepsilon_K = \epsilon.$
Moreover $d_s\neq 0$ implies $s\in K \subseteq H$ so that $H$ is basic.
Since $(d_1\lambda_1,\ldots,d_s\lambda_s)\in{\Cal S}_{s}$, setting $\upsilon :=  
\frac{\delta}{\tau_H}$,
we have
$$0 = \sum_{i=1}^s d_i\lambda_i\ast {\bf M}(g_i) = 
\sum_{i\in H} d_i\frac{\delta}{\tau_i}\ast c_i \tau_i\epsilon
=\left(\sum_{i\in H} d_i\alpha_{\lambda_i}(c_i)\right)\delta\epsilon$$
so that $\sum_{i\in H} d_i\alpha_{\lambda_i}(c_i) = 0$,
$\sum_{i\in H} \alpha_{\upsilon}^{-1}(d_i)\alpha_{H,i}(c_i) = 0$, whence 
$$ \alpha_{\upsilon}^{-1}(d_s)\alpha_{H,s}(c_s)\in{\Bbb I}_L(\alpha_{H,i}(c_i) : i\in H^{\times}) \And
 \alpha_{\upsilon}^{-1}(d_s)\in{\Bbb I}_L(\alpha_{H,i}(c_i) : i\in H^{\times}) : {\Bbb I}_L(\alpha_{H,s}(c_s)).$$

Therefore $ \alpha_{\upsilon}^{-1}(d_s) =\sum_{j=1}^{r_H} u_j d_{jH}$
and $S -  \sum_{j=1}^{r_H} \alpha_{\upsilon}(u_j) \upsilon\ast D_{jH}\in{\Cal S}_{s-1}.$
\end{proof}
\begin{Example} If we consider the ring of Example~\ref{GOeY}  as a left ${\Bbb Z}[x]$-module endowed 
with the $\Gamma$-pseudovaluation, 
$\Gamma =  \{Y_1^{a_1}Y_2^{a_2}Y_3^{a_3} : (a_1,a_2,a_1)\in{\Bbb N}^3\},$
we obtain a  similar solution as the one described  in Example~\ref{GOeX} .

Expressing each ${\bf M}(f_i)$ as ${\bf M}(f_i)=\lc(f_i){\bf T}(f_i)$, according Zacharias approach we need to compute a syzygy bases in ${\Bbb Z}[x]$ among
$\alpha_{Y_1}(\lc(f_1)) = (y^2-1)$, $\alpha_{Y_2}(\lc(f_2)) = (y^3-1)$ and
$\alpha_{Y_3}(\lc(f_3))  = (y^4-1)$; the natural solutions  
$(-(y^2+y+1),(y^2+1),0)$, $(-(y^2-1),0,1)$
produce $\sigma_1$ and  $\sigma_3$.
\end{Example}
   
   \begin{Example}\label{Azzurro3}
In  Examples~\ref{Azzurro}, the basic elements are the following:
$$\begin{array}{l|ll|l}
&H&{\bf T}(H)&f_H\\
\hline
s=1&\{1\}&Y_1Y_2^3Y_3^2& f_1 \\
\hline
s=2&\{1\}&Y_1Y_2^3Y_3^2& f_1 \\
&\{2\}&Y_1^2Y_2Y_3^2& f_2\\
&\{1,2\}&Y_1^2Y_2^3Y_3^2& f_{\{1,2\}}=4xY_1^2Y_2^3Y_3^2,\\
\hline
s=3&\{1\}&Y_1Y_2^3Y_3^2& f_1 \\
&\{2\}&Y_1^2Y_2Y_3^2& f_2\\
&\{3\}&Y_1^2Y_2^3Y_3& f_3\\
&\{1,2,3\}&Y_1^2Y_2^3Y_3^2& f_{\{1,2,3\}}=xY_1^2Y_2^3Y_3^2,\\
\hline
s=4&\{1\}&Y_1Y_2^3Y_3^2& f_1 \\
&\{2\}&Y_1^2Y_2Y_3^2& f_2\\
&\{3\}&Y_1^2Y_2^3Y_3& f_3\\
&\{2,4\}&Y_1^2Y_2^2Y_3^2& f_{\{2,4\}}=f_4\\
&\{1,2,3,4\}&Y_1^2Y_2^3Y_3^2& f_{\{1,2,3,4\}}=f_{\{1,2,3\}}.\\
\end{array}$$
\qed\end{Example}

\begin{Corollary}\label{48Cz1} Assuming that the Zacharias domain $R$ is a principal ideal domain and denoting\footnote{Remember that $\alpha_{\{i,j\},j}=\alpha_\tau$ for $\tau= \frac{\lcm(\tau_i,\tau_j)}{\tau_j}$.}, for each $i,j, 1 \leq i < j \leq u$, ${\bf e}_{\iota_i} ={\bf e}_{\iota_j}$,
\begin{eqnarray*}
b(i,j) &:=& 
\frac{\lcm\left(
\alpha_{\{i,j\},i}(c_i),
\alpha_{\{i,j\},j}(c_j)\right)}
{\alpha_{\{i,j\},j}(c_j)} \frac{\lcm(\tau_i,\tau_j)}{\tau_j} e_j
\\ &-&
\frac{\lcm\left(
\alpha_{\{i,j\},i}(c_i),
\alpha_{\{i,j\},j}(c_j)\right)}{\alpha_{\{i,j\},i}(c_i)} \frac{\lcm(\tau_i,\tau_j)}{\tau_i}e_i,
\\B(i,j) &:=&  
\frac{\lcm\left(
\alpha_{\{i,j\},i}(c_i),
\alpha_{\{i,j\},j}(c_j)\right)}
{\alpha_{\{i,j\},j}(c_j)} \frac{\lcm(\tau_i,\tau_j)}{\tau_j}\star g_j 
\\ &-&
\frac{\lcm\left(
\alpha_{\{i,j\},i}(c_i),
\alpha_{\{i,j\},j}(c_j)\right)}{\alpha_{\{i,j\},i}(c_i)}
\frac{\lcm(\tau_i,\tau_j)}{\tau_i}\star g_i
\end{eqnarray*}
we have that $\{b(i,j) : 1 \leq i < j \leq u, {\bf e}_{\iota_i} ={\bf e}_{\iota_j}\}$ is a Gebauer--M\"oller set for
$F$,
so that 
 $F$ is a   Gr\"obner basis of ${\sf M}$, iff
each $B(i,j)$, $1 \leq i < j \leq u, {\bf e}_{\iota_i} ={\bf e}_{\iota_j},$  has a weak  Gr\"obner
representation in terms of $F$.
\qed\end{Corollary}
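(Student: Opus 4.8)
The plan is to reduce Corollary~\ref{48Cz1} to the two preceding results, namely Theorem~\ref{48Tx1} (M\"oller's inductive construction of a homogeneous basis of $\mathcal{S}_u$) and the Left Lifting Theorem~\ref{SAASSAAS}. The only genuine content is the special shape that syzygies take when $R$ is a PID: in that case the colon ideal ${\Bbb I}_L(\{\alpha_{H,i}(c_i) : i\in H^\times\}):{\Bbb I}_L(\alpha_{H,s}(c_s))$ is principal, and moreover the classical fact that, for a two-element subset $H=\{i,j\}$, the principal generator of $(\alpha_{\{i,j\},i}(c_i)):(\alpha_{\{i,j\},j}(c_j))$ is precisely $\frac{\lcm(\alpha_{\{i,j\},i}(c_i),\alpha_{\{i,j\},j}(c_j))}{\alpha_{\{i,j\},j}(c_j)}$. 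Substituting this generator into the definition of a ``syzygy associated to $H$ and $d_s$'' (Definition~\ref{c46D4}) produces exactly $b(i,j)$, up to sign.

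The steps, in order, are as follows. First I would record that over a PID every basic subset contributes, via Theorem~\ref{48Tx1}, a single syzygy $D_H$ (since $r_H=1$). Second, I would show that for a basic subset $H$ with $|H|\geq 3$ the associated syzygy $D_H$ already lies in the module generated by the $b(i,j)$ with $i,j\in H$: this is the standard ``chain'' reduction, writing $\lcm_{i\in H}(\alpha_{H,i}(c_i))$ and $\tau_H$ stepwise, exactly as in the commutative Gebauer--M\"oller argument; the point is that a pairwise $\lcm$ tower telescopes, so a size-$k$ syzygy is an $R[{\Cal T}]$-combination of the size-$2$ ones. Here one uses that $\alpha$ is an automorphism to move the multipliers $\upsilon$ and apply $\alpha_\upsilon^{-1}$ as in the proof of Lemma~\ref{47L1}. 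Third, conversely each $b(i,j)$ is itself a syzygy associated to a basic subset (the maximal one for ${\bf T}(\{i,j\})$), hence lies in $\mathcal{S}(F)$. Combining, the set $\{b(i,j)\}$ and M\"oller's set $\mathcal S(F)$ generate the same homogeneous submodule of $\ker({\Frak s}_L)$, so $\{b(i,j)\}$ is a Gebauer--M\"oller set for $F$. Fourth, I would observe that ${\Frak S}_L(b(i,j)) = B(i,j)$ by the very definitions of ${\Frak S}_L$ and of $b(i,j), B(i,j)$. Then the equivalence $(1)\iff(3)$ of Theorem~\ref{SAASSAAS}, applied to this Gebauer--M\"oller set, gives that $F$ is a left Gr\"obner basis of ${\sf M}$ iff every $B(i,j)$ has a quasi-Gr\"obner representation; and since $(2)$ of that theorem says a Gr\"obner basis gives Gr\"obner representations of everything in ${\sf M}$ while a quasi-Gr\"obner representation is weaker, the statement can equivalently be phrased with ``weak Gr\"obner representation'' as in the Corollary.

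The main obstacle is Step~2: verifying cleanly that the higher-arity syzygies $D_H$ reduce to the pairwise $b(i,j)$. In the commutative polynomial case this is Buchberger's third criterion / the Gebauer--M\"oller chain argument, but here one must be careful that (i) the coefficients $c_i$ are twisted by the automorphisms $\alpha_{H,i}$ before taking $\lcm$'s, and (ii) the multipliers $t_i({\sf m})=\tau_H/\tau_i$ interact with $\alpha$ non-trivially, so that pushing a combination of $b(i,j)$'s up to ${\bf T}(H)$ requires applying $\alpha_\upsilon$ to scalar coefficients exactly as in Lemma~\ref{47L1}. Once one notes that $\alpha_{H,k}=\alpha_{\{i,k\},k}\circ(\text{shift})$ is compatible with the tower of $\lcm$'s — which follows since $\alpha_\tau$ is a ring endomorphism and $\alpha_{\tau\circ\omega}=\alpha_\tau\alpha_\omega$ — the telescoping goes through verbatim. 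Everything else is bookkeeping: unwinding Definition~\ref{c46D4} for $|H|=2$, checking signs, and invoking Theorem~\ref{SAASSAAS}.
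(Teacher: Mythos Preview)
Your overall strategy is right and matches the paper's: reduce to Theorem~\ref{48Tx1}, use the PID hypothesis to simplify the colon ideal, and then invoke the Lifting Theorem~\ref{SAASSAAS}. However, the paper avoids your Step~2 entirely by making a different choice of generators in Theorem~\ref{48Tx1}.

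You take $r_H=1$ (the colon ideal is principal) and then must reduce the resulting single multi-term syzygy $D_H$ to the pairwise $b(i,j)$'s by a telescoping/chain argument. The paper instead uses the identity
\[
{\Bbb I}\bigl(\{\alpha_{H,i}(c_i):i\in H^\times\}\bigr):{\Bbb I}\bigl(\alpha_{H,s}(c_s)\bigr)
=\sum_{i\in H^\times}\Bigl({\Bbb I}(\alpha_{H,i}(c_i)):{\Bbb I}(\alpha_{H,s}(c_s))\Bigr)
={\Bbb I}\!\left(\frac{\lcm(\alpha_{H,i}(c_i),\alpha_{H,s}(c_s))}{\alpha_{H,s}(c_s)}:i\in H^\times\right),
\]
valid in any PID, so it takes $r_H=|H^\times|$ with one generator per $i\in H^\times$. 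For the $i$-th generator $d_{iH}$, the relation $d_{iH}\alpha_{H,s}(c_s)=\lcm(\alpha_{H,i}(c_i),\alpha_{H,s}(c_s))$ is already a multiple of $\alpha_{H,i}(c_i)$ alone, so in Definition~\ref{c46D4} one may choose $d_j=0$ for $j\neq i$. The resulting syzygy $D_{iH}$ has support $\{i,s\}$ and is precisely $\frac{\tau_H}{\tau_{\{i,s\}}}\ast b(i,s)$ up to the $\alpha$-twist you already identified. Thus every $D_{jH}$ produced by Theorem~\ref{48Tx1} is a multiple of some $b(i,s)$, and no chain reduction is needed. Your route works, but the paper's choice of basis makes the whole argument a two-line observation rather than a telescoping computation.
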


\begin{proof} Since, for any  basic subset 
$H\subset\{1,\ldots,s\}\cap{\Frak H}(F)$
we have
\begin{eqnarray*}
{\Bbb I}(\{\alpha_{\{i,s\},i}(c_i) : i\in H^{\times}\}) : {\Bbb I}(\alpha_{\{i,s\},s}(c_s)) &=&
 \bigoplus ({\Bbb I}(\alpha_{\{i,s\},i}(c_i)) : {\Bbb I}(\alpha_{\{i,s\},s}(c_s))) 
 \\ &=& 
 {\Bbb I}\left(\frac{\lcm(\alpha_{\{i,s\},i}(c_i),\alpha_{\{i,s\},s}(c_s))}{\alpha_{\{i,s\},s}(c_s)}\right)
 \end{eqnarray*}
and $b(i,s)$ is the syzygy associated to $\{i,s\}$ and 
$\frac{\lcm(\alpha_{\{i,s\},i}(c_i),\alpha_{\{i,s\},s}(c_s))}{\alpha_{\{i,s\},s}(c_s)}$.
\end{proof}
\begin{Example}\label{Azzurro1}
In  Examples~\ref{Azzurro}, we obtain the following redundant Gebauer--M\"oller set (see Examples~\ref{Azzurro2})

$$\begin{array}{rllrrrr}
(i,j)&
\lcm\left(
\alpha_{\{i,j\},i}(c_i),
\alpha_{\{i,j\},j}(c_j)\right)&
\lcm(\tau_i,\tau_j)&
b(i,j)&&&\\
\hline
(1,2)&6^4\cdot5x^2&Y_1^2Y_2^3Y_3^2&(-2^23^4xY_1,&5Y_2^2,&0,&0)\\
(1,3)&60x&Y_1^2Y_2^3Y_3^2&(-3Y_1,&0,&4Y_3,&0)\\
(2,3)&6^4\cdot5x^2&Y_1^2Y_2^3Y_3^2&(0,&-5Y_2^2,&3^32^4xY_3,&0)\\
(1,4)&60x&Y_1^2Y_2^3Y_3^2&(-3Y_1,&0,&0,&10Y_2)\\
(2,4)&6^2x^2&Y_1^2Y_2^2Y_3^2&(0,&-Y_2,&0,&6^2x)\\
(3,4)&30x&Y_1^2Y_2^3Y_3^2&(0,&0,&-2Y_3,&5Y_2)\\
\end{array}$$
\qed\end{Example}

\begin{Corollary}\label{48Czr1} Assuming that the Zacharias domain $R$ is a principal ideal domain and 
that each $\alpha_i$  is an automorphism
denoting, for each $i,j, 1 \leq i < j \leq u$, ${\bf e}_{\iota_i} ={\bf e}_{\iota_j}$,
\begin{eqnarray*}
b(i,j) &:=& 
e_j\alpha^{-1}_{\tau_j}\left(\frac{\lcm(c_i,c_j)}{c_j}\right) \frac{\lcm(\tau_i,\tau_j)}{\tau_j}  -
e_i\alpha^{-1}_{\tau_i}\left(\frac{\lcm(c_i,c_j)}{c_i}\right) \frac{\lcm(\tau_i,\tau_j)}{\tau_i} 
\\B(i,j) &:=&  
g_j\star\alpha^{-1}_{\tau_j}\left(\frac{\lcm(c_i,c_j)}{c_j}\right) \frac{\lcm(\tau_i,\tau_j)}{\tau_j}  -
g_i\star\alpha^{-1}_{\tau_i}\left(\frac{\lcm(c_i,c_j)}{c_i}\right) \frac{\lcm(\tau_i,\tau_j)}{\tau_i} 
\end{eqnarray*}
we have that $\{b(i,j) : 1 \leq i < j \leq u, {\bf e}_{\iota_i} ={\bf e}_{\iota_j}\}$ is a Gebauer--M\"oller set for
$F$,
so that 
 $F$ is a  right Gr\"obner basis of ${\sf M}$, iff
each $B(i,j)$, $1 \leq i < j \leq u, {\bf e}_{l_i} ={\bf e}_{l_j},$  has a right weak  Gr\"obner
representation in terms of $F$.
\qed\end{Corollary}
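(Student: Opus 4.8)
The plan is to mirror, on the right, the way Corollary~\ref{48Cz1} was deduced from M\"oller's second algorithm (Theorem~\ref{48Tx1}). Because each $\alpha_i$, hence each $\alpha_\tau$, $\tau\in{\Cal T}$, is an automorphism, one may regard ${\sf R}$ as a right $R$-module (Remark~\ref{OreRem1}) and re-run the constructions of Sections~3--6 symmetrically, reading every left multiplication $\lambda\star(-)$ as a right multiplication $(-)\star\rho$; write ${\Frak s}_R,{\Frak S}_R$ for the resulting right analogues of ${\Frak s}_L,{\Frak S}_L$. Then the right-handed analogue of Theorem~\ref{48Tx1} holds, with Definition~\ref{c46D4} read on the right, and by the right analogues of Lemma~\ref{47L1} and Theorem~\ref{SAASSAAS} it suffices to show that $\{b(i,j):1\leq i<j\leq u,\ {\bf e}_{\iota_i}={\bf e}_{\iota_j}\}$ is a right Gebauer--M\"oller set for $F$, i.e.\ a ${\Cal T}^{(m)}$-homogeneous basis of $\ker({\Frak s}_R)$; the asserted equivalence with ``each $B(i,j)={\Frak S}_R(b(i,j))$ admits a right weak Gr\"obner representation in terms of $F$'' is then immediate from those right analogues.

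First I would describe, in a fixed ${\Cal T}^{(m)}$-degree $\delta\epsilon$, the homogeneous elements of $\ker({\Frak s}_R)$. Such an element has the form $\sigma=\sum_i e_i\,b_i\rho_i$ with $b_i\in R$, $\rho_i\in{\Cal T}$, and $\tau_i\circ\rho_i=\delta$, ${\bf e}_{\iota_i}=\epsilon$ whenever $b_i\neq 0$. By Eq.~(\ref{c46Eq2}) and Corollary~\ref{c46CoX2X2X2}(2) one has ${\bf M}(g_i\star b_i\rho_i)={\bf M}(g_i)\ast b_i\rho_i=c_i\,\alpha_{\tau_i}(b_i)\,(\tau_i\circ\rho_i)\,\epsilon$, so $\sigma\in\ker({\Frak s}_R)$ precisely when $\sum_i c_i\,\alpha_{\tau_i}(b_i)=0$, the index $i$ running over those with ${\bf T}(g_i)\mid\delta\epsilon$. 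Setting $\widetilde b_i:=\alpha_{\tau_i}(b_i)$ — a bijective substitution, since $\alpha_{\tau_i}$ is invertible — this is exactly the condition that $(\widetilde b_i)_i$ be a syzygy of $(c_i)_i$ in $R$; conversely every such syzygy yields a homogeneous member of $\ker({\Frak s}_R)$ upon setting $b_i:=\alpha_{\tau_i}^{-1}(\widetilde b_i)$, $\rho_i:=\delta/\tau_i$. Hence, degree by degree, $\ker({\Frak s}_R)$ is the $\alpha^{-1}$-twist of the ordinary one-sided syzygy module of $c_1,\dots,c_u$.

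Next, exactly as in the proof of Corollary~\ref{48Cz1}, I would use that $R$ is a PID: every relevant colon ideal splits, ${\Bbb I}(\{c_i:i\in H^{\times}\}):{\Bbb I}(c_s)=\sum_{i\in H^{\times}}\bigl({\Bbb I}(c_i):{\Bbb I}(c_s)\bigr)=\sum_{i\in H^{\times}}{\Bbb I}(\lcm(c_i,c_s)/c_s)$, by the distributive law $\gcd_i\lcm(c_i,c_s)=\lcm(\gcd_i c_i,c_s)$; consequently the syzygy module of $\{c_1,\dots,c_u\}$ is generated by the pairwise syzygies $\widetilde b(i,j):=\frac{\lcm(c_i,c_j)}{c_j}e_j-\frac{\lcm(c_i,c_j)}{c_i}e_i$. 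Applying the twist of the preceding paragraph to each $\widetilde b(i,j)$ — replacing the coefficient of $e_j$ by its image under $\alpha^{-1}_{\tau_j}$ and appending the monomial $\lcm(\tau_i,\tau_j)/\tau_j$, which is forced by $\tau_j\circ\rho_j=\lcm(\tau_i,\tau_j)$, and symmetrically for $e_i$ — produces precisely the $b(i,j)$ of the statement, and the basic-subset bookkeeping of Definition~\ref{c46D4}, read on the right, shows that these generate $\ker({\Frak s}_R)$. Since $B(i,j)={\Frak S}_R(b(i,j))$, the proof concludes.

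The main obstacle is not conceptual but an exacting audit of where the automorphisms act. In the left case the $\lcm$'s are taken among the already twisted coefficients $\alpha_{\tau_H/\tau_i}(c_i)$; on the right, since a right factor $b_i\rho_i$ attached to $g_i$ contributes $c_i\,\alpha_{\tau_i}(b_i)$ to the leading coefficient, the twist must be divided out instead, so the $\lcm$'s are formed from the bare $c_i$ and the cofactor $\lcm(c_i,c_j)/c_j$ is then pulled back through $\alpha^{-1}_{\tau_j}$. One has to check the resulting consistency — namely that $g_j\star\alpha^{-1}_{\tau_j}(\lcm(c_i,c_j)/c_j)\,(\lcm(\tau_i,\tau_j)/\tau_j)$ has leading monomial $\lcm(c_i,c_j)\,\lcm(\tau_i,\tau_j)\,{\bf e}_{\iota_j}$ — and, in case $R$ is only a one-sided PID, that the colon-ideal splitting and the existence of $\lcm$'s hold on the side imposed by the right module structure. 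Here the hypothesis that the $\alpha_i$ are automorphisms is essential: without it ${\sf I}_\tau$ need not even be a one-sided ideal (cf.\ Remark~\ref{RightSzaRem+}), and the right-handed Szekeres and M\"oller theory underlying the whole argument would be unavailable.
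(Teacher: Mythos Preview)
Your argument is correct and is essentially the explicit unpacking of what the paper leaves implicit: the paper states Corollary~\ref{48Czr1} with a bare \qed, treating it as the self-evident right-handed mirror of Corollary~\ref{48Cz1} once one invokes the right $R$-module structure available when every $\alpha_i$ is an automorphism (Remark~\ref{OreRem1}, Remark~\ref{RightSzaRem+}). Your key observation --- that ${\bf M}(g_i\star b_i\rho_i)=c_i\,\alpha_{\tau_i}(b_i)\,(\tau_i\circ\rho_i)\,{\bf e}_{\iota_i}$, so the bijective change of variables $\widetilde b_i=\alpha_{\tau_i}(b_i)$ reduces the right syzygy problem to ordinary syzygies among the bare $c_i$ rather than among twisted coefficients --- is exactly the point that explains why $\lcm(c_i,c_j)$ (not $\lcm(\alpha_{\{i,j\},i}(c_i),\alpha_{\{i,j\},j}(c_j))$) appears and why the cofactors must then be pulled back through $\alpha_{\tau_j}^{-1}$, $\alpha_{\tau_i}^{-1}$; the paper does not spell this out.
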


\subsection{Third algorithm: from weak to strong Gr\"obner basis}\label{c46S11E} \label{3Alg}

As regards {\em strong} Gr\"obner bases, we have

\begin{Definition} A set $C\subset{\sf R}^m$ is called a
{\em completion}\index{completion} of $F$, if, for each subset
$H\subset{\Frak H}(F)$ which is maximal for ${\bf T}(H)$, it contains an element
$f_H\in {\Bbb I}(F)$ 
which satisfies
\begin{enumerate}
\item ${\bf T}(f_H) = {\bf T}(H) = \tau_H\varepsilon_H$,
\item $\lc(f_H) = c_H = \gcd\left(\alpha_{H,i}(\lc(g_i)): i\in H\right)$,
\item $f_H$ has a  Gr\"obner representation in terms of $F$.
\end{enumerate}\end{Definition}

\begin{Algorithm}[M\"oller]\label{ShMo2} A completion of $F$ can be inductively computed by
mimicking the construction of Theorem~\ref{48Tx1}
as follows: the result being trivial if $\#F = 1$, we can assume to have
already obtained a completion $C(F^\times)$ of $F^\times = \{g_1,\ldots,g_{s-1}\},
s \leq u$; for each maximal subset $H \subset\{1,\ldots,s\}$, if $s\notin H$ we
can take as $f_H$ the corresponding element in $C(F^\times)$.
If instead $s\in H$, then $H^\times$ is maximal in $F^\times$ for 
${\bf T}(H^\times)$ and $\tau_{H^\times} \mid \tau_{H}$; thus there is a corresponding element
$f_{H^\times}$ in $C(F^\times)$;
let us compute the values $s,t,d\in R$ such that
$$\alpha_{H,H^\times}(\lc(f_{H^\times})) s + \alpha_{H,s}(\lc(g_s)) t = 
\gcd(\alpha_{H,H^\times}(\lc(f_{H^\times})),\alpha_{H,s}(\lc(g_s))) = d$$ 
and define
$f_H := s \frac{\tau_H}{\tau_{H^\times}}\star f_{H^\times} + 
t\frac{\tau_H}{\tau_s} \star g_s$
which satisfies ${\bf M}(f_H) = d {\bf T}(H) = d\tau_H\epsilon_H$ so that
\begin{enumerate}
\item ${\bf T}(f_H) = {\bf T}(H) = \tau_H\epsilon_H$,
\item $\lc(f_H) = \gcd(\alpha_{H,H^\times}(\lc(f_{H^\times})),\alpha_{H,s}(\lc(g_s))) = \ \gcd\left(\alpha_{H,i}(\lc(g_i)): i\in H\right) = d;$.
\item it is sufficient to substitute $f_{H^\times}$ with its  Gr\"obner
representation, to obtain the required  Gr\"obner representation of $f_H$.
\qed\end{enumerate}
\end{Algorithm}

\begin{Proposition}[M\"oller]\label{48xP1}  With the present notation 
and under the  assumption that $R$ is a principal ideal domain, 
the following conditions are equivalent: 
\begin{enumerate}
\item $F$ is a  Gr\"obner basis of ${\sf M}$;
\item a completion of $F$ is a strong   Gr\"obner basis of ${\sf M}$.
\end{enumerate}
\end{Proposition}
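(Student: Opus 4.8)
The plan is to prove both implications directly, using the three algorithms already established. The non-trivial direction is $(1)\then(2)$: assume $F$ is a weak Gr\"obner basis and let $C$ be a completion of $F$ as produced by Algorithm~\ref{ShMo2}. I must show $C$ is a \emph{strong} Gr\"obner basis, i.e.\ for each $f\in{\sf M}\setminus\{0\}$ there is $h\in C$ with ${\bf M}(h)\mid{\bf M}(f)$. The converse $(2)\then(1)$ is the easy part: a strong Gr\"obner basis is in particular a weak one (every element $h\in C$ with ${\bf M}(h)\mid{\bf M}(f)$ gives a monomial of the required form, so condition (i) implies (iv) of the Gr\"obner basis hierarchy), and $C$ generates the same module ${\sf M}$ since by construction each $f_H\in{\Bbb I}(F)={\sf M}$ and $F\subseteq{\Bbb I}(C)$ — actually one should note $C\supseteq$ (the one-element completions for singletons $H=\{i\}$, which are $g_i$ themselves up to a unit), so ${\Bbb I}(C)={\sf M}$.

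For $(1)\then(2)$, I would argue as follows. Take $f\in{\sf M}$, set $\tau{\bf e}_\ell:={\bf T}(f)$. Since $F$ is a weak Gr\"obner basis, $\lc(f)\in{\sf I}_\tau$, and in Szekeres terms ${\sf I}_\tau\subset R$ is the ideal generated by $\{\alpha_\lambda(c_i):\lambda\circ\tau_i=\tau,\ {\bf e}_{l_i}={\bf e}_\ell\}$ — more precisely, letting $H:=\{i: {\bf T}(g_i)\mid\tau{\bf e}_\ell\}$ be the maximal subset for $\tau{\bf e}_\ell$ and $\upsilon\in{\Cal T}$ with $\upsilon\circ\tau_H=\tau$, one has (using $R$ a PID) ${\sf I}_\tau={\Bbb I}_L\bigl(\alpha_\upsilon\bigl(\gcd(\alpha_{H,i}(c_i):i\in H)\bigr)\bigr)$. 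Now the completion $C$ contains an element $f_{H'}$ with ${\bf T}(f_{H'})={\bf T}(H')=\tau_{H'}\varepsilon_{H'}$ and $\lc(f_{H'})=\gcd(\alpha_{H',i}(\lc(g_i)):i\in H')$, where $H'$ is the maximal subset with ${\bf T}(H')\mid\tau{\bf e}_\ell$ and — here is the delicate point — one needs ${\bf T}(H')\mid\tau{\bf e}_\ell$ and the gcd of the $\alpha$-conjugated leading coefficients "at level $\tau$" to coincide with $\lc(f_{H'})$ up to an appropriate $\alpha_\upsilon$-image. Then ${\bf M}(f_{H'})\mid{\bf M}(f)$: the term divides because $\tau_{H'}\mid\tau$ and $\varepsilon_{H'}={\bf e}_\ell$, and the coefficient divides because $\lc(f)$ is a multiple of the gcd. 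So $h:=f_{H'}$ works.

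The main obstacle, and the step I would write out most carefully, is the interplay between the $\alpha_\tau$-twisting and the gcd/lcm arithmetic in $R$ when comparing the "local" ideal ${\sf I}_\tau$ at a term $\tau$ with the completion element $f_{H}$ built at the smaller term ${\bf T}(H)$. One must verify that shifting from ${\bf T}(H)$ up to $\tau$ by $\upsilon$ — which replaces each $\alpha_{H,i}(c_i)$ by $\alpha_\upsilon(\alpha_{H,i}(c_i))=\alpha_{\lambda_i}(c_i)$ — commutes appropriately with taking gcd's, i.e.\ $\alpha_\upsilon\bigl(\gcd_i\alpha_{H,i}(c_i)\bigr)$ generates the same ideal as $\gcd_i\alpha_{\lambda_i}(c_i)$. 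This uses that $\alpha_\upsilon$ is an injective ring endomorphism (an automorphism under our standing assumption), hence maps a gcd to a generator of the image ideal; since $R$ is a PID and $\alpha_\upsilon$ is an automorphism of $R$, this is immediate, but it must be stated. Once this is in place, the argument reduces to the fact, already embedded in Algorithm~\ref{ShMo2}, that $\lc(f_H)=\gcd(\alpha_{H,i}(\lc(g_i)):i\in H)$ generates $\alpha_\upsilon^{-1}({\sf I}_\tau)$, and the divisibility ${\bf M}(f_H)\mid{\bf M}(f)$ follows. I would also remark that the Gr\"obner representation clause (3) in the definition of completion is what guarantees $f_H\in{\sf M}$, closing the loop.

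\begin{proof}
$(2)\then(1)$: A completion $C$ of $F$ contains, for each singleton $\{i\}$, an element $f_{\{i\}}$ with ${\bf T}(f_{\{i\}})={\bf T}(g_i)$ and $\lc(f_{\{i\}})$ an associate of $\lc(g_i)$, and each $f_H\in{\Bbb I}(F)={\sf M}$ has a Gr\"obner representation in terms of $F$; hence ${\Bbb I}(C)={\sf M}$. If $C$ is a strong Gr\"obner basis then for each $f\in{\sf M}$ there is $h\in C$ with ${\bf M}(h)\mid{\bf M}(f)$, so ${\bf M}(f)\in{\bf M}(F)\subseteq{\bf M}\{{\Bbb I}({\bf M}\{F\})\}$, which is condition (iv); thus $F$ is a weak Gr\"obner basis of ${\sf M}$.

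$(1)\then(2)$: Let $C$ be the completion computed by Algorithm~\ref{ShMo2} and let $f\in{\sf M}\setminus\{0\}$, $\tau{\bf e}_\ell:={\bf T}(f)$. Put $H:=\{i: \tau_i\mid\tau,\ {\bf e}_{l_i}={\bf e}_\ell\}$, the maximal subset for $\tau{\bf e}_\ell$, and choose $\upsilon\in{\Cal T}$ with $\upsilon\circ\tau_H=\tau$; for $i\in H$ let $\lambda_i\in{\Cal T}$ with $\lambda_i\circ\tau_i=\tau$, so $\lambda_i=\upsilon\circ\tau_{H,i}$ and $\alpha_{\lambda_i}=\alpha_\upsilon\alpha_{H,i}$. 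Since $F$ is a weak Gr\"obner basis, $\lc(f)\in{\sf I}_\tau={\Bbb I}_L\bigl(\alpha_{\lambda_i}(c_i):i\in H\bigr)$; as $R$ is a PID and each $\alpha_\upsilon,\alpha_{H,i}$ is an automorphism, $\gcd\bigl(\alpha_{\lambda_i}(c_i):i\in H\bigr)=\alpha_\upsilon\bigl(\gcd(\alpha_{H,i}(c_i):i\in H)\bigr)$ generates ${\sf I}_\tau$ (up to a unit). By construction $C$ contains an element $f_H$ with ${\bf T}(f_H)={\bf T}(H)=\tau_H{\bf e}_\ell$ and $\lc(f_H)=\gcd(\alpha_{H,i}(\lc(g_i)):i\in H)$. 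Then $\tau_H\mid\tau$ gives ${\bf T}(f_H)\mid{\bf T}(f)$, and $\lc(f)\in{\sf I}_\tau={\Bbb I}_L(\alpha_\upsilon(\lc(f_H)))$ gives, after applying $\alpha_\upsilon^{-1}$, that $\alpha_\upsilon^{-1}(\lc(f))$ is a multiple of $\lc(f_H)$; multiplying $f_H$ on the left by $\alpha_\upsilon^{-1}(\lc(f))\lc(f_H)^{-1}\,\upsilon$ (using ${\bf M}(a\upsilon\star f_H)=a\alpha_\upsilon(\lc(f_H))\,\upsilon\circ\tau_H{\bf e}_\ell$) produces a monomial multiple of ${\bf M}(f_H)$ equal to ${\bf M}(f)$. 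Hence ${\bf M}(f_H)\mid{\bf M}(f)$ with $f_H\in C\cap{\sf M}$, so $C$ is a strong Gr\"obner basis of ${\sf M}$.
\end{proof}
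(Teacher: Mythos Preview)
Your argument follows essentially the same route as the paper's. For $(1)\then(2)$ the paper starts from a Gr\"obner representation $f=\sum h_i\star g_i$ and takes $H:=\{j:{\bf T}(h_j\star g_j)={\bf T}(f)\}$, while you take $H$ to be the full maximal subset for ${\bf T}(f)$ and invoke the Szekeres ideal ${\sf I}_\tau$; both lead to the same divisibility $\alpha_\upsilon(c_H)\mid\lc(f)$ and hence ${\bf M}(f_H)\mid{\bf M}(f)$. Your formulation is in fact slightly cleaner, since your $H$ is automatically maximal for ${\bf T}(H)$, which is what the definition of completion requires.

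There is, however, a slip in your $(2)\then(1)$ direction: from ${\bf M}(h)\mid{\bf M}(f)$ with $h\in C$ you conclude ``${\bf M}(f)\in{\bf M}(F)$''. This is not true as stated --- you only get ${\bf M}(f)\in{\bf M}(C)$. What is needed is ${\bf M}(f)\in{\bf M}\{{\Bbb I}({\bf M}\{F\})\}$, and for that you must actually use property~(3) of the completion: since $f_H$ has a Gr\"obner representation in terms of $F$, its leading monomial satisfies ${\bf M}(f_H)=\sum_{j}a_j\lambda_j\ast{\bf M}(g_{i_j})\in{\Bbb I}({\bf M}\{F\})$, and then ${\bf M}(f)=m\ast{\bf M}(f_H)\in{\Bbb I}({\bf M}\{F\})$ follows. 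The paper handles this by substituting the Gr\"obner representation of each $f_K$ into the strong Gr\"obner representation of $f$, yielding directly a Gr\"obner representation of $f$ in terms of $F$; your argument needs this same ingredient, just phrased at the level of leading monomials. Also, writing ``$\lc(f_H)^{-1}$'' is abusive since $\lc(f_H)$ is not a unit; you mean the quotient $d\in R$ with $\alpha_\upsilon^{-1}(\lc(f))=d\cdot\lc(f_H)$.
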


\begin{proof}\

\begin{description}
\item[$(1) \then (2)$] 

Let $f\in{\sf M}$ and let
$f = \sum_{i=1}^u h_i\star g_i$ be a  Gr\"obner representation;
denoting $H := \{j : {\bf T}(h_j\star g_j) = {\bf T}(f) =: \tau\epsilon\}$ we have
$\tau_H \mid\tau, \epsilon_H = \epsilon$. 
Thus, setting 
$\upsilon_j :=\frac{\tau}{\tau_j},\omega_j :=\frac{\tau_H}{\tau_j}$ for each $j$ and
$\lambda:=\frac{\tau}{\tau_H}$ we have
$$\begin{array}{rclcl}
\lc(f) &=& \sum_{j\in H} \lc(h_j)\alpha_{\upsilon_j}(\lc(g_j)) &&\\
&=& \sum_{j\in H} \lc(h_j)\alpha_{\lambda}\alpha_{\omega_j}(\lc(g_j))
&\in&
{\Bbb I}\left(\alpha_{\lambda}\alpha_{\omega_j}(\lc(g_j)) : j\in H\right) \\
&& &=& 
\alpha_{\lambda}\left({\Bbb I}\left(\alpha_{\omega_j}(\lc(g_j)) : j\in H\right)\right)  \\
&&&=& \alpha_{\lambda}({\Bbb I}(c_H))
\end{array}$$
so that
$ \alpha_{\lambda}(\lc(f_H)) = \alpha_{\lambda}(c_H)\mid \lc(f)$
and $\lc(f)=d\alpha_{\lambda}(\lc(f_H))$ with $d\in R.$ In conclusion we have
${\bf M}(f) =d\lambda\ast  {\bf M}(f_H)$.
\item[$(2) \then (1)$]   Let $f\in{\sf M}$ and let
$f = \sum\limits_{K\subset{\Frak H}(F)} c_K \tau_K f_K$ be a strong  Gr\"obner
representation of it in terms of 
a completion of $F$; it is sufficient to substitute
each $f_K$ with a  Gr\"obner
representation of it in terms of $F$ to obtain the required representation.
\end{description}\end{proof}

\begin{Example}
In the ring of Examples~\ref{GOeY} and~\ref{GOeX}, we finally have (see Remark~\ref{GOeR})
$$f_{\{1,2\}}=f_{\{1,3\}}=f_{\{1,2,3\}}={\Frak s}_L(\varsigma_A)={\Frak s}_L(\varsigma_B)=(y-1)Y_1^{2}Y_2^{2}Y_3^{2}.$$
\qed\end{Example}
\begin{Example}\label{Azzurro4}
The strong Gr\o"bner basis (see Examples~\ref{Azzurro3}) is 
$$\{f_1,f_2,f_3,f_4,f_{\{1,2,3,4\}}\}$$
since
$$
\gcd(\alpha_{\{1,2\},\{1\}}(\lc(f_{1})),\alpha_{\{1,2\},2}(\lc(f_2)))=
\gcd(\alpha_{Y_1}(x)),\alpha_{Y_2^2}(x))=
\gcd(20x)),36x)=4x$$
$$
        \gcd(\alpha_{\{1,2,3\},\{1,2\}}(\lc(f_{\{1,2\}})),\alpha_{\{1,2,3\},3}(\lc(f_3)))=
        \gcd(\alpha_{1}(4x)),\alpha_{Y_3}(x))=
         \gcd(4x)),15x)=x.$$
Similarly, $f_{\{2,4\}}=f_4$ follows trivially from
$$
        \gcd(\alpha_{\{2,4\},\{2\}}(\lc(f_{2})),\alpha_{\{2,4\},4}(\lc(f_4)))=
        \gcd(\alpha_{Y_2}(x^2)),\alpha_{1}(x))=
         \gcd(36x^2)),x)=x.$$
\qed\end{Example}

\subsection{Useless  S-pairs and Gebauer-M\"oller sets}\label{c46S12g}  

Let us still assume that  the Zacharias domain $R$ is a principal ideal domain
and we will use freely notations as 
${\bf M}(i), {\bf M}(i,j), {\bf M}(i,j,k), 1 \leq i,j,k \leq u$, instead of
${\bf M}(\{i\}),$ ${\bf M}(\{i,j\}),$ ${\bf M}(\{i,j,k\})$; we can then easily apply to the present setting the reformulation and improvement by Gebauer--M\"oller  \cite{GM} of 
Buchberger Criteria  \cite{BCrit}.
However we must be aware that in this context, there is no chance of  reformulating 
Buchberger's First Criterion.

\begin{Remark} In fact we should at least require that
$${\bf M}(i)\ast {\bf M}(j) = {\bf M}(i)\ast {\bf M}(j)={\bf M}(i,j)$$
{\em id est} not only $\lcm(\tau_i,\tau_j)=\tau_i\circ\tau_j=\tau_j\circ\tau_i$ which is trivially true but also
$$\lcm(
\alpha_{\tau_j}(c_i),
\alpha_{\tau_i}(c_j)) = c_j\alpha_{\tau_j}(c_i)=c_i\alpha_{\tau_i}(c_j).$$

This essentially requires $c_i\mid\alpha_{\tau_j}(c_i)$ and $c_j\mid\alpha_{\tau_i}(c_j)$ whence $\alpha_{\tau_j} = \Id$; this suggests that  Buchberger's First Criterion hardly can be applied except for the case of the commutative ring ${\Cal P}=R[Y_1,\ldots,Y_n], R$ a PIR, where it is stated as
\begin{quote}
If $F\subset{\Cal P}$ and ${\Bbb I}(F)$ is an ideal of ${\Cal P}$, it holds
\begin{eqnarray*}
{\bf M}(i) {\bf M}(j) = {\bf M}(i,j) &\iff&
\lcm(\tau_i,\tau_j)=\tau_i\tau_j,
\lcm(c_i,c_j) = c_ic_j 
\\ &\then& {\mathrm{NF}}(B(i,j), F) = 0.
\end{eqnarray*} 
\end{quote}

Note that the proof which considers the trival sysygies $g_ig_j-g_jg_i=0$ holds only to the classical polynomial ring case.
\qed\end{Remark}

\begin{Definition} 
A {\em useful S-pair set}
for $F$ is any 
subset 
$${\GM}\subset{\Cal S}(u) = \left\{
\{i,j\},1 \leq i < j \leq u, {\bf e}_{l_i} ={\bf e}_{l_j}\right\}$$ such that
$\left\{b(i,j) : \{i,j\}\in{\GM}\right\}$ is a Gebauer--M\"oller set for $F$.
\end{Definition}

\begin{Corollary} \label{c46C2} With the present notation, under the assumption that
 $R$ is a principal ideal domain, 
  $F$ is a    Gr\"obner basis of the left module ${\sf M}$ iff, denoting
$\GM$  a useful S-pair set  for $F$,
each S-polynomial $B(i,j), \{i,j\}\in\GM$ 
 has
 a   Gr\"obner representation in terms of $F$.
\qed\end{Corollary}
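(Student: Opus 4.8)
The plan is to reduce Corollary~\ref{c46C2} to the already-established machinery, chiefly the Left Lifting Theorem (Theorem~\ref{SAASSAAS}) together with Corollary~\ref{48Cz1}. Recall that Corollary~\ref{48Cz1} tells us that the family $\{b(i,j) : 1\leq i<j\leq u, {\bf e}_{\iota_i}={\bf e}_{\iota_j}\}$ is itself a (left) Gebauer--M\"oller set for $F$, and a \emph{useful S-pair set} $\GM$ is by definition any subset of the index pairs for which the corresponding sub-family $\{b(i,j):\{i,j\}\in\GM\}$ is still a Gebauer--M\"oller set, i.e.\ still a ${\Cal T}^{(m)}$-homogeneous basis of $\ker({\Frak s}_L)$. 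So the statement to be proved is just the instantiation of Theorem~\ref{SAASSAAS}, equivalence $(1)\iff(2)\iff(3)$, with the particular choice $\GM(F):=\{b(i,j):\{i,j\}\in\GM\}$, translated into the $B(i,j)$ notation.

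First I would note that ${\Frak S}_L(b(i,j)) = B(i,j)$ by the very definitions given in Corollary~\ref{48Cz1}: $b(i,j)$ is the element of ${\sf R}^u$ whose two nonzero components are the coefficients $\frac{\lcm(\alpha_{\{i,j\},i}(c_i),\alpha_{\{i,j\},j}(c_j))}{\alpha_{\{i,j\},j}(c_j)}\frac{\lcm(\tau_i,\tau_j)}{\tau_j}$ on $e_j$ and minus the analogous term on $e_i$, and $B(i,j)$ is obtained by replacing each $e_k$ with $g_k$ via $\star$, which is exactly the map ${\Frak S}_L$. Hence a (weak) Gr\"obner representation of $B(i,j)$ in terms of $F$ is the same thing as a Gr\"obner representation of ${\Frak S}_L(b(i,j))$, and — as spelled out in the Remark following Definition~\ref{46D2} — any Gr\"obner representation is in particular a quasi-Gr\"obner representation. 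Conversely, in the Zacharias-domain setting the canonical-form algorithm of Lemma~\ref{c46LCan2} upgrades a quasi-Gr\"obner representation of an element of ${\sf M}$ to a genuine Gr\"obner representation, so the two notions coincide for our purposes. Therefore condition (3) of Theorem~\ref{SAASSAAS} applied to $\GM(F)=\{b(i,j):\{i,j\}\in\GM\}$ is literally the hypothesis ``each $B(i,j), \{i,j\}\in\GM$, has a Gr\"obner representation in terms of $F$.''

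Next I would invoke Theorem~\ref{SAASSAAS} directly: since $\{b(i,j):\{i,j\}\in\GM\}$ is a Gebauer--M\"oller set for $F$ (this is exactly what ``useful S-pair set'' means, and is non-vacuous because Corollary~\ref{48Cz1} exhibits at least the full such set), the theorem gives the chain of equivalences among (1) ``$F$ is a left Gr\"obner basis of ${\sf M}$'', (2) the Gr\"obner-representation characterization of membership in ${\sf M}$, (3) ``each $\sigma\in\GM(F)$ has a quasi-Gr\"obner representation,'' (4), and (5). Combining $(1)\iff(3)$ with the translation of the previous paragraph yields precisely the asserted equivalence: $F$ is a left Gr\"obner basis of ${\sf M}$ if and only if each $B(i,j)$, $\{i,j\}\in\GM$, has a Gr\"obner representation in terms of $F$.

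I do not expect a serious obstacle here — the corollary is genuinely a corollary, and the only points needing care are bookkeeping ones: checking that ${\Frak S}_L(b(i,j))=B(i,j)$ under the identifications (in particular that the automorphism hypothesis on the $\alpha_i$ is in force, as assumed throughout Section~6), and checking that the passage between ``quasi-Gr\"obner'' and ``Gr\"obner'' representation is legitimate in the Zacharias-domain setting, which is guaranteed by the canonical-form construction of Lemma~\ref{c46LCan2}. The mildest subtlety is simply making sure the chosen $\GM$ is non-empty/does span $\ker({\Frak s}_L)$, but that is precisely the running definitional assumption on a useful S-pair set, so no argument is required beyond citing Corollary~\ref{48Cz1} for existence.
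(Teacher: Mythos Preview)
Your proof is correct and follows essentially the same route as the paper: both invoke the definition of ``useful S-pair set'' to conclude that $\{b(i,j):\{i,j\}\in\GM\}$ is a Gebauer--M\"oller set, and then appeal directly to Theorem~\ref{SAASSAAS}. Your handling of the quasi-Gr\"obner versus Gr\"obner distinction is slightly more elaborate than needed (the detour through Lemma~\ref{c46LCan2} is unnecessary---for the forward direction use $(1)\Rightarrow(2)$ of Theorem~\ref{SAASSAAS} together with $B(i,j)\in{\sf M}$, and for the converse the trivial implication ``Gr\"obner $\Rightarrow$ quasi-Gr\"obner'' suffices), but the argument is sound.
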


\begin{proof}  By definition
$\left\{b(i,j) : \{i,j\}\in{\GM}\right\}$ is a Gebauer--M\"oller set for $F$ so that,
by Theorem~\ref{SAASSAAS}, 
$F$ is a    Gr\"obner basis of ${\sf M}$ iff
each S-polynomial $B(i,j), \{i,j\}\in\GM$ 
 has
 a   Gr\"obner representation in terms of $F$.
\end{proof} 

If we moreover define, 
\begin{itemize}
\renewcommand\labelitemi{\bf --}
\item for each $i,j : 1 \leq i , j  \leq u,  {\bf e}_{l_i} ={\bf e}_{l_j},$
\begin{itemize}
\item $c(i,j) :=\lcm(
\alpha_{\{i,j\},i}(c_i),
\alpha_{\{i,j\},j}(c_j)),$
\item $\tau(i,j)=\lcm(\tau_i,\tau_j)$
\item $\mu(i,j) =c(i,j)\tau(i,j)$
\end{itemize}
\item and  for each $i,j,k : 1 \leq i , j , k \leq u,  {\bf e}_{l_i} ={\bf e}_{l_j} ={\bf e}_{l_k},$
\begin{itemize}
\item $c(i,j,k) :=\lcm(
\alpha_{\{i,j,k\},\{i,j\}}(c(i,j)),
\alpha_{\{i,j,k\},\{i,k\}}(c(i,k)),
\alpha_{\{i,j,k\};\{j,k\}}(c(j,k)))$
\item $\tau(i,j,k)=\lcm(\tau_i,\tau_j,\tau_k)$
\item $\mu(i,j,k) =c(i,j,k) \tau(i,j,k)$,
\end{itemize}
and we impose
 on 
the set
$${\Cal S}(u) := \left\{
\{i,j\},1 \leq i < j \leq u, {\bf e}_{l_i} ={\bf e}_{l_j}\right\} $$
the ordering $\prec$ defined by
\begin{equation}\label{eqsyzord}
\{i_1,j_1\} \prec \{i_2,j_2\} \iff \begin{cases}\tau(i_1,j_1) < \tau(i_2,j_2)&\mbox{  or}\cr
\tau(i_1,j_1) = \tau(i_2,j_2), j_1 < j_2&\mbox{  or}\cr
\tau(i_1,j_1) = \tau(i_2,j_2), j_1 = j_2, i_1 < i_2,& \cr\end{cases}
\end{equation}
\end{itemize}
we obtain
\begin{Definition}\label{c46D6} An S-element
$b(i,j), 
1 \leq i < j \leq u, {\bf e}_{l_i} ={\bf e}_{l_j},$ and the related S-pair $\{i,j\}$ are
 called {\em redundant}\index{S-element!redundant}\index{redundant!S-element/pair}
if either
\begin{enumerate}
\renewcommand\theenumi{{\rm (\alph{enumi})}}
\item exists $k > j$, ${\bf e}_{l_k} = {\bf e}_{l_i} ={\bf e}_{l_j}$ such that $$\mu(i,j,k) = \mu(i,j); \,
\mu(i,k) \neq \mu(i,j) \neq \mu(j,k)$$  
\item or exists $k < j,{\bf e}_{l_k} = {\bf e}_{l_i} ={\bf e}_{l_j} : \mu(j,k) \mid \mu(i,j) \neq \mu(k,j)$.
\qed\end{enumerate}
\end{Definition}

\begin{Lemma}[M\"oller]\label{48Lx1} The following holds
\begin{enumerate}
\item for each $i,j,k : 1 \leq i , j , k \leq u,  {\bf e}_{l_i} ={\bf e}_{l_j} ={\bf e}_{l_k},$ it holds
$$\frac{\mu(i,j,k)}{ \mu(i,k)} b(i,k) 
- \frac{\mu(i,j,k)}{ \mu(i,j)} b(i,j) 
+ \frac{\mu(i,j,k)}{ \mu(k,j)} b(k,j) 
= 0.$$ 
\item ${\Frak R} := \bigl\{b(i,j), 1\leq i < j \leq u, {\bf e}_{l_i} ={\bf e}_{l_j} 
\mbox{\ and not redundant}\bigr\}$
is a useful S-element set.
\item Let $G := \{g_1,\ldots,g_s\}, s \leq u$, and let $$\GM_* \subset
\{\{i,j\}, 1\leq i < j < s, {\bf e}_{l_i} ={\bf e}_{l_j}\}$$ be a useful S-pair set for $G_* = \{g_1,\ldots,
g_{s-1}\}$.

Let $\overline{M} := \{\mu(j,s) : 1 \leq j < s,{\bf e}_{l_j} ={\bf e}_{l_s}\}$ and let
$\overline{M}' \subset\overline{M}$ be the set of the elements $\mu := \mu(j,s) \in\overline{M}$ such
that 
there exists $\mu(j',s)\in \overline{M} : \mu(j',s) \mid \mu(j,s) 
\neq \mu(j',s)$.

For each $\mu := {\bf M}(j,s)\in\overline{M}\setminus\overline{M}'$ 
let $i_\mu, 1 \leq i_\mu < s, $ be such that 
$\mu = {\bf M}(i_\mu,s).$ 
Then $$\GM := \GM_* \cup \{\{i_\mu,s\} :
\mu\in\overline{M}\setminus\overline{M}'\}$$ is a useful S-pair set for $G$. 
\end{enumerate}\end{Lemma}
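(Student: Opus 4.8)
The plan is to prove the three claims in order, since (3) builds on (1) and (2), and all three are essentially the ``syzygies of syzygies'' bookkeeping underlying the Gebauer--M\"oller criteria, transported from the commutative PID case (as in \cite{GM}, \cite[\S26.1]{SPES}) to the present Ore setting where the coefficient twist shows up through the maps $\alpha_\tau$.

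\textbf{Ad (1).} This is a direct computation. Fix $i,j,k$ with ${\bf e}_{l_i}={\bf e}_{l_j}={\bf e}_{l_k}$ and write out $b(i,k),b(i,j),b(k,j)$ from the formula of Corollary~\ref{48Cz1}. Each of $b(i,j)$ etc. is a two-term element of ${\sf R}^u$ supported on $\{e_i,e_j\}$ (resp.\ the analogous pairs), with coefficients of the form $\frac{c(i,j)}{\alpha_{\{i,j\},\bullet}(c_\bullet)}\frac{\tau(i,j)}{\tau_\bullet}$. Multiplying through by $\frac{\mu(i,j,k)}{\mu(i,j)}=\frac{c(i,j,k)\tau(i,j,k)}{c(i,j)\tau(i,j)}$ and using $\tau(i,j,k)=\lcm(\tau(i,j),\tau_k)=\lcm(\tau_i,\tau_j,\tau_k)$ together with the compatibility $\alpha_{\{i,j,k\},\{i,j\}}\circ\alpha_{\{i,j\},i}=\alpha_{\{i,j,k\},i}$ (which follows because the $\alpha$'s commute and composition of the substitutions matches the quotient of terms), one checks that the $e_i$-component of $\frac{\mu(i,j,k)}{\mu(i,k)}b(i,k)-\frac{\mu(i,j,k)}{\mu(i,j)}b(i,j)$ is $\frac{c(i,j,k)}{\alpha_{\{i,j,k\},i}(c_i)}\frac{\tau(i,j,k)}{\tau_i}(1-1)e_i=0$, the $e_j$-component of $-\frac{\mu(i,j,k)}{\mu(i,j)}b(i,j)+\frac{\mu(i,j,k)}{\mu(k,j)}b(k,j)$ vanishes likewise, and similarly for $e_k$. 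So the alternating sum is $0$. The only care needed is the precise form of the scalar coefficients $\alpha_{\tau}(\cdot)$; but these combine exactly as in the commutative case because $\alpha_{\tau}(c)$ occurs only as a ``weight'' and $\lcm$ distributes over the chain of $\alpha$'s by the compatibility relation. I do not expect any obstacle here beyond careful notation.

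\textbf{Ad (2).} Given (1), this is the standard Gebauer--M\"oller argument. We must show that removing from $\{b(i,j)\}$ all \emph{redundant} $b(i,j)$ (in the sense of Definition~\ref{c46D6}) still leaves a Gebauer--M\"oller set, i.e.\ a ${\Cal T}^{(m)}$-homogeneous generating set of $\ker({\Frak s}_L)$. By Corollary~\ref{48Cz1} the full family $\{b(i,j)\}$ is such a set, so it suffices to show each redundant $b(i,j)$ lies in the ${\sf R}$-module (up to the appropriate multiplications $d\tau\ast(\cdot)$) generated by the non-redundant ones. For case (a) (some $k>j$ with $\mu(i,j,k)=\mu(i,j)$, $\mu(i,k)\neq\mu(i,j)\neq\mu(j,k)$): relation (1) expresses $b(i,j)$, after multiplying by $\mu(i,j,k)/\mu(i,j)=1$ (a unit here), as a combination of $\frac{\mu(i,j,k)}{\mu(i,k)}b(i,k)$ and $\frac{\mu(i,j,k)}{\mu(k,j)}b(k,j)$; since $\{i,k\}$ and $\{k,j\}$ have $\tau(i,k),\tau(k,j)$ properly dividing $\tau(i,j,k)=\tau(i,j)$, they are $\prec$-smaller, and by a Noetherian induction on $\prec$ (the ordering \eqref{eqsyzord}) they are themselves combinations of non-redundant $b$'s. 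For case (b) ($k<j$ with $\mu(j,k)\mid\mu(i,j)\neq\mu(k,j)$): here $b(i,j)$ is obtained from $b(j,k)$ (or $b(i,k)$) by an explicit term-and-coefficient multiple plus $\prec$-smaller terms, again closing the induction. The bookkeeping is to verify that in each case the ``replacement'' elements are strictly earlier in $\prec$, so that the induction is well-founded; this is exactly where the definition of $\prec$ by $\tau$-degree first, then indices, is used.

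\textbf{Ad (3).} This is the incremental version: one shows that if $\GM_*$ is a useful S-pair set for $G_*=\{g_1,\dots,g_{s-1}\}$, then adjoining the pairs $\{i_\mu,s\}$ for $\mu\in\overline M\setminus\overline M'$ gives a useful S-pair set for $G=\{g_1,\dots,g_s\}$. The proof runs by comparing with Theorem~\ref{48Tx1}: a ${\Cal T}^{(m)}$-homogeneous syzygy in $\ker({\Frak s}_L)$ for $G$ either has $e_s$-component zero, in which case it is a syzygy for $G_*$ and handled by $\GM_*$ plus (2), or it involves $e_s$ nontrivially, in which case Theorem~\ref{48Tx1} (with $R$ a PID, so that the colon ideal is principal and generated by $c(j,s)/\alpha_{\{j,s\},s}(c_s)$) reduces it modulo the elements $b(j,s)$; one then shows any $b(j,s)$ with $\mu(j,s)\in\overline M'$ (i.e.\ $\mu(j',s)\mid\mu(j,s)\neq\mu(j',s)$ for some $j'$) is redundant of type (b) and hence, by (2), already covered, while for $\mu\in\overline M\setminus\overline M'$ one keeps a single representative $\{i_\mu,s\}$ — the others $\{j,s\}$ with $\mu(j,s)=\mu$ differ by a $\prec$-handled relation. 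Combining, $\GM_*\cup\{\{i_\mu,s\}:\mu\in\overline M\setminus\overline M'\}$ generates $\ker({\Frak s}_L)$ for $G$. The main obstacle across all three parts is purely notational: keeping the $\alpha_\tau$-weights and the $\lcm$'s consistent through the compositions $\alpha_{\{i,j,k\},\{i,j\}}\circ\alpha_{\{i,j\},i}=\alpha_{\{i,j,k\},i}$, since that compatibility is what makes the commutative-case proof go through verbatim; once it is in place, (1) is a computation, (2) is Noetherian induction on $\prec$ via (1), and (3) is the $s{-}1\to s$ step of Theorem~\ref{48Tx1} combined with (2).
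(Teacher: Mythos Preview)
Your proposal is correct and follows essentially the same approach as the paper: part (1) by direct computation with the $\mu$-formalism, part (2) by Noetherian induction on the ordering $\prec$ of \eqref{eqsyzord} using the relation in (1) to replace a redundant $b(i,j)$ by $\prec$-smaller ones, and part (3) by the incremental $s{-}1\to s$ step reducing every $b(j,s)$ either to redundancy (type (b)) or to the chosen representative $b(i_\mu,s)$ plus an element handled by $\GM_*$. The only cosmetic difference is that in (3) you frame the argument via ``$e_s$-component zero or not'' and invoke Theorem~\ref{48Tx1}, whereas the paper argues directly on each $b(i,s)$ via the trichotomy (properly divided $\Rightarrow$ redundant; $i=i_\mu\Rightarrow$ kept; $i\neq i_\mu\Rightarrow$ $b(i,s)=\frac{\mu(i,i_\mu,s)}{\mu(i,i_\mu)}b(i,i_\mu)-b(i_\mu,s)$); the content is the same.
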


\begin{proof} 
\begin{enumerate}
\item ({\em cf.}  \cite[Lemma~25.1.4]{SPES}) One has
\begin{eqnarray*}
&& \frac{\mu(i,j,k)}{ \mu(i,k)}\ast b(i,k) - \frac{\mu(i,j,k)}{\mu(i,j)}\ast b(i,j) + \frac{\mu(i,j,k)}{ \mu(k,j)}\ast b(k,j) 
\\&=& 
\frac{\mu(i,j,k)}{\mu(i,k)}\ast\left(\frac{{\mu(i,k)}}{{\mu(k)}} e_k - \frac{{\mu(i,k)}}{{\mu(i)}} e_i\right)
\\&-&
\frac{\mu(i,j,k)}{\mu(i,j)}\ast \left(\frac{{\mu(i,j)}}{{\mu(j)}} e_j - \frac{{\mu(i,j)}}{{\mu(i)}} e_i\right)
\\&+&
\frac{\mu(i,j,k)}{ \mu(k,j)} \ast\left(\frac{{\mu(k,j)}}{{\mu(j)}} e_j -\frac{{\mu(k,j)}}{{\mu(k)}} e_k\right)
\\&=&
\left(\frac{{\mu(i,j,k)}}{{\mu(k)}} e_k - \frac{{\mu(i,j,k)}}{{\mu(i)}} e_i\right)
\\&-&
\left(\frac{{\mu(i,j,k)}}{{\mu(j)}} e_j - \frac{{\mu(i,j,k)}}{{\mu(i)}} e_i\right)
\\&+&
\left(\frac{{\mu(i,j,k)}}{{\mu(j)}} e_j - \frac{{\mu(i,j,k)}}{{\mu(k)}} e_k\right)
\\&=&
0 
\end{eqnarray*}
\item ({\em cf.}  \cite[Lemma~25.1.8]{SPES}) In order to prove the claim by induction, it is sufficient to show
that, for each redundant $\{i,j\}, 1\leq i < j \leq u, {\bf e}_{l_i} ={\bf e}_{l_j} =: \epsilon,$ 
there are
\begin{itemize}
\renewcommand\labelitemi{\bf --}
\item  $\{i_1,j_1\}, \ldots, \{i_\rho,j_\rho\}, \ldots, \{i_r,j_r\}, 1\leq i_\rho < j_\rho  \leq u$,  
${\bf e}_{l_{i_\rho}} ={\bf e}_{l_{j_\rho}} = \epsilon$,
\item elements $t_1,\ldots,t_r \in {\Cal T},$ 
\item and coefficients $c_1,\ldots c_r \in R\setminus\{0\}$
\end{itemize}
such that
\begin{itemize}
\item $b(i,j) = \sum_\rho c_\rho t_\rho\ast b(i_\rho,j_\rho);$
\item $\tau(i,j) = t_\rho\circ \tau(i_\rho,j_\rho),$ for each $\rho;$
\item $\{i_\rho,j_\rho\} \prec  \{i,j\}$.
\end{itemize}

In order to show this, we only need to consider the representation 
$$b(i,j) = \frac{\mu(i,j,k)}{ \mu(i,k)}\ast b(i,k) + \frac{\mu(i,j,k)}{ \mu(k,j)}\ast b(k,j)$$
and to prove that 
$$\{i,k\} \prec  \{i,j\} \succ  \{k,j\};$$ 
this happens (according to the two cases of the definition)
because
\begin{enumerate}
\item 
$\tau(i,k) \mid \tau(i,j,k) = \tau(i,j) \neq \tau(i,k)$ implies
$\{i,k\} \prec 
\{i,j\}$  and the same argument proves $\{j,k\} \prec  \{i,j\};$
\item the same argument as that above proves $\{j,k\} \prec  \{i,j\},$  while
$\{i,k\} \prec  \{i,j\}$ because $\tau(i,k) \leq\tau(i,j)$ and $k < j.$
\end{enumerate}
\item ({\em cf.}  \cite[Lemma~25.1.9]{SPES})\index{Moeller's@M\"oller's!Lemma}
 Let $i < s,  {\bf e}_{l_i} ={\bf e}_{l_s} =: \epsilon, \mu := \mu(i,s)$. Then:
\begin{itemize}
\item if there exists $\mu'\in\overline{M}$ such that $\mu(i_{\mu'},s) = \mu' \mid \mu(i,s) \neq
\mu'$, then since $i_{\mu'} < s$, $\{i,s\}$ is redundant;
\item if $i = i_\mu$ 
 then $\{i_m,s\}\in \GM$;
\item if $i \neq i_\mu$ then 
$b(i,s) = \frac{\mu(i,i_\mu,s)}{\mu(i,i_\mu)} b(i,i_\mu) - b(i_\mu,s).$
 \end{itemize}
\end{enumerate}\end{proof}

\begin{Corollary}\label{48Cy1}  With the present notation, under the assumption that
 $R$ is a principal ideal domain,  
  $F$ is a    Gr\"obner basis of ${\sf M}$ iff 
each S-polynomial $B(i,j), \{i,j\}\in\Frak{R}$ 
 has
 a   Gr\"obner representation in terms of $F$.
\qed\end{Corollary}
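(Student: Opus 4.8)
The plan is to deduce Corollary~\ref{48Cy1} directly from the machinery already assembled, namely M\"oller's Left Lifting Theorem (Theorem~\ref{SAASSAAS}) together with part (2) of Lemma~\ref{48Lx1}, which identifies $\Frak{R}$ as a useful S-element set. Recall that by Definition a useful S-pair set $\GM$ for $F$ is one for which $\{b(i,j) : \{i,j\}\in\GM\}$ is a Gebauer--M\"oller set for $F$, {\em id est} a ${\Cal T}^{(m)}$-homogeneous basis of $\ker({\Frak s}_L)$. Lemma~\ref{48Lx1}(2) asserts precisely that $\Frak{R} = \{b(i,j) : 1\leq i<j\leq u, {\bf e}_{l_i}={\bf e}_{l_j} \text{ and not redundant}\}$ arises from such a useful S-pair set, so $\Frak{R}$ itself is a Gebauer--M\"oller set for $F$.

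First I would invoke Theorem~\ref{SAASSAAS} with the particular choice $\GM(F) := \Frak{R}$, which is legitimate since $\Frak{R}$ is a Gebauer--M\"oller set for $F$ by the previous paragraph. The equivalence $(1)\iff(3)$ of that theorem then reads: $F$ is a left Gr\"obner basis of ${\sf M}$ if and only if for each $\sigma\in\Frak{R}$ the S-polynomial ${\Frak S}_L(\sigma)$ has a quasi-Gr\"obner representation in terms of $F$. Second, I would observe that the elements of $\Frak{R}$ are exactly the $b(i,j)$ for non-redundant pairs $\{i,j\}$, and that by the definitions in Corollary~\ref{48Cz1} (and the notation $B(i,j)$ introduced there) one has ${\Frak S}_L(b(i,j)) = B(i,j)$; thus the condition ``${\Frak S}_L(\sigma)$ has a quasi-Gr\"obner representation for every $\sigma\in\Frak{R}$'' is literally the condition ``$B(i,j)$ has a Gr\"obner representation in terms of $F$ for every $\{i,j\}\in\Frak{R}$''. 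Here I would use the observation from the Remark following Definition~\ref{46D2} that a Gr\"obner representation is in particular a quasi-Gr\"obner representation, and conversely, when combined with the other equivalent conditions of Theorem~\ref{SAASSAAS} (specifically $(1)\then(2)$, which upgrades quasi-Gr\"obner to genuine Gr\"obner representations once $F$ is known to be a basis), so that the two phrasings coincide for the purposes of the iff.

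Stitching these together gives the chain: $F$ is a Gr\"obner basis of ${\sf M}$ $\iff$ (Theorem~\ref{SAASSAAS}, $(1)\iff(3)$, with $\GM(F)=\Frak{R}$) each ${\Frak S}_L(\sigma)$, $\sigma\in\Frak{R}$, has a quasi-Gr\"obner representation $\iff$ each $B(i,j)$, $\{i,j\}\in\Frak{R}$, has a Gr\"obner representation in terms of $F$, which is the assertion of the Corollary. This is essentially the same argument as the proof already given for Corollary~\ref{c46C2}, with $\Frak{R}$ in place of the abstract useful S-pair set $\GM$; the only additional input is Lemma~\ref{48Lx1}(2), which certifies that $\Frak{R}$ qualifies as such a set.

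The only real content — and hence the step I would flag as the genuine obstacle — is not in Corollary~\ref{48Cy1} itself but is already discharged in Lemma~\ref{48Lx1}(2): namely, showing that discarding the redundant S-pairs still leaves a generating set of $\ker({\Frak s}_L)$. That argument rests on the syzygy-of-syzygies identity of Lemma~\ref{48Lx1}(1), $\frac{\mu(i,j,k)}{\mu(i,k)}b(i,k) - \frac{\mu(i,j,k)}{\mu(i,j)}b(i,j) + \frac{\mu(i,j,k)}{\mu(k,j)}b(k,j) = 0$, together with the well-foundedness of the ordering $\prec$ on ${\Cal S}(u)$ from~(\ref{eqsyzord}): each redundant $b(i,j)$ is rewritten as an ${\sf R}$-combination of strictly $\prec$-smaller $b(i_\rho,j_\rho)$ of compatible ${\Cal T}^{(m)}$-degree, and Noetherian induction closes the argument. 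Since Lemma~\ref{48Lx1} is available to us, Corollary~\ref{48Cy1} follows formally and presents no further difficulty.
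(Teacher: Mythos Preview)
Your proposal is correct and matches the paper's intended argument: the corollary is stated with \qed precisely because it is the immediate specialization of Corollary~\ref{c46C2} to the particular useful S-pair set $\Frak{R}$ furnished by Lemma~\ref{48Lx1}(2). Your handling of the Gr\"obner versus quasi-Gr\"obner distinction via the equivalences $(1)\iff(2)$ and $(1)\iff(3)$ of Theorem~\ref{SAASSAAS} is exactly right and is the same reasoning implicit in the proof of Corollary~\ref{c46C2}.
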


\begin{Example}\label{Azzurro2}
In connection with Lemma~\ref{48Lx1} we have
$$\begin{array}{rll|l}
(i,j,k)&
c(i,j,k)&
\mu(i,j,k)&
b(i,j,k)\\
\hline
(1,2,3)&6^4\cdot5x^2&Y_1^2Y_2^3Y_3^2&b(1,2)-2^23^3xb(1,3)+b(2,3)=0,\\
(1,2,4)&6^4\cdot5x^2&Y_1^2Y_2^3Y_3^2&b(1,2)-2^23^3xb(1,4)+5Y_2\ast b(2,4)=0,\\
(1,3,4)&60x&Y_1^2Y_2^3Y_3^2&b(1,3)-b(1,4)+2 b(3,4)=0.\\
(2,3,4)&6^4\cdot5x^2&Y_1^2Y_2^3Y_3^2&b(2,3)-5Y_2\ast b(2,4)+6^3 b(3,4)=0.\\
\end{array}.$$

Note that we obviously \cite{J,MM} have also
$$b(1,2,3)-b(1,2,4)+2^23^3xb(1,3,4)-b(2,3,4).$$

Thus the redundant elements are $b(2,3)$ via 1 or 4, $b(1,2)$ via 4 and $b(1,4)$ via 3.

But, as it is well-known, it is more efficient (if else for storing considerations) the algorithm sketched in Lemma~\ref{48Lx1}.3 which  
\begin{description}
\item[for $s=2$] stores $(1,2)$,
\item[for $s=3$] stores $(1,3)$,
\item[for $s=4$] removes
$(1,2)$ and stores $(2,4)$ and $(3,4)$.
\end{description}

Thus the Gebauer M\"oller set is still $$\{b(1,3),b(2,4),b(3,4)\}$$ while
\begin{eqnarray*}
b(1,4)&=&b(1,3)+2 b(3,4),\\
b(2,3)&=&5Y_2\ast b(2,4)+6^3 b(3,4),\\
b(1,2)&=&2^23^3xb(1,4)-5Y_2\ast b(2,4).
\end{eqnarray*}
\qed\end{Example}

\section{Weis\-pfen\-ning Completions for Bilateral Gr\"obner basis  for Multivariate Ore Extensions of Zacharias Domains}

\subsection{Kan\-dri-Ro\-dy--Weis\-pfen\-ning completion}

The most efficient technique for producing bilateral  Gr\"obner bases  $G:={\Bbb I }_2(F)$  in  a noetherian Ore extension is
Kan\-dri-Ro\-dy--Weis\-pfen\-ning completion  \cite{KrW}. Iteratively:
 \begin{itemize}
 \item Repeat
 \begin{itemize}
 \item Compute  a left-Gr\"obner basis $G$ of the ideal ${\Bbb I }_L(F)$;
  \item for each $g\in G, 1\leq i \leq n$, compute the normal form $\NF(g\star Y_i, {\Bbb I }_L(F))$ of 
$g\star Y_i$ w.r.t. $G$;
\item set $H:=\{\NF(g\star Y_i, {\Bbb I }_L(G)), g\in G, 1\leq i \leq n\}$, $F:=G\cup H$
\end{itemize}
until $H=\emptyset$.
\end{itemize}

The {\em rationale} of the algorithm is

\begin{Lemma}[Kandri-Rody--Weispfenning]\label{KR3} 
For $G\subset{\sf R}$ the following conditions are equivalent:
\begin{enumerate}
\item ${\Bbb I}_L(G)={\Bbb I}_2(G)$;
\item for each $\tau\in{\Cal T}$ and each $g\in G$, $g\star\tau\in{\Bbb I}_L(G)$;
\item for each $i, 1 \leq i \leq n$, and each $g\in G$, $g\star Y_i\in{\Bbb I}_L(G)$.
\end{enumerate} \end{Lemma}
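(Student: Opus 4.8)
The plan is to prove the cycle of implications $(1)\then(2)\then(3)\then(1)$, since $(2)\then(3)$ is immediate (specialize $\tau$ to the variables $Y_i$) and $(1)\then(2)$ is almost immediate as well. Indeed, assuming $(1)$, for any $g\in G$ and $\tau\in{\Cal T}$ the product $g\star\tau$ lies in ${\Bbb I}_2(G)$ by definition of the bilateral module, and ${\Bbb I}_2(G)={\Bbb I}_L(G)$ by hypothesis, so $g\star\tau\in{\Bbb I}_L(G)$; this is $(2)$.

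The substantive implication is $(3)\then(1)$, and this is where I would focus the work. Since ${\Bbb I}_L(G)\subseteq{\Bbb I}_2(G)$ always holds, it suffices to show ${\Bbb I}_2(G)\subseteq{\Bbb I}_L(G)$. A generic element of ${\Bbb I}_2(G)$ is a finite sum $\sum_j l_j\star g_j\star r_j$ with $l_j,r_j\in{\sf R}$, $g_j\in G$; by left-distributivity it is enough to treat a single summand $l\star g\star r$, and in fact (pulling the left factor $l$ out, since ${\Bbb I}_L(G)$ absorbs left multiplication) it suffices to show $g\star r\in{\Bbb I}_L(G)$ for every $g\in G$ and every $r\in{\sf R}$. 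Writing $r=\sum_t c_t\, t$ as an $R$-linear combination of terms $t\in{\Cal T}$ and using left-distributivity once more together with the fact that $g\star(c_t t)=g\star c_t\star t$ absorbs into left multiples once we know $g\star t\in{\Bbb I}_L(G)$ — here one uses that ${\Bbb I}_L(G)$ is a two-sided $R$-submodule in the sense that $R$ acts on the left via $r'\cdot(\sum l_j\star g_j)=\sum (r'\star l_j)\star g_j$ — the claim reduces to: $g\star t\in{\Bbb I}_L(G)$ for all $g\in G$, $t\in{\Cal T}$, which is precisely $(2)$.

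So the real content is the implication $(3)\then(2)$, i.e.\ upgrading right multiplication by each $Y_i$ to right multiplication by an arbitrary term $t=Y_1^{a_1}\cdots Y_n^{a_n}$. I would do this by induction on the total degree $\deg(t)=a_1+\cdots+a_n$, the base case $\deg(t)=0$ (i.e.\ $t=1$) being trivial and the case $\deg(t)=1$ being exactly hypothesis $(3)$. For the inductive step, write $t=t'\star Y_i$ for some variable $Y_i$ with $a_i>0$ and $t'=t/Y_i\in{\Cal T}$ of strictly smaller degree (using that in a multivariate Ore extension the variables commute, Lemma~\ref{OreLePe}, so $t=t'\circ Y_i=t'\star Y_i$). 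Then for $g\in G$ we have $g\star t=(g\star t')\star Y_i$. By the inductive hypothesis $g\star t'\in{\Bbb I}_L(G)$, say $g\star t'=\sum_k l_k\star g_k$ with $l_k\in{\sf R}$, $g_k\in G$. Hence $g\star t=\sum_k l_k\star(g_k\star Y_i)$. By hypothesis $(3)$, each $g_k\star Y_i$ lies in ${\Bbb I}_L(G)$, hence so does the left-${\sf R}$-linear combination $\sum_k l_k\star(g_k\star Y_i)$, completing the induction.

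The main obstacle, such as it is, is bookkeeping rather than depth: one must be careful that ${\Bbb I}_L(G)$ — the \emph{left} module generated by $G$ — genuinely absorbs arbitrary right multiplication by coefficients $c\in R$ once it absorbs right multiplication by terms, which relies on the identity $g\star c=\alpha_g$-type relations (more precisely $g\star c$ has $R$-coefficients that, after reordering, still present $g\star c$ as an element of $\sum {\sf R}\star g$ only because $c\in R$ and $R\subset{\sf R}$), and on the fact that the left module is stable under the left $R$-action; both are consequences of the ring axioms and the already-recorded arithmetic (\ref{Eq4}) of ${\sf R}$. I would state these reductions explicitly but not belabor them. The essential idea — commuting variables plus induction on degree, propagating $g\star Y_i\in{\Bbb I}_L(G)$ through left combinations — is short and clean.
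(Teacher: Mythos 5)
Your cycle $(1)\Rightarrow(2)\Rightarrow(3)\Rightarrow(1)$ has the right skeleton, and the induction-on-degree argument for $(3)\Rightarrow(2)$ is correct: writing $t=t'\circ Y_i=t'\star Y_i$ (the variables commute), expanding $g\star t'=\sum_k l_k\star g_k$, and using $g_k\star Y_i\in{\Bbb I}_L(G)$ gives $g\star t\in{\Bbb I}_L(G)$ as you say. The gap is in the next reduction, from $g\star r\in{\Bbb I}_L(G)$ for a general $r\in{\sf R}$ to the term case. Writing $r=\sum_t c_t\,t$, you must handle $g\star(c_t t)=(g\star c_t)\star t$, and the coefficient $c_t\in R$ sits to the \emph{right} of $g$. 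The assertion that ``${\Bbb I}_L(G)$ is a two-sided $R$-submodule'' is exactly what is not available: ${\Bbb I}_L(G)$ is stable under \emph{left} multiplication by $R$, but $R$ is not central in ${\sf R}$ (one has $Y_i\star c=\alpha_i(c)Y_i+\delta_i(c)\ne cY_i$), so the right multiple $g\star c$ is not a left multiple of $g$, and conditions $(2)$ and $(3)$, which only govern right multiplication by \emph{terms}, say nothing about it.

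This is not a bookkeeping matter; it makes $(3)\Rightarrow(1)$ fail as stated. In ${\sf R}={\Bbb Q}[x][Y;\alpha]$ with $\alpha(x)=x^2$ and $\delta=0$, take $G=\{g\}$ with $g=Y-1$. Then $g\star Y=Y^2-Y=Y\star g\in{\Bbb I}_L(G)$, so $(3)$ holds, but $g\star x=x^2Y-x$ cannot equal $a\star(Y-1)=aY-a$ for any $a\in{\Bbb Q}[x]$ (this would force $a=x^2=x$), so $g\star x\in{\Bbb I}_2(G)\setminus{\Bbb I}_L(G)$ and $(1)$ fails. One therefore needs an additional hypothesis controlling $g\star c$ for $c$ ranging over $R$ (or a generating set of $R$); this is precisely the extra reduction that appears in Weispfenning's restricted completion, Corollary~\ref{48cC1}, where each $g_j\star\alpha_\upsilon(v)$, $v$ a generator of $R$, is also required to reduce. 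For what it is worth, the paper's own proof of $(2)\Rightarrow(1)$ hides the same difficulty behind the unproved assertion that $\{\lambda\star g\star\rho:\lambda,\rho\in{\Cal T},\ g\in G\}$ is an $R$-linear basis of ${\Bbb I}_2(G)$; in the example above this set is $\{Y^{k+1}-Y^k:k\ge0\}$, whose $R$-span misses $x^2Y-x$. The lemma is safe when $R$ is central in ${\sf R}$ (the solvable-polynomial-ring setting of the original Kandri-Rody--Weispfenning reference), but not for an Ore extension with nontrivial $\alpha$ or $\delta$.
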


\begin{proof}\

\begin{description}
\item[$(1) \then (2) \iff (3)$] is trivial.
\item[$(2) \then (1)$]  
${\Cal B}_2(G)  := \{\lambda\star g\star\rho :
\lambda,\rho\in{\Cal T}, g\in G\}$ 
is an $R$-linear basis of ${\Bbb I}_2(G)$ and satisfies
$$
{\Cal B}_2(G)  
=
 \{\lambda\star\left(g\star\rho\right) :
\lambda,\rho\in{\Cal T}, g\in G\} 
\subseteq
 \{\lambda\star h :
\lambda\in{\Cal T}, h\in {\Bbb I}_L(G)\}\subseteq{\Bbb I}_L(G).
$$
\end{description}\end{proof}


\subsection{Weispfenning: Restricted Representation and Completion}\label{SWRRC}

We can wlog assume that $R$ is effectively given as a quotient $R={\Cal R}/{\sf I}$ of a 
free monoid ring ${\Cal R} := {\Bbb Z}\langle {\Bcc v}\rangle$ (over  ${\Bbb Z}$ and the monoid $\langle {\Bcc v}\rangle$ of all words over the alphabet ${\Bcc v}$) modulo a bilateral ideal ${\sf I}$.

We must restrict ourselves to the case in which $<$ is a sequential term-ordering, {\em id est} for each $\tau\in{\Cal T}$, the set $\{\omega\in{\Cal T} : \omega<\tau\}$ is finite.

 \begin{Lemma} \cite{W} Let 
$$F := \{g_1,\ldots,g_u\}\subset {\sf R}^m, 
g_i = {\bf M}(g_i)-p_i =: c_i \tau_i {\bf e}_{\iota_i} - p_i;$$
set $\Omega := \max_<\{{\bf T}(g_i) : 1 \leq i\leq u\}$.

Let ${\sf M}$ be the bilateral module ${\sf M} := {\Bbb I}_2(F)$ 
and  ${\Bbb I}_W(F)$  the  restricted module 
$${\Bbb I}_W(F) : = \Span_{R}(a f\star \rho   : a\in
R\setminus\{0\}, \rho\in {\Cal T}, f\in F).$$

If every $f\star \alpha_{\upsilon}(v), f\in F, v\in{\Bcc v}, \upsilon\in{\Cal T},\upsilon<\Omega$, has a restricted representation in terms of $F$ w.r.t. a sequential term-ordering $<$, then
every $f\star r, f\in F, r\in{\sf R}$, has a restricted representation in terms of $F$ w.r.t. $<$.
\end{Lemma}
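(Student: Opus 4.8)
The plan is to reduce the general statement to the hypothesis by a double induction: an outer induction on the element $r\in{\sf R}$ via its maximal term ${\bf T}(r)$ under the sequential term-ordering $<$ (which is a well-ordering in which every element has only finitely many predecessors), and an inner reduction that expresses $f\star r$ in terms of products $f\star\alpha_\upsilon(v)\circ\omega$ with $v\in{\Bcc v}$. First I would write $R={\Cal R}/{\sf I}$ with ${\Cal R}={\Bbb Z}\langle{\Bcc v}\rangle$, so that any $r\in R$ is an ${\Bbb Z}$-linear combination of words $w=v_1\cdots v_k$ in the letters ${\Bcc v}$; by ${\Bbb Z}$-linearity of the maps $f\mapsto f\star r$ it suffices to treat $r$ a single such word, and then $f\star r$ of the form $f\star w$ for a monomial $cx^{b_0}Y_1^{b_1}\cdots Y_n^{b_n}$ only, since every element of ${\sf R}$ is an $R$-linear combination of terms $\rho\in{\Cal T}$ and $f\star(c\rho)$ is what we must handle; the term $\rho$ contributes a right multiplication by $\rho\in{\Cal T}$ which is harmless for the restricted-representation bookkeeping (it only multiplies each $\rho_i$ on the right), so the crux is $f\star a$ for $a\in R$, i.e.\ $f\star w$, $w$ a word in ${\Bcc v}$.

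Next I would argue the key algebraic identity. Write $w=vw'$ with $v\in{\Bcc v}$. Using the arithmetic of the Ore extension (Eqs.~(\ref{Eq3}),~(\ref{Eq4}) and the commutation rule $t\star r=\alpha_t(r)t+\theta_t(r)$ from Section~1.2), one has for $f=c\tau{\bf e}_\iota - p$ with $\tau={\bf T}(f)$
$$f\star v = \left(\alpha_\tau(v)\right)\star f + \bigl(\text{lower terms}\bigr),$$
more precisely $f\star v - (\text{something in }{\Bbb I}_W(F))$ has maximal term strictly below ${\bf T}(f\star v)={\bf T}(f)\circ v$; the point is that $f\star v$, modulo a single restricted summand $a\,f\star\rho$ with $\rho\le v$ matching the leading monomial, reduces to an element of strictly smaller $<$-value. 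But we are not allowed to invoke the conclusion on an arbitrary element of smaller value unless it has the form $f'\star r'$ with $f'\in F$; the honest route is: for $w<\Omega$ proceed by the hypothesis (after reducing words to single letters $\alpha_\upsilon(v)$, see below); for $w\ge\Omega$, peel off one letter, $f\star w = (f\star v)\star w'$, and observe $f\star v$ lies in the restricted module ${\Bbb I}_W(F)$, so after applying a restricted normal-form step ($f\star v = \sum_i a_i g_i\star\rho_i + h$ with the $g_i\star\rho_i$ appropriately bounded and ${\bf T}(h)<{\bf T}(f\star v)$) we get $f\star w = \sum_i a_i (g_i\star\rho_i)\star w' + h\star w'$; now $(g_i\star\rho_i)\star w' = g_i\star(\rho_i\star w')$ and $\rho_i\star w'$ has smaller maximal term-length worth of $Y$-degree only if... — so the genuinely decreasing quantity must be chosen carefully.

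The honest bookkeeping quantity I would use is the pair $\bigl(\deg_Y(\text{right factor}),\ {\bf T}\bigr)$ ordered lexicographically, where $\deg_Y$ is the total $Y$-degree: multiplying $f$ on the right by $v\in{\Bcc v}\subset R$ does not change the $Y$-degree but strictly decreases ${\bf T}$ after one reduction step, whereas to invoke the hypothesis we must land inside $\{\upsilon<\Omega\}$, which is exactly the finiteness afforded by sequentiality. So the induction is: outer induction on $\deg_Y$ of the element we right-multiply by (vacuous once it is $0$, i.e.\ once we are right-multiplying by elements of $R$), and for right multiplication by $r\in R$ a further induction on ${\bf T}(f\star r)$ using the sequential order; the base step of the latter is precisely the hypothesis applied to the words $\alpha_\upsilon(v)$ with $\upsilon<\Omega$, after noting every $r\in R$ is a ${\Bbb Z}$-combination of words and every word is a product of letters, and that the commutation $Y_j\star v$ introduces only $\alpha_\upsilon(v)$-type factors with $\upsilon$ below the relevant term (here sequentiality is used again to guarantee finitely many such $\upsilon$).

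\emph{The main obstacle} I anticipate is the careful choice of the well-founded quantity that simultaneously (a) decreases when we perform one restricted reduction step on $f\star v$, and (b) decreases when we strip one letter off the word $w$, while (c) allowing the hypothesis to be invoked only on the $\alpha_\upsilon(v)$ with $\upsilon<\Omega$. The subtlety is that right multiplication by a variable $v\in{\Bcc v}$ does not lower ${\bf T}$ a priori (it raises it), so the decrease only materializes after folding in a restricted normal-form subtraction; one must check that this subtraction is itself a \emph{restricted} representation (not merely a left one) — which is where $<$ being sequential and $\Omega$ being the max of the ${\bf T}(g_i)$ enter, since any $f\star\alpha_\upsilon(v)$ with $\upsilon<\Omega$ is covered by the hypothesis and all larger cases are reached by stripping letters, each strip strictly decreasing the $Y$-degree-then-${\bf T}$ measure. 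Once that measure is pinned down, the rest is the routine transitivity-of-restricted-representations argument: a restricted representation of each piece, concatenated, is a restricted representation of the whole, because ${\bf T}(g_i\star\rho_i\star\omega)={\bf T}(g_i)\circ\rho_i\circ\omega$ by Corollary~\ref{c46CoX2X2X2} and the bounds add up.
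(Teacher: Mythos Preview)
Your proposal has the right large-scale reductions --- linearity in $r$, handling the $\rho\in{\Cal T}$ factor separately, and reducing to $r\in R$ a word $w=v_1\cdots v_\nu$ in the generators ${\Bcc v}$ --- but the inner induction you propose does not terminate. You say you will induct on ${\bf T}(f\star r)$ for $r\in R$, but for any $r\in R\setminus\{0\}$ one has ${\bf T}(f\star r)={\bf T}(f)$ (right multiplication by a degree-zero element cannot raise or lower the maximal term), so this quantity never decreases. You seem to feel this (``the genuinely decreasing quantity must be chosen carefully'', ``The main obstacle I anticipate\ldots''), and you never actually pin one down. Your fallback, peeling a letter from the \emph{left} as $w=vw'$ and using the hypothesis on $f\star v$, leads to $g_{i_j}\star(\rho_j\star w')$ and hence to products $g\star\alpha_{\rho_j}(w')$; but $\alpha_{\rho_j}(w')$ is a general element of $R$, not a shorter word, so you are back where you started.

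The paper's argument avoids all of this by a single induction on the word length $\nu$ and by peeling from the \emph{right}: write $w=w'v_\nu$ with $w'=v_1\cdots v_{\nu-1}$. The induction hypothesis gives a restricted representation $f\star w'=\sum_j d_j\,g_{i_j}\star\rho_j$ with $\tau_{i_j}\circ\rho_j\le{\bf T}(f)$; then
\[
f\star w=(f\star w')\star v_\nu=\sum_j d_j\,g_{i_j}\star(\rho_j\star v_\nu)
=\sum_j d_j\,\bigl(g_{i_j}\star\alpha_{\rho_j}(v_\nu)\bigr)\star\rho_j,
\]
and since each $\rho_j\le{\bf T}(f)\le\Omega$ (indeed $\rho_j<\Omega$ once $\tau_{i_j}\neq 1$), the hypothesis applies directly to each $g_{i_j}\star\alpha_{\rho_j}(v_\nu)$, with $\upsilon=\rho_j$ and $v=v_\nu$ a \emph{single} letter. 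Substituting those restricted representations and right-multiplying by $\rho_j$ preserves the required bound. The point you are missing is that peeling on the right lets the induction hypothesis absorb the long part of the word \emph{first}, so that only a single letter remains to be handled by the standing hypothesis --- and that hypothesis is stated precisely for single letters twisted by $\alpha_\upsilon$.
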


\begin{proof} We can wlog assume $r=\prod_{i=1}^\nu v_i, v_i\in{\Bcc v}$ and prove the claim by induction on $\nu\in{\Bbb N}$.
 
 Thus we have a restricted representation in terms of $F$
 $$f\star\left(\prod_{i=1}^{\nu-1} v_i\right)=\sum_j d_j  g_{i_j}\star \rho_j, \tau_{i_j}\circ\rho_j\leq{\bf T}(f),$$ whence we obtain
 $$f\star \prod_{i=1}^\nu v_i=\left(f\star\prod_{i=1}^{\nu-1} v_i\right)\star v_\nu=\sum_j d_j  g_{i_j}\star \left(\rho_j\star v_\nu\right)
=\sum_j d_j   g_{i_j}\star \alpha_{\rho_j}(v_\nu)\rho_j$$
and since $\rho_j<{\bf T}(f)\leq\Omega$ each element
$g_{i_j}\star \alpha_{\rho_j}(v_\nu)$ can be substituted with its restricted representation whose existence is granted by assumption.
\end{proof}

\begin{Lemma} \cite{W}  Under the same assumption, if,  for each 
$g_j\in F$, both $Y_i\star g_j, 1\leq i \leq n$  and  each 
$g_j\star  \alpha_{\upsilon}(v),  v\in{\Bcc v}, \upsilon\in{\Cal T},\upsilon<\Omega$, have a restricted representation in terms of $F$ w.r.t. $<$, then ${\Bbb I}_W(F)={\sf M}$.
\end{Lemma}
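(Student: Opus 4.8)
The plan is to deduce $\mathsf{M} = {\Bbb I}_2(F) \subseteq {\Bbb I}_W(F)$ from the hypotheses, the reverse inclusion ${\Bbb I}_W(F) \subseteq \mathsf{M}$ being immediate since each generator $af\star\rho$ of the restricted module obviously lies in the bilateral module. So the real content is: every element of the form $\lambda\star g_j\star\rho$, with $\lambda,\rho\in{\Cal T}$ and $g_j\in F$, lies in ${\Bbb I}_W(F)$; since such elements $R$-linearly span ${\Bbb I}_2(F)$ (the analogue of ${\Cal B}_2(G)$ from Lemma~\ref{KR3}), this suffices.

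First I would reduce the left multiplier $\lambda$ to a product of the variables $Y_i$ and argue by induction on its length, exactly as in the proof of Lemma~\ref{KR3} combined with the previous lemma's inductive scheme. The inductive seed $\lambda = 1$ is the statement ``$g_j\star\rho$ has a restricted representation in terms of $F$ for all $\rho\in{\Cal T}$'', which is precisely the conclusion of the previous Lemma, whose hypothesis ``$g_j\star\alpha_\upsilon(v)$ has a restricted representation for $v\in{\Bcc v}$, $\upsilon<\Omega$'' is among the assumptions here. For the inductive step I would write $\lambda = Y_i\circ\lambda'$ and consider $Y_i\star g_j\star\rho$. By hypothesis $Y_i\star g_j$ has a restricted representation $Y_i\star g_j = \sum_k d_k\, g_{j_k}\star\sigma_k$ with $\tau_{j_k}\circ\sigma_k \leq {\bf T}(Y_i\star g_j)$; hence $\lambda'\star(Y_i\star g_j)\star\rho = \sum_k d_k\,\lambda'\star g_{j_k}\star(\sigma_k\circ\rho)$ — here one must move $\lambda'$ past the scalar $d_k$, picking up $\alpha_{\lambda'}(d_k)$ and using ${\bf M}(\lambda'\star d_k) = \alpha_{\lambda'}(d_k)\lambda'$ — and each term $\lambda'\star g_{j_k}\star(\sigma_k\circ\rho)$ has strictly shorter left multiplier $\lambda'$, so the inductive hypothesis applies and yields a restricted representation. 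Summing and collecting, $\lambda\star g_j\star\rho\in{\Bbb I}_W(F)$.

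The bookkeeping obstacle — and the step I expect to demand the most care — is verifying that the term-degree bounds propagate correctly so that what one obtains is genuinely a \emph{restricted representation} (i.e. satisfies ${\bf T}(g_{\ell}\star\tilde\rho)\leq{\bf T}(\text{the element being rewritten})$), not merely an expansion into the restricted module. This is where the sequentiality of $<$ enters: in the inductive step the intermediate terms $\sigma_k\circ\rho$ produced when rewriting $Y_i\star g_j$ may a priori be large, and one needs to know that only finitely many rewriting steps occur and that the degrees stay controlled; the finiteness of $\{\omega\in{\Cal T}:\omega<\tau\}$ guarantees the induction terminates and that the accumulated representation is finite. I would therefore phrase the induction not just on word-length of $\lambda$ but lexicographically on the pair (word-length of $\lambda$, the term ${\bf T}(\lambda\star g_j\star\rho)$ under the well-ordering $<$), mirroring the structure already used in the proofs of Theorem~\ref{50ReLiTh} and the preceding lemmas. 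Once the restricted representation of every $\lambda\star g_j\star\rho$ is in hand, the conclusion ${\Bbb I}_W(F)=\mathsf{M}$ follows by taking $R$-linear combinations, since those elements span $\mathsf{M}={\Bbb I}_2(F)$ and each lies in ${\Bbb I}_W(F)$, while ${\Bbb I}_W(F)\subseteq\mathsf{M}$ is trivial.
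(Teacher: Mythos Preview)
Your overall strategy is correct and close in spirit to the paper's, but you make it harder than it needs to be, and your final paragraph worries about something that is not actually required.

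The paper's argument is a clean closure argument: it shows that for any $f\in{\Bbb I}_W(F)$ one has $Y_i\star f\in{\Bbb I}_W(F)$ and $f\star r\in{\Bbb I}_W(F)$ for every $r\in{\sf R}$. Writing $f=\sum_j d_j g_{i_j}\star\rho_j$, one gets $Y_i\star f=\sum_j(\alpha_i(d_j)Y_i+\delta_i(d_j))\star g_{i_j}\star\rho_j$; the $\delta_i$-terms are already in ${\Bbb I}_W(F)$, and the $Y_i\star g_{i_j}$ are in ${\Bbb I}_W(F)$ by hypothesis. Similarly $f\star r=\sum_j d_j(g_{i_j}\star\alpha_{\rho_j}(r))\star\rho_j$, and the previous lemma handles $g_{i_j}\star\alpha_{\rho_j}(r)$. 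Since ${\Bbb I}_W(F)$ is already a left $R$-module, these two closures make it a bilateral submodule containing $F$, hence containing ${\sf M}={\Bbb I}_2(F)$. No induction on the length of $\lambda$ is needed, and no tracking of individual elements $\lambda\star g_j\star\rho$.

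Your explicit induction on the length of $\lambda$ reproduces this, but with extra bookkeeping: when you move $\lambda'$ past $d_k$ you must keep all the lower terms of $\lambda'\star d_k=\alpha_{\lambda'}(d_k)\lambda'+\theta_{\lambda'}(d_k)$, not just the leading one (your parenthetical acknowledges the twist but only names ${\bf M}(\lambda'\star d_k)$). The lower terms are sums $\sum_{\tau\mid\lambda',\,\tau\neq\lambda'} c_\tau\tau$, and each $\tau\star g_{j_k}\star(\sigma_k\circ\rho)$ falls under the inductive hypothesis, so the argument still closes; but this should be said explicitly.

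More importantly, your last paragraph is unnecessary. The conclusion of this lemma is the set equality ${\Bbb I}_W(F)={\sf M}$, i.e., mere \emph{membership} in ${\Bbb I}_W(F)$; you do not need the resulting expansion to be a restricted representation in the sense of carrying the bound ${\bf T}(g_\ell\star\tilde\rho)\leq{\bf T}(\cdot)$. The degree bounds in the hypotheses are there only so that the \emph{previous} lemma can be invoked for the right-hand side; once that lemma delivers $g_j\star r\in{\Bbb I}_W(F)$, you are done. So the lexicographic double induction you propose, and the appeal to sequentiality of $<$ at this stage, are not needed.
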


\begin{proof} It is sufficient to show that, for each $f\in {\Bbb I}_W(F)$, both each
$Y_i\star f\in  {\Bbb I}_W(F), 1\leq i \leq n$ and each $f\star r\in {\Bbb I}_W(F),r\in{\sf R}$.

By assumption $f=\sum_j d_j g_{i_j} \star \rho_j, d_j\in R\setminus\{0\}, \rho_j\in{\Cal T},1\leq i_j\leq u$, so that
$$Y_i\star f = \sum_j \alpha_i(d_j)\star(Y_i\star g_{i_j}) \star \rho_j
\And f\star r=\sum_j d_j  (g_{i_j}\star  \alpha_{\rho_j}(r))\star\rho_j;$$
by assumption each $Y_i\star g_{i_j}$ has a restricted representation in terms of $F$;
for the Lemma above, also each $g_{i_j} \star \alpha_{\rho_j}(r)$  has a restricted representation in terms of $F$.
\end{proof}

\begin{Corollary}\label{48cC1} \cite{W}  Let 
$$F := \{g_1,\ldots,g_u\}\subset {\sf R}^m, 
g_i = {\bf M}(g_i)-p_i =: c_i \tau_i {\bf e}_{\iota_i} - p_i.$$
Let ${\sf M}$ be the bilateral module ${\sf M} := {\Bbb I}_2(F)$ 
and  ${\Bbb I}_W(F)$  the  restricted module 
$${\Bbb I}_W(F) : = \Span_{R}(a  f\star \rho   : a\in
R\setminus\{0\}, \rho\in {\Cal T}, f\in F).$$

$F$ is the bilateral Gr\"obner basis of  ${\sf M}$ iff
\begin{enumerate}
\item denoting 
$\GM(F)$   any  restricted Gebauer--M\"oller set for $F$,  each $\sigma\in\GM(F)$ has a restricted quasi-Gr\"obner representation
in terms of $F$;
\item for each 
$g_j\in F$, both $Y_i\star g_j, 1\leq i \leq n$  and  each 
$g_j\star  \alpha_{\upsilon}(v),  v\in{\Bcc v}, \upsilon\in{\Cal T},\upsilon<\Omega$, have a restricted representation in terms of $F$ w.r.t. $<$.
\end{enumerate}
\end{Corollary}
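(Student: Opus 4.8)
The plan is to combine the restricted M\"oller--Pritchard lifting theorem (Theorem~\ref{50ReLiTh}) with the two preceding Lemmas, which together reduce membership in ${\Bbb I}_W(F)$ and the equality ${\Bbb I}_W(F)={\sf M}$ to checkable conditions, and then to use the Kandri-Rody--Weispfenning characterization (Lemma~\ref{KR3}) of when a left basis is already bilateral. First I would prove the implication $(\Leftarrow)$. Assume (1) and (2). By Theorem~\ref{50ReLiTh}, condition (1) says exactly that $F$ is a \emph{restricted} Gr\"obner basis of ${\Bbb I}_W(F)$, so every $f\in{\Bbb I}_W(F)$ has a restricted Gr\"obner representation in terms of $F$; in particular each of the elements $Y_i\star g_j$ and each $g_j\star\alpha_\upsilon(v)$ appearing in (2), \emph{once we know they lie in ${\Bbb I}_W(F)$}, automatically has a restricted representation. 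But condition (2) asserts precisely that these elements have restricted representations in terms of $F$, which is what the hypothesis of the second Lemma requires; hence that Lemma gives ${\Bbb I}_W(F)={\sf M}$. Now ${\Bbb I}_W(F)\subseteq{\Bbb I}_L(F)\subseteq{\sf M}$ always, so ${\Bbb I}_W(F)={\sf M}$ forces ${\Bbb I}_L(F)={\sf M}={\Bbb I}_2(F)$; since $F$ is a restricted Gr\"obner basis of ${\Bbb I}_W(F)={\sf M}$, and restricted Gr\"obner representations are in particular bilateral ones, $F$ is a bilateral Gr\"obner basis of ${\sf M}$ (one checks condition (iv) of the bilateral Gr\"obner basis definition term by term, exactly as in the proof of Proposition~\ref{57P1}).

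For the converse $(\Rightarrow)$, assume $F$ is a bilateral Gr\"obner basis of ${\sf M}={\Bbb I}_2(F)$. I would first argue ${\Bbb I}_W(F)={\sf M}$: every generator $f\star\rho$ of ${\sf M}$ over the relevant span, and more generally every $f\in{\sf M}$, has a bilateral Gr\"obner representation; I need to upgrade this to a \emph{restricted} representation. The key observation is that in a multivariate Ore extension, $\lambda\star g\star b\rho = \alpha_\lambda(b')\,g'\star(\lambda'\rho')$-type manipulations let one push the left multiplier $\lambda$ and the coefficient through to the right using Corollary~\ref{c46CoX2X2X2} and the relations $t\star r=\alpha_t(r)t+\theta_t(r)$; the induction on ${\bf T}$, which is a well-order, lets one replace any bilateral Gr\"obner representation by a restricted one without increasing leading terms. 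Granting ${\Bbb I}_W(F)={\sf M}$, condition (2) is immediate because $Y_i\star g_j$ and $g_j\star\alpha_\upsilon(v)$ all lie in ${\sf M}={\Bbb I}_W(F)$ and $F$, being a bilateral hence \emph{a fortiori} restricted Gr\"obner basis of ${\Bbb I}_W(F)$, provides the restricted representations. Finally condition (1): since $F$ is a restricted Gr\"obner basis of ${\Bbb I}_W(F)$, Theorem~\ref{50ReLiTh} (the equivalence $(1)\Leftrightarrow(3)$ there) gives that each $\sigma$ in any restricted Gebauer--M\"oller set has a restricted quasi-Gr\"obner representation.

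The step I expect to be the main obstacle is the claim, needed in the $(\Rightarrow)$ direction, that ${\Bbb I}_W(F)={\sf M}$ whenever $F$ is a bilateral Gr\"obner basis — equivalently, that a bilateral Gr\"obner representation can always be rewritten as a restricted one. This is the non-commutative analogue of ``the right multipliers suffice once you have a Gr\"obner basis'', and it is exactly where the Ore-extension structure (as opposed to a general non-commutative ring) is used: one needs $\alpha_\tau$ to be an automorphism to move a left scalar $a\lambda$ past $g$ to the right, producing $g\star\alpha_{{\bf T}(g)}^{-1}(a)\lambda$ plus lower-order terms, and one needs the sequential term-ordering hypothesis to make the resulting descending induction terminate. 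I would handle this by a careful induction on $v({\bf T}(\cdot))$ mimicking the proof of the second Lemma above, reducing a general bilateral representation to the restricted generators $f\star\rho$ one leading monomial at a time, and invoking the lifting theorem to absorb the error terms. Everything else is bookkeeping built on the results already established in Sections~5--7.
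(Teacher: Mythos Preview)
Your $(\Leftarrow)$ direction is correct and is exactly what the paper intends: the two Lemmas immediately preceding the Corollary give ${\Bbb I}_W(F)={\sf M}$ from condition~(2), Theorem~\ref{50ReLiTh} gives that $F$ is a restricted Gr\"obner basis of ${\Bbb I}_W(F)$ from condition~(1), and since a restricted Gr\"obner representation is \emph{a fortiori} a bilateral one, $F$ is a bilateral Gr\"obner basis of ${\sf M}$. The paper offers no proof beyond this; the Corollary is recorded as an immediate consequence of those results.

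Your $(\Rightarrow)$ direction has a genuine gap, and the obstacle you flag cannot be repaired in the way you propose. The rewriting ``$a\lambda\star g \leadsto g\star\alpha_{{\bf T}(g)}^{-1}(a)\lambda$ plus lower terms'' does not match leading monomials: ${\bf M}(a\lambda\star g)=a\,\alpha_\lambda(\lc(g))\,\lambda\circ\tau_g$ while ${\bf M}(a'\,g\star\lambda)=a'\lc(g)\,\tau_g\circ\lambda$, so one would need $\lc(g)\mid a\,\alpha_\lambda(\lc(g))$ in $R$, which fails whenever $\alpha_\lambda$ moves $\lc(g)$ out of the ideal it generates. Concretely, take $R={\Bbb Q}[x]$, ${\sf R}=R[Y;\alpha]$ with $\alpha(x)=x+1$ (an automorphism), and $F=\{x\}$. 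Then $Y\star x-x\star Y=Y$, so ${\Bbb I}_2(x)=(x,Y)$, and one checks directly that $\{x\}$ \emph{is} a bilateral weak Gr\"obner basis of $(x,Y)$: every leading monomial of an element of $(x,Y)$ lies in ${\Bbb I}_2({\bf M}\{x\})={\Bbb I}_2(x)=(x,Y)$. Yet $Y\star x=(x+1)Y\notin{\Bbb I}_W(\{x\})=\Span_R\{xY^k:k\geq0\}$, so condition~(2) fails. Thus the $(\Rightarrow)$ implication does not hold over a general Zacharias domain. It \emph{does} hold when $R$ is a skew field (Weispfenning's original setting in \cite{W}), since then $\lc(g)$ is a unit and your induction goes through; the paper's ``iff'' should be read as the termination criterion for the completion procedure, not as a characterization of arbitrary bilateral Gr\"obner bases.
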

\subsection{Gebauer-M\"oller sets for Restricted Gr\"obner bases}

It is clear from Corollary~\ref{48cC1}  that the computation of a Gr\"obner bases can be obtained via Weispfenning's completion, provided that we are able to produce restricted Gebauer-M\"oller sets; to do so, we need only to properly reformulate the results of Section~\ref{SWRRC}. 

We begin by remarking that for each monomial $c\tau\in{\sf M}({\sf R})$ the function
$g\mapsto cg\star\tau$ distributes, thus we can define a multiplication 
$\diamond : {\sf R}\times{\sf R}\to{\sf R}$ by setting
$$c_i\tau_i\diamond c_j\tau_j := c_ic_j\tau_j\tau_i = c_jc_i\tau_i\tau_j =: c_j\tau_j\diamond c_i\tau_i$$
which of course is commutative and thus, granting the trivial syzygy
$$g_i\diamond g_j = g_j \diamond g_i$$ allows to recover Buchberger First Criterium.

As a consequence, we can define the notion of restricted Gr\"obner representation: 
\begin{itemize}
\item we say that $f\in {\sf R}^m\setminus\{0\}$ has a restricted {\em Gr\"obner  representation}\index{representation!
Gr\"obner!(of a polynomial)}\index{Gr\"obner!representation (of a polynomial)}
in terms of $G$ if it can be written as
$f = \sum_{i=1}^u l_i  \diamond g_i,$ 
with $l_i\in {\sf R}, g_i \in G$ and 
${\bf T}(l_i)\circ {\bf T}(g_i) \leq {\bf T}(f) \Forall i.$    
\end{itemize}

Let us denote, for each $i,j, 1 \leq i < j \leq u$, ${\bf e}_{l_i} ={\bf e}_{l_j}$,
\begin{eqnarray*}
b_W(i,j) &:=& \frac{\lcm(c_i,c_j)}{c_j} e_j\frac{\lcm(\tau_i,\tau_j)}{\tau_j} - 
\frac{\lcm(c_i,c_j)}{c_i}e_i\frac{\lcm(\tau_i,\tau_j)}{\tau_i}
\\ &=&
\frac{{{\bf M}(i,j)}}{{{\bf M}(j)}}\diamond e_j - \frac{{{\bf M}(i,j)}}{{{\bf M}(i)}}\diamond  e_i
\in\ker({\Frak s}_W), 
\\ B_W(i,j) &:=&
\frac{\lcm(c_i,c_j)}{c_j}  g_j\star\frac{\lcm(\tau_i,\tau_j)}{\tau_j} -
\frac{\lcm(c_i,c_j)}{c_i}g_i\star \frac{\lcm(\tau_i,\tau_j)}{\tau_i}
\\ &=&
\frac{{{\bf M}(i,j)}}{{{\bf M}(j)}}\diamond g_j - \frac{{{\bf M}(i,j)}}{{{\bf M}(i)}}\diamond  g_i.
\end{eqnarray*}
and let us explicitly assume that
\begin{itemize}
\item for each 
$g_j\in F$, both $Y_i\star g_j, 1\leq i \leq n$  and  each 
$g_j\star  \alpha_{\upsilon}(v),  v\in{\Bcc v}, \upsilon\in{\Cal T},\upsilon<{\bf T}(g_j)$, have a restricted representation in terms of $F$ w.r.t. $<$.
\end{itemize}

\begin{Lemma}[Buchberger's First Criterion]\index{Buchberger's!First Criterion}\index{criteria!Buchberger's First}
If $m = 1$, {\em id est} $F\subset{\sf R}$ and ${\Bbb I}_W(F)$ is an ideal of ${\sf R}$, 
then
\begin{eqnarray*}
{\bf M}(i)\diamond {\bf M}(j) = {\bf M}(i,j) &\iff&
\lcm(\tau_i,\tau_j)=\tau_i\tau_j \And
\lcm(c_i,c_j) = c_ic_j 
\\ &\then&{\mathrm{NF}_W}(B_W(i,j), F) = 0.
\end{eqnarray*}
\end{Lemma}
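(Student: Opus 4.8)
The plan is to transcribe the classical proof of Buchberger's First Criterion, working throughout with the commutative multiplication $\diamond$ in place of $\star$. The equivalence on the left is purely formal: by definition ${\bf M}(i)\diamond{\bf M}(j) = c_ic_j\,\tau_i\tau_j$, whereas ${\bf M}(i,j) = \lcm(c_i,c_j)\,\lcm(\tau_i,\tau_j)$; since $\lcm(c_i,c_j)\mid c_ic_j$ and $\lcm(\tau_i,\tau_j)\mid\tau_i\tau_j$, the two monomials coincide exactly when $\lcm(c_i,c_j) = c_ic_j$ and $\lcm(\tau_i,\tau_j) = \tau_i\tau_j$. So it remains to prove the implication under these coprimality hypotheses.

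First I would rewrite the S-element. Under coprimality ${\bf M}(i,j) = {\bf M}(i)\diamond{\bf M}(j)$ as monomials, hence $\frac{{\bf M}(i,j)}{{\bf M}(j)} = {\bf M}(i)$ and $\frac{{\bf M}(i,j)}{{\bf M}(i)} = {\bf M}(j)$, so that $B_W(i,j) = {\bf M}(i)\diamond g_j - {\bf M}(j)\diamond g_i$. Writing $g_i = {\bf M}(g_i) - p_i$ and $g_j = {\bf M}(g_j) - p_j$ with ${\bf T}(p_i)\prec\tau_i$, ${\bf T}(p_j)\prec\tau_j$ (allowing $p_i = 0$ or $p_j = 0$), we have ${\bf M}(i) = g_i + p_i$ and ${\bf M}(j) = g_j + p_j$; since $\diamond$ distributes over addition and is commutative — this is the trivial syzygy $g_i\diamond g_j = g_j\diamond g_i$ — the two $g_i\diamond g_j$ terms cancel and
$$B_W(i,j) = p_i\diamond g_j - p_j\diamond g_i.$$
Expanding $p_i = \sum_t a_t\sigma_t$ and $p_j = \sum_s b_s\rho_s$ with $\sigma_t\prec\tau_i$, $\rho_s\prec\tau_j$, and recalling $c\sigma\diamond g = c\,g\star\sigma$, this reads $B_W(i,j) = \sum_t a_t\,g_j\star\sigma_t - \sum_s b_s\,g_i\star\rho_s$, a restricted representation in terms of $F$. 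For each summand ${\bf T}(g_j\star\sigma_t) = \tau_j\circ\sigma_t \prec \tau_j\circ\tau_i = \lcm(\tau_i,\tau_j) = v(b_W(i,j))$ by $\prec$-compatibility, and symmetrically ${\bf T}(g_i\star\rho_s)\prec v(b_W(i,j))$; hence this is a restricted quasi-Gr\"obner representation of the S-polynomial ${\Frak S}_W(b_W(i,j)) = B_W(i,j)$. By the descending-term reduction argument underlying the restricted normal form — equivalently by Theorem~\ref{50ReLiTh} — the pair $b_W(i,j)$ contributes nothing to the Buchberger test, which is precisely the content of $\NF_W(B_W(i,j),F) = 0$, so that $b_W(i,j)$ may be removed from any restricted Gebauer--M\"oller set.

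The main obstacle lies entirely in justifying the trivial syzygy $g_i\diamond g_j = g_j\diamond g_i$: one must verify that $\diamond$, although defined monomial-by-monomial via $c\tau\diamond g := c\,g\star\tau$, is a well-defined commutative product on all of ${\sf R}$, and this rests precisely on the multivariate-Ore hypotheses $Y_j\star Y_i = Y_iY_j$ for $i<j$ together with $\alpha_k(Y_l) = Y_l$, $\delta_k(Y_l) = 0$ for $k>l$, which force $\sigma\star\rho = \sigma\circ\rho$ for all $\sigma,\rho\in{\Cal T}$; this is exactly the reason, noted in the preceding Remark, that no analogue of Buchberger's First Criterion is available for $\star$ itself. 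Note that the extra hypotheses invoked just above the statement (that $Y_i\star g_j$ and the $g_j\star\alpha_\upsilon(v)$ admit restricted representations) are not used here; they belong to the ambient completion setup but play no role in the cancellation argument.
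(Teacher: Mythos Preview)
Your expansion $B_W(i,j)=\sum_t a_t\,g_j\star\sigma_t-\sum_s b_s\,g_i\star\rho_s$ and the bound ${\bf T}(g_j\star\sigma_t),\,{\bf T}(g_i\star\rho_s)\prec\tau_i\tau_j=v(b_W(i,j))$ are correct, but this gives only a \emph{quasi}-Gr\"obner representation: the summands are controlled by $v(b_W(i,j))$, not by ${\bf T}(B_W(i,j))$, which is strictly smaller. In this paper $\NF_W(B_W(i,j),F)=0$ means that $B_W(i,j)$ admits a restricted \emph{weak Gr\"obner} representation, i.e.\ every summand has ${\bf T}\le{\bf T}(B_W(i,j))$; the paper's proof opens by announcing exactly this and then invokes Proposition~\ref{57P1}, $(4)\Leftrightarrow(8)$. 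Your appeal to Theorem~\ref{50ReLiTh} does not close the gap: that theorem is an equivalence for the whole basis $F$, and a single S-polynomial having a quasi-Gr\"obner representation does not force its normal form to vanish. The subsequent Corollary also explicitly uses the conclusion in the form ``$B_W(i,j)$ has a weak Gr\"obner representation''.

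What is missing is a cancellation analysis. You must exclude the possibility ${\bf M}(p_i)\diamond{\bf M}(g_j)={\bf M}(p_j)\diamond{\bf M}(g_i)$, for then ${\bf T}(B_W(i,j))$ drops strictly below $\max\bigl({\bf T}(p_i)\circ\tau_j,\,{\bf T}(p_j)\circ\tau_i\bigr)$ and your expansion is no longer a Gr\"obner representation. The paper rules this out using the \emph{term} coprimality hypothesis: if ${\bf T}(p_i)\circ\tau_j={\bf T}(p_j)\circ\tau_i$, this common term is a common multiple of $\tau_i$ and $\tau_j$ which (since ${\bf T}(p_i)\prec\tau_i$) is strictly below $\tau_i\tau_j$, contradicting $\lcm(\tau_i,\tau_j)=\tau_i\tau_j$. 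Once that case is excluded, ${\bf T}(B_W(i,j))=\max\bigl({\bf T}(p_i)\circ\tau_j,\,{\bf T}(p_j)\circ\tau_i\bigr)$ and your expansion is already the required Gr\"obner representation. Note that your argument never actually uses $\lcm(\tau_i,\tau_j)=\tau_i\tau_j$ beyond rewriting ${\bf M}(i,j)$; this is a signal that something is missing.
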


\begin{proof}  We will prove that $B_W(i,j)$ has a restricted Gr\"obner representation in terms of $F$; thus the result will follow by the equivalence   
Proposition~\ref{57P1}, (4)$\iff$(8).

Remark that 
$$p_i := g_i - {\bf M}(i) =  \sum_l  c_{il} t_{il} \And p_j := g_j - {\bf M}(j)=\sum_k c_{jk} t_{jk}$$  satisfy
${\bf T}(p_i) < {\bf T}(g_i),$ ${\bf T}(p_j) < {\bf T}(g_j).$

Then it holds:
$$0 = g_i\diamond g_j - g_j\diamond g_i = {\bf M}(i)\diamond g_j + p_i\diamond  g_j - {\bf M}(j)\diamond  g_i - p_j\diamond  g_i,$$
and
$$B_W(i,j) := \frac{{\bf M}(i,j)}{{\bf M}(j)}\diamond  g_j - \frac{{{\bf M}(i,j)}}{{{\bf M}(i)}}\diamond  g_i
= {\bf M}(i)\diamond  g_j - {\bf M}(j)\diamond  g_i = p_j\diamond  g_i - p_i\diamond  g_j.$$

There are then two possibilities: either
\begin{itemize}
\item ${\bf M}( p_j)\diamond{\bf M}( g_i) \neq {\bf M}( p_i)\diamond{\bf M}( g_j)$ in which case 
$${\bf T}(B_W(i,j)) = \max({\bf T}( p_j)\circ{\bf T}(g_i), {\bf T}( p_i)\circ{\bf T}( g_j))$$ and 
$$B_W(i,j)= p_j\diamond g_i - p_i\diamond g_j =
\sum_k c_{jk}g_i\star t_{jk}-\sum_l  c_{il}g_j\star t_{il}$$
is a restricted Gr\"obner representation;
\item or ${\bf M}( p_j)\diamond{\bf M}( g_i) = {\bf M}( p_i)\diamond{\bf M}( g_j)$, 
${\bf T}(B_W(i,j)) < {\bf T}( p_j)\circ{\bf T}( g_i) = {\bf T}( p_i)\circ{\bf T}( g_j)$, in which case
$B_W(i,j)= p_j\diamond g_i - p_i\diamond g_j$ would not be a Gr\"obner representation.
\end{itemize}
But the latter case is impossible: in fact, from
$${\bf T}( p_i)\circ{\bf T}( g_j) = {\bf T}( p_j)\circ{\bf T}( g_i) < {\bf T}(g_j) \circ{\bf T}(g_i)$$ 
we deduce 
$\lcm({\bf T}(g_i), {\bf T}(g_j)) \neq {\bf T}(g_j)\circ {\bf T}(g_i)$ 
and ${\bf T}(i,j) \neq {\bf T}(i)\circ {\bf T}(j)$
contradicting the assumption 
${\bf M}(i,j) = {\bf M}(i)\diamond {\bf M}(j).$
\end{proof}
\begin{Definition} Denote  
{\small$${\Frak C}_u  
:=\begin{cases}\Bigl\{
\{i,j\} :  {\bf M}(i)\diamond {\bf M}(j) = {\bf M}(i,j)\Bigr\}&\mbox{ if
${\sf M}$ is an ideal}\cr
\emptyset &\mbox{ otherwise.}\end{cases}$$}
A {\em useful S-pair set}\index{S-element!useful}\index{useful S-element/pair} for $F$ is any 
subset 
$${\GM}\subset{\Cal S}(u) = \left\{
\{i,j\},1 \leq i < j \leq u, {\bf e}_{l_i} ={\bf e}_{l_j}\right\}$$ such that
$\left\{b(i,j) : \{i,j\}\in{\GM}\cup{\Frak C}_u\right\}$ is a Gebauer--M\"oller set for $F$.
\end{Definition}

\begin{Corollary}  With the present notation, under the assumption that
 $R$ is a principal ideal domain,  
  $F$ is a    Gr\"obner basis of ${\sf M}$ iff, denoting
$\GM$  a useful S-pair set  for $F$,
each S-polynomial $B_W(i,j), \{i,j\}\in\GM$ 
 has
 a   Gr\"obner representation in terms of $F$.
\qed\end{Corollary}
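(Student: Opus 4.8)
The statement to be proved is the Corollary concluding Section~7.3: under the assumption that $R$ is a principal ideal domain, $F$ is a (bilateral) Gr\"obner basis of ${\sf M} = {\Bbb I}_2(F)$ if and only if, for a useful S-pair set $\GM$ for $F$, each $B_W(i,j)$ with $\{i,j\}\in\GM$ has a Gr\"obner representation in terms of $F$. My plan is to reduce this, in three moves, to results already established in the excerpt: Corollary~\ref{48cC1} (Weispfenning's characterisation of bilateral Gr\"obner bases via restricted quasi-Gr\"obner representations plus the conditions $Y_i\star g_j$, $g_j\star\alpha_\upsilon(v)$ having restricted representations), the definition of a useful S-pair set (which already incorporates the Buchberger-First-Criterion set ${\Frak C}_u$ of pairs that are automatically resolved), and the M\"oller--Pritchard restricted Lifting Theorem~\ref{50ReLiTh} together with the Buchberger's First Criterion Lemma proved just above the statement.

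First I would invoke Corollary~\ref{48cC1}: $F$ is a bilateral Gr\"obner basis of ${\sf M}$ iff (1) each $\sigma$ in some restricted Gebauer--M\"oller set $\GM(F)$ has a restricted quasi-Gr\"obner representation in terms of $F$, and (2) for each $g_j\in F$ both $Y_i\star g_j$ ($1\le i\le n$) and each $g_j\star\alpha_\upsilon(v)$ ($v\in\Bcc v,\ \upsilon<\Omega$) have restricted representations in terms of $F$. Condition (2) is precisely the standing hypothesis explicitly assumed immediately before the Buchberger-First-Criterion Lemma, so it holds by assumption and nothing further need be checked for it. This leaves only condition (1) to be matched with the hypothesis on $B_W(i,j)$.

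Next I would connect the abstract restricted Gebauer--M\"oller set to the concrete one built from the $b_W(i,j)$. Since $R$ is a PID, exactly the argument of Corollary~\ref{48Cz1}/Corollary~\ref{48Czr1} applied in the restricted setting (where each instance of $\alpha_\lambda(c_\omega)$ with $\tau=\lambda\circ\omega\circ\rho$ is replaced by $c_\omega$ with $\tau=\omega\circ\rho$, as Remark~\ref{RightSzaRem+} prescribes for restricted modules) shows that the colon ideals decompose and that $\{b_W(i,j): 1\le i<j\le u,\ {\bf e}_{l_i}={\bf e}_{l_j}\}$ is a restricted Gebauer--M\"oller set for $F$ in the sense of Theorem~\ref{50ReLiTh}; the $b_W(i,j)$ are the syzygies associated to the two-element basic sets. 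Those pairs $\{i,j\}$ lying in ${\Frak C}_u$ are handled automatically: the Buchberger's First Criterion Lemma above shows $\mathrm{NF}_W(B_W(i,j),F)=0$, i.e.\ $B_W(i,j)$ has a restricted Gr\"obner representation — hence a restricted quasi-Gr\"obner representation — so those $\sigma=b_W(i,j)$ satisfy condition (1) without hypothesis. For $\{i,j\}\notin{\Frak C}_u$, by definition of a useful S-pair set the family $\{b_W(i,j):\{i,j\}\in\GM\cup{\Frak C}_u\}$ is a restricted Gebauer--M\"oller set; combining this with the First-Criterion resolution of the ${\Frak C}_u$-part, condition (1) of Corollary~\ref{48cC1} is equivalent to: each $B_W(i,j)$ with $\{i,j\}\in\GM$ has a restricted quasi-Gr\"obner representation in terms of $F$. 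Finally, by Theorem~\ref{50ReLiTh} ((2)$\iff$(3)) having a quasi-Gr\"obner representation for all elements of a Gebauer--M\"oller set is equivalent to having genuine Gr\"obner representations, so ``quasi-Gr\"obner'' may be upgraded to ``Gr\"obner'' in the statement; this closes both implications.

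The main obstacle I anticipate is purely bookkeeping rather than conceptual: carefully checking that the reduction of the abstract restricted Gebauer--M\"oller set to the two-element $b_W(i,j)$'s is legitimate, i.e.\ that over a PID the colon-ideal computation used for bilateral/left bases in Section~6.2 transfers verbatim to the restricted setting via the $\diamond$-multiplication, and that the ${\Frak C}_u$ pairs are exactly the redundant ones covered by Buchberger's First Criterion so that excising them from the Gebauer--M\"oller set still leaves a Gebauer--M\"oller set (this is the content folded into the definition of ``useful S-pair set'' with the ${\Frak C}_u$ term, so strictly it is already granted). Everything else is an immediate chaining of Corollary~\ref{48cC1}, Theorem~\ref{50ReLiTh}, and the First Criterion Lemma.
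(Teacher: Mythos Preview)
Your argument is correct but takes a longer route than the paper's. The paper's proof is two sentences: by the very definition of a useful S-pair set, $\{b_W(i,j):\{i,j\}\in\GM\cup{\Frak C}_u\}$ is a (restricted) Gebauer--M\"oller set, so the restricted Lifting Theorem gives that $F$ is a Gr\"obner basis of ${\sf M}$ iff every $B_W(i,j)$ with $\{i,j\}\in\GM\cup{\Frak C}_u$ has a Gr\"obner representation; Buchberger's First Criterion then removes the ${\Frak C}_u$ pairs, leaving exactly the condition on $\GM$.

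Your detour through Corollary~\ref{48cC1} --- interpreting the statement as one about the \emph{bilateral} Gr\"obner basis of ${\sf M}={\Bbb I}_2(F)$, then bridging to the restricted setting via the two conditions of that corollary, then noting that condition~(2) is the standing hypothesis --- is logically sound but unnecessary. Under the standing hypothesis of Section~7.3 one already has ${\Bbb I}_W(F)={\Bbb I}_2(F)$, so the restricted and bilateral notions coincide and the paper simply works in the restricted setting from the start, invoking the Lifting Theorem directly rather than passing through Weispfenning's characterisation. Your approach buys an explicit reminder of \emph{why} the restricted test suffices for the bilateral conclusion; the paper's buys brevity by treating that identification as already established.

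One minor caution: the standing hypothesis in Section~7.3 bounds $\upsilon$ by ${\bf T}(g_j)$ rather than by $\Omega$ as in Corollary~\ref{48cC1}, so if you do go through \ref{48cC1} you should note that the per-element bound is what the proof of Buchberger's First Criterion actually uses, and that this is all that is needed at this point.
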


\begin{proof}  By definition
$\left\{b_W(i,j) : \{i,j\}\in{\GM}\cup{\Frak C}_u\right\}$ is a Gebauer--M\"oller set for $F$ so that,
by Theorem~\ref{SAASSAAS}, 
$F$ is a    Gr\"obner basis of ${\sf M}$ iff
each S-polynomial $B_W(i,j), \{i,j\}\in\GM\cup{\Frak C}_u$ 
 has
 a   Gr\"obner representation in terms of $F$.

The claim is a direct consequence of  Buchberger's First Criterion which states that for each $\{i,j\}\in{\Frak C}_u$, $B_W(i,j)$ has a weak Gr\"obner
representation in terms of $F$.
\end{proof}
\begin{Definition}  An S-element
$b(i,j), 
1 \leq i < j \leq u, {\bf e}_{l_i} ={\bf e}_{l_j},$ and the related S-pair $\{i,j\}$ are
 called {\em redundant}\index{S-element!redundant}\index{redundant!S-element/pair}
if either
\begin{enumerate}
\renewcommand\theenumi{{\rm (\alph{enumi})}}
\item exists $k > j$, ${\bf e}_{l_k} = {\bf e}_{l_i} ={\bf e}_{l_j}$ such that $${\bf M}(i,j,k) = {\bf M}(i,j); \,
{\bf M}(i,k) \neq {\bf M}(i,j) \neq {\bf M}(j,k),$$
\item or exists $k < j,{\bf e}_{l_k} = {\bf e}_{l_i} ={\bf e}_{l_j} : 
{\bf M}(j,k) \mid {\bf M}(i,j) \neq {\bf M}(j,k)$.
\qed\end{enumerate}
\end{Definition}

\begin{Lemma}[M\"oller]  The following holds
\begin{enumerate}
\item for each $i,j,k : 1 \leq i , j , k \leq u,  {\bf e}_{l_i} ={\bf e}_{l_j} ={\bf e}_{l_k},$ it holds
$$\frac{c(i,j,k)}{c(i,k)} b(i,k) \ast\frac{\tau(i,j,k)}{ \tau(i,k)}
- \frac{c(i,j,k)}{c(i,j)} b(i,j) \ast\frac{\tau(i,j,k)}{ \tau(i,j)}
+ \frac{c(i,j,k)}{ c(k,j)} b(k,j) \ast \frac{\tau(i,j,k)}{ \tau(k,j)}
= 0.$$ 
\item ${\Frak R} := \bigl\{b(i,j), 1\leq i < j \leq u, {\bf e}_{l_i} ={\bf e}_{l_j} 
\mbox{\ and not redundant}\bigr\}$
is a useful S-element set.
\item Let $G := \{g_1,\ldots,g_s\}, s \leq u$, and let $$\GM_* \subset
\{\{i,j\}, 1\leq i < j < s, {\bf e}_{l_i} ={\bf e}_{l_j}\}$$ be a useful S-pair set for $G_* = \{g_1,\ldots,
g_{s-1}\}$.

Let $\overline{M} := \{{\bf M}(j,s) : 1 \leq j < s,{\bf e}_{l_j} ={\bf e}_{l_s}\}$ and let
$\overline{M}' \subset\overline{M}$ be the set of the elements $\mu := {\bf M}(j,s) \in\overline{M}$ such
that either 
\begin{itemize}
\item there exists ${\bf M}(j',s)\in \overline{M} : {\bf M}(j',s) \mid {\bf M}(j,s) 
\neq {\bf M}(j',s)$ or 
\item (in case ${\sf M}$ is an ideal) there exists $i_\mu, 1 \leq i_\mu < s :$
$${\bf M}(i_\mu)\diamond{\bf M}(s)={\bf M}(i_\mu,s) =\mu.$$
\end{itemize}
For each $\mu := {\bf M}(j,s)\in\overline{M}\setminus\overline{M}'$ 
let $i_\mu, 1 \leq i_\mu < s, $ be such that 
$\mu = {\bf M}(i_\mu,s).$ 
Then $$\GM := \GM_* \cup \{\{i_\mu,s\} :
\mu\in\overline{M}\setminus\overline{M}'\}$$ is a useful S-pair set for $G$. 
\end{enumerate}\end{Lemma}

\begin{proof} 
\begin{enumerate}
\item ({\em cf.}  \cite[Lemma~25.1.4]{SPES}) One has
\begin{eqnarray*}
&& \frac{{\bf M}(i,j,k)}{ {\bf M}(i,k)}\diamond b(i,k) - \frac{{\bf M}(i,j,k)}{{\bf M}(i,j)}\diamond b(i,j) + \frac{{\bf M}(i,j,k)}{ {\bf M}(k,j)}\diamond b(k,j) 
\\&=& 
\frac{{\bf M}(i,j,k)}{{\bf M}(i,k)}\diamond\left(\frac{{{\bf M}(i,k)}}{{{\bf M}(k)}} \diamond e_k - \frac{{{\bf M}(i,k)}}{{{\bf M}(i)}} \diamond e_i\right)
\\&-&
\frac{{\bf M}(i,j,k)}{{\bf M}(i,j)}\diamond \left(\frac{{{\bf M}(i,j)}}{{{\bf M}(j)}} \diamond e_j - \frac{{{\bf M}(i,j)}}{{{\bf M}(i)}} \diamond e_i\right)
\\&+&
\frac{{\bf M}(i,j,k)}{ {\bf M}(k,j)} \diamond\left(\frac{{{\bf M}(k,j)}}{{{\bf M}(j)}} \diamond e_j -\frac{{{\bf M}(k,j)}}{{{\bf M}(k)}} \diamond e_k\right)
\\&=&
\left(\frac{{{\bf M}(i,j,k)}}{{{\bf M}(k)}} \diamond e_k - \frac{{{\bf M}(i,j,k)}}{{{\bf M}(i)}} \diamond e_i\right)
\\&-&
\left(\frac{{{\bf M}(i,j,k)}}{{{\bf M}(j)}} \diamond e_j - \frac{{{\bf M}(i,j,k)}}{{{\bf M}(i)}} \diamond e_i\right)
\\&+&
\left(\frac{{{\bf M}(i,j,k)}}{{{\bf M}(j)}} \diamond e_j - \frac{{{\bf M}(i,j,k)}}{{{\bf M}(k)}} \diamond e_k\right)
\\&=&
0 
\end{eqnarray*}
\item ({\em cf.}  \cite[Lemma~25.1.8]{SPES}) In order to prove the claim by induction, it is sufficient to show
that, for each redundant $\{i,j\}, 1\leq i < j \leq u, {\bf e}_{l_i} ={\bf e}_{l_j} =: \epsilon,$ 
there are
\begin{itemize}
\renewcommand\labelitemi{\bf --}
\item  $\{i_1,j_1\}, \ldots, \{i_\rho,j_\rho\}, \ldots, \{i_r,j_r\}, 1\leq i_\rho < j_\rho  \leq u$,  
${\bf e}_{l_{i_\rho}} ={\bf e}_{l_{j_\rho}} = \epsilon$,
\item elements $t_1,\ldots,t_r \in {\Cal T},$ 
\item and coefficients $c_1,\ldots c_r \in R\setminus\{0\}$
\end{itemize}
such that
\begin{itemize}
\item $b(i,j) = \sum_\rho c_\rho t_\rho\diamond b(i_\rho,j_\rho);$
\item $\tau(i,j) = t_\rho\circ \tau(i_\rho,j_\rho),$ for each $\rho;$
\item $\{i_\rho,j_\rho\} \prec  \{i,j\}$.
\end{itemize}

In order to show this, we only need to consider the representation 
$$b(i,j) = \frac{{\bf M}(i,j,k)}{ {\bf M}(i,k)}\diamond b(i,k) - \frac{{\bf M}(i,j,k)}{ {\bf M}(k,j)}\diamond b(k,j)$$
and to prove that 
$$\{i,k\} \prec  \{i,j\} \succ  \{k,j\};$$ 
this happens (according to the two cases of the definition)
because
\begin{enumerate}
\item 
$\tau(i,k) \mid \tau(i,j,k) = \tau(i,j) \neq \tau(i,k)$ implies
$\{i,k\} \prec 
\{i,j\}$  and the same argument proves $\{j,k\} \prec  \{i,j\};$
\item the same argument as that above proves $\{j,k\} \prec  \{i,j\},$  while
$\{i,k\} \prec  \{i,j\}$ because $\tau(i,k) \leq\tau(i,j)$ and $k < j.$
\end{enumerate}
\item ({\em cf.}  \cite[Lemma~25.1.9]{SPES})\index{Moeller's@M\"oller's!Lemma}
 Let $i < s,  {\bf e}_{l_i} ={\bf e}_{l_s} =: \epsilon, \mu := {\bf M}(i,s)$. Then:
\begin{itemize}
\item if there exists $\mu'\in\overline{M}$ such that ${\bf M}(i_{\mu'},s) = \mu' \mid {\bf M}(i,s) \neq
\mu'$, then since $i_{\mu'} < s$, $\{i,s\}$ is redundant;
\item if $i = i_\mu$ and ${\bf M}(i_\mu)\diamond{\bf M}(s) = {\bf M}(i_\mu,s)$, then  
(${\sf M}$ is an ideal)
$b_W(i_\mu,s)\in{\Frak C}_s$ so that $B_W(i_\mu,s)$ has a restricted Gr\"obner
representation in terms of $G$ by Buchberger's First Criterion;
\item if $i = i_\mu$ and ${\bf M}(i_\mu)\diamond{\bf M}(s) \neq {\bf M}(i_m,s)$ then $\{i_m,s\}\in \GM$;
\item if $i \neq i_\mu$ then 
$b(i,s) = \frac{{\bf M}(i,i_\mu,s)}\diamond{{\bf M}(i,i_\mu)} b(i,i_\mu) - b(i_\mu,s).$
 \end{itemize}
\end{enumerate}\end{proof}

\begin{Corollary}   With the present notation, under the assumption that
 $R$ is a principal ideal domain,  
  $F$ is a restricted Gr\"obner basis of ${\sf M}$ iff 
 \begin{enumerate}
\item each S-polynomial $B_W(i,j), \{i,j\}\in\Frak{R}$, 
has
 a restricted Gr\"obner representation in terms of $F$;
\item for each 
$g_j\in F$, both $Y_i\star g_j, 1\leq i \leq n$  and  each 
$g_j\star  \alpha_{\upsilon}(v),  v\in{\Bcc v}, \upsilon\in{\Cal T},\upsilon<{\bf T}(g_j)$, have a restricted representation in terms of $F$ w.r.t. $<$.
\end{enumerate}
\qed\end{Corollary}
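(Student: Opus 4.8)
The plan is to obtain the statement by specializing Corollary~\ref{48cC1} and then replacing the abstract hypothesis there --- ``every $\sigma$ in a restricted Gebauer--M\"oller set for $F$ has a restricted quasi-Gr\"obner representation'' --- by the concrete finite test on the non-redundant S-elements $B_W(i,j)$, $\{i,j\}\in{\Frak R}$. Note first that condition (2) of the present statement \emph{is} condition (2) of Corollary~\ref{48cC1}; under it, that Corollary gives both ${\Bbb I}_W(F)={\sf M}$ and the equivalence of ``$F$ is a bilateral Gr\"obner basis of ${\sf M}$'' with its condition (1). So the whole proof reduces to showing that, under (2), condition (1) of Corollary~\ref{48cC1} is equivalent to ``each $B_W(i,j)$, $\{i,j\}\in{\Frak R}$, has a restricted Gr\"obner representation in terms of $F$''; and, since ${\Bbb I}_W(F)={\sf M}$, being a bilateral Gr\"obner basis of ${\sf M}$ coincides with being a restricted one.

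The first step is to establish that $\{b_W(i,j) : \{i,j\}\in{\Frak R}\cup{\Frak C}_u\}$ is a restricted Gebauer--M\"oller set for $F$, i.e. a ${\Cal T}^{(m)}$-homogeneous basis of $\ker({\Frak s}_W)$. This is exactly the content of the preceding M\"oller Lemma for the restricted case: part 1 (the three-term relation among the $b_W(i,j,k)$) together with part 3 (the inductive construction of a useful S-pair set and the r\^ole of Buchberger's First Criterion through ${\Frak C}_u$) shows that $\{b_W(i,j):\{i,j\}\in{\Frak R}\cup{\Frak C}_u\}$ generates $\ker({\Frak s}_W)$ as in the left and bilateral cases, and the ${\Cal T}^{(m)}$-homogeneity and the valuation bookkeeping carry over verbatim with ${\Frak s}_W,{\Frak S}_W$ in place of ${\Frak s}_L,{\Frak S}_L$. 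Granting this, Theorem~\ref{50ReLiTh} (conditions (1)$\iff$(3)) identifies condition (1) of Corollary~\ref{48cC1} with: each $B_W(i,j)={\Frak S}_W(b_W(i,j))$, $\{i,j\}\in{\Frak R}\cup{\Frak C}_u$, has a restricted quasi-Gr\"obner representation in terms of $F$.

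The second step is to split this index family and discard the harmless part. For $\{i,j\}\in{\Frak C}_u$ the restricted Buchberger's First Criterion (the Lemma above) already produces a restricted Gr\"obner representation of $B_W(i,j)$, hence a fortiori a quasi-Gr\"obner one (the summands satisfy ${\bf T}(l_i)\circ{\bf T}(g_i)\le{\bf T}(B_W(i,j))<v(b_W(i,j))$), so these pairs impose no condition. For $\{i,j\}\in{\Frak R}$: a restricted Gr\"obner representation is in particular a restricted quasi-Gr\"obner representation, and conversely, once $F$ is a restricted Gr\"obner basis of ${\sf M}$, Proposition~\ref{57P1} ((5)$\then$(4), and (4)$\then$(8)/(8)$\then$(4)) guarantees that every element of ${\Bbb I}_W(F)={\sf M}$ --- in particular each $B_W(i,j)$, which lies in ${\sf M}$ --- admits a restricted (weak) Gr\"obner representation. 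Thus, modulo (2), ``each $B_W(i,j)$, $\{i,j\}\in{\Frak R}$, has a restricted Gr\"obner representation'' $\iff$ ``each $B_W(i,j)$, $\{i,j\}\in{\Frak R}\cup{\Frak C}_u$, has a restricted quasi-Gr\"obner representation'' $\iff$ condition (1) of Corollary~\ref{48cC1}, which is the claim.

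I expect the main obstacle to be precisely the first step: checking that the restricted analogue of M\"oller's Lemma yields a genuine ${\Cal T}^{(m)}$-homogeneous \emph{basis} of $\ker({\Frak s}_W)$ rather than merely a generating family, and that all the degree/valuation estimates needed to pass freely between ``Gr\"obner'' and ``quasi-Gr\"obner'' representations remain valid after the reformulation through the commutative product $\diamond$ and the morphisms ${\Frak s}_W,{\Frak S}_W$. Both are routine given the left- and bilateral-case templates already proved in Sections~\ref{LeLiTh} and~\ref{46S5B}, but they must be re-verified with the restricted data in place, since that is where the asymmetry between ${\sf R}$ and $G({\sf R})$ and the $\alpha_\tau$-twists enter.
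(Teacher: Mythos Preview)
Your proposal is correct and follows the same route the paper implicitly takes (the statement carries a bare \qed\ in the paper): combine the restricted M\"oller Lemma (its part 2 gives that ${\Frak R}$ is a useful S-pair set), Buchberger's First Criterion to dispose of the pairs in ${\Frak C}_u$, and Theorem~\ref{50ReLiTh}/Corollary~\ref{48cC1} to tie the S-pair test to the restricted Gr\"obner property, with condition (2) supplying ${\Bbb I}_W(F)={\sf M}$. One minor inaccuracy: condition (2) here has $\upsilon<{\bf T}(g_j)$, while Corollary~\ref{48cC1} has $\upsilon<\Omega$; the former is the standing assumption explicitly introduced at the start of Section~7.3 (and is what the intervening results actually use), so you should cite that formulation rather than asserting the two conditions are identical. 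A more streamlined version of your argument would simply invoke the immediately preceding unnamed Corollary (which already packages Theorem~\ref{50ReLiTh} with the First Criterion) together with M\"oller's Lemma part (2), rather than going back to Corollary~\ref{48cC1} and Theorem~\ref{50ReLiTh} separately.
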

 
\section{Structural Theorem for Multivariate Ore Extensions of Zach\-arias PIDs}

\begin{Theorem}[Structural Theorem] 
Let $R$ be a left Zacharias principal ideal domain, ${\sf R} := R[Y_1,\ldots,Y_n]$ a multivariate Ore extension of $R$,
 $<$ a term-ordering, ${\sf M} \subset{\sf R}^m$ a left module generated 
by a basis 
$F := \{g_1,\ldots,g_u\}\subset{\sf M}$, ${\bf M}(g_i) = c_i \tau_i{\bf e}_{l_i}$,
$C(F)$ a completion of $F$,  
${\Frak R} := \{B(i,j), 1\leq i < j \leq u, {\bf e}_{l_i} = {\bf e}_{l_j} 
\mbox{\ and not redundant}\}.$ 

Then the following conditions are equivalent:
\begin{enumerate}
\renewcommand\theenumi{{\rm (\arabic{enumi})}}
\item $F$ is a left Gr\"obner basis of ${\sf M}$;
\renewcommand\theenumi{{\rm ($\arabic{enumi}_s$)}}
\setcounter{enumi}{0}
\item $C(F)$ is a left strong  Gr\"obner basis of ${\sf M}$;
\renewcommand\theenumi{{\rm (\arabic{enumi})}}
\item ${\Cal B}(F) := \{\lambda g : \lambda\in {\Cal T}, g\in F\}$ is a Gauss generating
set \emph {\cite[Definition 21.2.1]{SPES}};  
\item $f \in {\sf M}\iff$ it has a  left Gr\"obner representation in terms of $F$;
\item $f \in {\sf M}\iff$ it has a left  strong  Gr\"obner representation in terms of $C(F)$; 
\item for each $f\in{\sf R}^m\setminus\{0\}$ and any normal form
$h$ of $f$ w.r.t. $F$, we have
$$f\in{\sf M} \iff h = 0;$$
\renewcommand\theenumi{{\rm ($\arabic{enumi}_s$)}}
\setcounter{enumi}{4}
\item for each $f\in{\sf R}^m\setminus\{0\}$ and any strong normal form
$h$ of $f$ w.r.t. $C(F)$, we have
$$f\in{\sf M} \iff h = 0;$$
\renewcommand\theenumi{{\rm (\arabic{enumi})}}
\item for each $f\in{\sf R}^m\setminus\{0\}, f - \Can(f,{\sf M})$ has a strong 
 Gr\"obner representation in terms of $C(F)$;
\item each $B(i,j)\in\Frak{R}$  has a weak  Gr\"obner
representation in terms of $F$;
\item  for each element $\sigma$ of a Gebauer--M\"oller set for $F$,
the  S-polynomial ${\Frak S}_L(\sigma)$ has a left quasi-Gr\"obner representation in terms of $F$.
\end{enumerate}
\end{Theorem}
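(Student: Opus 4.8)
The plan is to establish the cycle of equivalences by collecting together the machinery already set up in the earlier sections, so that the proof is essentially an exercise in citing the right previous result for each implication. I would organise it around a central hub — condition (1), that $F$ is a left Gr\"obner basis — and show that every other condition is equivalent to it.

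First I would dispatch the ``lifting'' equivalences: $(1)\iff(4)\iff(7)$ is precisely the content of M\"oller's Left Lifting Theorem~\ref{SAASSAAS}, where the S-polynomial/quasi-Gr\"obner-representation condition there is our (7) and the Gr\"obner-representation characterisation is our (4). The equivalence $(1)\iff(6)$ is the statement of Proposition~\ref{57P1}, namely $(5)\iff(8)$ there, applied to the module ${\sf M}={\Bbb I}_L(F)$; here I would just note that a normal form exists since $R$ has canonical representatives (Lemma~\ref{c46LCan2}) and invoke the implications $(5)\Rightarrow(8)$ and $(8)\Rightarrow(5)$ of that Proposition. Next, for the ``strong'' statements $(1_s)$, $(4_s)$, $(6_s)$: Proposition~\ref{48xP1} gives $(1)\iff(1_s)$ under the PID hypothesis; and once $C(F)$ is known to be a strong Gr\"obner basis, $(4_s)$ and $(6_s)$ follow from Proposition~\ref{57P1} (implications $(3)\iff(2)$, $(3)\Rightarrow(7)$, $(7)\Rightarrow(2)$) applied to $C(F)$, which by Algorithm~\ref{ShMo2} generates the same module ${\sf M}$. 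Condition $(5_s)$, that $f-\Can(f,{\sf M})$ has a strong Gr\"obner representation in terms of $C(F)$, is then exactly Lemma~\ref{c46LCan2}(4) combined with the fact that $C(F)$ is a strong basis.

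The Gebauer--M\"oller reductions $(2)\iff(1)$ and $(3)\iff(1)$ come from Corollary~\ref{48Cy1}: ${\Frak R}$ is a useful S-element set by Lemma~\ref{48Lx1}(2), so testing Gr\"obner-representability on $\{B(i,j):\{i,j\}\in{\Frak R}\}$ is equivalent to testing it on a full Gebauer--M\"oller set, which by Theorem~\ref{SAASSAAS} is equivalent to (1); and (8) is the analogue where one tests only a (possibly smaller) redundancy-reduced set but with ``weak'' in place of ``Gr\"obner'' representations — here I would point out that by Proposition~\ref{57P1} weak and ordinary Gr\"obner representations generate the same notion of Gr\"obner basis in the left case, so (8) is again equivalent to (1) via Corollary~\ref{48Cy1}. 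The one genuinely new assertion is $(1)\iff(2)$, the Gauss-generating-set reformulation: I would cite \cite[Definition~21.2.1]{SPES} and verify directly that ${\Cal B}(F)=\{\lambda g:\lambda\in{\Cal T},g\in F\}$ being a Gauss generating set is, unwinding the definition, the same as saying that every $f\in{\sf M}$ reduces via ${\Cal B}(F)$ to an element whose leading monomial is not in ${\bf M}\{{\Bbb I}_L({\bf M}\{F\})\}$ — which, using Eq.~(\ref{Eq4}) to identify $\lambda\star g$ and $\lambda\ast{\bf M}(g)$ at the level of leading monomials, is condition (v) of the definition of left weak Gr\"obner basis.

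The main obstacle I anticipate is not any single implication but rather bookkeeping: making sure the ``weak'' versus ``strong'' distinction is tracked correctly throughout, since over a PID (not a field) these genuinely differ and the cycle must pass through Proposition~\ref{48xP1} exactly once to cross from the weak world $(1),(4),(6),(7),(8)$ to the strong world $(1_s),(4_s),(5_s),(6_s)$. I would therefore structure the write-up as two internal cycles joined by Proposition~\ref{48xP1}, rather than one long chain, to keep the logic transparent. The Gauss-generating-set equivalence (2) also requires a careful reading of the cited definition in \cite{SPES} and a short argument that the Ore multiplication $\star$ may be replaced by $\ast$ in all the leading-term computations that enter it — but this is exactly what Corollary~\ref{c46CoX2X2X2} provides, so it reduces to a citation plus one line.
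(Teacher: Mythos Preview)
Your high-level strategy matches the paper's exactly: the theorem is pure bookkeeping, every implication is a citation of an earlier result, and Proposition~\ref{48xP1} is the single bridge between the weak conditions and the strong ones. However, you have systematically misread the numbering of the conditions in the statement, and as a result several of your citations point at the wrong implication.

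Concretely: the conditions labelled $(3)$ and $(8)$ in the theorem are, respectively, the Gr\"obner-representation characterisation and the S-polynomial/quasi-Gr\"obner condition, so Theorem~\ref{SAASSAAS} gives $(1)\iff(3)\iff(8)$, not $(1)\iff(4)\iff(7)$ as you write. Condition $(4)$ concerns \emph{strong} Gr\"obner representations in terms of $C(F)$, not $F$; condition $(7)$ asks that the non-redundant $B(i,j)$ have \emph{weak} Gr\"obner representations, which is Corollary~\ref{48Cz1}/\ref{48Cy1}, not the lifting theorem. Condition $(6)$ is the statement about $f-\Can(f,{\sf M})$, while $(5)$ and $(5_s)$ are the normal-form conditions --- you have these swapped when you say ``$(5_s)$, that $f-\Can(f,{\sf M})$ \ldots''. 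There are no conditions $(4_s)$ or $(6_s)$ in the statement. Finally, $(1)\iff(2)$ (the Gauss generating set) is declared ``trivial'' in the paper, and you correctly identify this at the end of your proposal, but earlier you inconsistently say $(2)\iff(1)$ comes from Corollary~\ref{48Cy1}.

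The paper's actual routing is: $(1)\iff(1_s)$ by Proposition~\ref{48xP1}; $(1)\iff(2)$ trivial; $(1)\iff(5)\iff(3)$ by Proposition~\ref{57P1}; $(1_s)\iff(4)\iff(5_s)$ by Proposition~\ref{57P1} applied to $C(F)$; $(1)\Rightarrow(6)$ from the completion construction of Section~\ref{c46S11E}, and $(6)\Rightarrow(4)$ because $\Can(f,{\sf M})=0$ for $f\in{\sf M}$; $(1)\iff(7)$ by Corollary~\ref{48Cz1}; $(1)\iff(3)\iff(8)$ by Theorem~\ref{SAASSAAS}. Once you realign the labels, your proposal becomes exactly this.
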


\begin{proof} \

\begin{description}
\item[$(1) \iff (1_s)$] is Proposition~\ref{48xP1};
\item[$(1) \iff (2)$] is trivial;
\item[$(1) \iff (5) \iff (3)$] is Proposition~\ref{57P1};
\item[$(1_s) \iff (4) \iff (5_s)$] is Proposition~\ref{57P1};
\item[$(1) \then (6)$] is the content of Section~\ref{c46S11E};
\item[$(6) \then (4)$] because for each $f\in{\sf M}, \Can(f,{\sf M}) = 0$;
 \item[$(1) \iff (7)$] is Corollary~\ref{48Cz1};
\item[$(1) \iff (3) \iff (8)$] is Theorem~\ref{SAASSAAS}.
\end{description}\end{proof} 

\section{Spear's Theorem}

For  Gr\"obner bases in a ring ${\Cal A}$ 
given as   quotient
$$\Pi : {\Cal Q} :={\Bbb Z}\langle Y_1,\ldots,Y_n \rangle \twoheadrightarrow {\Cal A} \cong {\Cal Q}/{\Cal I},
\quad {\Cal I} := \ker(\Pi)$$
 of a free assocative algebra,
a general approach is to directly apply  Spear's Theorem  \cite{Sp}  \cite[Proposition~24.7.3]{SPES}  \cite{Bath}, which, while not  a tool for  computation,  can be helpful in order to understand 
the structure  of ${\Cal A}$.

For the present setting, denoting
\begin{itemize}
\renewcommand\labelitemi{\bf --}
\item $f_{ij} := Y_jY_i - \alpha_j(Y_i)Y_j-\delta_j(Y_i), 1\leq i < j \leq n$,
\item $C := \{f_{ij} : 1\leq i < j \leq n\}$;
\item ${\Cal I} := {\Bbb I}_2(C)$
\end{itemize}
and for each $m\in{\Bbb N}$,
\begin{itemize}
\renewcommand\labelitemi{\bf --}
\item $\{{\bf e}_1,\ldots,{\bf e}_m\}$ the  canonical basis of ${\sf R}^m$,
\item $C^{(m)} := \{f_{ij\iota}  : = f_{ij} {\bf e}_\iota : 1\leq i < j \leq n, 1\leq \iota \leq m\}$,
\end{itemize}
we have the presentation
$${\sf R}= {\Cal Q}/{\Cal I},  {\Cal I} := \ker(\Pi),
\Pi : {\Cal Q} := R\langle Y_1,\ldots,Y_n \rangle \twoheadrightarrow {\sf R}$$
and, for each free  ${\sf R}$-module ${\sf R}^m, m\in{\Bbb N}$,  
the projection $\Pi$  extends to the canonical projections, still denoted $\Pi$,
$$\Pi : {\Cal Q}^m \twoheadrightarrow {\sf R}^m, \ker(\Pi) = {\Cal I}^m
={\Bbb I}_2(C^{(m)}).$$

Thus denoting
\begin{itemize}
\renewcommand\labelitemi{\bf --}
\item $F := \{g_1,\ldots,g_u\}\subset {\sf R}^m, 
g_i = {\bf M}(g_i)-p_i =: c_i \tau_i {\bf e}_{l_i} - p_i,$
\item  ${\sf M}\subset  {\sf R}^m$  the module ${\sf M} := {\Bbb I}_2(F)$,
\item ${\sf M}' : =\Pi^{-1}({\sf M}) = {\sf M}+{\Cal I}^m\subset {\Cal Q}^m$,
\end{itemize}
we can reformulate Spear's result as
\begin{Lemma}\label{Le} \cite[Lemma 12]{Bath} Assume $F\subset{\sf M}'$ is a Gr\"obner basis of ${\sf M}'$ and
denote
$$\bar{F} := \{\Can(g,{\Cal I}^m) : g\in F,  {\bf T}(g)\in{\Cal T}^{(m)}\}\subset R[Y_1,\ldots,Y_n]^m$$ where  $\Can(g,{\Cal I}^m)$ denotes the canonical form of  $g\in {\Cal Q}^m$ w.r.t.
$C^{(m)}$ so that in particular $g=\Pi(g)$ for each $g\in\bar{F}$.

Then  $\bar{F}\sqcup C^{(m)}$ is a Gr\"obner basis of ${\sf M}'$.
\end{Lemma}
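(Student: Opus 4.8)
The plan is to show that $\bar F\sqcup C^{(m)}$ both generates ${\sf M}'$ as a bilateral $R\langle Y_1,\ldots,Y_n\rangle$-module and that every element of ${\sf M}'$ has a bilateral Gr\"obner representation in terms of it; by Theorem~\ref{50BiLiTh}, equivalence of (2) and (1), this gives the conclusion. First I would unwind the definitions: since $\Pi$ is an $R$-algebra surjection with $\ker\Pi = {\Cal I}^m = {\Bbb I}_2(C^{(m)})$ and ${\sf M}' = \Pi^{-1}({\sf M}) = {\sf M}+{\Cal I}^m$, the hypothesis that $F$ is a Gr\"obner basis of ${\sf M}'$ must be read in the monoid ring ${\Cal Q}^m = {\Bbb Z}\langle Y_1,\ldots,Y_n\rangle^m$ equipped with a term ordering whose restriction to ${\Cal T}^{(m)}$ is the given $<$. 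I would first observe that $C^{(m)}\subset{\sf M}'$ and that the leading term of $f_{ij\iota}$ is ${\bf T}(f_{ij\iota}) = Y_jY_i{\bf e}_\iota$, a \emph{non-commutative} word which lies outside ${\Cal T}^{(m)}$. Hence a $g\in F$ with ${\bf T}(g)\in{\Cal T}^{(m)}$ is precisely a $g$ whose leading term is already a commutative monomial, and for such $g$ the canonical form $\Can(g,{\Cal I}^m)$ has the same leading term (reduction against $C^{(m)}$ only rewrites the strictly smaller, non-commutative part of $g$, because each $f_{ij\iota}$ has leading word $Y_jY_i{\bf e}_\iota$ with $j>i$, which is $>$ any commutative reordering). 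Thus ${\bf T}\{\bar F\}\cup{\bf T}\{C^{(m)}\} \supseteq \{{\bf T}(g):g\in F,\ {\bf T}(g)\in{\Cal T}^{(m)}\}\cup\{Y_jY_i{\bf e}_\iota\}$.

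The core step is to show ${\bf T}\{\bar F\sqcup C^{(m)}\}$ generates the same bilateral semigroup ideal in ${\Cal T}({\Cal Q}^m)$ as ${\bf T}\{F\}$ together with $C^{(m)}$, which amounts to: every leading term ${\bf T}(g)$, $g\in F$, either lies in ${\Cal T}^{(m)}$ (handled above) or contains some $Y_jY_i$ with $j>i$ as a factor, hence is divisible by some ${\bf T}(f_{ij\iota})$. This is a consequence of the fact that $F$ is a Gr\"obner basis of ${\sf M}'\supseteq{\Cal I}^m$: if ${\bf T}(g)$ were a non-commutative word with no descent $Y_jY_i$, $j>i$, it would be a commutative monomial, contradicting the assumption that we are in the complementary case. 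So $\bar F\sqcup C^{(m)}$ has the same leading-term ideal as any Gr\"obner basis of ${\sf M}'$, namely ${\bf T}_2({\sf M}')$; combined with $\bar F\sqcup C^{(m)}\subseteq{\sf M}'$ this is exactly the defining property of a bilateral Gr\"obner basis (condition (1) of Theorem~\ref{50BiLiTh}, via the characterization ${\bf M}({\sf M}') = {\bf M}\{{\Bbb I}_2({\bf M}\{\bar F\sqcup C^{(m)}\})\}$).

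To make the leading-coefficient bookkeeping rigorous I would argue more carefully that $\bar F\sqcup C^{(m)}$ generates ${\sf M}'$ as a bilateral module: given $h\in{\sf M}'$, write a bilateral Gr\"obner representation $h = \sum_k a_k\lambda_k\star g_{j_k}\star b_k\rho_k$ in terms of $F$ (possible since $F$ is a Gr\"obner basis of ${\sf M}'$), then for each $g_{j_k}$ with ${\bf T}(g_{j_k})\in{\Cal T}^{(m)}$ replace $g_{j_k}$ by $\Can(g_{j_k},{\Cal I}^m) = g_{j_k} - (\text{element of }{\Cal I}^m)$, and absorb the correction term into a bilateral combination of $C^{(m)}$; for each $g_{j_k}$ with ${\bf T}(g_{j_k})\notin{\Cal T}^{(m)}$, reduce $g_{j_k}$ itself against $C^{(m)}$ — since ${\bf T}(g_{j_k})$ is divisible by some ${\bf T}(f_{ij\iota})$, a single reduction step strictly lowers the leading term, and by Noetherian induction on $<$ (well-orderedness) $g_{j_k}$ rewrites to an element of lower degree, eventually a bilateral $C^{(m)}$-combination. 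Iterating and collecting all the ${\Cal T}^{(m)}$-valuation inequalities $v(\cdot)\le{\bf T}(h)$ that are preserved at each substitution yields a bilateral Gr\"obner representation of $h$ in terms of $\bar F\sqcup C^{(m)}$, which by Theorem~\ref{50BiLiTh}, (2)$\Rightarrow$(1), completes the proof.

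The main obstacle I anticipate is the careful verification that reducing a generator $g$ (or a tail correction) against $C^{(m)}$ respects the ${\Cal T}^{(m)}$-valuation bounds needed for a \emph{Gr\"obner} (not merely module) representation — i.e., that each rewriting step $g\mapsto g - cu\star f_{ij\iota}\star w$ satisfies ${\bf T}(u\star f_{ij\iota}\star w)\le{\bf T}(g)$. This follows because ${\bf T}(f_{ij\iota})$ is chosen to divide the term being cancelled, but one must check that the term ordering on ${\Cal Q}^m$ is compatible in the sense of Eq.~(\ref{Eq3})–(\ref{Eq4}) and that the descent-elimination terminates; this is where invoking the well-ordering of $<$ and the finiteness encoded in Lemma~\ref{Le}'s hypotheses does the work, exactly paralleling the proof of Theorem~\ref{SAASSAAS}.
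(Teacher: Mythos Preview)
The paper does not prove this lemma in the text; it is stated with a citation to \cite[Lemma~12]{Bath} and no argument is supplied. So there is no in-paper proof to compare against, and I can only assess your argument on its own merits.

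Your approach is sound and hits the right idea. The dichotomy you isolate is the crux: any word in the free monoid $\langle Y_1,\ldots,Y_n\rangle$ either has weakly increasing indices (hence lies in ${\Cal T}$) or contains a factor $Y_jY_i$ with $j>i$, and in the latter case the corresponding term in ${\Cal Q}^m$ is a bilateral multiple of ${\bf M}(f_{ij\iota})$ for suitable $\iota$ (here $\lc(f_{ij\iota})=1$, so no coefficient obstruction arises). Combined with the observation that ${\bf M}(\Can(g,{\Cal I}^m))={\bf M}(g)$ whenever ${\bf T}(g)\in{\Cal T}^{(m)}$, this already shows that ${\bf M}\{\bar F\sqcup C^{(m)}\}$ generates the same bilateral monomial module as ${\bf M}\{F\}$, namely ${\bf M}\{{\sf M}'\}$.

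Your third paragraph and the worry in your final paragraph are more than is needed. You do not have to manufacture a Gr\"obner \emph{representation} of an arbitrary $h\in{\sf M}'$; it is enough to check the leading-monomial condition (v) of Definition~\ref{49D1} directly. For $h\in{\sf M}'$: if ${\bf T}(h)\notin{\Cal T}^{(m)}$ then ${\bf M}(h)$ is a bilateral multiple of some ${\bf M}(f_{ij\iota})$ by the descent argument; if ${\bf T}(h)\in{\Cal T}^{(m)}$, the weak Gr\"obner property of $F$ expresses ${\bf M}(h)$ as a bilateral combination of leading monomials ${\bf M}(g_k)$ whose leading terms are contiguous subwords of ${\bf T}(h)$, and a contiguous subword of an increasing word is again increasing, so each ${\bf T}(g_k)\in{\Cal T}^{(m)}$ and ${\bf M}(g_k)={\bf M}(\bar g_k)$ for the corresponding $\bar g_k\in\bar F$. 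This bypasses the Noetherian-induction and valuation-bound bookkeeping entirely. One small caveat: the lifting theorems~\ref{SAASSAAS} and~\ref{50BiLiTh} as stated in this paper are for Ore extensions, not for the free algebra ${\Cal Q}$; the relevant analogue for ${\Cal Q}$ is Pritchard's result \cite{Pr1,Pr2}, so your citations should point there rather than to Theorem~\ref{50BiLiTh}.
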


 \begin{Theorem}[Spear] \cite[Theorem 13]{Bath}
\index{Spear's Theorem}
With the present notation, the following holds:
\begin{enumerate}
\item if $F$ is a reduced Gr\"obner basis of ${\sf M}'$, then 
\begin{eqnarray*}
\{g\in F : g = \Pi(g)\} &=&
\{\Pi(g) : g\in F, {\bf T}(g)\in{\Cal T}^{(m)}\} 
\\ &=&
 F\cap R[Y_1,\ldots,Y_n]^m
 \end{eqnarray*}
is a reduced Gr\"obner basis of ${\sf M}$;
\item if $F\subset R[Y_1,\ldots,Y_n]^m$, so that in particular $\Pi(f)=f$ for each $f\in F$,
 is  the Gr\"obner basis of ${\sf M}$, then $F\sqcup C^{(m)}$ is a   Gr\"obner basis of ${\sf M}'$.
\item Assume each  $m'\in{\sf M}'$ has a Gr\"obner representation in terms of $F\subset{\sf M}'$.

Set
$$\bar{F} := \{\Can(g,{\Cal I}^m) : g\in F, g\notin {\Cal I}^m\}\subset R[Y_1,\ldots,Y_n]^m$$ where
 $\Can(g,{\Cal I}^m)\in R[Y_1,\ldots,Y_n]^m$ denotes the canonical form of  $g\in {\Cal Q}^m$ w.r.t.
$C^{(m)}$ so that in particular $g=\Pi(g)$ for each $g\in\bar{F}$.

Then  each
$m\in{\sf M}$
 has a Gr\"obner representation in terms of $\bar{F}$.
\item if $F\subset R[Y_1,\ldots,Y_n]^m$, so that in particular $\Pi(f)=f$ for each $f\in F$,
is such that each
$m\in{\sf M}$
 has a Gr\"obner representation in terms of $F$, then
each $m'\in{\sf M}'$
 has a Gr\"obner representation in terms of  $F\sqcup C^{(m)}$.
\end{enumerate}
\end{Theorem}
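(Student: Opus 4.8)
The plan is to prove the four statements of Spear's Theorem by reducing each to the corresponding statement of Lemma~\ref{Le}, together with elementary bookkeeping on $C^{(m)}$ and the projection $\Pi$. First I would record the structural facts I intend to use repeatedly: $C^{(m)}$ is a bilateral Gr\"obner basis of ${\Cal I}^m = \ker(\Pi)$ (the relations $f_{ij}$ generate exactly the two-sided ideal collapsing ${\Cal Q}$ onto ${\sf R}$, and their leading terms $Y_jY_i{\bf e}_\iota$, $i<j$, are precisely the non-commutative terms outside ${\Cal T}^{(m)}$); consequently $\Can(g,{\Cal I}^m)\in R[Y_1,\ldots,Y_n]^m$ is well defined for every $g\in{\Cal Q}^m$, lies in the same ${\Cal I}^m$-coset, and satisfies $\Pi(\Can(g,{\Cal I}^m)) = \Pi(g)$ because $\Pi$ is exactly reduction modulo $C^{(m)}$. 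I would also note that ${\bf T}(g)\in{\Cal T}^{(m)}$ (i.e. ${\bf T}(g)$ is a commutative term) is equivalent to $g\notin{\Cal I}^m$ being ``already in commutative form'' in leading position, which is what lets Lemma~\ref{Le} split $F$ into the part supported on ${\Cal T}^{(m)}$ and the part whose leading term is divisible by some ${\bf T}(f_{ij\iota})$.

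For part (1), I would start from a reduced Gr\"obner basis $F$ of ${\sf M}'$. By Lemma~\ref{Le}, $\bar F\sqcup C^{(m)}$ is a Gr\"obner basis of ${\sf M}'$; since $F$ is reduced and $C^{(m)}\subset{\sf M}'$, the members of $F$ with leading term a multiple of some $Y_jY_i{\bf e}_\iota$ are exactly those reducible by $C^{(m)}$, while the members with ${\bf T}(g)\in{\Cal T}^{(m)}$ must themselves be reduced (hence lie in $R[Y_1,\ldots,Y_n]^m$ and equal their own canonical form, so $g=\Pi(g)$). Intersecting ${\sf M}'$ with $R[Y_1,\ldots,Y_n]^m$ recovers $\{g\in F: g=\Pi(g)\}$; applying $\Pi$ to a Gr\"obner representation in ${\sf M}'$ and using ${\bf T}(\Pi(h))\preceq{\bf T}(h)$ with equality when ${\bf T}(h)\in{\Cal T}^{(m)}$ shows this set is a Gr\"obner basis of ${\sf M}=\Pi({\sf M}')$; reducedness transfers because canonical forms commute with $\Pi$. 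Part (2) is the converse packaging of the same Lemma: if $F\subset R[Y_1,\ldots,Y_n]^m$ is a Gr\"obner basis of ${\sf M}$, then $\Pi^{-1}({\sf M})={\sf M}+{\Cal I}^m$ and one checks directly that ${\bf T}\{F\sqcup C^{(m)}\}$ generates ${\bf T}_2({\sf M}')$: any $m'\in{\sf M}'$ reduces mod $C^{(m)}$ to $\Pi(m')\in{\sf M}$, whose leading term is covered by ${\bf T}\{F\}$, and the reduction steps themselves are covered by ${\bf T}\{C^{(m)}\}$.

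Parts (3) and (4) are the ``Gr\"obner representation'' analogues and I would prove them by lifting/projecting representations termwise rather than invoking leading-term ideals. For (3): given $m\in{\sf M}$, pick any preimage $m'\in{\sf M}'$ (e.g. $m'=m$ viewed in ${\Cal Q}^m$), write $m' = \sum_i l_i\star g_i\star r_i$ a Gr\"obner representation in terms of $F$ with $\lambda_i\circ{\bf T}(g_i)\circ\rho_i\preceq{\bf T}(m')$, apply $\Pi$, replace each $\Pi(g_i)=\Pi(\Can(g_i,{\Cal I}^m))$, and observe that $\Pi$ cannot raise term degrees, so the result is a Gr\"obner representation of $m=\Pi(m')$ in terms of $\bar F$; the only subtlety is discarding the indices with $g_i\in{\Cal I}^m$, for which $\Pi(g_i)=0$. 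Part (4) runs the same argument in the other direction, lifting a Gr\"obner representation of $m\in{\sf M}$ in terms of $F\subset R[Y_1,\ldots,Y_n]^m$ to ${\sf M}'$ and then using that any $m'\in{\sf M}'$ equals $m+c$ with $m\in{\sf M}$, $c\in{\Cal I}^m={\Bbb I}_2(C^{(m)})$, and $c$ visibly has a Gr\"obner representation in terms of $C^{(m)}$.

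The main obstacle I anticipate is bookkeeping the term orders across $\Pi$: one must be sure that the chosen term-ordering on ${\Cal Q}^m$ restricts to the given one on ${\sf R}^m$ and makes $Y_jY_i{\bf e}_\iota \succ Y_iY_j{\bf e}_\iota$ for $i<j$ (so that $C^{(m)}$ really is a Gr\"obner basis with leading terms the forbidden products), and that $\Pi$ is exactly Buchberger reduction by $C^{(m)}$ — this is the point where the non-commutativity of ${\Cal Q}$ versus the ``commuting variables'' structure of ${\sf R}$ has to be handled carefully, since reduction by $f_{ij\iota}$ introduces the lower-order tails $\alpha_j(Y_i)Y_j{\bf e}_\iota+\delta_j(Y_i){\bf e}_\iota$ and one needs the term-ordering to guarantee termination and confluence. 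Everything else is a routine transfer of Lemma~\ref{Le} and of the equivalences in Proposition~\ref{57P1}. I would therefore devote the bulk of the written proof to fixing this admissible ordering on ${\Cal Q}^m$ once and for all, after which parts (1)--(4) each take only a few lines.
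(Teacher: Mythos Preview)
The paper does not actually supply a proof of this theorem: it is stated with a citation to \cite[Theorem~13]{Bath} and no argument is given in the present text. So there is no ``paper's own proof'' to compare against; you are not reconstructing an omitted argument so much as supplying one from scratch.

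Your outline is sound and follows the natural route suggested by Lemma~\ref{Le}: split any Gr\"obner basis of ${\sf M}'$ into the $C^{(m)}$-part (leading terms $Y_jY_i{\bf e}_\iota$) and the commutative-support part, identify the latter with its image under $\Pi$, and shuttle representations back and forth across $\Pi$. For parts (1) and (2) this is exactly right, and your observation that reducedness of $F$ forces every $g\in F$ with ${\bf T}(g)\in{\Cal T}^{(m)}$ to have \emph{all} its terms in ${\Cal T}^{(m)}$ (hence $g=\Can(g,{\Cal I}^m)=\Pi(g)$) is the key point. You also correctly flag the one genuine technical issue, namely fixing an admissible word ordering on ${\Cal Q}^m$ that (i) restricts to the given $<$ on ${\Cal T}^{(m)}$ and (ii) makes each $Y_jY_i$ larger than anything in the tail of $f_{ij}$; without this, $C^{(m)}$ is not a Gr\"obner basis and nothing works.

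The only place your sketch is thin is part (4). Writing $m'=m+c$ with $m=\Can(m',{\Cal I}^m)\in{\sf M}$ and $c\in{\Cal I}^m$ is fine, and $c$ has a Gr\"obner representation in $C^{(m)}$ with terms $\leq{\bf T}(m')$ coming from the reduction process. But when you ``lift'' the Gr\"obner representation $m=\sum_i l_i\star g_i$ from ${\sf R}^m$ to ${\Cal Q}^m$, the free-algebra products $l_i\cdot g_i$ have leading terms that are \emph{concatenations} of commutative words, not commutative products; in general ${\bf T}(l_i\cdot g_i)$ in the word order can exceed ${\bf T}(l_i)\circ{\bf T}(g_i)$ and hence exceed ${\bf T}(m)$. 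You must either (a) choose the word ordering so that a concatenation of commutative words has the same order-value as its commutative product (a degree-then-multiset refinement does this), or (b) explicitly absorb the discrepancy $l_i\cdot g_i - l_i\star g_i\in{\Cal I}^m$ into additional $C^{(m)}$-summands and check their term bounds. Either fix is routine once the ordering is pinned down, but it should be stated; as written, ``lifting'' hides this step.
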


\begin{Corollary} \cite[Corollary 14]{Bath}\label{SpeCo} With the present notation and considering
\begin{itemize}
\renewcommand\labelitemi{\bf --}
\item the bilateral ${\sf R}$-module  $\left({\sf R}\otimes_{\hat{R}}{\sf R}^{\op}\right)^u$ with canonical basis
$\{e_1,\ldots,e_u\}$
\item the bilateral ${\Cal Q}$-module  $\left({\Cal Q}\otimes_{\hat{R}}{\Cal Q}^{\op}\right)^{\vert F\vert+m\vert G\vert}$ with canonical basis
$$\{e_1,\ldots,e_u\}\sqcup\{{\sf e}_{ij\iota} : 1\leq i < j \leq n, 1\leq \iota \leq m\},$$
\item the projections ${\Frak S}_2:  \left({\sf R}\otimes_{\hat{R}}{\sf R}^{\op}\right)^{\vert F\vert}\to{\sf R}^m : 
{\Frak S}_2(e_i) = g_i, 1\leq i \leq u$,  and
\item $\hat{\Frak S}_2 :  \left({\Cal Q}\otimes_{\hat{R}}{\Cal Q}^{\op}\right)^{\vert F\vert+m\vert C\vert} \to {\Cal Q}^m : $
$$\hat{\Frak S}_2(e_i) = g_i, 1\leq i \leq u,\quad
\hat{\Frak S}_2({\sf e}_{ij\iota}) = f_{ij\iota}, 1\leq i < j \leq n, 1\leq \iota \leq m,$$
\item  the  map  $$\bar\Pi :  
\left({\Cal Q}\otimes_{\hat{R}}{\Cal Q}^{\op}\right)^{\vert F\vert+m\vert C\vert} \to \left({\sf R}\otimes_{\hat{R}}{\sf R}^{\op}\right)^{\vert F\vert}$$
(where each
$\lambda,\rho\in R\langle{Y_1,..., Y_n}\rangle,
a,b\in R\setminus\{0\}$)
$$\bar\Pi\left(\sum_i a_i \lambda_i e_i  b_i \rho_i+
\sum_{ij\iota} a_{ij\iota} \lambda_{ij\iota} {\sf e}_{ij\iota}  b_{ij\iota} \rho_{ij\iota}
\right)
=
 \sum_{i} a_i \Pi(\lambda_i) e_i b_i \Pi(\rho_i),
$$
\end{itemize}
if $\Sigma\subset\left({\Cal Q}\otimes_{\hat{R}}{\Cal Q}^{\op}\right)^{\vert F\vert+m\vert C\vert}$
is a bilateral standard basis of $\ker(\hat{\Frak S}_2)$, then
$\bar\Pi(\Sigma)$ is a bilateral standard basis of $\ker({\Frak S}_2)$.
\end{Corollary}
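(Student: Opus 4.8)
The plan is to transfer a standard basis of the syzygy module upstairs (over the free algebra $\mathcal Q$) to one downstairs (over the Ore extension $\mathsf R$) by showing that $\bar\Pi$ is a well-behaved morphism of $\mathcal T^{(m)}$-graded bilateral $\mathsf R$-modules that carries kernels to kernels, leading terms to leading terms, and surjects onto the target kernel. The key diagram to keep in mind is
\begin{equation*}
\bar\Pi : \left({\Cal Q}\otimes_{\hat{R}}{\Cal Q}^{\op}\right)^{\vert F\vert+m\vert C\vert} \to \left({\sf R}\otimes_{\hat{R}}{\sf R}^{\op}\right)^{\vert F\vert},\qquad {\Frak S}_2 \circ \bar\Pi = \Pi \circ \hat{\Frak S}_2,
\end{equation*}
which is immediate from the definitions of $\hat{\Frak S}_2$, ${\Frak S}_2$ and $\bar\Pi$ once one notes that $\Pi(f_{ij\iota})=0$, i.e. $f_{ij\iota}\in\ker\Pi$, so the ${\sf e}_{ij\iota}$-components are simply deleted by $\bar\Pi$ and their images under $\hat{\Frak S}_2$ are killed by $\Pi$.

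First I would establish the three structural facts about $\bar\Pi$. (i) $\bar\Pi$ is a morphism of bilateral $\mathsf R$-modules: this is formal, since $\bar\Pi$ is $R$-linear and intertwines the left/right $\mathcal T$-actions with the $\mathcal T^{(m)}$-actions via $\Pi$, and $\Pi$ is a ring homomorphism. (ii) $\bar\Pi$ is compatible with the $\mathcal T^{(m)}$-valuations $v$ upstairs and downstairs, hence with leading forms: because $\Pi$ restricted to terms is essentially the identity on $\mathcal T$ (the free-algebra terms map to the commuting monomials, and Eq.~(\ref{c46Eq3}) controls $\mathbf T$ of products), one has $v(\bar\Pi(\sigma))\preceq v(\sigma)$ with equality unless the top-degree part of $\sigma$ is supported entirely on the ${\sf e}_{ij\iota}$ slots; in the relevant situation one argues that $\mathcal L_2(\bar\Pi(\sigma)) = \bar\Pi(\mathcal L_2(\sigma))$ whenever the right-hand side is nonzero. (iii) $\bar\Pi$ maps $\ker(\hat{\Frak S}_2)$ into $\ker({\Frak S}_2)$: immediate from ${\Frak S}_2\circ\bar\Pi = \Pi\circ\hat{\Frak S}_2$. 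The crucial surjectivity statement is that $\bar\Pi(\ker(\hat{\Frak S}_2)) = \ker({\Frak S}_2)$: given $\tau\in\ker({\Frak S}_2)$, lift each coefficient/monomial through $\Pi$ to get $\hat\tau$ with $\bar\Pi(\hat\tau)=\tau$; then $\hat{\Frak S}_2(\hat\tau)\in\ker\Pi = {\Cal I}^m = {\Bbb I}_2(C^{(m)})$, so $\hat{\Frak S}_2(\hat\tau)=\sum a_k\lambda_k\star f_{i_kj_k\iota_k}\star b_k\rho_k$; subtracting $\sum a_k\lambda_k {\sf e}_{i_kj_k\iota_k} b_k\rho_k$ from $\hat\tau$ produces an element of $\ker(\hat{\Frak S}_2)$ still mapping to $\tau$ under $\bar\Pi$ (since the ${\sf e}_{ij\iota}$-components are in the kernel of $\bar\Pi$).

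With those in hand, the argument concludes as follows. Let $\Sigma$ be a bilateral standard basis of $\ker(\hat{\Frak S}_2)$, meaning ${\Bbb I}_2({\Cal L}_2\{\Sigma\}) = {\Bbb I}_2({\Cal L}_2\{\ker(\hat{\Frak S}_2)\})$ in $\left({\sf S}\otimes_{\hat R}{\sf S}^{\op}\right)^{\vert F\vert+m\vert C\vert}$. Take an arbitrary $\tau\in\ker({\Frak S}_2)$ and, by surjectivity, pick $\hat\tau\in\ker(\hat{\Frak S}_2)$ with $\bar\Pi(\hat\tau)=\tau$; without loss, by the kernel-subtraction step above, choose $\hat\tau$ so that its leading form is not swallowed by the ${\sf e}_{ij\iota}$-slots, whence ${\Cal L}_2(\tau) = \bar\Pi({\Cal L}_2(\hat\tau))$. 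Expanding ${\Cal L}_2(\hat\tau) = \sum a_\sigma \lambda_\sigma\ast\sigma\ast b_\sigma\rho_\sigma$ with $\sigma\in\Sigma$ and applying $\bar\Pi$ (which commutes with the $\mathcal T$-actions and leading forms) yields ${\Cal L}_2(\tau) = \sum a_\sigma\Pi(\lambda_\sigma)\ast\bar\Pi(\sigma)\ast b_\sigma\Pi(\rho_\sigma) \in {\Bbb I}_2({\Cal L}_2\{\bar\Pi(\Sigma)\})$; this shows ${\Cal L}_2\{\ker({\Frak S}_2)\}\subseteq{\Bbb I}_2({\Cal L}_2\{\bar\Pi(\Sigma)\})$, and the reverse inclusion is trivial because $\bar\Pi(\Sigma)\subseteq\ker({\Frak S}_2)$. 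Hence $\bar\Pi(\Sigma)$ is a bilateral standard basis of $\ker({\Frak S}_2)$.

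The main obstacle I anticipate is step (ii)--(iii) bookkeeping: controlling precisely what happens to the $\mathcal T^{(m)}$-valuation and the leading form under $\bar\Pi$, because $\bar\Pi$ genuinely forgets the $C^{(m)}$-coordinates, so one must verify that the leading form of a standard-basis element, after projection, does not degenerate in a way that destroys the covering of ${\Cal L}_2\{\ker({\Frak S}_2)\}$. The clean way around this is exactly the "subtract off the $C^{(m)}$-part" normalization used in the surjectivity argument, which lets one always work with representatives whose top degree survives $\bar\Pi$; making that normalization interact correctly with an arbitrary standard-basis expansion is where the care is needed, but it is bounded by the well-ordering of $<$ and the already-proved Janet--Schreyer theorems for the bilateral case.
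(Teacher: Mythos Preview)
The paper does not give its own proof of this corollary: it is stated with the citation \cite[Corollary~14]{Bath} and no proof environment follows, so there is nothing in the paper to compare against directly.

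Your outline is the natural one and is essentially what a proof must do: establish the commuting square ${\Frak S}_2\circ\bar\Pi=\Pi\circ\hat{\Frak S}_2$, deduce $\bar\Pi(\ker\hat{\Frak S}_2)\subseteq\ker{\Frak S}_2$, prove surjectivity by the ``lift and subtract the $C^{(m)}$-part'' trick, and then push leading forms through. Two small points deserve tightening. First, in your final display you expand ${\Cal L}_2(\hat\tau)$ as $\sum a_\sigma\lambda_\sigma\ast\sigma\ast b_\sigma\rho_\sigma$; by definition of standard basis this expansion is in terms of ${\Cal L}_2(\sigma)$, not $\sigma$, so after applying $\bar\Pi$ you land in ${\Bbb I}_2\bigl(\bar\Pi\{{\Cal L}_2(\sigma):\sigma\in\Sigma\}\bigr)$ and must still argue this is contained in ${\Bbb I}_2\bigl({\Cal L}_2\{\bar\Pi(\Sigma)\}\bigr)$. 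Second, and this is exactly the obstacle you flag, that containment requires $\bar\Pi({\Cal L}_2(\sigma))={\Cal L}_2(\bar\Pi(\sigma))$ whenever the left side is nonzero; your normalization trick is applied to $\hat\tau$, not to the fixed elements $\sigma\in\Sigma$, so it does not by itself resolve this. The clean way out is to observe that on single words $\lambda\in\langle Y_1,\ldots,Y_n\rangle$ the projection $\Pi$ sends $\lambda$ to an element of ${\sf R}$ whose leading term is the commutatively sorted monomial with coefficient~$1$ (this is precisely the content of the Ore relations $f_{ij}$ having ${\bf T}(f_{ij})=Y_jY_i$), so $\bar\Pi$ is degree-preserving on monomial generators $a\lambda e_i b\rho$ and hence commutes with taking leading forms on the $e_i$-supported part. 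Once that is written down explicitly the gap closes.
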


\protect\markright{\thesection\hskip5pt
Lazard Structural Theorem for  Ore Extensions over PIDs}
\section{Lazard Structural Theorem for  Ore Extensions over a Principal Ideal Domain}
\protect\markright{\thesection\hskip5pt
Lazard Structural Theorem for  Ore Extensions over PIDs}

\index{Lazard, D.!Theorem|(}
 Let ${\Bbb D}$ be a commutative principal ideal domain, ${\sf R}:= {\Bbb D}[Y;\alpha,\delta]$ be an Ore extension  and  
${\sf I} \subset {\sf R}$ be a bilateral 
ideal.

Let $F := \{f_0,f_1,\ldots,f_k\}$ be a reduced minimal strong bilateral Gr\"obner basis of ${\sf I}$ ordered so that 
$$\deg(f_0) \leq \deg(f_1) \leq \cdots  \leq \deg(f_k)$$ and let us denote, for each
$i$, 
$c_i := \lc(f_i)$,
$r_i\in {\Bbb D}\setminus\{0\}$ and $p_i\in{\sf R}$ the content\footnote{Defined here as the greatest  common  divisor of the coefficients of $f_i$ in the principal ideal domain ${\Bbb D}$ .}  and the primitive of $f_i$ so that $f_i=r_ip_i;$
denoting $P := p_0$ the primitive part of $f_0$ 
and $G_{k+1} := r_k\in  {\Bbb D}\setminus\{0\}$ the content of $f_k$ we have
 
\begin{Theorem} With the present notation, for each $i, 0\leq   i < k$, there is $H_{i+1}\in {\sf R}, d(i) := \deg(H_i)$ and $G_i\in  {\Bbb D}\setminus\{0\}$ such that
\begin{itemize}
\item $f_0 = G_1\cdots G_{k+1}P,$
\item $f_j =  G_{j+1}\cdots G_{k+1}H_jP, 1 \leq j \leq k,$
\end{itemize}
and
\begin{enumerate}
\item $0 < d(1) < d(2) < \cdots < d(k)$;
\item $G_i\in  {\Bbb D}, 1\leq i \leq k+1$, is such that $c_{i-1}=G_{i}c_{i}$
\item $P=p_0$  (the primitive part of $f_0\in {\sf R})$;
\item $H_i\in {\sf R}$ is a monic polynomial of degree $d(i)$, for each $i$;
\item {\small $H_{i+1} \in (G_1\cdots G_{i},  G_{2}\cdots G_{i}H_1, \ldots, G_{j+1}\cdots G_iH_j,  \ldots, H_{i-1} G_i, H_i)$ for all $i$.}
\item $r_i = G_{i+1}\cdots G_{k}$.
\end{enumerate}
\end{Theorem}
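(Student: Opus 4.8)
The plan is to prove the equivalent-form description by induction on $k$, peeling off the top generator $f_k$ at each step: we remove $f_k$, apply the statement to the (again reduced, minimal, strong) Gr\"obner basis $\{f_0,\dots,f_{k-1}\}$ of the smaller ideal, and then install the new top layer $G_k,G_{k+1},H_k$. Three ingredients feed this induction: the degree combinatorics forced by minimality, the chain of leading coefficients in ${\Bbb D}$ obtained from Szekeres theory together with bilaterality, and the factorization of every $f_i$ through a single primitive polynomial, obtained by localization.

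First I would settle the degrees and the scalars $G_i$. Since $F$ is a minimal strong Gr\"obner basis and $m=1$, no $\mathbf{M}(f_i)$ divides $\mathbf{M}(f_j)$; as the leading terms are powers of $Y$ and right multiplication by a term does not alter leading coefficients, this forces $\deg f_0<\deg f_1<\cdots<\deg f_k$, so $d(j):=\deg f_j-\deg f_0$ satisfies $0=d(0)<d(1)<\cdots<d(k)$, giving (1). Next, applying Szekeres theory (Definition~\ref{SzeDef}, Proposition~\ref{SzePro}) to the bilateral module ${\sf I}\subset{\sf R}$ over the noetherian B\'ezout --- indeed principal ideal --- domain ${\Bbb D}$: each ${\sf I}_{Y^d}\subset{\Bbb D}$ is an ideal ${\Bbb D}c_{[d]}$, these form an ascending chain in $d$, and, $F$ being strong, $c_i=\lc(f_i)$ generates ${\sf I}_{Y^{d_i}}$. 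Bilaterality makes the ensuing divisibilities \emph{untwisted}: right multiplying $f_{i-1}$ by $Y^{d_i-d_{i-1}}$ keeps its leading coefficient equal to $c_{i-1}$ and lands in ${\sf I}_{Y^{d_i}}={\Bbb D}c_i$, so $c_i\mid c_{i-1}$; hence $c_k\mid c_{k-1}\mid\cdots\mid c_0$, and one sets $G_i:=c_{i-1}/c_i\in{\Bbb D}\setminus\{0\}$ for $1\le i\le k$ and $G_{k+1}:=r_k$, which is (2).

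The heart of the argument is producing $P$ and the $H_j$. I would localize ${\sf R}$ at the Ore set ${\Bbb D}\setminus\{0\}$ (an Ore set because $\alpha$ is injective), obtaining the Ore extension ${\sf K}[Y;\alpha,\delta]$ over the skew field ${\sf K}=\mathrm{Frac}({\Bbb D})$, which is a principal ideal domain; the extension ${\sf I}^e$ of ${\sf I}$ is a two-sided, hence left-principal, ideal, generated by a polynomial normalized to be primitive in ${\sf R}$, and since $f_0$ realizes the minimal degree $d_0$ of ${\sf I}^e$ this generator is, up to a unit of ${\Bbb D}$, the primitive part $P:=p_0$ of $f_0$ --- this is (3). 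For each $i$, from $f_i\in{\sf I}\subset{\sf I}^e={\sf K}[Y;\alpha,\delta]\star P$, right division gives $f_i=q_i\star P$ with $q_i\in{\sf K}[Y;\alpha,\delta]$ of degree $d(i)$; matching leading monomials via Corollary~\ref{c46CoX2X2X2} and using the chain $c_k\mid\cdots\mid c_0$ together with $r_k\lc(P)=c_k$ (read off the Szekeres data), one checks $q_i$ actually has coefficients in ${\Bbb D}$ and equals $G_{i+1}\cdots G_{k+1}$ times a monic polynomial $H_i$ of degree $d(i)$. This yields the displayed factorizations $f_j=G_{j+1}\cdots G_{k+1}H_jP$, the degree claim in (4), the identities $r_i=G_{i+1}\cdots G_{k+1}$ (condition (6)), and $\lc(p_i)=\lc(P)$ for all $i$. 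The induction then runs because $\{f_0,\dots,f_{k-1}\}$ is a reduced minimal strong bilateral Gr\"obner basis of ${\Bbb I}_2(\{f_0,\dots,f_{k-1}\})$ --- its Szekeres data agrees with that of ${\sf I}$ in $Y$-degree $<d_k$ --- so the inductive hypothesis supplies $G_1,\dots,G_{k-1},P,H_1,\dots,H_{k-1}$, and only the top layer plus the identity $r_{k-1}=G_kG_{k+1}r_k$ remains, which is exactly the localization computation just described.

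Finally, condition (5), the membership $H_{i+1}\in(G_1\cdots G_i,\,G_2\cdots G_iH_1,\dots,\,G_iH_{i-1},\,H_i)$, expresses that $f_{i+1}$, a new Gr\"obner generator at $Y$-degree $d_{i+1}$, is reachable by Buchberger reduction (Theorem~\ref{SAASSAAS}, Algorithm~\ref{ShMo2}) from the data already present at degrees $\le d_{i+1}$, namely the right-shifts $Y^{\,d_{i+1}-d_j}\star f_j$ with $j\le i$ and the B\'ezout combination realizing the $\gcd$ of their (twisted) leading coefficients; dividing that reduction on the right by the common factor $P$ and cancelling the scalars $G_\bullet$ turns it into the asserted membership. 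I expect the main obstacle to be the step producing $P$ and the $H_j$: proving that the ${\sf K}$-cofactor $q_i$ of $P$ is genuinely a ${\Bbb D}$-scalar multiple of a monic polynomial --- equivalently, that every divisibility and leading-coefficient identity above really takes place in the untwisted ring ${\Bbb D}$ and that the twists $\alpha_{Y^e}$ all cancel --- since this is where bilaterality must be used most delicately and where the Ore case departs from the classical polynomial case; conditions (1)--(3) and (6) are then routine given Szekeres theory and localization, and (5) is bookkeeping on the syzygies of the leading monomials.
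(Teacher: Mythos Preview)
Your route is genuinely different from the paper's. The paper does not localize and does not induct on $k$. Instead it first sets $P$ to be the greatest common \emph{right} divisor of the primitive parts $p_0,\dots,p_k$ and $G_{k+1}:=\gcd(r_0,\dots,r_k)$, divides the whole basis on the right by $P$ and on the left by $G_{k+1}$, and works in the reduced situation $P=G_{k+1}=1$. There the chain $c_{i+1}\mid c_i$ is obtained by the same right-shift-and-B\'ezout trick you use, and from it the paper extracts the single relation $G_{i+1}f_{i+1}\in{\Bbb I}(f_0,\dots,f_i)$. This relation is then used twice, by induction on $i$ inside the fixed basis (not on $k$): once to show $p_0\mid_R f_j$ for all $j$, whence $p_0\mid_R\gcd_R(p_j)=1$ so $p_0=1$, giving~(3); and once to show $c_i\mid_L f_j$ for all $j\le i$, whence $c_k\mid\gcd(r_j)=1$, giving~(6). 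Conditions (4) and (5) fall out by writing $f_i=c_iH_iP$ and reading (5) directly off the relation $G_{i+1}f_{i+1}\in{\Bbb I}(f_0,\dots,f_i)$. Your localization makes transparent \emph{why} a single right divisor $P$ exists at all --- it is the left generator of the two-sided ideal ${\sf I}^e$ in the left PID $K[Y;\alpha,\delta]$ --- while the paper's approach never leaves ${\sf R}$, never needs to extend $\alpha,\delta$ to $K$, and avoids your extra induction on $k$ entirely.

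The obstacle you flag is real and is exactly the one point where the paper's argument is also delicate. You need $q_i\in{\sf R}$; the paper needs ``$p_0\mid_R f_j$ for $j\le i$ and $G_{i+1}f_{i+1}\in{\Bbb I}_2(f_0,\dots,f_i)$ imply $p_0\mid_R f_{i+1}$'', and right-divisibility by a general primitive element is \emph{not} preserved under bilateral combinations in an Ore extension (Gauss's lemma fails: already $Y\cdot x=\alpha(x)Y$ has content $\alpha(x)\neq x$). What rescues both arguments is that bilaterality of ${\sf I}$ forces $p_0$ to be a normal element --- $K[Y;\alpha,\delta]\,p_0$ is already two-sided --- which is the ``twists cancel'' phenomenon you allude to. Neither your sketch nor the paper makes this fully explicit, so whichever route you take it is worth isolating the normality of $p_0$ and the integrality of the resulting quotients as a separate lemma.
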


\begin{proof}
Let $P$ and $G_{k+1}$ be, resp., 
the greatest  common right divisor of $\{p_0,\ldots,p_k\}$  in ${\sf R}$
and the greatest  common divisor of $\{r_0,\ldots,r_k\}$ in ${\Bbb D}$;
since a set  $\{g_0,\ldots,g_k\}$ is a minimal strong  Gr\"obner basis   if and only if the
same is true for $\{rg_0g,\ldots,rg_kg\}$, $r\in{\Bbb D},g\in{\sf R}$, we can left divide by $G_{k+1}$ and right
divide by $P$ and assume wlog
that $P = G_{k+1} = 1$ and  that both the greatest  common right divisor of $\{p_0,\ldots,p_k\}$ 
and the greatest  common   divisor of $\{r_0,\ldots,r_k\}$ are 1.

Setting  ${\sf d}(i) := \deg(f_i)$ and 
$\nu(i):={\sf d}(i+1)-{\sf d}(i)$  for each $i$, by assumption we have
${\sf d}(i) \leq {\sf d}(i+1)$.

If ${\sf d}(i) = {\sf d}(i+1)$, let us define
$$h := b_if_i+b_{i+1}f_{i+1}\in{\sf I}$$
where $c,b_i,b_{i+1}\in {\Bbb D}$ are such that
$b_ic_i+b_{i+1}c_{i+1} = c$, $c$ being the greatest common divisor of $c_i$ and $c_{i+1}$,
so that $cY^{{\sf d}(i+1)} = {\bf M}(h)\in {\bf M}({\sf I})$;
this implies the existence of $j$ such that ${\bf M}(f_j) \mid   {\bf M}(h) \mid   {\bf M}(f_{i+1})$
contradicting minimality; thus ${\sf d}(i) < {\sf d}(i+1)$  and this, in turn, implies (1) since
$d(i)={\sf d}(i)-\deg(P).$


Both $f_iY^{\nu(i)}$ and $f_{i+1}$ are
in the ideal and have degree ${\sf d}(i+1)$.

Therefore, for
 $c,b_i,b_{i+1}\in {\Bbb D}$ such that
$b_ic_i+b_{i+1}c_{i+1} = c$, $c$ being the greatest common  divisor of $c_i$ and $c_{i+1}$,
$h := b_if_iY^{{\sf d}(i+1)-{\sf d}(i)}+b_{i+1}f_{i+1}\in{\sf I},$
so that
$cY^{{\sf d}(i+1)} = {\bf M}(h)\in {\bf M}({\sf I})$ 
and ${\bf M}(f_j) \mid   {\bf M}(h)$ for some $j$. If
$c_{i+1} \neq c$, necessarily $\deg(f_j)<\deg(f_{i+1})$ whence $j < i+1$ and 
${\bf M}(f_j)\mid  {\bf M}(h)\mid {\bf M}(f_{i+1})$ getting a contradiction.
As a conclusion $c_i=G_{i+1}c_{i+1}$, for some $G_{i+1}\in  {\Bbb D}$ and (2).

Since $G_{i+1} f_{i+1} -f_i Y^{\nu(i)}$ is a polynomial of degree less than
${\sf d}(i+1)$  which reduces to zero by the  Gr\"obner basis, it follows that
 $$G_{i+1}f_{i+1} \in {\Bbb I}(f_0,  \ldots,f_i) \Forall i, 0\leq   i < k;$$
thus, inductively we obtain
$$p_0 \mid_R f_j \Forall j \leq i \then
p_0 \mid_R f_j \Forall j \leq i+1.$$
Also
 \begin{eqnarray*}
c_i \mid_L f_j \Forall j \leq i &\then& 
 G_{i+1}c_{i+1}= c_i\mid_L
G_{i+1}f_{i+1}
\\&\then&
c_{i+1} \mid_L f_j \Forall j \leq i+1.
\end{eqnarray*}

Therefore, the assumptions that  the greatest  common right divisor of $\{p_0,\ldots,p_k\}$ 
and the greatest  common   
divisor of $\{r_0,\ldots,r_k\}$ are 1
 imply that 
$ p_0 =  c_k=1$  proving (3); thus in particular $f_0=c_0$ so that $c_0\mid  f_0$ and this is sufficient to deduce, by the inductive argument, that  each $c_i$ left-divides
$f_i$ and therefore coincides with $r_i$.

Inductively we obtain
$$ r_i\lc(P) = c_i = G_{i+1} c_{i+1} =  
 G_{i+1} r_{i+1}\lc(P) =
 G_{i+1}\cdots G_{k}\lc(P)
$$
thus  proving (6); 
defining $H_i$ the polynomial s.t. $c_iH_iP = f_i$ for all $i$ we have $\lc(H_i) = 1$ (proving (4)), $d(i) + \deg(P) = \deg(f_i)$ and
$G_{i+1} f_{i+1}\in(f_0,\ldots,f_i)$  which proves  (5) dividing  out $G_{i+1}\cdots G_{k}$.
\end{proof}   
\appendix
\section{The PIR case}

While an understandable {\em timor} restrained us to violate Ore's {\em tabu} requiring degree preservation of product, it is well-known that Zacharias--M\"oller results are naturally stated for polynomials over PIRs and the restriction to PIDs is unnatural; we therefore sketch here the few modifications to the theory which are required  in order to adapt it to Ore extensions ${\sf R}$ over a PIR $R$.

The first delicate adaptation is required by formula (\ref{c46Eq3}); the natural solution is due to Gateva \cite{Ga1,Ga2,Ga3} which considered valuation over the semigroup with zero  ${\Cal T}\cup\{o\}$ instead of ${\Cal T}$ setting
$$ \circ :  {\bf N} \times  {\bf N}\to {\bf N}\cup\{o\} :
u\circ v = \begin{cases}{\bf T}(u\star v)&u\star v\neq 0\cr
o&u\star v=0.\cr\end{cases}$$
Her theory however apply only to domains.

Thus in order to extend Corollary~\ref{c46CoX2X2X2} we need to reformulate it as

\setcounter{Theorem}{14}
\begin{Corollary}  
If $\prec$ is a term  ordering on ${\Cal T}$ 
and $<$ is a $\prec$-compatible
term  ordering on ${\Cal T}^{(m)}$,
then, for each $l,r\in{\sf R}$ and $f\in{\sf R}^{(m)}$,
\begin{enumerate}
\item ${\bf M}(l\star  f)= {\bf M}( {\bf M}(l)\star{\bf M}(f))$  provided $\lc(l)\lc(f) \neq 0$;
\item ${\bf M}(f\star r)=  {\bf M}({\bf M}(f)\star {\bf M}(r))$  provided $\lc(f)\lc(r) \neq 0$;
\item ${\bf M}(l\star  f\star r)={\bf M}({\bf M}(l)\star{\bf M}(f)\star {\bf M}(r))$ provided $\lc(l)\lc(f)\lc(r) \neq 0$.
\item ${\bf T}(l\star   f)\leq {\bf T}(l)\circ{\bf T}(f)$  equality holding provided that $\lc(l)\lc(f) \neq 0$;
\item ${\bf T}(f\star r)\leq  {\bf T}(f)\circ{\bf T}(r)$  equality holding provided that $\lc(f)\lc(r) \neq 0$;
\item ${\bf T}(l \star  f\star r)\leq  {\bf T}(l)\circ{\bf T}(f)\circ{\bf T}(r)$  equality holding provided that $\lc(l)\lc(f)\lc(r) \neq 0$.
\end{enumerate}
If, moreover, $R$ is a domain, then
\begin{enumerate}
\setcounter{enumi}{6}
\item ${\bf T}(l\star    f)= {\bf T}(l)\circ{\bf T}(f)$;
\item ${\bf T}(f\star  r)= {\bf T}(f)\circ{\bf T}(r)$;
\item ${\bf T}(l\star  f\star  r)= {\bf T}(l)\circ{\bf T}(f)\circ{\bf T}(r)$.
\end{enumerate}
\end{Corollary}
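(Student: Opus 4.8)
The plan is to re-run the proof of Corollary~\ref{c46CoX2X2X2}, but with the exact identity ${\bf T}(\tau\star\omega)=\tau\circ\omega$ of Eq.~(\ref{c46Eq3}) replaced by the weaker bound ${\bf T}(\tau\star\omega)\le\tau\circ\omega$ --- read inside Gateva's semigroup--with--zero ${\Cal T}\cup\{o\}$ introduced above --- together with the observation that strictness can occur only when a coefficient is annihilated by cancellation in $R$. Since statements (1)--(3) are exactly the equality clauses of (4)--(6) supplemented with the identification of the corresponding leading monomials, and since the two--factor statements arise from the three--factor one by specialising $l=1$ or $r=1$ (note ${\bf M}(1)=1$ and $\lc(1)=1_R$, so $\lc(l)\lc(f)\lc(1)=\lc(l)\lc(f)$), it suffices to prove (6) together with (3) for $l,r\in{\sf R}$ and $f\in{\sf R}^m$.

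First I would expand $l=\sum_j a_j\lambda_j$, $f=\sum_k c_k\tau_k{\bf e}_{\iota_k}$, $r=\sum_i b_i\rho_i$ with the leading data at $j=k=i=1$, use $R$-bilinearity of $\star$ to write $l\star f\star r$ as a finite sum of monomial products, and bound each summand. The only tool needed is the defining relation $t\star c=\alpha_t(c)\,t+\theta_t(c)$, whose correction term $\theta_t(c)$ is supported on proper divisors of $t$, iterated and combined with the $\prec$-compatibility of $<$; this gives
$${\bf T}\big((a_j\lambda_j)\star(c_k\tau_k{\bf e}_{\iota_k})\star(b_i\rho_i)\big)\ \le\ \lambda_j\circ\tau_k\circ\rho_i{\bf e}_{\iota_k}\ \le\ {\bf T}(l)\circ{\bf T}(f)\circ{\bf T}(r),$$
and taking the maximum over $j,k,i$ yields the inequality in (6), hence also those in (4) and (5).

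For the equality clause I would note that, by strict monotonicity of $\circ$ with respect to $\prec$ and $<$, every summand with $(j,k,i)\ne(1,1,1)$ has top term strictly below ${\bf T}(l)\circ{\bf T}(f)\circ{\bf T}(r)$; therefore the coefficient of ${\bf T}(l)\circ{\bf T}(f)\circ{\bf T}(r)$ in $l\star f\star r$ equals the leading coefficient of ${\bf M}(l)\star{\bf M}(f)\star{\bf M}(r)$, which a direct computation (using that each $\alpha_\tau$ is a ring endomorphism and that terms commute) identifies as $\lc(l)\,\alpha_{\lambda}(\lc(f))\,\alpha_{\lambda\tau}(\lc(r))$, where $\lambda:={\bf T}(l)$ and $\tau{\bf e}_\iota:={\bf T}(f)$. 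In the presence of the injectivity of the maps $\alpha_\tau$ (Lemma~\ref{OreLe+}(1)), the stated hypothesis $\lc(l)\lc(f)\lc(r)\ne0$ is precisely what keeps this coefficient from vanishing; with it one gets equality in the degree bound and simultaneously ${\bf M}(l\star f\star r)={\bf M}({\bf M}(l)\star{\bf M}(f)\star{\bf M}(r))$, i.e. (3), whence (1) and (2) by the specialisations above. Finally, if $R$ is a domain, $\lc(l),\lc(f),\lc(r)$ nonzero forces $\lc(l)\lc(f)\lc(r)\ne0$ automatically, so the side conditions in (4)--(6) are vacuously met and (7)--(9) follow.

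The main obstacle is the equality clause in the zero--divisor case: one must check both that cancellation among the many non--leading summands cannot conspire to recreate the top term, and that the hypothesis on the leading coefficients is exactly strong enough to keep the leading coefficient of the product alive after it is pushed through the endomorphisms $\alpha_\tau$. This is the only point at which the argument departs from the field/domain treatment, and it is where Ore's axiom~1, in the form of Lemma~\ref{OreLe+}(1), is genuinely used.
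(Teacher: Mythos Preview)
The paper provides no proof of this appendix corollary (it is stated as an unproved reformulation of Corollary~\ref{c46CoX2X2X2} for the PIR case), so there is no argument to compare your approach against. Your overall plan --- expand into monomials, bound each triple product using $t\star c=\alpha_t(c)t+\theta_t(c)$, then identify the unique summand that can reach the top term --- is sound, and it correctly yields the inequality clauses (4)--(6) and the domain statements (7)--(9).

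The gap is in the equality clause. You correctly identify the coefficient of ${\bf T}(l)\circ{\bf T}(f)\circ{\bf T}(r)$ in $l\star f\star r$ as $\lc(l)\,\alpha_\lambda(\lc(f))\,\alpha_{\lambda\tau}(\lc(r))$, but the passage from the hypothesis $\lc(l)\lc(f)\lc(r)\ne0$ to the nonvanishing of this \emph{twisted} product is not justified by injectivity of $\alpha_\tau$. Lemma~\ref{OreLe+}(1) only tells you that each factor $\alpha_\lambda(\lc(f))$ and $\alpha_{\lambda\tau}(\lc(r))$ is individually nonzero; in a ring with zero divisors a product of nonzero elements can still vanish, and there is no reason the annihilator relations among $\lc(l),\lc(f),\lc(r)$ should be preserved once some of the factors are replaced by their $\alpha$-images. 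Concretely, if $\alpha$ is a nontrivial automorphism of $R$ one can have $a c\ne0$ while $a\,\alpha_\lambda(c)=0$ (take $R=k[x,y]/(xy)$ with $\alpha$ swapping $x$ and $y$, $a=c=x$, $\lambda=Y$). Moreover, Lemma~\ref{OreLe+} is derived under the standing assumption that $R$ is a domain, so invoking it in the PIR appendix already requires justification.

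What your argument \emph{does} establish is the sharper statement with the hypothesis replaced by $\lc(l)\,\alpha_\lambda(\lc(f))\,\alpha_{\lambda\tau}(\lc(r))\ne0$ (equivalently, $\lc\bigl({\bf M}(l)\star{\bf M}(f)\star{\bf M}(r)\bigr)\ne0$). This is likely what the paper intends --- note the appendix is only a ``sketch'' --- and it is the condition that actually appears in the applications. You should either state and prove this corrected version, or add an explicit extra hypothesis on $\alpha$ (e.g.\ that each $\alpha_\tau$ preserves annihilator ideals) under which the two conditions coincide.
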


As regards Gr\"obner basis computation we remark that  the first and the third algorithms (Section~\ref{1Alg} and~\ref{3Alg} )  apply {\em verbatim} also in the PIR case; in the algorithm in fact we have $\{i\}\in {\Frak H}(F)$ for each $i, 1\leq i \leq u$ and  thus each ${\sf m} := {\bf T}(g_i)\in{\sf H}$ is treated by the algorithm which (if the basis is minimal) produces also the annihilitator syzygy
$$(d_1,\ldots,d_u))\in\Syz_L(v({\sf m})_1,\ldots,v({\sf m})_u),  d_j := \begin{cases}a_j&\mbox{if $j=i$}\cr0&\mbox{otherwise}\cr\end{cases}$$
where we  denote, for each $i\leq u$,  $a_i\in{R}$ the annihilator of ${\Bbb I}(c_i)$.

In the second algorithm (Section~\ref{2Alg})  the inductive seed  becomes
$${\Cal S}_{1}=\{b\in R : b\lc(g_1) = 0\}={\Bbb I}(a_1)\subset R,$$ and, for  each $s, 1< s\leq u$,
$\{s\}$  is basic for  ${\bf T}(g_s)$  provided the basis is minimal.

Therefore 
\setcounter{Theorem}{48}

\begin{Corollary} Assuming that the Zacharias ring $R$ is  principal  and denoting, for each $i,j, 1 \leq i < j \leq u$, ${\bf e}_{l_i} ={\bf e}_{l_j}$,
\begin{eqnarray*}
b(i,j) &:=& 
\frac{\lcm\left(
\alpha_{\{i,j\},i}(c_i),
\alpha_{\{i,j\},j}(c_j)\right)}
{\alpha_{\{i,j\},j}(c_j)} \frac{\lcm(\tau_i,\tau_j)}{\tau_j} e_j
\\ &-&
\frac{\lcm\left(
\alpha_{\{i,j\},i}(c_i),
\alpha_{\{i,j\},j}(c_j)\right)}{\alpha_{\{i,j\},i}(c_i)} \frac{\lcm(\tau_i,\tau_j)}{\tau_i}e_i,
\\B(i,j) &:=&  
\frac{\lcm\left(
\alpha_{\{i,j\},i}(c_i),
\alpha_{\{i,j\},j}(c_j)\right)}
{\alpha_{\{i,j\},j}(c_j)} \frac{\lcm(\tau_i,\tau_j)}{\tau_j}\star g_j 
\\ &-&
\frac{\lcm\left(
\alpha_{\{i,j\},i}(c_i),
\alpha_{\{i,j\},j}(c_j)\right)}{\alpha_{\{i,j\},i}(c_i)}
\frac{\lcm(\tau_i,\tau_j)}{\tau_i}\star g_i
\\ a(i) &:=& a_i e_i\, 
\\ A(i) &:=& a_i\star g_i,
\end{eqnarray*}
we have that 
$$\{b(i,j) : 1 \leq i < j \leq u, {\bf e}_{l_i} ={\bf e}_{l_j}\}\cup\{a(i), i\leq u\}$$ is a Gebauer--M\"oller set for
$F$,
so that 
 $F$ is a   Gr\"obner basis of ${\sf M}$, iff
each $B(i,j)$, $1 \leq i < j \leq u, {\bf e}_{l_i} ={\bf e}_{l_j},$
 and each $A(i), i\leq u$,
  has a weak  Gr\"obner
representation in terms of $F$.
\qed\end{Corollary}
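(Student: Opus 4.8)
The plan is to deduce this from M\"oller's Left Lifting Theorem (Theorem~\ref{SAASSAAS}) in exactly the way Corollary~\ref{48Cz1} was deduced in the PID case, the only genuinely new feature being the annihilator syzygies $a(i)$ that the zero-divisors of the PIR $R$ force into any Gebauer--M\"oller set. First I would record the bookkeeping: with the notation of Section~\ref{1Alg} one has $B(i,j) = {\Frak S}_L(b(i,j))$ and $A(i) = {\Frak S}_L(a(i))$, and both $b(i,j)$ and $a(i)$ are ${\Cal T}^{(m)}$-homogeneous elements of $\ker({\Frak s}_L)$: for $b(i,j)$ this is the computation already done for Corollary~\ref{48Cz1}, while ${\Frak s}_L(a_i{\bf e}_i) = a_i\ast{\bf M}(g_i) = (a_ic_i)\tau_i{\bf e}_{l_i} = 0$ because $a_i$ annihilates ${\Bbb I}(c_i)$ by definition. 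Hence the displayed set lies in $\ker({\Frak s}_L)$ and consists of ${\Cal T}^{(m)}$-homogeneous elements; the substance of the statement is that it \emph{generates} $\ker({\Frak s}_L)$ as a left ${\sf R}$-module, i.e.\ that it is a Gebauer--M\"oller set for $F$.

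For that generation step I would run the inductive scheme of Theorem~\ref{48Tx1} (or, equivalently, the argument of Lemma~\ref{47L1}), enlarging ${\Cal S}_{s-1}$ to ${\Cal S}_s$, but with the PIR-adapted bookkeeping already indicated in the Appendix: the inductive seed becomes ${\Cal S}_1 = {\Bbb I}(a_1)$, and for each $s$ the singleton $\{s\}$ is now a basic subset (Definition~\ref{c46D4}) whose associated colon ideal is $(0) : {\Bbb I}_L(c_s)$, the annihilator of ${\Bbb I}(c_s)$, which equals ${\Bbb I}(a_s)$ and contributes precisely the syzygy $a(s) = a_s{\bf e}_s$; here the automorphism hypothesis on the $\alpha_i$ is used to transport annihilators along $\alpha_{\lambda}$ so that the lift is again of the required monomial shape. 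For a pair-type basic subset $H$ with $s\in H$ one checks, exactly as in the proof of Corollary~\ref{48Cz1} and using that $R$ is commutative and principal, that ${\Bbb I}_L(\{\alpha_{H,i}(c_i):i\in H^{\times}\}):{\Bbb I}_L(\alpha_{H,s}(c_s))$ is principal, generated by $\lcm(\alpha_{\{i,s\},i}(c_i),\alpha_{\{i,s\},s}(c_s))/\alpha_{\{i,s\},s}(c_s)$ for a suitable $i$, whose associated syzygy is $b(i,s)$; the reduction of an arbitrary ${\Cal T}^{(m)}$-homogeneous syzygy to a combination of terms $\alpha_\upsilon(u_j)\upsilon\ast\sigma_j$ then proceeds verbatim as in Lemma~\ref{47L1}. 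The outcome is that $\ker({\Frak s}_L)$ is ${\Cal T}^{(m)}$-homogeneously generated by $\{b(i,j)\}\cup\{a(i)\}$.

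Granting the Gebauer--M\"oller property, the equivalence is immediate from Theorem~\ref{SAASSAAS}, condition $(1)\iff(3)$: $F$ is a left Gr\"obner basis of ${\sf M}$ iff every $\sigma$ in the Gebauer--M\"oller set has a quasi-Gr\"obner representation, i.e.\ iff each of the polynomials $B(i,j)$ and $A(i)$ does. To match the ``weak Gr\"obner representation'' phrasing of the statement I would note that a weak Gr\"obner representation is in particular a quasi-Gr\"obner one (the Remark following Definition~\ref{46D2}), and conversely that once $F$ is known to be a Gr\"obner basis, every element of ${\Bbb I}_L(F)$ --- hence in particular each $B(i,j)$ and each $A(i)$, all of which lie in ${\sf M}={\Bbb I}_L(F)$ --- admits a weak Gr\"obner representation by Proposition~\ref{57P1}$(4)$; the two formulations are therefore interchangeable here.

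I expect the only real obstacle to be the generation step, and within it the two PIR-specific verifications: that the relevant colon ideals stay principal when $R$ is merely a PIR rather than a PID, and that the singleton basic subsets $\{s\}$, together with the automorphism transport of annihilators, account for \emph{all} the annihilator relations in $\ker({\Frak s}_L)$. Once these are in place the lcm/gcd manipulations and the homogeneous reduction are mechanical and identical to the PID case. No redundancy analysis (Lemma~\ref{48Lx1}) is needed, since the claim is only that the \emph{full} set $\{b(i,j)\}\cup\{a(i)\}$ --- not a minimized version of it --- is a Gebauer--M\"oller set.
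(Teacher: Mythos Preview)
Your proposal is correct and follows exactly the approach the paper intends. The paper itself gives no explicit proof here (the statement ends with \qed), relying instead on the paragraphs immediately preceding this Corollary in the Appendix, which outline precisely the two PIR-specific adaptations you identify: the inductive seed becomes ${\Cal S}_1 = {\Bbb I}(a_1)$, and each singleton $\{s\}$ is basic for ${\bf T}(g_s)$, contributing the annihilator syzygy $a(s)$; the pair syzygies $b(i,j)$ are handled verbatim as in Corollary~\ref{48Cz1}, and the conclusion is drawn from Theorem~\ref{SAASSAAS}.
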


\setcounter{Theorem}{50}
\begin{Corollary} Assuming that the Zacharias ring $R$ is  principal and 
that each $\alpha_i$  is an automorphism
denoting, for each $i,j, 1 \leq i < j \leq u$, ${\bf e}_{l_i} ={\bf e}_{l_j}$,
\begin{eqnarray*}
b(i,j) &:=& 
e_j\alpha^{-1}_{\tau_j}\left(\frac{\lcm(c_i,c_j)}{c_j}\right) \frac{\lcm(\tau_i,\tau_j)}{\tau_j}  -
e_i\alpha^{-1}_{\tau_i}\left(\frac{\lcm(c_i,c_j)}{c_i}\right) \frac{\lcm(\tau_i,\tau_j)}{\tau_i} 
\\B(i,j) &:=&  
g_j\star\alpha^{-1}_{\tau_j}\left(\frac{\lcm(c_i,c_j)}{c_j}\right) \frac{\lcm(\tau_i,\tau_j)}{\tau_j}  -
g_i\star\alpha^{-1}_{\tau_i}\left(\frac{\lcm(c_i,c_j)}{c_i}\right) \frac{\lcm(\tau_i,\tau_j)}{\tau_i} 
\\ a(i) &:=& e_i\alpha^{-1}_{\tau_i}(a_i), 
\\ A(i) &:=& g_i\star\alpha^{-1}_{\tau_i}(a_i),
\end{eqnarray*}
we have that 
$$\{b(i,j) : 1 \leq i < j \leq u, {\bf e}_{l_i} ={\bf e}_{l_j}\}\cup\{a(i), i\leq u\}$$ 
is a Gebauer--M\"oller set for $F$,
so that 
 $F$ is a  right Gr\"obner basis of ${\sf M}$, iff
each $B(i,j)$, $1 \leq i < j \leq u, {\bf e}_{l_i} ={\bf e}_{l_j},$ and each $A(i), i\leq u$,  has a right weak  Gr\"obner
representation in terms of $F$.
\qed\end{Corollary}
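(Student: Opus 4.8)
The plan is to imitate the proof of Corollary~\ref{48Czr1} — the characterisation of right Gr\"obner bases over a Zacharias principal ideal \emph{domain} — while inserting the single feature that separates a principal ideal ring from a domain: elements of $R$ may have nonzero annihilators, so each module of syzygies among leading coefficients carries, in addition to the pairwise $\lcm$-syzygies, the principal annihilator syzygies. First I would work on the left. Fixing a term ${\sf m}=\delta\epsilon\in{\sf T}$ and the vector $v({\sf m})\in R^u$ of Section~\ref{1Alg}, I would invoke the structure of syzygy modules over a PIR (M\"oller~\cite{M}, \cite[\S 26.1]{SPES}): $\Syz_L\bigl(v({\sf m})_1,\ldots,v({\sf m})_u\bigr)$ is generated by the pairwise syzygies together with the syzygies $a_j{\bf e}_j$, where $a_j\in R$ denotes the annihilator of ${\Bbb I}(c_j)$. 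In the inductive language of Section~\ref{2Alg} this is precisely the remark made just above: the seed becomes ${\Cal S}_1={\Bbb I}(a_1)$, and every singleton $\{s\}$, $1\le s\le u$, is basic for ${\bf T}(g_s)$ when $F$ is minimal, so running the construction of Theorem~\ref{48Tx1}, specialised to the principal-ideal setting exactly as in Corollary~\ref{48Cz1}, yields $\{b(i,j)\}\cup\{a_i{\bf e}_i\}$ as a left Gebauer--M\"oller set for $F$.

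Next I would transport this to right modules just as Corollary~\ref{48Cz1} is transported to Corollary~\ref{48Czr1}: since each $\alpha_i$, hence each $\alpha_\tau$, is an automorphism (cf.\ Remark~\ref{RightSzaRem+}), a left syzygy of the leading coefficients becomes a right one after applying $\alpha^{-1}_{\tau_i}$ coordinatewise and re-attaching the cofactor terms. This rewriting turns $b(i,j)$ into the right S-element displayed in the statement and $a_i{\bf e}_i$ into $a(i)={\bf e}_i\alpha^{-1}_{\tau_i}(a_i)$; the one identity to verify is that ${\bf M}(g_i)\ast\alpha^{-1}_{\tau_i}(a_i)=c_i\,\alpha_{\tau_i}\!\bigl(\alpha^{-1}_{\tau_i}(a_i)\bigr)\,\tau_i{\bf e}_{l_i}=c_ia_i\tau_i{\bf e}_{l_i}=0$, so that $a(i)$ lies in the kernel of the right leading-term syzygy map and $A(i)=g_i\star\alpha^{-1}_{\tau_i}(a_i)\in{\Bbb I}_R(F)$ satisfies ${\bf T}(A(i))<\tau_i{\bf e}_{l_i}=v(a(i))$.

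Having thus shown that $\{b(i,j):1\le i<j\le u,\ {\bf e}_{l_i}={\bf e}_{l_j}\}\cup\{a(i):i\le u\}$ is a right Gebauer--M\"oller set for $F$, I would conclude by quoting the right-hand version of M\"oller's Lifting Theorem (Theorem~\ref{SAASSAAS}): $F$ is a right Gr\"obner basis of ${\sf M}$ iff every element of some Gebauer--M\"oller set for $F$ has a right quasi-Gr\"obner representation in terms of $F$, which for this explicit set is exactly the requirement that each $B(i,j)$ and each $A(i)$ admit a right weak Gr\"obner representation in terms of $F$.

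The step I expect to be the genuine obstacle is the syzygy-module structure over a PIR. Over a domain the kernel of $(d_1,\ldots,d_u)\mapsto\sum_id_i v({\sf m})_i$ is generated by the pairwise $\lcm$-syzygies alone; over a PIR one must also account for annihilators and be sure that pairwise plus annihilator syzygies generate. This is standard — it underlies M\"oller's original PIR treatment and is recorded in~\cite[\S 26.1]{SPES}, and the appendix has already reduced it to the modified inductive seed — so the remainder is the routine $\alpha^{-1}$-bookkeeping carried over verbatim from the domain case, together with the formal application of the Lifting Theorem.
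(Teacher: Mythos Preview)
Your proposal is correct and follows exactly the approach implicit in the paper: the corollary is stated there without proof (marked \qed), but the surrounding appendix discussion makes clear that the argument is precisely the one you give --- supplement the pairwise $\lcm$-syzygies with the annihilator syzygies (the modified inductive seed ${\Cal S}_1={\Bbb I}(a_1)$ and the basic singletons $\{s\}$), then transport from left to right via $\alpha^{-1}_{\tau_i}$ exactly as Corollary~\ref{48Cz1} is transported to Corollary~\ref{48Czr1}, and finally invoke the Lifting Theorem. Your identification of the PIR syzygy structure as the only nontrivial input is accurate, and the paper handles it in the same way by reference to M\"oller's original treatment.
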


\setcounter{Theorem}{57}

\begin{Corollary}  
With the present notation, under the assumption that
 $R$ is a principal ideal ring,  
  $F$ is a    Gr\"obner basis of ${\sf I}$ iff, denoting
$\GM$  a useful S-pair set  for $F$,
each S-polynomial $B(i,j), \{i,j\}\in\GM$, and each $A(i), 1\leq i\leq u$, 
 has
 a   Gr\"obner representation in terms of $F$.
\qed\end{Corollary}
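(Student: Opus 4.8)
The plan is to derive this exactly as in the principal ideal domain case (Corollary~\ref{48Cy1}), by invoking M\"oller's Left Lifting Theorem (Theorem~\ref{SAASSAAS}); the only genuinely new feature is that, over a PIR, the syzygy modules among the leading coefficients carry extra \emph{annihilator} relations, and these must be added to the Gebauer--M\"oller set. Throughout put ${\sf I}:={\sf M}:={\Bbb I}_L(F)$.

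First I would make the annihilator syzygies explicit. Since $R$ is a PIR, for each $i$ the annihilator of ${\Bbb I}(c_i)$ is principal, say generated by $a_i\in R$. For the singleton $\{i\}\in{\Frak H}(F)$ the term ${\sf m}:={\bf T}(g_i)$ lies in ${\sf T}$, and the vector $a_ie_i$ belongs to $\Syz_L\bigl(v({\sf m})_1,\ldots,v({\sf m})_u\bigr)$ (its only nonzero component is $a_i v({\sf m})_i = a_i\alpha_{t_i({\sf m})}(\lc(g_i))$, and $\alpha_{t_i({\sf m})}=\Id$ here while $a_ic_i=0$). Tracing this through the construction of ${\Cal S}(F)$ in Lemma~\ref{47L1} yields the ${\Cal T}^{(m)}$-homogeneous element $a(i)=a_ie_i\in\ker({\Frak s}_L)$ with ${\Frak S}_L(a(i))=a_i\star g_i=A(i)$. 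Then I would invoke the PIR forms of Corollary~\ref{48Cz1} and Lemma~\ref{48Lx1}: the combinatorial reduction that pares $\{b(i,j):1\le i<j\le u\}$ down to $\{b(i,j):\{i,j\}\in\GM\}$ uses only the lcm/divisibility bookkeeping among the $\tau_i$ and the ideals ${\Bbb I}(\alpha_{H,i}(c_i))$ and never the domain hypothesis, so it carries over verbatim; the $a(i)$ are homogeneous of the minimal degrees ${\bf T}(g_i)$ and are untouched by it. Hence $\GM(F):=\{b(i,j):\{i,j\}\in\GM\}\cup\{a(i):1\le i\le u\}$ is a ${\Cal T}^{(m)}$-homogeneous basis of $\ker({\Frak s}_L)$, i.e. a left Gebauer--M\"oller set for $F$.

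Finally I would apply Theorem~\ref{SAASSAAS} with this $\GM(F)$. Since ${\Frak S}_L(b(i,j))=B(i,j)$ and ${\Frak S}_L(a(i))=A(i)$, its condition (3) reads precisely: each $B(i,j)$, $\{i,j\}\in\GM$, and each $A(i)$, $1\le i\le u$, has a quasi-Gr\"obner representation in terms of $F$. A Gr\"obner representation is in particular a quasi-Gr\"obner one (Remark after Definition~\ref{46D2}), and conversely, if $F$ is a Gr\"obner basis then every element of ${\sf I}$ — in particular each $B(i,j)\in{\sf I}$ and each $A(i)\in{\sf I}$ — has a Gr\"obner representation by condition (2) of the same theorem. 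This yields the asserted equivalence.

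The step I expect to be the main obstacle is the bookkeeping verification that the reduction of Lemma~\ref{48Lx1}, when zero-divisors are present, still produces a basis of $\ker({\Frak s}_L)$ after the $a(i)$ are adjoined — in other words, that no ``mixed'' generator combining some $a(i)$ with some $b(j,k)$ is needed. This follows from the degree argument (each $a(i)$ is ${\Cal T}^{(m)}$-homogeneous of degree ${\bf T}(g_i)$, which is $\preceq$-minimal among the degrees occurring among the $b(j,k)$ that involve index $i$, so the reduction of a homogeneous syzygy of degree $\delta\epsilon$ with $\delta\epsilon={\bf T}(g_i)$ can only use $a(i)$), but it is the one point where the transfer from the PID case is not purely formal and deserves spelling out.
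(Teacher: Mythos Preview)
Your proposal is correct and lands on the same mechanism the paper uses: once $\{b(i,j):\{i,j\}\in\GM\}\cup\{a(i):1\le i\le u\}$ is known to be a Gebauer--M\"oller set, Theorem~\ref{SAASSAAS} gives the equivalence directly, exactly as in the PID proof of Corollary~\ref{c46C2}. The paper simply records this with a \qed, relying on the appendix's PIR version of Corollary~\ref{48Cz1} (which already asserts that the full family $\{b(i,j)\}\cup\{a(i)\}$ is a Gebauer--M\"oller set) together with the definition of ``useful S-pair set''.

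Where you diverge from the paper is in doing extra work that this particular corollary does not require. You invoke Lemma~\ref{48Lx1} to argue that the reduction from all $b(i,j)$ to those with $\{i,j\}\in\GM$ is legitimate, and you flag the ``mixed generator'' bookkeeping as the main obstacle. But in the statement at hand $\GM$ is \emph{given} as a useful S-pair set; by definition this already means that $\{b(i,j):\{i,j\}\in\GM\}$ (augmented, in the PIR setting, by the annihilator syzygies $a(i)$) generates $\ker({\Frak s}_L)$. No reduction argument is needed here --- that machinery belongs to the next corollary (the PIR analogue of Corollary~\ref{48Cy1}), where one must \emph{prove} that the specific set ${\Frak R}$ is useful. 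So your concern about whether Lemma~\ref{48Lx1} survives zero-divisors is well placed, but it is a concern for Corollary~61, not for this one.
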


\setcounter{Theorem}{60}

\begin{Corollary}   With the present notation, under the assumption that
 $R$ is a principal ideal domain,  
  $F$ is a    Gr\"obner basis of ${\sf M}$ iff 
each S-polynomial $B(i,j), \{i,j\}\in\Frak{R}$, and each $A(i), 1\leq i\leq u$, 
has
 a   Gr\"obner representation in terms of $F$.
\qed\end{Corollary}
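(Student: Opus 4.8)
The plan is to reduce Corollary~63 (the PIR‐domain version) to the corresponding PIR statement, Corollary~60, exactly as Corollary~48Cy1 reduces to Corollary~c46C2 in the PID setting; the only genuinely new ingredient in the PIR case is the presence of the annihilator syzygies $a(i)$, and the whole point of the appendix is that these enter the Gebauer--M\"oller set uniformly. First I would recall that by Lemma~48Lx1.2 (whose statement and proof go through \emph{verbatim} over a principal ideal ring, since nothing there uses absence of zero‐divisors beyond what M\"oller's lifting theorem already packages) the set ${\Frak R}$ of non‐redundant $b(i,j)$ together with the annihilator elements $a(i)$ forms a useful S‐element set: every redundant $b(i,j)$ is an ${\sf R}$‐combination $\sum_\rho c_\rho t_\rho\ast b(i_\rho,j_\rho)$ of strictly $\prec$‐smaller S‐elements, and the induction that this generates terminates because $\prec$ is a well‐ordering on the finite set ${\Cal S}(u)$. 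Crucially, the reduction step in Lemma~48Lx1 only ever replaces a $b(i,j)$ by other $b(i',j')$'s with the same leading support component ${\bf e}_{l_i}$; it never touches an $a(i)$, so the annihilator part of the Gebauer--M\"oller set is carried along untouched.

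Next I would invoke Corollary~60: under the hypothesis that $R$ is a principal ideal domain\,---\,or, with the appendix's modifications, a principal ideal ring\,---\, $F$ is a left Gr\"obner basis of ${\sf M}$ iff each $B(i,j)$, $\{i,j\}\in{\Frak R}$, \emph{and} each $A(i)$, $1\le i\le u$, has a (weak) Gr\"obner representation in terms of $F$. Since $\{b(i,j):\{i,j\}\in{\Frak R}\}\cup\{a(i):i\le u\}$ is, by the previous paragraph, a Gebauer--M\"oller set for $F$, M\"oller's Left Lifting Theorem (Theorem~SAASSAAS, equivalence of (1) and (3)) says precisely that $F$ is a left Gr\"obner basis iff the S‐polynomial $\Frak{S}_L(\sigma)$ of every element $\sigma$ of that Gebauer--M\"oller set has a quasi‐Gr\"obner representation. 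Running through the dictionary ${\Frak S}_L(b(i,j))=B(i,j)$ and ${\Frak S}_L(a(i))=A(i)$, this is exactly the asserted equivalence. The statement is then obtained by restricting attention, as in the PID case, from the full set ${\Cal S}(u)$ to the smaller useful S‐pair set ${\Frak R}$ while keeping all the $A(i)$'s, which is legitimate precisely because ${\Frak R}\cup\{a(i)\}$ is already a Gebauer--M\"oller set.

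The one point that deserves care\,---\,and that I expect to be the main (if modest) obstacle\,---\,is checking that the $\alpha_\tau$-twists do not spoil the annihilator bookkeeping. Over a PIR, the annihilator $a_i\in R$ of ${\Bbb I}(c_i)$ produces the syzygy $(0,\dots,a_i,\dots,0)\in\Syz_L(v({\sf m})_1,\dots,v({\sf m})_u)$ only when ${\sf m}={\bf T}(g_i)$, so one must verify that the first‐algorithm discussion in Section~2Alg of the appendix indeed places $\{i\}$ in ${\Frak H}(F)$ and that $v({\bf T}(g_i))_i=\alpha_{t_i({\sf m})}(\lc(g_i))=\lc(g_i)=c_i$, whence $a_i c_i=0$ gives back the annihilator relation after the $\alpha_\upsilon^{-1}$ normalization in the proof of Lemma~47L1; this is where minimality of the basis is used. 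Granting that\,---\,and it is exactly the content of the two displayed remarks in the appendix preceding Corollary~51 and Corollary~52\,---\,there is nothing left: the proof is a one‐line citation chain ``Lemma~48Lx1 $\Rightarrow$ Gebauer--M\"oller set $\Rightarrow$ Theorem~SAASSAAS,'' identical in shape to the PID proof, with the annihilator generators added to every Gebauer--M\"oller set throughout.
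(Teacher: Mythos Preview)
Your proposal is correct and follows essentially the same path the paper implies: the paper gives no explicit proof here (just \qed), treating the statement as the immediate PIR analogue of Corollary~\ref{48Cy1}, obtained by combining Lemma~\ref{48Lx1}.2 (which shows ${\Frak R}$ is a useful S-element set) with the appendix version of Corollary~\ref{c46C2} and carrying the annihilator syzygies $a(i)$ along unchanged. Your only slip is in the numbering (the appendix corollary you cite as ``Corollary~60'' is actually the one placed at number~58 via \verb|\setcounter{Theorem}{57}|), and you might note that the hypothesis ``principal ideal domain'' in the statement is almost certainly a typo for ``principal ideal ring,'' since over a genuine domain every $a_i=0$ and the $A(i)$ conditions are vacuous.
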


\begin{Corollary}   With the present notation, under the assumption that
 $R$ is a principal ideal ring,  
  $F$ is a    Gr\"obner basis of ${\sf M}$ iff 
 \begin{enumerate}
\item \item each S-polynomial $B_W(i,j), \{i,j\}\in\Frak{R}$,  and each $A(i), 1\leq i\leq u$, 
has
 a restricted Gr\"obner representation in terms of $F$;
\item for each 
$g_j\in F$, both $Y_i\star g_j, 1\leq i \leq n$  and  each 
$g_j\star  \alpha_{\upsilon}(v),  v\in{\Bcc v}, \upsilon\in{\Cal T},\upsilon<{\bf T}(g_j)$, have a restricted representation in terms of $F$ w.r.t. $<$.
\end{enumerate}
\qed\end{Corollary}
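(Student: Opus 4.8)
The plan is to obtain this statement as the principal-ideal-ring counterpart of Corollary~\ref{48cC1}, reading ${\sf M}={\Bbb I}_2(F)$ and ``Gr\"obner basis'' as ``bilateral Gr\"obner basis'' there, by exactly the two-step mechanism used in Section~7.3; the only genuinely new ingredient is that over a PIR a restricted Gebauer--M\"oller set must carry, besides the pairwise elements $b_W(i,j)$, the annihilator elements $a(i):=e_i\alpha^{-1}_{\tau_i}(a_i)$, where $a_i$ generates the annihilator ideal of $\lc(g_i)$. Thus I would first invoke Theorem~\ref{50ReLiTh}: $F$ is a restricted Gr\"obner basis of ${\Bbb I}_W(F)$ precisely when every element of some restricted Gebauer--M\"oller set for $F$ has a restricted quasi-Gr\"obner representation in terms of $F$. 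Then Corollary~\ref{48cC1} promotes this, together with condition~(2) of the statement (the restricted representations of all $Y_i\star g_j$ and of all $g_j\star\alpha_{\upsilon}(v)$ with $\upsilon<{\bf T}(g_j)$), to the assertion that $F$ is the bilateral Gr\"obner basis of ${\sf M}={\Bbb I}_2(F)$. Consequently it suffices to prove two things: that $\{b_W(i,j):\{i,j\}\in\Frak{R}\}\cup\{a(i):1\le i\le u\}$ is a restricted Gebauer--M\"oller set for $F$, and that in the restricted setting the quasi-Gr\"obner representations of the $B_W(i,j)$ and the $A(i)$ may be taken to be honest restricted Gr\"obner representations.

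For the Gebauer--M\"oller claim I would rerun the proof of Lemma~\ref{47L1} in its restricted form: a generic ${\Cal T}^{(m)}$-homogeneous $\sigma\in\ker({\Frak s}_W)$ of ${\Cal T}^{(m)}$-valuation ${\sf m}=\delta\epsilon$ reduces, after factoring out the common term $\upsilon$ with $\upsilon\circ\lcm\{{\bf T}(g_i):i\in H\}=\delta$ and applying $\alpha^{-1}_{\upsilon}$, to an element of $\Syz_L(v({\sf m})_1,\ldots,v({\sf m})_u)$. Over a principal ideal ring this syzygy module is generated (M\"oller \cite{M}, \cite[\S26.1]{SPES}) by the pairwise $\lcm$-syzygies together with the annihilator syzygies $a_ie_i$; pushed forward through $\upsilon$, the pairwise generators are exactly the $b_W(i,j)$ attached to the subset ${\sf m}$, handled verbatim as in the PID case, while the annihilator generators already appear for the singleton terms ${\sf m}={\bf T}(g_i)$ (which always lie in ${\sf T}$) and push forward to the $a(i)$. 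The redundancy calculus of Lemma~\ref{48Lx1} --- its items~1 and~3, which concern only the $b_W$'s and rely solely on the commutative behaviour of $\tau(i,j)$ and $\tau(i,j,k)$ --- then thins the pairwise part down to $\Frak{R}$; the annihilator part cannot be reduced, since each $a_i$ generates the full annihilator ideal of $\lc(g_i)$ and $a(i)$ is, up to a unit, the unique homogeneous syzygy of its valuation supported on $e_i$ alone, so all $u$ elements $a(i)$ must be retained.

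The upgrade from quasi-Gr\"obner to Gr\"obner representation is exactly as in Section~7.3: for restricted modules the commutative product $\diamond$ and Buchberger's First Criterion are available, so a restricted quasi-Gr\"obner representation of $B_W(i,j)$, respectively of $A(i)=g_i\star\alpha^{-1}_{\tau_i}(a_i)$, can be reorganised into a restricted Gr\"obner representation, and conversely such a representation is a fortiori quasi-Gr\"obner; feeding this into the equivalences of Proposition~\ref{57P1} closes the loop and yields the stated ``iff''.

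I expect the main obstacle to be the Gebauer--M\"oller step, and within it the bookkeeping that the annihilator syzygies enter a homogeneous basis of $\ker({\Frak s}_W)$ cleanly: one must check that factoring out $\upsilon$ and applying $\alpha^{-1}_{\upsilon}$ carries the PIR syzygy-module generators to precisely the claimed set without introducing spurious lower-valuation terms, that the automorphism hypothesis on each $\alpha_i$ is genuinely what allows one to form $\alpha^{-1}_{\tau_i}(a_i)$, and that the redundancy reduction of Lemma~\ref{48Lx1}, stated for the PID pairwise elements, neither creates nor deletes any $a(i)$. Everything else is a mechanical transcription of the principal-ideal-domain arguments already in place, so I would not grind through it in the write-up.
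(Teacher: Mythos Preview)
Your proposal is correct and follows exactly the route the paper intends: the paper states this corollary with a bare \qed, treating it as the immediate PIR-analogue of the corresponding PID corollary in Section~7.3, obtained by adjoining the annihilator elements $a(i),A(i)$ as explained in the preamble to the appendix; your write-up simply makes explicit the chain Theorem~\ref{50ReLiTh} $\to$ Corollary~\ref{48cC1} $\to$ Lemma~\ref{48Lx1}(2) that the paper leaves implicit.

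One small technical point you may want to clean up: in the \emph{restricted} setting the leading-syzygy condition at degree $\delta\epsilon$ is $\sum_i a_ic_i=0$ with no $\alpha$-twist (since ${\Frak s}_W(a_ie_{\ell_i}\rho_i)=a_ic_{\ell_i}(\tau_{\ell_i}\circ\rho_i)\varepsilon_{l_i}$), so the natural annihilator element is $a(i)=a_ie_i$ and $A(i)=a_ig_i$, not the right-module formula $e_i\alpha^{-1}_{\tau_i}(a_i)$ you borrowed from Corollary~\ref{48Czr1}. Correspondingly, your invocation of $\alpha^{-1}_\upsilon$ in the Gebauer--M\"oller step is unnecessary here --- the restricted case is where the classical commutative syzygy calculus (Buchberger's First Criterion included) applies verbatim, which is precisely why Section~7.3 introduces the $\diamond$ product. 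This simplification also removes your worry about the automorphism hypothesis being needed to form $\alpha^{-1}_{\tau_i}(a_i)$: it is not needed for the restricted annihilator.
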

Finally we remark that a 
Lazard Structural Theorem for  Ore Extensions over a Principal Ideal Domain can be easily obtained by adapting the result given by Norton--S\u al\u agean \cite{NS1,NS2}, \cite[\S~33.3]{SPES} for polynomial rings.

\end{document}